\DeclareRobustCommand{\cev}[1]{%
	\mathpalette\do@cev{#1}%
}
\newcommand{\do@cev}[2]{%
	\fix@cev{#1}{+}%
	\reflectbox{$\m@th#1\vec{\reflectbox{$\fix@cev{#1}{-}\m@th#1#2\fix@cev{#1}{+}$}}$}%
	\fix@cev{#1}{-}%
}
\newcommand{\fix@cev}[2]{%
	\ifx#1\displaystyle
	\mkern#23mu
	\else
	\ifx#1\textstyle
	\mkern#23mu
	\else
	\ifx#1\scriptstyle
	\mkern#22mu
	\else
	\mkern#22mu
	\fi
	\fi
	\fi
}
\numberwithin{equation}{section}
\theoremstyle{plain}
\newtheorem{theorem}{Theorem}[section]
\newtheorem{corollary}[theorem]{Corollary}
\newtheorem{lemma}[theorem]{Lemma}
\newtheorem{proposition}[theorem]{Proposition}
\theoremstyle{definition}
\newtheorem{definition}[theorem]{Definition}
\theoremstyle{remark}
\newtheorem{remark}[theorem]{Remark}
\newtheorem{example}[theorem]{Example}
\renewcommand{\S}{\mathbb{S}}
\newcommand{\Z}{\mathbb{Z}}
\newcommand{\R}{\mathbb{R}}
\newcommand{\C}{\mathbb{C}}
\newcommand{\CP}{{\mathbb{C}\mathrm{P}}}
\newcommand{\calC}{\mathcal{C}}
\newcommand{\calF}{\mathcal{F}}
\newcommand{\calL}{\mathcal{L}}
\newcommand{\calU}{\mathcal{U}}
\newcommand{\calG}{\mathcal{G}}
\newcommand{\calA}{\mathcal{A}}
\newcommand{\calI}{\mathcal{I}}
\newcommand{\Bound}{\partial V}
\newcommand{\hC}{\hat{\C}}
\newcommand{\F}{\mathscr{F}}
\newcommand{\Fs}{\mathsf{F}}
\newcommand{\Ts}{\mathsf{T}}
\newcommand{\Ms}{\mathsf{M}}
\newcommand{\M}{\mathscr{M}}
\newcommand{\Md}{\mathscr{M}^\dagger}
\renewcommand{\S}{\mathscr{S}}
\newcommand{\vphi}{\varphi}
\newcommand{\az}[2]{{\mathscr{A}_{#1}\kern-0.1em\left[#2\right]}}
\DeclareMathOperator{\sgn}{sgn}
\DeclareMathOperator{\sign}{sign}
\DeclareMathOperator{\col}{col}
\DeclareMathOperator{\lc}{lc}
\newcommand{\GD}{G^D}
\tikzstyle{bvert}=[draw,circle,fill=black,minimum size=5pt,inner sep=0pt]
\tikzstyle{blvert}=[draw,circle,fill=blue,draw=blue,minimum size=5pt,inner sep=0pt]
\tikzstyle{wvert}=[draw,circle,fill=white,minimum size=5pt,inner sep=0pt]
\tikzstyle{blwvert}=[draw,circle,draw=blue,fill=white,minimum size=5pt,inner sep=0pt]
\tikzset{circle through 3 points/.style n args={3}{%
		insert path={let    \p1=($(#1)!0.5!(#2)$),
			\p2=($(#1)!0.5!(#3)$),
			\p3=($(#1)!0.5!(#2)!1!-90:(#2)$),
			\p4=($(#1)!0.5!(#3)!1!90:(#3)$),
			\p5=(intersection of \p1--\p3 and \p2--\p4)
			in
			node at (\p5) [draw,circle through= {(#1)}]{}}
}}
\title{The Schwarzian octahedron recurrence (dSKP equation) I: explicit solutions}
\author{Niklas Christoph Affolter
	\thanks{TU Berlin, Institute of Mathematics, Strasse des 17. Juni 136, 10623 Berlin, Germany.
		Départment de mathématiques, ENS, Université PSL, 45 rue d'Ulm, 75005 Paris, France. \textit{E-mail address}: \texttt{affolter~at~posteo.net}} ,
	Béatrice de Tilière
	\thanks{PSL University-Dauphine, CNRS, UMR 7534, CEREMADE, 75016 Paris, France. Institut Universitaire de France. \textit{E-mail address}: \texttt{detiliere at ceremade.dauphine.fr}} ,
	Paul Melotti
	\thanks{Université Paris-Saclay, CNRS, Laboratoire de mathématiques d’Orsay, 91405, Orsay, France.
		\textit{E-mail address}: \texttt{melotti at posteo.net}}
}
\date{June 7, 2024}
\begin{document}

\maketitle

\begin{abstract}
  We prove an explicit expression for the solutions of the discrete Schwarzian octahedron
recurrence, also known as the discrete Schwarzian KP equation (dSKP),
as the ratio of two partition functions. Each one counts weighted oriented dimer configurations of an associated bipartite graph, and is equal to the determinant of a Kasteleyn matrix. This is in the spirit of Speyer's result on the dKP equation, or octahedron recurrence~\cite{Speyer}.
One consequence is that dSKP has zero algebraic entropy, meaning that the
growth of the degrees of the polynomials involved is only
polynomial. There are cancellations in the partition function, and we prove an alternative, cancellation free explicit expression involving \emph{complementary trees and forests}. Using all of the above, we show several instances of the Devron property for dSKP, \emph{i.e.}, that certain singularities in initial data repeat after a finite number of steps. This has many applications for discrete geometric systems and is the subject of a companion paper~\cite{paper2}.
We also find limit shape results analogous to the arctic circle of the Aztec diamond. Finally, we discuss the combinatorics of all the other octahedral equations in the classification of Adler, Bobenko and Suris~\cite{abs}.
\end{abstract}


\section{Introduction}\label{sec:introduction}

The dSKP equation is a relation on six variables that arises in the study of the Krichever-Novikov equation \cite[Equation (30)]{dndskp}, and as a
discretization of the Schwarzian Kadomtsev-Petviashvili
hierarchy \cite{BK1,BK2}, hence its name. Note that it can be traced back to \cite[Equation (4)]{ncwqdskp} as a special case when  $p,q,r = 0$, $\alpha = -\beta = \varepsilon^{-1}$ in the limit $\varepsilon \rightarrow 0$.
It appears in a number of
systems such as: Menelaus' theorem and Clifford configurations
\cite{ksclifford}, evolutions of $t$-embeddings of dimer models (or
Miquel dynamics) \cite{klrr,amiquel}, consistent octahedral
equations \cite{abs}. These examples and many more are described in a companion paper~\cite{paper2}.

In this paper we embed this relation on a lattice to get the so-called
\emph{dSKP recurrence}. Formally, we consider the
\emph{octahedral-tetrahedral lattice $\calL$} defined as:
\begin{align*}
  \calL = \left\{p=(i,j,k) \in \Z^3 : i+j+k \in 2\Z \right\}.
\end{align*}
Consider the space $\hC = \C \cup \{\infty\}$, where $\C$ is an \emph{affine chart} of the \emph{complex projective line} $\CP^1$, and a function $x:\calL \to \hat{\C}$. We say that $x$ satisfies the \emph{dSKP recurrence}, or \emph{Schwarzian octahedron recurrence}, if
\begin{align}\label{eq:dskp_x_intro}
  \frac{(x_{-e_3}-x_{e_2})(x_{-e_1}-x_{e_3})(x_{-e_2}-x_{e_1})}{
  (x_{e_2}-x_{-e_1})(x_{e_3}-x_{-e_2})(x_{e_1}-x_{-e_3})} = -1,
\end{align}
where $(e_1,e_2,e_3)$ is the canonical basis of $\Z^3$, $x_q(p) :=
x(p+q)$ for every $q\in\{\pm e_i\}_{i=1}^3$, and the relation is evaluated at any $p \in \Z^3\setminus\calL$.

Suppose that we are given an \emph{initial data}
$a=\left( a_{i,j} \right)_{i,j\in \Z^2}$ located at vertices
$(i,j,h(i,j)) \in \calL$ for some \emph{height function} $h$, see Section~\ref{sec:recurrence} for the definition. One starts with values
\begin{equation*}
        (x(i,j,h(i,j))) = (a_{i,j}),
\end{equation*}
and apply the dSKP recurrence to get any value $x(i,j,k)$ with $(i,j,k) \in \calL$
and $k>h(i,j)$. This takes the form of a rational function in the
variables $a$. One of the main purposes of this paper is to prove
a combinatorial expression of this
rational function. The corresponding problem has been solved for various similar
recurrences \cite{CScube,Speyer,KPddim,Mkash}, and has led to fruitful developments such as
limit shapes results \cite{PSarctic,dFSG,Ggroves}.

It turns out that the combinatorics fitted to the dSKP recurrence leads
to the introduction of the \emph{oriented dimer model}. Consider a
finite planar graph $G=(V,E)$, and let $F$ be its set of faces, equipped with
weights $(a_f)_{f\in F}$. Suppose that we are given a particular
orientation known as a \emph{Kasteleyn orientation}, seen as a skew
symmetric function $\varphi: \vec{E} \to \{-1,1\}$. An \emph{oriented
  dimer configuration} is a subset of oriented edges $\vec{\Ms}$ such
that every vertex is either the origin or the tip of an oriented edge
in $\vec{\Ms}$. For an oriented edge $\vec{e}$, we denote by
$f(\vec{e})$ the face to the right of
$\vec{e}$. Then we define the weight of $\vec{\Ms}$ as
\begin{equation*}
  w(\vec{\Ms}) = \prod_{\vec{e} \in \vec{\Ms}} \varphi_{\vec{e}} \ a_{f(\vec{e})},
\end{equation*}
and the corresponding \emph{partition function} is given by the signed enumeration of all oriented dimer configurations:
\begin{equation*}
  Z(G,a,\varphi) = \sum_{\vec{\Ms}} w(\vec{\Ms}).
\end{equation*}

The following is a loose statement of one of our main results, see Theorem~\ref{theo:dskp_dimers} for a precise statement.
\begin{theorem} \label{theo:dskp_dimers_intro}
Let $x:\calL\rightarrow\hC$ be a function that satisfies the dSKP recurrence. Let $h$ be a height function and consider an initial data $a=(a_{i,j})=(x(i,j,h(i,j)))$. Then, for every point $(i,j,k)\in
\calL$ with $k>h(i,j)$, the value $x(i,j,k)$ is expressed as a function of $(a_{i,j})$ as
\begin{equation*}
x(i,j,k) = C(G,a) \
\frac{Z(G,a^{-1},\varphi)}{Z(G,a,\varphi)},
\end{equation*}
where $G$ is the \emph{crosses-and-wrenches graph} explicitly constructed from $(i,j,k)$ and $h$~\cite{Speyer}, whose faces are indexed by a subset of $\Z^2$, and equipped with weights $a=( a_{i,j})$ or $a^{-1}=( a^{-1}_{i,j})$; $C(G,a)$ is
the product of all face weights with some exponents.
\end{theorem}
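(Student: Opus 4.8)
The plan is to prove the formula by induction on the number of recurrence steps, i.e.\ on the quantity $k-h(i,j)\geq 0$ measuring how far above the initial surface the point $(i,j,k)$ sits. The base case $k=h(i,j)$ is immediate: there the graph $G$ degenerates to a single face carrying weight $a_{i,j}$, both partition functions are trivial, and $C(G,a)\,Z(G,a^{-1})/Z(G,a)$ collapses to $a_{i,j}=x(i,j,h(i,j))$, as required. For the inductive step I would fix the octahedron of $\calL$ with top vertex $(i,j,k)$, bottom vertex $(i,j,k-2)$ and the four side vertices $(i\pm 1,j,k-1)$, $(i,j\pm 1,k-1)$, all of which lie strictly below level $k$ and hence obey the claimed formula by induction. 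The relation \eqref{eq:dskp_x_intro} is linear-fractional in the top value and so determines $x(i,j,k)$ as a Möbius function of these five lower values; it then remains to check that substituting the five inductive expressions and simplifying reproduces $C(G,a)\,Z(G,a^{-1})/Z(G,a)$ for the graph $G$ attached to $(i,j,k)$.

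The heart of the argument is to exploit that dSKP is projectively invariant and to pass to homogeneous coordinates: write each octahedron value as $x_\alpha=C_\alpha\,N_\alpha/D_\alpha$ with $N_\alpha=Z(G_\alpha,a^{-1})$ and $D_\alpha=Z(G_\alpha,a)$. A difference $x_\alpha-x_\beta$ then factors as a single cross term $N_\alpha D_\beta-N_\beta D_\alpha$ (times monomials in the face weights) divided by $D_\alpha D_\beta$, and I expect each such cross term to be itself the partition function of an intermediate crosses-and-wrenches graph obtained by one local move. With this identification the multi-ratio in \eqref{eq:dskp_x_intro} collapses to a single bilinear (Hirota--Plücker) identity among the six neighbouring partition functions, the $Z(G_\alpha,a)$ playing the role of $\tau$-functions. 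Since each $Z$ is, by Kasteleyn theory, the determinant of the Kasteleyn matrix $K$, these bilinear identities are exactly the Desnanot--Jacobi (Dodgson condensation) relations among the minors of $K$: adding the new top face borders $K$ by one row and column, and the minors obtained by deleting the bordering indices satisfy the quadratic Plücker relation algebraically equivalent to ``multi-ratio $=-1$''. This is the analogue, for the oriented dimer model, of the Kuo condensation used by Speyer \cite{Speyer} for the plain octahedron recurrence.

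The remaining work is sign and monomial bookkeeping. The value $-1$ on the right-hand side of \eqref{eq:dskp_x_intro} is not automatic: it is produced by the Kasteleyn signs $\varphi$, so one must verify that the orientation on $G_{(i,j,k)}$ is compatible, across the local move, with the orientations on the neighbouring graphs, and that the resulting product of signs in the bilinear identity is exactly what turns the Plücker relation into the multi-ratio with right-hand side $-1$ rather than $+1$. In parallel one has to track the prefactor $C(G,a)$, the product of face weights with prescribed exponents: each recurrence step multiplies by a controlled monomial, and one checks that these accumulate to the claimed $C(G,a)$ while preserving the $a\leftrightarrow a^{-1}$ symmetry that forces the numerator to use inverse weights.

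The step I expect to be the main obstacle is this sign-consistent condensation identity for the oriented dimer model: proving that the Kasteleyn determinants of the six locally related graphs satisfy the precise bilinear relation with the signs that yield $-1$. Establishing it requires choosing the Kasteleyn orientation globally and coherently as the graph grows, and verifying compatibility of $\varphi$ under the elementary move that adds a cross or a wrench. A clean way to organise this is to prove the six-term identity once, in the smallest nontrivial octahedron, purely in terms of $K$ and $\varphi$, and then argue by locality that every inductive step reduces to it.
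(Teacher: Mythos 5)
Your induction skeleton needs repair before anything else: the quantity $k-h(i,j)$ does \emph{not} decrease when you pass from the top vertex to the side vertices of the octahedron, since $(i\pm1,j,k-1)$ has $k-1-h(i\pm1,j)=k-h(i,j)$ whenever $h(i\pm1,j)=h(i,j)-1$, which height functions allow. One must instead induct on a finite quantity such as $\#\left(\calU_h\cap\calC_p\right)$, which is exactly what the paper does (decreasing it by bumping a local minimum of $h$ rather than peeling off the top level). That is fixable bookkeeping. The genuine gap is that your entire inductive step rests on a claim you only ``expect'': that each cross term $C_\alpha N_\alpha D_\beta-C_\beta N_\beta D_\alpha$ is a monomial times the oriented-dimer partition function of an intermediate crosses-and-wrenches graph, so that the multi-ratio collapses to a condensation identity. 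This fails already in the smallest nontrivial octahedron, the very case you propose as the seed of your locality argument. Take $h(i,j)=[i+j]_2$ and $p=(0,0,2)$ as in Example~\ref{ex:onestep}, with $N,D$ the numerator and denominator displayed there. The cross term for the pair $x(0,0,2)$, $a_{-1,0}=x(-1,0,1)$ (this difference does occur in the multi-ratio) is
\begin{equation*}
N-a_{-1,0}D=(a_{0,0}-a_{1,0})\,a_{-1,0}\,\bigl(a_{-1,0}-a_{0,1}-a_{0,-1}\bigr)-a_{0,1}a_{0,-1}(a_{0,0}+a_{1,0}),
\end{equation*}
which contains the monomial $a_{0,0}a_{-1,0}^{2}$ and also terms free of $a_{-1,0}$. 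By Theorem~\ref{thm:comb_int_II} (or Corollary~\ref{cor:Z_Aztec_diamond}), every oriented-dimer partition function in this framework is \emph{multilinear} in the face weights, since each monomial is a product of distinct face weights; a monomial multiple of a multilinear polynomial cannot have degree $2$ in $a_{-1,0}$ while containing terms of degree $0$ in $a_{-1,0}$. So the factorization you need does not exist in the form you posit, and ``prove it once on the smallest octahedron, then spread by locality'' has nothing to start from.

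The second concrete error is identifying the needed six-term identity with Desnanot--Jacobi for a once-bordered Kasteleyn matrix. The six graphs attached to the octahedron vertices are nested, but they differ by $O(k)$ faces and vertices, not one: for $h=[i+j]_2$ the top graph is $A_k$ and the side graphs are $A_{k-1}$, so the matrices $K$ differ by $2k$ rows and columns. Worse, the twelve quantities $N_\alpha=Z(G_\alpha,a^{-1},\varphi)$ and $D_\alpha=Z(G_\alpha,a,\varphi)$ are determinants of two \emph{different} weight specializations ($a^{-1}$ versus $a$), so they are not minors of any single matrix and no Pl\"ucker/Dodgson relation applies to them as such. (A one-column relation mixing $a$ and $a^{-1}$ does exist in the paper --- Proposition~\ref{prop:Zp_over_Z} and Theorem~\ref{theo:kery} --- but it lives on the tree-and-forest matrix $C$, not on $K$, and it is proved after and independently of Theorem~\ref{theo:dskp_dimers}.) What actually replaces your conjectural condensation is the invariance of the ratio $Y$ under local moves: contraction/expansion of degree-$2$ vertices (Proposition~\ref{prop:contraction}) and the spider move (Proposition~\ref{prop:urban}), whose proof is a six-case decomposition of matchings with a common factor $\lambda$ and where the Kasteleyn-sign bookkeeping you flag is carried out; these are extended to infinite completions (Proposition~\ref{prop:inf}, Corollary~\ref{cor:inv_inf}) precisely to avoid boundary case analysis, and then combined with the height-function induction. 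If you want a condensation-style proof in the spirit of Kuo's argument for dKP, you would first have to formulate and prove the correct oriented-dimer analogue; your proposal presupposes it rather than supplying it.
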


An immediate consequence of Theorem~\ref{theo:dskp_dimers_intro} and
the construction of the graph is that the dSKP recurrence has zero
\emph{algebraic entropy} \cite{BV}, meaning that the degree
of the
function $x(i,j,k)$ in the initial data grows sub-exponentially (and,
in fact, polynomially) in $k$. This unusual property is
typical of integrable rational systems.

\begin{example} \label{ex:onestep}
As an example, let us take $h(i,j)=[i+j]_2$, where
  $[n]_p\in \{0,1,\dots,p-1 \}$ denotes the value of $n$ modulo $p$. Then
  $x(0,0,2)$ may be expressed as a function of
  $a_{0,0}=x(0,0,0), a_{1,0}=x(1,0,1), a_{-1,0}=x(-1,0,1),
  a_{0,1}=x(0,1,1), a_{0,-1}=x(0,-1,1)$, namely
  \begin{equation*}
   \frac{
      a_{1,0}a_{-1,0}a_{0,1} + a_{1,0}a_{-1,0}a_{0,-1}
      - a_{0,0}a_{1,0}a_{-1,0} - a_{-1,0}a_{0,1}a_{0,-1} -
      a_{1,0}a_{0,1}a_{0,-1} - a_{0,0}a_{0,1}a_{0,-1}}
    {a_{0,0}a_{0,-1} + a_{0,0}a_{0,1}
      - a_{0,1}a_{0,-1} -a_{0,0}a_{1,0} - a_{0,0}a_{-1,0} + a_{1,0}a_{-1,0}
    }.
  \end{equation*}
  The corresponding graph is shown in Figure~\ref{fig:ex1}, with the
  list of its oriented dimer configurations and their weights, whose
  sum is indeed the denominator. To check that the numerator of
  Theorem~\ref{theo:dskp_dimers_intro} also matches, let us mention
  that in this case, $C(G,a)=a_{0,0}a_{0,1}a_{0,-1}a_{1,0}a_{-1,0}$, so
  that the denominator is the ``complement polynomial'' of the
  numerator -- meaning that every monomial is replaced by its
  complement in the five variables, since the maximum degree of each
  variable is $1$ in this case.
\end{example}
\begin{figure}[ht]
  \centering
  \includegraphics[width=11cm]{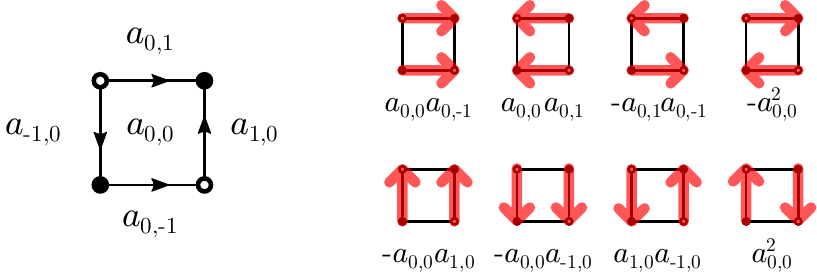}
  \caption{Left: the graph $G$ corresponding to one application of the
    dSKP equation, with a Kasteleyn orientation (showing the oriented
    edges $\vec{e}$ such that $\varphi_{\vec{e}}=1$). Right: its eight oriented
    dimer configurations, and their weights.}
  \label{fig:ex1}
\end{figure}

Another contribution of this paper is to show that
the partition function $Z(G,a)$ can be expressed as a determinant. More precisely, the crosses-and-wrenches graph $G$ is bipartite so that its vertex set can be split into $V=W\sqcup B$; consider the weighted adjacency $K=(K_{w,b})$ of $G$,
whose rows are indexed by white vertices of $W$, columns by black
vertices of $B$, and whose non-zero entries are given by, for every
edge $wb$ of $G$,
\begin{equation}
  \label{eq:defKast}
  K_{w,b} = a_{f(w,b)}-a_{f(b,w)}.
\end{equation}

Then, in Proposition~\ref{prop:oriented_dimers_adjacency_matrix}, we prove that
\begin{equation*}
    Z(G,a,\varphi) = \pm \det(K).
\end{equation*}

It is to be noted that this exact matrix $K$ appears in the recent
introduction of Coulomb gauges, or $t$-embeddings of dimer models. In
\cite{klrr} the authors start with a planar graph equipped with a
standard dimer model, and (under some conditions) find an embedding of
the dual graph: every face $f$ is sent to a point $a_f$. The Kasteleyn
matrix of the initial dimer model is gauge equivalent to a matrix $K$
that is then exactly \eqref{eq:defKast}, therefore $\det(K)$ is equal, up to a constant, to the
partition function of the initial dimer model. As a result, our
approach may be seen as a way to expand the partition function of the
initial dimer model in terms of these variables $a_f$, taken as formal
variables, instead of the usual edge weights. With this perspective of formal variables living in $\hC$, we have no control on the sign of the entries of the matrix $K$, and they do not satisfy the Kasteleyn condition in general~\cite{Kuperberg}.

\begin{figure}[tbp]
  \centering
  \includegraphics[width=11cm]{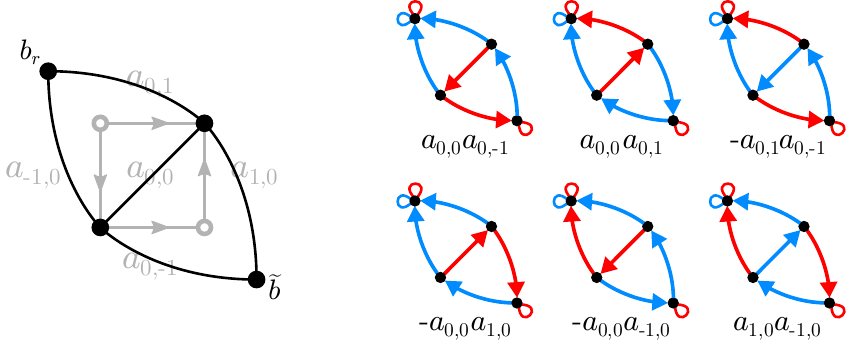}
  \caption{Left: the graph $G^\bullet$ in the running example; two
    vertices $b_r,\tilde{b}$ have been added at the boundary. Right:
    its six complementary trees and forests configurations, and their
    weights. Forests are shown in red and may be rooted at $b_r$
    or $\tilde{b}$, while trees are shown in blue and may only be
    rooted at $b_r$. Roots are represented by small loops.}
  \label{fig:ex1_forests}
\end{figure}

Note that in the previous example, two configurations have weights
that cancel each other. This is a generic fact which, for instance,
occurs at all faces of degree 4 of $G$. Our next main contribution consists in introducing
another model, whose partition function is
equal to $Z(G,a,\varphi)$, but whose configurations are in \emph{one-to-one
correspondence} with the monomials in the $a$ variables. More precisely, in
Section~\ref{sec:comb_sol_II}, for any quadrangulation $G$ of the sphere, we introduce a model of \emph{complementary trees and
forests} on the graph $G^\bullet$ formed by the black vertices of $G$ and
diagonals joining them, with some boundary conditions. A
configuration consists of two subsets $(\Ts,\Fs)$ of edges of
$G^\bullet$ such that $\Ts$ is a spanning tree, and its complement
$\Fs$ is a spanning forest, rooted at some specific vertices, see Section~\ref{sec:comb_sol_II} for details, and Figure~\ref{fig:ex1_forests} for an example. We show the
following, see also Theorem~\ref{thm:comb_int_II}.
\begin{theorem}\label{thm:comb_int_II_intro}
For any Kasteleyn orientation $\vphi$, the oriented dimer partition function is equal to
\begin{equation*}
Z(G,a,\vphi)=\pm
\sum_{(\Ts,\Fs)} \sign(\Ts,\Fs) \prod_{\vec{e}\in \Fs} a_{f_{\vec{e}}},
\end{equation*}
where the sum is over all complementary trees and forests
configurations of $G^\bullet$. Moreover, there is a bijection between
terms in the sum on the right-hand-side and monomials of
$Z(G,a,\vphi)$ in the variables $a$.
\end{theorem}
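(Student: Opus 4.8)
The plan is to start from the identity $Z(G,a,\varphi)=\pm\det(K)$ of Proposition~\ref{prop:oriented_dimers_adjacency_matrix} and to transport the permutation expansion of $\det(K)$ onto the graph $G^\bullet$. First I would fix the combinatorial dictionary induced by the quadrangulation: the black vertices $B$ of $G$ are the vertices of $G^\bullet$, the faces of $G$ are the black diagonals and hence the edges of $G^\bullet$, and the white vertices $W$ are the faces of $G^\bullet$. Under this dictionary the nonzero entry $K_{w,b}=a_{f(w,b)}-a_{f(b,w)}$ is precisely the difference of the weights of the two edges of $G^\bullet$ that are incident to the vertex $b$ and that bound the face $w$; in other words $K$ is a weighted vertex--face incidence matrix of $G^\bullet$, and the rooting introduced through the boundary vertices $b_r,\tilde b$ is exactly what makes this square matrix non-degenerate on the sphere.

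Next I would expand. Writing $\det(K)=\sum_\sigma \sgn(\sigma)\prod_w K_{w,\sigma(w)}$ and then distributing each binomial $K_{w,\sigma(w)}=a_{f(w,\sigma(w))}-a_{f(\sigma(w),w)}$, every surviving term is an oriented dimer configuration: a perfect matching $\sigma$ of $G$ together with, for each matched edge, the choice of one of its two adjacent faces (equivalently, an orientation of the edge, the chosen face being the one to its right). Reading this through the dictionary, each term assigns to every black vertex $b$ a single incident edge $e_b$ of $G^\bullet$, so it is exactly a choice of one out-edge per vertex of $G^\bullet$, that is, a functional subgraph. Its monomial is $\prod_b a_{e_b}$ and its sign is the product of $\sgn(\sigma)$, the orientation signs and the values of $\varphi$. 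This realizes $Z(G,a,\varphi)$ as a signed generating function over functional subgraphs of $G^\bullet$.

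The heart of the argument, and the step I expect to be hardest, is the cancellation. A functional subgraph decomposes into rooted trees hanging off cycles, and the boundary roots $b_r,\tilde b$ are meant to absorb the chains that would otherwise close up. I would build a fixed sign-reversing involution on the terms whose out-edge structure still contains a nontrivial cycle: select a canonical cycle (say the first in a prescribed ordering of the edges of $G^\bullet$) and modify the configuration along it so that the monomial $\prod_b a_{e_b}$ is preserved while the sign is reversed. The delicate point is to track this sign through all three contributions at once --- the matching sign $\sgn(\sigma)$, the local orientation choices and the Kasteleyn function $\varphi$ --- and here I would lean on planarity together with the defining skew-symmetry of $\varphi$, which guarantees that altering a configuration along a face or a cycle changes the product of $\varphi$-values in a controlled, purely topological way. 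After cancellation the only fixed points are the configurations whose chosen out-edges form a spanning forest $\Fs$ rooted at $b_r,\tilde b$; its complement in the edge set of $G^\bullet$ is forced by the Euler count $|B|+|W|=|E(G^\bullet)|+2$ to be a spanning tree $\Ts$, so the survivors are exactly the complementary trees-and-forests configurations.

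Finally I would match signs and establish the bijection. For a surviving configuration the global sign computed above should collapse to the combinatorial quantity $\sign(\Ts,\Fs)$; I expect this to follow from the same local analysis, checked once on an elementary move and then propagated, so that the answer is independent of the chosen $\varphi$ (any two Kasteleyn orientations change $\det(K)$ only by a global sign, consistent with the $\pm$ in the statement). The remaining assertion, that the surviving terms are in one-to-one correspondence with the monomials of $Z(G,a,\varphi)$, then amounts to checking that no two distinct surviving configurations produce the same monomial $\prod_{\vec e\in\Fs} a_{f_{\vec e}}$, which I would obtain by recovering the forest $\Fs$ --- and with it the whole pair $(\Ts,\Fs)$ --- from the data of the monomial; surjectivity onto the monomials of $Z$ holds by construction. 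This yields the displayed formula and the claimed bijection.
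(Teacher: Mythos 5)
Your reduction to $\det K$ and the idea of a cycle-killing involution point in a reasonable direction, but the proposal has a genuine gap, and it is fatal as written. The decisive error is the claim that for a surviving spanning forest $\Fs$ rooted at $(\tilde{B}\cup\{b_r\})\setminus B$, ``its complement in the edge set of $G^\bullet$ is forced by the Euler count to be a spanning tree.'' The count only gives the complement the correct \emph{number} of edges; it does not force acyclicity or connectedness. Concretely, take the Aztec diamond $A_1$ of Example~\ref{ex:onestep} and Figure~\ref{fig:ex1_forests}: there $G^\bullet$ is the $4$-cycle $b_r,b_1,\tilde b,b_2$ (where $b_1,b_2$ are the two black vertices of the square) together with the diagonal $b_1b_2$; the edges incident to $b_r$ carry the weights $a_{0,1},a_{-1,0}$, those incident to $\tilde b$ carry $a_{1,0},a_{0,-1}$, and the diagonal carries $a_{0,0}$. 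The configuration in which both $b_1$ and $b_2$ point to $b_r$ is a perfectly valid spanning forest rooted at $\{b_r,\tilde b\}$ (with $\tilde b$ an isolated root), yet its complement is the triangle $b_1b_2$, $b_1\tilde b$, $b_2\tilde b$, which is not a spanning tree --- and, consistently, $a_{0,1}a_{-1,0}$ is \emph{not} a monomial of $Z(A_1,a,\varphi)$ (see the denominator in Example~\ref{ex:onestep}, which has exactly six monomials, matching the six configurations of Figure~\ref{fig:ex1_forests}). So after your cycle cancellation such ``bad'' forests would still need to disappear, and your argument provides no mechanism for that. A second, related imprecision: terms of $\det K$ are \emph{not} in bijection with out-edge systems on $G^\bullet$. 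A term is a bijection $\sigma:W\to B$ plus a face choice per matched edge; distinct terms with different $\sigma$ (and opposite signs) can induce the same out-edge system (this is exactly how doubled-edge ``$2$-cycles'' like $a_{0,0}^2$ cancel), and some out-edge systems --- precisely the bad forests above --- are induced by no term at all. Hence your involution cannot be defined on out-edge systems, and your identification of the fixed points is incorrect.

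This missing mechanism is exactly what the paper's detour through the double graph supplies, and it is why the paper does not expand $\det K$ directly. Proposition~\ref{prop:matrix_relation_gen} and Corollary~\ref{cor:matrix_relation_gen} replace $\det K$ by $\pm\det\begin{pmatrix} C(1)^{\tilde{B}} & C(a)^{B}\end{pmatrix}$, whose rows are indexed by the edges of $G^\bullet$ and whose columns are \emph{two} families of black vertices: a permutation term is then a pair of out-edge systems, one on $\tilde{B}$ (the future tree $\Ts$) and one on $B$ (the future forest $\Fs$), which are edge-disjoint and use every edge of $G^\bullet$ exactly once \emph{by construction}. On such pairs the cycle-reversal involution is well defined and its sign analysis is clean: reversing a cycle of length $n$ changes the permutation sign by $(-1)^{n+1}$ and the product of matrix entries by $(-1)^n$, thanks to the alternating sign convention built into $C(1)$ and $C(a)$ (not the Kasteleyn function $\varphi$, which was already absorbed into the global $\pm$ by Proposition~\ref{prop:oriented_dimers_adjacency_matrix}). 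The survivors are then exactly the pairs of $\F$: the tree is \emph{produced} by the extra columns, not extracted from the complement of the forest. If you want to keep your more direct route via $\det K$, you would have to prove that a rooted spanning forest has nonzero fiber sum precisely when its complement is a spanning tree, and that this sum is a single signed monomial; that statement is essentially equivalent to the paper's matrix identity, so there is no real shortcut here.
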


The tools we develop to get the previous two results have several
applications, in particular they allow us to study
\emph{singularities} of the dSKP recurrence. Although interesting in their own respect, the introduction of such
singularities is motivated by their occurrence in geometric systems.
The study of these systems and their singularities is the subject of the companion paper~\cite{paper2},
where the results of the current paper allow us to provide a unified treatment of the description of singularities of several geometric systems, recovering previously known results~\cite{gdevron,yao}, solving three conjectures of~\cite[Section~9]{gdevron}, and showing counterparts in other systems.

\begin{figure}[ht]
  \centering
  \includegraphics[width=11cm]{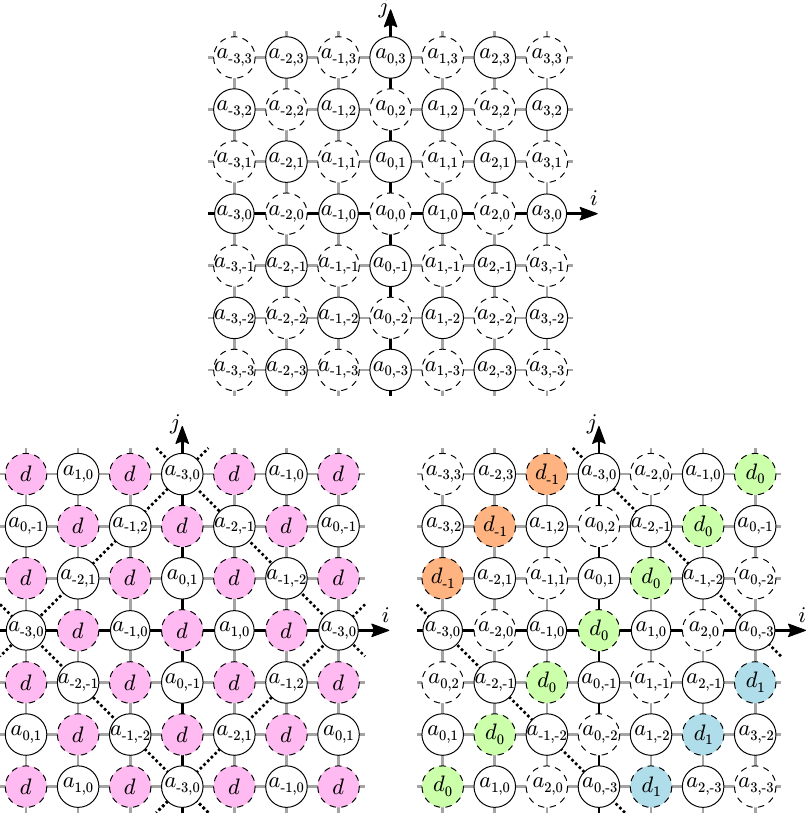}
  \caption{Initial data $(a_{i,j})$ for the dSKP recurrence, with
    height function $h(i,j)=[i+j]_2$. Variables in dashed circles lie
    at height $k=0$, while those in solid circles lie at height
    $k=1$. Top: generic. Bottom left: $3$-Dodgson; an
    elementary pattern is shown as a dashed square, and particular
    values at height $0$ are shown in purple. Bottom right:
    $(3,2)$-Devron; constant ``columns'' shown in
    orange, green, blue.}
  \label{fig:sing_data}
\end{figure}

Consider the height function $h(i,j)=[i+j]_2$, so that
the initial data $a=(a_{i,j})=(x(i,j,[i+j]_2))$ occupies heights $k=0$ and
$k=1$ in $\calL$. Suppose that all of the values at height $k=0$ are
equal to a single value $d\in\hat\C$, and that values at height $k=1$
are $m$-doubly periodic, meaning that
$a_{i,j}=a_{i+m,j+m}=a_{i+m,j-m}$; see Figure~\ref{fig:sing_data},
bottom left. We
call such initial data $m$-\emph{Dodgson initial conditions}. In this
case, the dSKP recurrence fails to define $x(i,j,k)$ for $k<0$, as
trying to apply \eqref{eq:dskp_x_intro} leads to a
singularity. However, getting the values at heights $k>1$ seems
possible. As the dSKP recurrence is one of the octahedral consistent
equations \cite{abs}, it is expected to have features common to
integrable system, one of them being the Devron property
\cite{gdevron}: if some data is singular for the backwards dynamics,
it should become singular after a finite number of applications of the forward
dynamics. In our case, this means that at some height $k\geq 1$
(\emph{i.e.} after $k-1$ \emph{iterations}, understood as applications
of the recurrence~\eqref{eq:dskp_x_intro} to a whole level), all values of $x(i,j,k)$
will be equal. After this point the forward dynamics becomes
impossible. We prove this Devron property, see also Section~\ref{sec:dskp-devr-prop}.
\begin{theorem}
  \label{theo:sing_dodgson_intro}
  For $m$-Dodgson initial data, the values of $x$ are
  constant after $m-1$ iterations of the dSKP recurrence. In other
  words,
    for every $(i,j)\in\Z^2$ such that $(i,j,m) \in \calL$,
    $x(i,j,m)$ is independent of $(i,j)$.
\end{theorem}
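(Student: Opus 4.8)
The plan is to combine the explicit solution of Theorem~\ref{theo:dskp_dimers_intro} with the exact geometry of the dependency region, reducing the whole statement to a single shift-invariance of the partition function. First I would exploit that the multi-ratio in \eqref{eq:dskp_x_intro} is invariant under applying one Möbius transformation simultaneously to all values of $x$: each of the six variables occurs exactly once in the numerator and once in the denominator, so the Jacobian and denominator factors all cancel. Since a level is constant if and only if its Möbius image is, I may choose the transformation sending $d\mapsto\infty$ and assume all height-$0$ values equal $\infty$. The first application of the recurrence then collapses to the additive form
\[
x(i,j,2)=\frac{a_{i,j+1}a_{i,j-1}-a_{i-1,j}a_{i+1,j}}{a_{i,j+1}+a_{i,j-1}-a_{i-1,j}-a_{i+1,j}},
\]
which already displays, when $m=2$, the antisymmetry $\tfrac{CD-AB}{C+D-A-B}=\tfrac{AB-CD}{A+B-C-D}$ that forces constancy.

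The key geometric input is a counting fact. The value $x(i,j,m)$ depends only on the initial data inside the $\ell^1$-diamond of radius $m$ about $(i,j)$, and the height-$1$ points of that diamond (those at radius $\le m-1$) number \emph{exactly} $m^2$ and form a complete, repetition-free set of representatives for the height-$1$ classes modulo the period lattice $L=\langle(m,m),(m,-m)\rangle$, of covolume $2m^2$. Indeed any two such points differ by an $\ell^1$-vector of length $\le 2(m-1)<2m$, whereas every nonzero vector of $L$ has length $\ge 2m$, so no two are $L$-equivalent; a direct count gives exactly $m^2$ of them, matching the $m^2$ height-$1$ classes per fundamental domain. Hence, as the base point varies, $x(i,j,m)$ is a function of the \emph{same} multiset of $m^2$ free weights together with the constant $d$, only the assignment of these weights to the faces of the crosses-and-wrenches graph $G$ being permuted. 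By translation-covariance of the construction of $G$ it therefore suffices to prove the shift-invariances $x(i,j,m)=x(i+1,j+1,m)=x(i+1,j-1,m)$, since the vectors $(1,1)$ and $(1,-1)$ span the whole level.

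To establish one such equality I would use the cancellation-free expansion of Theorem~\ref{thm:comb_int_II_intro}. Writing $x(i,j,m)=C(G,a)\,Z(G,a^{-1},\vphi)/Z(G,a,\vphi)$, and noting that the prefactor $C(G,a)$ is the same for the two base points (same multiset of weights, same exponents from the isomorphic graph), the identity $x(i,j,m)=x(i+1,j+1,m)$ reduces to the polynomial identity $Z(G,a^{-1},\vphi)_{(i,j)}\,Z(G,a,\vphi)_{(i+1,j+1)}=Z(G,a^{-1},\vphi)_{(i+1,j+1)}\,Z(G,a,\vphi)_{(i,j)}$. Because the trees-and-forests model puts each partition function in bijection with its (Laurent-)monomials, the natural route is to exhibit a weight-preserving, $\sign(\Ts,\Fs)$-respecting bijection between the pairs of complementary trees-and-forests configurations contributing to the two sides: the permutation of weights induced by shifting the diamond by $(1,1)$, together with the periodic wrap-around at its boundary, should match configurations in pairs, exactly as the single pair-swap did in the $m=2$ computation above.

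The hard part will be precisely this bijection for general $m$. The geometry cleanly reduces everything to a shift-invariance and the single-step cases are transparent, but controlling the partition function on the radius-$m$ diamond under the periodic identification — and verifying that the boundary wrap-around is compatible with the configuration signs $\sign(\Ts,\Fs)$ — is where the real work lies. I would organize the argument as an induction on $m$, with base cases $m=1$ (immediate from $1$-periodicity) and $m=2$ (the computation above); the qualitative Devron picture to keep in view is that the singular front advances by one unit per iteration and fills the level exactly at height $m$, at which point two of the six factors in \eqref{eq:dskp_x_intro} vanish (one in the numerator, one in the denominator), so that the next forward step is ill-posed and the forward dynamics cannot be continued — confirming that constancy is indeed the forward manifestation of the backward singularity.
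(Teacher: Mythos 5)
Your reductions are correct as far as they go: the count of $m^2$ height-$1$ points in the dependency diamond forming a complete, repetition-free set of representatives modulo $\langle (m,m),(m,-m)\rangle$ is right, the reduction of the theorem to the two shift-invariances $x(i,j,m)=x(i+1,j\pm 1,m)$ is right, and the $m=2$ computation is valid. But the proof stops exactly where the theorem begins: for general $m$ the entire content is the identity $x(i,j,m)=x(i+1,j+1,m)$, and for this you only assert that a ``weight-preserving, $\sign(\Ts,\Fs)$-respecting bijection'' induced by shift-plus-wrap-around ``should'' exist, explicitly deferring it as the hard part. This is not a proof, and the proposed mechanism faces real obstructions you do not address. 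First, the wrap-around acts as a permutation of the \emph{faces} (well-defined thanks to your representative count and the constancy of the height-$0$ values), but it is not induced by any automorphism of the graph $G^\bullet$ carrying the trees-and-forests model: that graph has distinguished boundary roots $b_r,\tilde b$ (and $w_r$), and its rooting structure is not translation-equivariant, so there is no canonical induced map on configurations to even propose. Second, the identity you need is genuinely a statement about \emph{specialized} weights: the shift-invariance of the partition function fails for merely doubly-periodic data (the paper notes that already in the $(m,p)$-Devron case with $p\geq 2$, $Z(G,a,\varphi)$ itself is generically not invariant), so no argument can work term-by-term on the generic, cancellation-free expansion of Theorem~\ref{thm:comb_int_II_intro}; once the height-$0$ faces are all set to $d$, monomials collide and configurations cancel, and any combinatorial proof must first organize exactly this cancellation.

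That organizing step is what the paper supplies and what your plan lacks. Under the constant-column specialization, column operations on the matrix $C(a)$ collapse the trees-and-forests sum to a sum over \emph{permutation spanning forests} (Theorem~\ref{thm:col_degen_I}), i.e.\ to $\pm\prod_{i,j}(c_{i,j}-d)\cdot\det N$ with $N_{i,j}=(c_{i,j}-d)^{-1}$ (Corollary~\ref{cor:dodgson}); these reduced configurations are indexed by permutations, on which the shift \emph{does} act as an honest bijection $\tau\mapsto\tau\circ c$ (Theorem~\ref{thm:vert_translation}). The theorem then follows from Corollary~\ref{cor:dodgson_L}: $x(i,j,m)=d+\sum_{i',j'}(N^{-1})_{i',j'}$, and shifting the base point permutes the rows of $N$ cyclically, which leaves $\sum_{i',j'}(N^{-1})_{i',j'}$ unchanged. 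Note also that your induction on $m$ comes with no mechanism for the inductive step, and that your M\"obius normalization $d\mapsto\infty$, while harmless for the recurrence itself, is incompatible with the partition-function formulas you then invoke: with some $a_f=\infty$, the prefactor $C(G,a)$ and both $Z(G,a,\varphi)$, $Z(G,a^{-1},\varphi)$ degenerate, so you would need to keep $d$ finite (as the paper does) or add a limiting argument.
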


In fact, we prove a stronger invariance result for the partition function $Z(G,a,\vphi)$ itself (and not only the ratio), via the combinatorics of particular trees and forests configurations named \emph{permutation spanning forests}, see Section~\ref{sec:cst_col}, Theorems~\ref{thm:vert_translation} and~\ref{thm:col_degen_I}. As a consequence, we are able to explicitly compute this final value using the
determinant and minors of the $m \times m$ matrix with entries
$\frac{1}{a_{i,j}-d}$, which is reminiscent of Dodgson's condensation
\cite{dodgson}, hence the name of these initial data; see
Corollary~\ref{cor:dodgson}. Note that this matrix is much smaller
than the previous matrix $K$, whose size is roughly $m^2 \times
m^2$. For instance, when $m=2$, a single iteration produces a constant
layer; this can be seen in the running example, with $a_{0,0}$ playing
the role of $d$. In this case, the explicit value of
Corollary~\ref{cor:dodgson} uses the matrix
\begin{align*}
        N = \begin{pmatrix}
                \frac{1}{a_{0,1}-a_{0,0}} & \frac{1}{a_{1,0}-a_{0,0}} \\
                \frac{1}{a_{-1,0}-a_{0,0}} & \frac{1}{a_{0,-1}-a_{0,0}}
        \end{pmatrix}.
\end{align*}
It states that
\[Z(G,a,\varphi) = \pm \prod_{i,j}(a_{i,j}-a_{0,0}) \cdot \det N,
\] and that
for any $(i,j)\in\Z^2$ with $[i+j]_2=0$,
\begin{align*}
        x(i,j,2) = a_{0,0} + \sum_{i,j}\left( N^{-1} \right)_{i,j},
\end{align*}
with the sum and product being over $(i,j)\in \{(0,1),(1,0),(0,-1),(-1,0)\}$.

Assuming more symmetries in the initial data, we prove an even simpler form for the final value.
\begin{corollary}
  \label{cor:harm_mean}
  For $m$-Dodgson initial data, suppose in addition
  that for some $p\not\in m\Z$, when $[i+j]_2=1$,
  $a_{i,j}=a_{i+p+1,j-p+1}$. Then after $m-1$ iterations of the dSKP
  recurrence, $x(i,j,m)$ is the shifted harmonic mean of the $m$ different
  values of the initial data:
  \begin{equation*}
   x(i,j,m) = d+ \left( \frac{1}{m} \sum_{i=0}^{m-1} \frac{1}{a_{i,1-i}-d}\right) ^{-1}.
  \end{equation*}

\end{corollary}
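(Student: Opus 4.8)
The plan is to derive Corollary~\ref{cor:harm_mean} as a specialization of the general $m$-Dodgson formula encoded in Corollary~\ref{cor:dodgson}, exploiting the extra symmetry $a_{i,j}=a_{i+p+1,j-p+1}$ to collapse the $m\times m$ matrix $N$ with entries $\tfrac{1}{a_{i,j}-d}$ into a circulant. First I would set $b_i := \tfrac{1}{a_{i,1-i}-d}$ for $i\in\{0,\dots,m-1\}$, the reciprocals of the shifted initial values appearing in the claimed harmonic mean, and track how the hypothesis $p\notin m\Z$ forces the entries of $N$ to depend only on a single residue class. The key point is that the $m$-periodicity from the Dodgson condition, combined with the new $(p{+}1,-p{+}1)$-periodicity, generates a cyclic symmetry of order $m$ on the indices (here $\gcd(p,m)$ governs the orbit structure, and $p\notin m\Z$ guarantees a nontrivial shift). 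Concretely, I would show that up to relabeling rows and columns, $N$ becomes a circulant matrix built from the values $b_0,\dots,b_{m-1}$, each appearing exactly once per row.

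The heart of the computation is then evaluating $\sum_{i,j}(N^{-1})_{i,j}$, the sum of all entries of $N^{-1}$, since Corollary~\ref{cor:dodgson} gives $x(i,j,m)=d+\sum_{i,j}(N^{-1})_{i,j}$ (after identifying the constant layer value with $d$). For a circulant matrix $C$, the all-ones vector $\mathbf{1}$ is an eigenvector, with eigenvalue equal to the row sum $\sum_{i=0}^{m-1} b_i$. Hence $C^{-1}\mathbf{1} = \bigl(\sum_i b_i\bigr)^{-1}\mathbf{1}$, and therefore the sum of all entries of $C^{-1}$ is
\begin{equation*}
\mathbf{1}^{\top} C^{-1}\mathbf{1} = \left(\sum_{i=0}^{m-1} b_i\right)^{-1}\mathbf{1}^{\top}\mathbf{1} = m\left(\sum_{i=0}^{m-1} b_i\right)^{-1}.
\end{equation*}
Substituting $b_i=\tfrac{1}{a_{i,1-i}-d}$ yields
\begin{equation*}
\sum_{i,j}(N^{-1})_{i,j} = m\left(\sum_{i=0}^{m-1}\frac{1}{a_{i,1-i}-d}\right)^{-1} = \left(\frac{1}{m}\sum_{i=0}^{m-1}\frac{1}{a_{i,1-i}-d}\right)^{-1},
\end{equation*}
which is exactly the shifted harmonic mean claimed, once added to $d$.

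The main obstacle I anticipate is the bookkeeping in the first step: verifying that $N$ really becomes circulant (and not merely block-circulant or circulant after a permutation that scrambles the harmonic-mean indexing). This requires carefully composing the two periodicities and checking that each of the $m$ distinct values $a_{i,1-i}-d$ appears precisely once in each row of $N$, with no collisions and no omissions, which is where the hypothesis $p\notin m\Z$ is essential. A secondary subtlety is confirming that the scalar prefactors and signs from Corollary~\ref{cor:dodgson} collapse cleanly so that the answer is literally $d$ plus the harmonic mean, with the product $\prod(a_{i,j}-a_{0,0})$ and the determinantal prefactor playing no role in the \emph{ratio} $x(i,j,m)$ (they affect $Z$ but cancel in the value of $x$). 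Once the circulant structure is established, the eigenvector argument makes the remainder essentially immediate; I would therefore devote most of the writeup to making the symmetry reduction precise, perhaps via an explicit index substitution $(i,j)\mapsto(i+p+1,j-p+1)$ iterated $m$ times, and relegate the eigenvalue computation to a short concluding paragraph.
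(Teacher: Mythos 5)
Your proposal is correct in substance and follows essentially the same route as the paper: specialize the $m$-Dodgson formula (Corollary~\ref{cor:dodgson}, in lattice coordinates Corollary~\ref{cor:dodgson_L}) and evaluate $\sum_{i,j}(N^{-1})_{i,j}$ by noting that the all-ones vector is an eigenvector of $N$. The paper packages that last step as Lemma~\ref{lem:markovdodgson}, whose hypothesis is only that every row of $N$ has the same sum $\lambda$; its conclusion $\sum_{i,j}(N^{-1})_{i,j}=m\lambda^{-1}$ is literally your computation $\mathbf{1}^{\top}N^{-1}\mathbf{1}=m\bigl(\sum_i b_i\bigr)^{-1}$. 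This makes the ``main obstacle'' you anticipate --- establishing a genuine circulant normal form for $N$ up to relabeling of rows and columns --- unnecessary work: the eigenvector argument never uses the column structure of $N$, only its row sums, and the constant-row-sum property is immediate from the observation (which is all the paper records) that successive rows of $N$ contain the same $m$ variables, shifted by $p$ from one row to the next. Dropping the circulant reduction, your write-up collapses exactly to the paper's proof.

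One caveat, which bites your formulation slightly harder than the paper's: the circulant claim is actually false when $\gcd(p,m)>1$. Writing out the indices, one finds $N_{i',j'}=b_{[c+j'+pi']_m}$ for a constant $c$, so row $i'$ and row $i'+m/\gcd(p,m)$ coincide; e.g.\ for $m=4$, $p=2$ (allowed by the hypothesis $p\notin m\Z$), $N$ has repeated rows, is singular, and no relabeling turns it into a circulant in generically distinct values $b_0,\dots,b_{m-1}$. The constant-row-sum property survives for all $p$, but invertibility does not. To be fair, the paper's proof has the same soft spot: it asserts that $p\notin m\Z$ guarantees invertibility of $N$, whereas the shift structure gives (generic) invertibility precisely when $\gcd(p,m)=1$, as one sees by a discrete Fourier diagonalization. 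So whichever formulation you adopt, the invertibility step deserves the honest hypothesis $\gcd(p,m)=1$; under that hypothesis your circulant reduction is valid and your argument goes through, but the row-sum version remains the leaner path.
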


We then consider a generalization of Dodgson initial conditions. Suppose
that the initial data is $m$-\emph{simply periodic}, meaning that for
all $(i,j)\in\Z^2$, $a_{i,j}=a_{i+m,j+m}$. We also assume that for some
$p \geq 1$, for all $(i,j)\in\Z^2$ with $[i-j]_{2p}=0$,
$a_{i,j} = a_{i+1,j+1}$. This amounts to having every $p$-th SW-NE
diagonal at height $0$ constant, see Figure~\ref{fig:sing_data},
bottom right; we denote these constant values by
$a_{i,j}=:d_{(j-i)/(2p)}$. In this case, it is convenient to rotate the
lattice by $45$ degrees, so the singularity becomes constant columns,
which are easier to visualize; every $p$-th column of height $0$ is constant.
We call these $(m,p)$-Devron initial data. Again, we
expect a Devron property to hold for this kind of singular data, which
here means that at some height $k\geq 1$, values of $x(i,j,k)$ also
have $p$-periodic constant columns.

\begin{theorem}
  \label{theo:sing_devron_intro}
  For $(m,p)$-Devron initial data, let $k=(m-2)p+2$. Then after $k-1$ iterations of the dSKP
  recurrence, the values of $x$ also have $p$-periodic constant columns.

  More precisely, for all $(i,j)\in\Z^2$ such that $[i-j-mp]_{2p}=0$,
  \begin{equation*}
    x(i,j,k) = x(i+1,j+1,k).
  \end{equation*}
\end{theorem}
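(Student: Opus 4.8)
The plan is to reduce the identity $x(i,j,k)=x(i+1,j+1,k)$ to an invariance property of the oriented dimer partition function under a lattice translation, and then to prove that invariance with the constant-column combinatorics of Section~\ref{sec:cst_col}. First I would choose coordinates adapted to the singularity: rotating the plane by $45$ degrees turns the constant SW-NE diagonals of the $(m,p)$-Devron data into $p$-periodic constant columns, and turns the shift $(i,j)\mapsto(i+1,j+1)$ into a translation along the columns. Since $h(i+1,j+1)=[i+j+2]_2=[i+j]_2=h(i,j)$, the crosses-and-wrenches graph $G_{(i+1,j+1,k)}$ is exactly the translate of $G_{(i,j,k)}$ carrying the $(1,1)$-shifted face weights. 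By Theorem~\ref{theo:dskp_dimers_intro},
\[ x(i,j,k)=C(G,a)\,\frac{Z(G,a^{-1},\varphi)}{Z(G,a,\varphi)}, \qquad G=G_{(i,j,k)}, \]
so the claim is equivalent to the statement that this ratio is unchanged when $G$ is replaced by its translate and all weights are shifted by $(1,1)$. The $m$-periodicity $a_{i,j}=a_{i+m,j+m}$ together with the constant columns makes many shifted weights $a_{f+(1,1)}$ equal to $a_f$, which is what should ultimately force the ratio to be translation invariant once the height is large enough.

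Second, I would analyse $Z(G,a,\varphi)$ under the degeneration. Using the cancellation-free model of Theorem~\ref{thm:comb_int_II_intro}, the constant-column condition makes most complementary trees and forests configurations carry repeated face weights; the column degeneration result (Theorem~\ref{thm:col_degen_I}) identifies the surviving net contributions as permutation spanning forests and produces a factorised expression for $Z(G,a,\varphi)$ in which the dependence on the remaining free weights is isolated. The vertical translation invariance (Theorem~\ref{thm:vert_translation}) should then show that this factorised form changes only by an explicit monomial under the $(1,1)$-shift. Running the same analysis for the weights $a^{-1}$ and comparing with the shift of the prefactor $C(G,a)$, these explicit factors combine so as to cancel in the ratio, which yields $x(i,j,k)=x(i+1,j+1,k)$.

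The quantitative core of the argument, and the step I expect to be the main obstacle, is to show that this translation invariance becomes available precisely at the height $k=(m-2)p+2$. This is where the geometry of the crosses-and-wrenches graph and the propagation of the singularity must be controlled: the contributing permutation spanning forests for $G_{(i,j,k)}$ and for its translate $G_{(i+1,j+1,k)}$ have to be matched by a weight-preserving bijection, and such a bijection should only exist once the constant columns have propagated far enough through the graph. I would argue this by tracking, height by height, how the $p$-periodic constant columns at height $0$ force coincidences among the face weights appearing at higher levels, and by a counting argument showing that $m-2$ passes of length $p$, plus the two boundary layers, are exactly what is needed for the shifted configurations to be in bijection. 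A useful consistency check is that setting $p=1$ gives $k=m$, recovering the height at which the Dodgson data of Theorem~\ref{theo:sing_dodgson_intro} becomes constant.
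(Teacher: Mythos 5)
Your first step is exactly the paper's route: rotate by $45^\circ$, observe that $x(i,j,k)$ and $x(i+1,j+1,k)$ are the ratio function $Y(A_{k-1},\cdot)$ of the \emph{same} Aztec diamond of size $k-1=(m-2)p+1$ evaluated at weights $a$ and at the cyclically shifted weights $\tilde{a}$ of Equation~\eqref{eq:defatilde2}, so that the theorem reduces to the invariance $Y(A_{k-1},a)=Y(A_{k-1},\tilde{a})$. The gap is in how you propose to prove that invariance. Theorems~\ref{thm:col_degen_I} and~\ref{thm:vert_translation} (permutation spanning forests, and the sign $(-1)^k$ shift-invariance of $Z$) require \emph{all} odd columns of the Aztec diamond to be constant; under $(m,p)$-Devron data only every $p$-th odd column is constant, so for $p\geq 2$ this machinery simply does not apply: vertical edges of $G^\bullet$ do not all acquire weight zero in the modified matrix $C'(a)^B$, and the surviving configurations are not permutation spanning forests. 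Worse, the intermediate statement you rely on --- that $Z(A_{k-1},a,\varphi)$ changes only by an explicit monomial under the shift, with the factors cancelling in the ratio --- is false in general: the paper notes that for generic $p$ the partition function itself is \emph{not} invariant (even up to such factors), only the ratio $Y$ is, and that a combinatorial proof of this ratio invariance is an open problem.

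What the paper does instead (Theorem~\ref{thm:single_col}) is algebraic. It uses Theorem~\ref{theo:kery}, which expresses $Y(A_{k-1},a)$ through any nonzero vector $v$ in $\ker D^T$, where $D=\begin{pmatrix} C(1)^B & C(a)^B\end{pmatrix}$. One passes to the quotient graph $\bar{G^\bullet}$ on a cylinder (quotient by the vertical period $(0,m)$), proves $\dim\ker\bar{D}^T\geq 1$ by a rank-nullity count --- this is precisely where the exact height enters: for Aztec size $(m-2)p+1$, deleting the horizontal edges on the constant columns leaves exactly $m$ connected components, producing $m+1$ independent vectors in $\ker\bar{D}$ --- then lifts a kernel vector $\bar{v}$ to the full graph. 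Since the shifted weights $\tilde{a}$ have the \emph{same} cylinder quotient, the same $\bar{v}$ computes both $Y(A_{k-1},a)$ and $Y(A_{k-1},\tilde{a})$, and two telescoping identities coming from $\bar{v}\in\ker(\bar{C}(1)^{\bar B})^T\cap\ker(\bar{C}(a)^{\bar B})^T$ force the two ratios to coincide. So your reduction and your $p=1$ consistency check are sound, but the core of your argument works only in the $p=1$ (Dodgson-type) case; for $p\geq 2$ you need to replace the combinatorial bijection by this kernel/quotient argument, and the "propagation plus counting" heuristic in your last paragraph is not a substitute for the dimension count that actually pins down the height $k=(m-2)p+2$.
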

When $p=1$, \emph{i.e.}, when all columns at height $0$ are constant, the proof of the strong invariance result of the $m$-Dodgson case also works, meaning that we have invariance of the partition function itself. For
generic $p$ we cannot provide such a combinatorial proof -- in fact
the values of $Z(G,a,\varphi)$ are generically not invariant, while
their ratio in Theorem~\ref{theo:dskp_dimers_intro} is -- and we
resort to more algebraic tools, in particular to Theorem~\ref{theo:kery}.

Another case of study is when the initial data is periodic with
respect to two non-collinear vectors $(s,t)$ and $(u,v)$ in $\Z^2$. We can also
predict at which height singularities reoccur in that case, as
consequences of Theorem~\ref{theo:sing_devron_intro}; see Corollary~\ref{cor:devron_periodic_vectors}.

As mentioned, such combinatorial solutions of discrete evolution
equations are often related to limit shapes phenomena for the
associated statistical mechanics models, which generalize the
celebrated arctic circle phenomenon for tilings of the Aztec diamond; on this classical theory, see \cite{CEP,CKP,Gbook} and references therein, and for approaches similar to ours, \cite{dFSG,PSarctic,Mkash,Ggroves}. In our case, it is unclear if
one can hope for probabilistic interpretations of this sort for
oriented dimers or complementary trees and forests, first because the
solution is not a partition function but a ratio of partition
functions, and second because configurations come with signs. However, if $h$ is fixed, for any $(i,j,k)$ with $k>h(i,j)$, we may see $x(i,j,k)$ as a rational function of initial data $(a_{i',j'})$ (as was the case in Example~\ref{ex:onestep}), and consider the partial derivative
\begin{equation*}
  \rho(i,j,k) = \dfrac{\partial
    x(i,j,k)}{\partial a_{0,0}}.
\end{equation*}
By adapting the techniques developed in the previous references, we
can compute the asymptotic behaviour of $\rho(i,j,k)$.
More precisely, we are able to study
$\rho(i,j,k)$ when evaluated at some specific solutions of the dSKP
recurrence, namely $x(i,j,k)=ia+jb+kc+d$ and $x(i,j,k)=a^i b^j c^k d$,
where $a,b,c,d$ are real parameters, and $h(i,j)=[i+j]_2$. In these cases, we compute
the asymptotics of $\rho(xk,yk,k)$ when $k\to \infty$,
which depend on $x,y$. In some regimes of the $a,b,c,d$ parameters,
this quantity behaves like $k^{-1}$ in some region of $x,y$
corresponding to an ``arctic ellipse'', and decays exponentially
outside of this region. For other choices of $a,b,c,d$, the behaviour
is always exponential and sometimes divergent; that is, in those cases, we show
\begin{equation*}
  \lim_{k\to \infty} \frac{1}{k} \log \rho(xk,yk,k) = \xi(x,y)
\end{equation*}
with an explicit rate function $\xi(x,y)$. In terms of dynamical systems, we can see this rate $\xi(x,y)$ as a Lyapunov exponent for the dynamics (see e.g. \cite{BSbook}); we show that it can be positive in a range of $(x,y)$. This positivity of the Lyapunov exponent is often associated with chaos. The previous results are made precise in
Proposition~\ref{prop:limitshape}. In the absence of a probabilistic
interpretation, we may thus view these results as a way to quantify the
influence of initial conditions on solutions of the dSKP recurrence.

Finally, we give exact solutions for all other equations of the
classification of integrable equations of octahedron type by Adler,
Bobenko and Suris \cite{abs}. In this reference the authors classify all
equations on octahedra that satisfy some \emph{multi-dimensional
  consistency} condition, up to admissible transformations, and come
up with a finite list $\chi_1, \dots, \chi_5$. Equation $\chi_1$ is
the standard octahedron, or dKP, equation, whose solution was found by
Speyer \cite{Speyer} in terms of the dimer model; $\chi_2$ is the dSKP
equation. In Theorem~\ref{theo:chi_k}, we show how explicit solutions
of the $\chi_3,\chi_4$ and $\chi_5$ recurrences can be found from our
$\chi_2$ solution as leading coefficient in certain expansions. Then,
for $\chi_4$ and $\chi_5$, we also give direct combinatorial
descriptions of these solutions, at least in the case of the height
function $h(i,j)=[i+j]_2$.

\subsection*{Plan of the paper}
In Section~\ref{sec:recurrence} we set up the definitions and recall
Speyer's solution of the dKP recurrence~\cite{Speyer}. In
Section~\ref{sec:comb_sol_I} we introduce the oriented dimer model,
prove its determinantal structure, and state Theorem~\ref{theo:dskp_dimers} (Theorem~\ref{theo:dskp_dimers_intro}
of the introduction); we then prove it, extending some of
Speyer's tools and techniques. In Section~\ref{sec:comb_sol_II} we
introduce the complementary trees and forests model; we show how it
relates generically to oriented dimers and Kasteleyn matrices, and we
prove Theorem~\ref{thm:comb_int_II} (Theorem~\ref{thm:comb_int_II_intro} of the introduction).
Then in Section~\ref{sec:singularity}
we turn to the study of singularities of the dSKP recurrence, by
studying oriented dimers, or trees and forests, on Aztec diamonds; we
prove
Theorem~\ref{theo:sing_dodgson_intro}, Theorem~\ref{theo:sing_devron_intro},
and Corollary~\ref{cor:harm_mean}. Section~\ref{sec:limitshapes} is
concerned with ``limit shapes'' phenomena, with a proper statement of
the asymptotic behaviour of $\rho(i,j,k)$. Finally, in
Section~\ref{sec:other_consistent_equ} we extend the combinatorial
solution to all consistent equations of the octahedral family of
\cite{abs}.

\subsection*{Acknowledgments}We would like to thank the anonymous referees for their careful reading of the manuscript and for their many useful suggestions that helped increase the quality of the paper.
The first author would like to thank Boris Springborn and Sanjay Ramassamy for helpful discussions. He is supported by the Deutsche Forschungsgemeinschaft (DFG) Collaborative Research Center TRR 109 ``Discretization in Geometry and Dynamics'' as well as the by the MHI and Foundation of the ENS through the ENS-MHI Chair in Mathematics. The second and third authors are partially supported by the DIMERS project ANR-18-CE40-0033 funded by the French National Research Agency.

\section{The dSKP recurrence \& some tools of Speyer}\label{sec:recurrence}

In Section~\ref{sec:dSKP_recurrence_def}, we give a precise definition of the dSKP
recurrence then, in Section~\ref{sec:crosses_and_wrenches}, we introduce the method of crosses and wrenches of Speyer~\cite{Speyer}
and, in order to put one of our main results (Theorem~\ref{theo:dskp_dimers}) into perspective, we state the result of~\cite{Speyer} on the dKP recurrence in Section~\ref{sec:dKP_recurrence}.

\subsection{Definition}\label{sec:dSKP_recurrence_def}

The dSKP recurrence lives on vertices of the \emph{octahedral-tetrahedral
  lattice $\calL$} defined as:
        \begin{align*}
                \calL = \left\{p=(i,j,k) \in \Z^3 : i+j+k \in 2\Z \right\}.
        \end{align*}
Projecting $\calL$ vertically onto the plane yields the lattice
$\Z^2$, whose bipartite coloring of the vertices corresponds to even
and odd levels of $\calL$.

\begin{remark}
  \label{rk:A3}
  A somewhat more symmetric lattice that is in fact isomorphic to
  $\calL$ is defined in \cite{abs} as the \emph{root lattice}:
  \begin{equation*}
    Q(A_3) = \{n=(n_0,n_1,n_2,n_3) \in \Z^4 \mid n_0 + n_1 + n_2 + n_3 =
    0\}.
  \end{equation*}
  More precisely, an octahedral cell in $\calL$ is given by six
  vertices $p \pm e_i$, where $p \in \Z^3 \setminus \calL$ and $(e_1,e_2,e_3)$ is
  the canonical basis of $\Z^3$; in
  $Q(A_3)$ they are $n+e_i+e_j$ where $n_0+n_1+n_2+n_3 = -2$ and $\{i,j\}$
  runs through the $6$ pair sets in $\{0,1,2,3\}$. An example of a graph
  isomorphism between $Q(A_3)$ and $\calL$ is $(n_0,n_1,n_2,n_3)\mapsto(n_0+n_1, n_0+n_2, n_0+n_3)$
  with inverse given by  $(i,j,k)\mapsto\frac12
  (i+j+k,i-j-k,-i+j-k,-i-j+k)$.
  These will be useful to translate some results of \cite{abs} into our setting, see Section~\ref{sec:other_consistent_equ}.

  In this paper we use $\calL$ even though its symmetries are less
  apparent, as we are interested in iterating an equation on octahedra
  towards the distinguished $e_3$ direction.
\end{remark}

Up to now we have discussed the definition space of the dSKP recurrence, we now turn to the natural target space: the \emph{complex projective line} $\CP^1$. Consider the equivalence relation $\sim$ on $\C^2$ such that for $v,v'\in \C^2$ we have $v\sim v'$ if there is a $\lambda \in \C\setminus \{0\}$ such that $v = \lambda v'$. Every point in the projective line is an equivalence class $[v] = \{v' : v' \sim v\}$ for some $v \in \C^2 \setminus \{(0,0)\}$, thus
\begin{align*}
        \CP^1= \{[v] : v \in \C^2 \setminus \{(0,0)\} \} = \left(\C^2 \setminus \{(0,0)\}\right)/\sim.
\end{align*}
It is practical to consider an \emph{affine chart} $\C$ of $\CP^1$, and the set $\hC = \C \cup \{\infty \}$ which we identify with $\CP^1$. 
Every point $z\in \C \subset \hC$ corresponds to $[z,1]$ in $\CP^1$ and $\infty\in \hC$ corresponds to $[1,0]$. In $\hC$ one can perform the usual arithmetic operations on $\C$. One can even apply the naive calculation rules $z + \infty = \infty, z/\infty = 0$ etc., see \cite[Section 17]{rgbook}.

\begin{definition}\label{def:dSKP_recurrence}
        A function $x: \calL \rightarrow \hC$ satisfies the
        \emph{dSKP recurrence}, if
        \begin{align}\label{eq:dskp_x}
          \frac{(x_{-e_3}-x_{e_2})(x_{-e_1}-x_{e_3})(x_{-e_2}-x_{e_1})}{
          (x_{e_2}-x_{-e_1})(x_{e_3}-x_{-e_2})(x_{e_1}-x_{-e_3})} = -1.
        \end{align}
        holds evaluated at every point $p$ of $\Z^3\setminus \calL$, where $x_q(p):=x(q+p)$ for every $q\in \{\pm e_i\}_{i=1}^3$.
        More generally, if $A \subset \calL$ and $x:A\to \hat{\C}$, we
        say that $x$ \emph{satisfies the dSKP recurrence on} $A$ when
        \eqref{eq:dskp_x} holds whenever all the points are in $A$.
\end{definition}

\begin{remark}\label{rem:dskpprojsymmetry}\leavevmode
\begin{enumerate}
\item By direct calculation, one sees that the dSKP recurrence features \emph{octahedral symmetry}, \emph{i.e.}, if it holds then it is also satisfied for any permutation of the unit vectors $(e_1,e_2,e_3)$, and for any reflection $e_i \mapsto -e_i$.

\item A projective transformation $f:\CP^1 \rightarrow \CP^1$ is a
  bijection such that there is a matrix $F \in GL(2,\C)$ with
  $f([v]) = [F v]$ for all $v\in \C^2 \setminus\{(0,0)\}$. Conversely, any matrix $F$
  with $\det F\neq 0$ defines a projective transformation of
  $\CP^1$. In $\hC$, any projective transformation
  acts by $f(z) = \frac{a z +b}{c z +d}$ for some $a,b,c,d\in \C$ with
  $ad-bc \neq 0$ and some special rules for $\infty$, in particular
  $f(\infty) = \frac{a}{c}$ and $f(-\frac{d}{c}) = \infty$. It is a
  direct calculation to verify that the dSKP recurrence is invariant
  under projective transformations of $\hC$, and even by Möbius
  transformations, which are projective transformations possibly
  composed with complex conjugation $z \mapsto \bar{z}$. This is
  the first reason why $\hC$ is the natural target space for the
  dSKP recurrence.
  \item The other reason is that
  if 
  $x_{-e_1},x_{-e_2},x_{-e_3},x_{e_1},x_{e_2}$ are given such that
  \begin{align*}
    x_{e_1} \neq x_{e_2}, \ x_{e_2} \neq x_{-e_1}, \ x_{-e_1} \neq
    x_{-e_2}, \ x_{-e_2} \neq x_{e_1},
  \end{align*}
    then $x_{e_3}$ is well-defined by the dSKP recurrence, while this is not generally true in $\C$. In fact, if the condition above is satisfied, then there is a unique projective involution $f$ of $\CP^1$ such that $f(x_{e_1}) = x_{-e_1}$ and $f(x_{e_2}) = x_{-e_2}$. A quick calculation shows that $f(x_{e_3}) = x_{-e_3}$ if and only if dSKP is satisfied. However, $f(x_{e_3})$ may be $\infty$ which is fine in $\CP^1$ but not in $\C$.
\end{enumerate}
\end{remark}
\begin{example}
  \label{ex:sol}
  If $a,b,c,d\in\hat{\C}$, the function $x:\calL \to \hat{\C}$ given by
  $x(i,j,k)=ia+jb+kc+d$ satisfies the dSKP recurrence. The same is true
  for the function $x(i,j,k)=a^i b^j c^k d$.
\end{example}

Following~\cite[Section 2.1]{Speyer}, we now define initial conditions for this recurrence.
Let $h:\Z^2 \to \Z$ be a function such that, for all $(i,j)\in\Z^2$, $(i,j,h(i,j))\in \calL$; we say that $h$ is a \emph{height function} if the following holds:
\begin{enumerate}
\item If $(i,j)$ and $(i',j')$ are neighbors in $\Z^2$, then
  $|h(i,j)-h(i',j')|=1$,
\item $\lim_{|i|+|j|\to \infty}h(i,j)+|i|+|j| = \infty$.
\end{enumerate}
Consider the following subset of $\Z^3$ that will
play the role of initial data locations for the dSKP recurrence:
\begin{equation}
  \label{eq:defI}
  \calI_{h} = \{(i,j,h(i,j)) \mid (i,j)\in\Z^2\}.
\end{equation}
The idea is that fixing
the values of $x$ at the points of $\calI_{h}$ is enough to define $x$
on the \emph{upper set $\calU_{h}$ of $h$}, defined as
\begin{equation}
  \label{eq:defU}
  \calU_{h} = \{(i,j,k)\in \calL \mid k > h(i,j)\}.
\end{equation}
When there is no ambiguity, we will simply denote these by $\calI,\calU$.

For the sequel, we also need the following definition. The
\emph{closed (resp. open) square cone} of a vertex $(i,j,k)\in \calL$
(roughly speaking a semi-infinite square-pyramid with its tip at $(i,j,k)$,
see Figure~\ref{fig:cross_wrench}):
\begin{equation}
\label{eq:defC}
  \begin{split}
    \calC_{(i,j,k)} &= \{(i',j',k')\in \calL \mid k' \leq k - |i-i'| -
    |j-j'|\}\\
    \mathring{\calC}_{(i,j,k)} &= \{(i',j',k')\in \calL \mid k' < k - |i-i'| -
    |j-j'|\}.
  \end{split}
\end{equation}
Note that Condition 2. on the height function is equivalent to the fact that for any $p\in \calL$,
$\calC_p \cap \calU$ is finite~\cite{Speyer}.

Consider a height function $h$, and an \emph{initial condition} $a$, that is a function $a:\Z^2\rightarrow \C$ such that on $\calI$,
\[
x(i,j,h(i,j))=a_{i,j}.
\]
Our goal is to analyze the solution $x$ to the dSKP recurrence on the set $\calI\cup \calU$ when the initial condition on $\calI$ is given by $a$. 

\subsection{The method of crosses and wrenches} \label{sec:crosses_and_wrenches}

Let us now turn to the \emph{method of crosses and wrenches} of
Speyer~\cite[Section 3]{Speyer}. All graphs considered are simple, connected, planar and embedded, implying that they also have \emph{faces}; in order to alleviate the text, these assumptions will not be repeated.

We first need the definition of a \emph{graph with open faces}. Consider
a finite graph $G=(V,E)$. Denote by $F^i$ the set of internal
faces. Partition the external boundary into sets of adjacent edges;
this partitions the outer face into a finite number of
  faces referred to as
\emph{open faces} and denoted by $F^o$. Set $F=F^i
\cup F^o$.

Given a height function $h$, Speyer defines an infinite graph $\calG$, referred to as the
\emph{(infinite) crosses-and-wrenches graph},
in the following way. Faces of $\calG$ are indexed by points of
$\calI$, and are in bijection with $\Z^2$: the face corresponding to
$(i,j,h(i,j))$ is centered at the vertex $(i,j)$ of $\Z^2$. Every unit
square of $\Z^2$ is bounded by four vertices corresponding to four
faces of $\calG$; the way the four faces meet depends on the values of
$h$ at the vertices. Since $h$ is a height function, we are in one of
the following cases: either both diagonally opposite heights are equal
(and differ by 1), in which case we put a ``cross'' (a vertex of
degree $4$) at the intersection of the four faces; or two
diagonally-opposite faces have the same height $h_0$, and the other
two have heights $h_0-1$ and $h_0+1$ respectively, in which case we
put a ``wrench'' (an edge with two endpoints of degree $3$) where
those four faces meet, with the ``handle'' separating the faces of
height $h_0$, see Figure~\ref{fig:cross_wrench} (center) for an
example, and \cite{Speyer} for more details. Note that the graph
$\calG$ is bipartite with faces of degree 4, 6 or 8.

Then, to every point $p$ of $\calU$, one assigns a finite subgraph
with open faces of $\calG$, denoted by $G_p=(V_p,E_p)$ and referred to as the \emph{crosses-and-wrenches graph corresponding to $p$}, constructed
as follows.  The internal faces $F_p^i$ are indexed by elements of
$\calI \cap \mathring{\calC}_{p}$, and the edges $E_p$ and vertices
$V_p$ are those of $\calG$ that belong to at least one of these
faces. The open faces $F_p^o$ are indexed by the elements of $\calI$
that share some (but not all) edge(s) with $E_p$; note that there are
no edges separating open faces. We have $F_p=F_p^i\cup F_p^o$.  The
vertices of the external face that separate open faces are called
\emph{boundary vertices} and their set is denoted by
$\partial V_p \subset V_p$; see again Figure~\ref{fig:cross_wrench}.
Whenever no confusion occurs, we will remove the subscript $p$ from
the notation. Each face $f$ of $F$ corresponds to some $(i,j,h(i,j))$
in $\calI$ and to some $(i,j)$ in $\Z^2$, and we assign it a weight
\[
a_f = a_{i,j}=x(i,j,h(i,j)),
\]
corresponding to the initial condition $a$ for the dSKP recurrence.
The \emph{degree} $d(f)$ of the face
$f$ is defined as the number of edges of $G_p$ adjacent to $f$.

\begin{figure}[tbp]
  \centering
  \def\svgwidth{15cm} 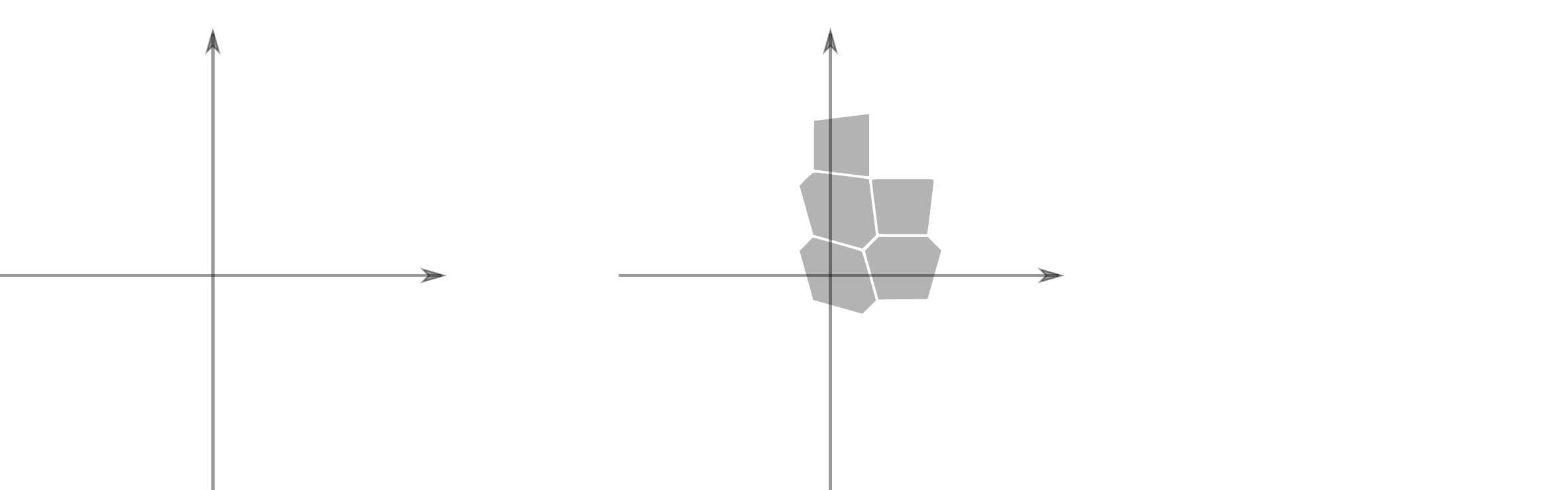
  \caption{Left: for $(i,j,k)=(0,0,4)$, the function
    $(i',j') \mapsto k-|i-i'|-|j-j'|$ that defines the closed square
    cone $\calC_{(0,0,4)}$. Center: an example of height function $h:\Z^2\to \Z$,
    with the corresponding infinite crosses-and-wrenches graph $\calG$. Note that the function on the left and the height function are not related a priori. The
    places where $(i',j',h(i',j')) \in \mathring{\calC}_{(0,0,4)}$, \emph{i.e.}, where the height is strictly smaller than on the left, are colored
    in gray; this is used to build the finite crosses-and-wrenches graph $G_{(0,0,4)}$. Right: the graph $G_{(0,0,4)}$, equipped with the
    corresponding variables $a_{i,j}=x(i,j,h(i,j))$. The vertices of
    $\Bound_{(0,0,4)}$ are shown in blue and joined by a dashed ``boundary''.}
  \label{fig:cross_wrench}
\end{figure}

\subsection{dKP recurrence}\label{sec:dKP_recurrence}

In \cite{Speyer}, Speyer solved the case of the dKP or octahedron
recurrence  \footnote{In fact Speyer solved a slightly more general
  version of the octahedron recurrence. In the following, we set the
  additional coefficients of Speyer $(a,b,c,d)$ to one, which
  specializes the generalized recurrence of Speyer to the standard
  octahedron recurrence \cite{hirota}
}, and a correspondence with the dimer model was
established. In order to make
the context of the present paper more clear, and to put our
forthcoming Theorem~\ref{theo:dskp_dimers} into perspective, we rephrase these results in our notation.

\begin{definition}
  \label{def:dkp_recurrence}
  A function $x: \calL \rightarrow \hat{\C}$ satisfies the
  \emph{dKP or octahedron recurrence} if
  \begin{align*}
   x_{e_3}x_{-e_3} = x_{e_1}x_{-e_1} + x_{e_2}x_{-e_2},
  \end{align*}

  holds evaluated at any $p$ of $\Z^3\setminus \calL$.
\end{definition}

The main result of Speyer~\cite{Speyer} relies on the following definitions.
Let $G$ be a finite graph. Then, a \emph{dimer configuration} $\Ms$ of $G$, or
\emph{perfect matching}, is a subset of edges such that every
vertex of $G$ is incident to exactly one edge of $\Ms$; we denote by
$\M$ the set of dimer configurations of $G$. When $G$ is a graph with open faces $F=F^i\cup F^o$, equipped with face weights $(a_f)_{f\in F}$, define the
weight of $\Ms$ as
\begin{equation*}
  w(\Ms) = \prod_{e\in \Ms}\frac{1}{a_{f(e)}a_{f'(e)}},
\end{equation*}
where $f(e),f'(e)$ denote the two faces adjacent to $e$. The corresponding
\emph{partition function}, denoted $Z_{\text{dim}}(G,a)$, is
\begin{equation*}
  Z_{\mathrm{dim}}(G,a) = \sum_{\Ms \in \M} w(\Ms).
\end{equation*}

\begin{theorem}[\cite{Speyer}]
  \label{theo:speyer}
  Let $x: \calL \rightarrow \hat{\C}$ be a function that satisfies
  the dKP recurrence. Let $h$ be a height function, and $\calI, \calU$
  be defined as in Equations~\eqref{eq:defI},~\eqref{eq:defU}. Let
  $a=(a_{i,j})=(x(i,j,h(i,j)))$ be the initial data indexed by
  points of $\calI$. Then for every point $p$ of $\calU$,
\begin{equation*}
    x(p) = C_{\mathrm{dim}}(G_p,a) \cdot \ Z_{\mathrm{dim}}(G_p,a),
  \end{equation*}
  where
  \begin{equation*}
    C_{\mathrm{dim}}(G_p,a)= \prod_{f=(i,j) \in F_p^i}a_{i,j}^{\frac{d(f)}{2}-1} \prod_{f=(i,j) \in F_p^o}a_{i,j}^{\lceil \frac{d(f)}{2} \rceil},
  \end{equation*}
and $G_p$ is the crosses-and-wrenches graph corresponding to $p$.
\end{theorem}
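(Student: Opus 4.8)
The plan is to prove the identity $x(p) = C_{\mathrm{dim}}(G_p,a)\,Z_{\mathrm{dim}}(G_p,a)$ by induction on the height, using the finiteness of $\calC_p\cap\calU$ (guaranteed by Condition~(2) on $h$) to well-order the points of $\calU$ so that each is produced from strictly lower ones. For the inductive step I fix $p_0\in\Z^3\setminus\calL$ and look at the octahedron with vertices $p_0\pm e_i$; its top vertex $p_0+e_3$ is the point to be computed, while the five others have smaller height and are therefore already given by the proposed formula. Solving the dKP recurrence for the top vertex, it suffices to prove that the function $p\mapsto C_{\mathrm{dim}}(G_p,a)\,Z_{\mathrm{dim}}(G_p,a)$ satisfies the same quadratic relation, namely
\[
  [C_{\mathrm{dim}}Z_{\mathrm{dim}}](G_{p_0+e_3})\,[C_{\mathrm{dim}}Z_{\mathrm{dim}}](G_{p_0-e_3})
  =\sum_{i=1,2}[C_{\mathrm{dim}}Z_{\mathrm{dim}}](G_{p_0+e_i})\,[C_{\mathrm{dim}}Z_{\mathrm{dim}}](G_{p_0-e_i}).
\]

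The combinatorial heart is then a Pl\"ucker/condensation-type identity for the dimer partition functions of the six graphs. All six are subgraphs of the infinite graph $\calG$, nested according to the inclusions of open square cones $\mathring{\calC}_{p_0-e_3}\subset\mathring{\calC}_{p_0\pm e_i}\subset\mathring{\calC}_{p_0+e_3}$. A short parity computation shows that $G_{p_0+e_3}$ and $G_{p_0-e_3}$ differ exactly by the outermost diamond shell of faces of the cone, that this shell splits into four quadrant half-shells, and that $G_{p_0\pm e_1}$ (resp. $G_{p_0\pm e_2}$) is obtained by adjoining to $G_{p_0-e_3}$ the two opposite East/West (resp. North/South) half-shells. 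I would then prove the quadratic relation by the superposition argument underlying graphical condensation: superimpose a dimer configuration of $G_{p_0+e_3}$ with one of $G_{p_0-e_3}$; since the shell vertices are present only in the former, the symmetric difference is a union of cycles together with paths whose endpoints are the shell vertices. Re-routing these paths within the shell re-pairs the two configurations into a pair of configurations of $(G_{p_0+e_1},G_{p_0-e_1})$ or of $(G_{p_0+e_2},G_{p_0-e_2})$, according to which of the two decompositions of the shell into opposite half-shells the paths realize. Tracking the edge weights $1/(a_fa_{f'})$ gained and lost along each re-routed path produces the required monomial prefactors.

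Finally I would reconcile the normalization constants. Since the six graphs differ only in the shell region, the ratios of the products $\prod_{f\in F^i}a_f^{d(f)/2-1}$ and $\prod_{f\in F^o}a_f^{\lceil d(f)/2\rceil}$ are monomials supported on the shell faces, whose degrees change by a controlled amount; I would check that these monomials convert the weighted superposition identity into the displayed quadratic relation term by term, including the factor $[C_{\mathrm{dim}}Z_{\mathrm{dim}}](G_{p_0-e_3})$ on the left. The base case is a point $p$ whose graph has a single internal face, where the formula reduces to one application of the recurrence and can be verified directly.

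I expect the main obstacle to be the bookkeeping in two places at once: the clean classification of the superposition paths in the shell --- which must be carried out separately for the two local geometries of $\calG$, the ``cross'' and the ``wrench'', since they produce different face degrees and different corner structures --- together with the matching exponent count for $C_{\mathrm{dim}}$, where the distinction between internal faces (exponent $d(f)/2-1$) and open faces (exponent $\lceil d(f)/2\rceil$, with its ceiling) must be handled with care as faces pass from internal to open, or change degree, between the six graphs.
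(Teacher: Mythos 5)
Your high-level strategy is sound and genuinely different from what is done here: this paper does not reprove Theorem~\ref{theo:speyer} at all (it is quoted from~\cite{Speyer}), and the method it imports from Speyer for the dSKP analogue (Theorem~\ref{theo:dskp_dimers}) is the ``Proof I'' route --- infinite completions, invariance of the normalized partition function under contraction/expansion and spider moves, and induction on height functions via local-minimum flips --- not a condensation identity. Your reduction is also correctly set up: every $p\in\calU$ is the top of an octahedron whose five lower vertices lie in $\calI\cup\calU$, so it does suffice to show that $f(p):=C_{\mathrm{dim}}(G_p,a)Z_{\mathrm{dim}}(G_p,a)$ satisfies the dKP relation, and the nesting of the cones and the half-shell description of the six graphs is accurate.

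The genuine gap is in the superposition step. When you superimpose a matching of $G_{p_0+e_3}$ with one of $G_{p_0-e_3}$, \emph{every} vertex of the shell (of which there are order $k$ many, not four) is the endpoint of a path of the symmetric difference, and the paths realize some non-crossing pairing of black shell vertices with white ones. Your dichotomy --- ``the paths realize either the East/West or the North/South decomposition'' --- is not forced. Even granting the color symmetry that forbids paths joining diagonally opposite arcs, mixed non-crossing pairings exist: e.g.\ with two white vertices $w_1,w_2$ on the NW arc, the pairing $\{w_2\!-\!b_{NE},\, w_1\!-\!b_{SW},\, b_{NE}'\!-\!w_{SE},\, b_{SW}'\!-\!w_{SE}'\}$ is non-crossing, color-legal, and compatible with \emph{neither} the $(G_{p_0+e_1},G_{p_0-e_1})$ nor the $(G_{p_0+e_2},G_{p_0-e_2})$ re-pairing, since rerouting a path transfers \emph{both} of its endpoints to the same side. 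Because all dKP weights $1/(a_fa_{f'})$ are positive monomials, there is no sign mechanism by which such mixed configurations could cancel, so the naive path-swapping bijection cannot produce a three-term identity; the identity is true, but not for this reason. The standard repair is Kuo's graphical condensation, which removes only \emph{four} well-chosen vertices (so that only two path-pairings are possible) and recovers the four intermediate graphs through cascades of forced edges that eat up the half-shells, contributing the monomial prefactors you want; even then, carrying this out for general crosses-and-wrenches graphs (crosses \emph{and} wrenches, faces of degree $4$, $6$, $8$, internal versus open faces) is a substantial piece of work that your sketch defers entirely to ``bookkeeping''. As written, the proof of the key quadratic lemma does not go through.
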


\section{dSKP: combinatorial solution I - oriented dimers}
\label{sec:comb_sol_I}

In Section~\ref{sec:main_theorem}, we state
Theorem~\ref{theo:dskp_dimers}, our main result on the dSKP
recurrence. In Section~\ref{sec:infinite}, we introduce the ratio
function of oriented dimers in the setting of infinite completions, a
tool that allows to smoothly handle boundary issues. Then, in
Section~\ref{sec:invariance}, we prove invariance of the ratio
function under two types of moves on the underlying graph, namely
\emph{contraction/expansion of a degree 2 vertex} and \emph{spider
move}. Using all of this we proceed with the proof of
Theorem~\ref{theo:dskp_dimers} in Section~\ref{sec:proof_main_thm},
following an argument of~\cite{Speyer}.

\subsection{Definitions and main dSKP theorem}\label{sec:main_theorem}

Consider a finite, bipartite graph $G$ with open faces $F=F^i\cup F^o$, equipped with face weights $(a_f)_{f\in F}$. The set of vertices $V$ is naturally split into black and white, $V=B\sqcup W$, and from now on we assume that $|W|=|B|$. Denote by $\vec{E}$ the set of directed edges of $G$, \emph{i.e.}, given an edge $wb$ of $E$ there corresponds two directed edges $(w,b)$, $(b,w)$ of $\vec{E}$; when vertices are not specified a directed edge is also denoted as $\vec{e}$.

A \emph{Kasteleyn orientation} \cite{Kasteleyn} is a skew-symmetric
function $\vphi$ from $\vec{E}$ to $\{-1,1\}$ such that, for every
internal face $f$ of $F^i$ of degree $2k$, we have
\begin{equation*}
  \prod_{wb\in\partial f}\vphi_{(w,b)}=(-1)^{k+1}.
\end{equation*}
This corresponds to an orientation of edges of the graph: an edge
$e=wb$ is oriented from $w$ to $b$ when $\varphi_{(w,b)}=1$, and from
$b$ to $w$ when $\varphi_{(w,b)}=-1$.  By Kasteleyn~\cite{Kasteleyn2}, such an
orientation exists when $G$ is planar.

An \emph{oriented dimer configuration} of $G$ is a subset of oriented
edges $\vec{\Ms}$ such that its undirected version $\Ms$ is a dimer configuration. Denote by $\vec{\M}$ the set of oriented dimer configurations of $G$. Note that given a dimer configuration $\Ms$ there corresponds $2^{|\Ms|}$ oriented dimer configurations, where $|\Ms|$ denotes the number of edges of $\Ms$.

An oriented edge $\vec{e}$ separates two (inner or open) faces, and we denote by $f(\vec{e})$ the one that is on the right relative to the orientation of $\vec{e}$. Given a Kasteleyn orientation $\varphi$, the \emph{weight} of an oriented dimer configuration $\vec{\Ms}$ is
\begin{equation*}
  w(\vec{\Ms}) = \prod_{\vec{e} \in \vec{\Ms}} \varphi_{\vec{e}} \ a_{f(\vec{e})},
\end{equation*}
and the corresponding \emph{partition function} is
\begin{equation*}
  Z(G,a,\varphi) = \sum_{\vec{\Ms}\in\vec{\M}} w(\vec{\Ms}).
\end{equation*}

\begin{remark}
By grouping the two possible orientations of an edge, the partition function can be rewritten as
\begin{equation}\label{equ:partition_function_2}
Z(G,a,\vphi)=\sum_{\Ms\in\M(G)} \prod_{wb\in \Ms}\vphi_{(w,b)}(a_{f(w,b)}-a_{f(b,w)}).
\end{equation}
It is thus the partition function of usual dimers, with edge weights
$(\vphi_{(w,b)}(a_{f(w,b)}-a_{f(b,w)}))_{wb\in E}$.
These weights need not be real positive numbers. However, if $\vphi$ is allowed to take complex values of modulus 1, and chosen to be equal to $\vphi_{(w,b)}= \frac{|a_{f(w,b)}-a_{f(b,w)}|}{a_{f(w,b)}-a_{f(b,w)}}$, then the edge weights are real and positive, equal to $|a_{f(w,b)}-a_{f(b,w)}|$. In the case where the variables $a$ are taken to be the complex positions of circle centers in a circle pattern, or t-embedding \cite{klrr,clrtembeddings}, then $\varphi$ is gauge equivalent to a Kasteleyn orientation, see~\cite{Kuperberg} and~\cite{klrr} for more details, and the forthcoming Proposition~\ref{prop:oriented_dimers_adjacency_matrix} also holds up to a complex constant of modulus 1.

\end{remark}

As we will now see, going back to $\varphi$ with values in $\{-1,1\}$, the quantity \eqref{equ:partition_function_2} depends on $\varphi$ only up to a global sign.

Let $K=(K_{w,b})$ be the weighted adjacency matrix of $G$, whose rows are indexed by white vertices of $W$, columns by black vertices of $B$, and whose non-zero entries are given by, for every edge $wb$ of $G$,
\begin{equation*}
K_{w,b} = a_{f(w,b)}-a_{f(b,w)}.
\end{equation*}
Then we prove the following.

\begin{proposition}\label{prop:oriented_dimers_adjacency_matrix}
  For every finite, bipartite graph $G$ with open faces,
  weights $a=(a_f)_{f\in F}$ on the faces, and Kasteleyn orientation $\varphi$, there exists
  $\epsilon(\varphi) \in \{-1,+1\}$ depending on $\varphi$ only, such that
  \begin{equation*}
    Z(G,a,\varphi) = \epsilon(\varphi) \det(K).
  \end{equation*}
\end{proposition}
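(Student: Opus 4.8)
The plan is to relate the partition function $Z(G,a,\varphi)$, rewritten via the grouped form \eqref{equ:partition_function_2} as an ordinary dimer partition function with signed edge weights $\varphi_{(w,b)}K_{w,b}$, to the determinant of the Kasteleyn matrix $K$. The classical strategy, going back to Kasteleyn, is to expand $\det(K)$ along the permutation definition and match terms with dimer configurations. Concretely, $\det(K)=\sum_{\sigma}\sgn(\sigma)\prod_{w}K_{w,\sigma(w)}$, where $\sigma$ ranges over bijections $W\to B$. A nonzero term requires $K_{w,\sigma(w)}\neq 0$ for every $w$, i.e. each $w$ is matched to an adjacent black vertex $\sigma(w)=b$; such a $\sigma$ is exactly a dimer configuration $\Ms$ of $G$. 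Thus both $\det(K)$ and $Z(G,a,\varphi)$ are indexed by the same set $\M(G)$, with the same monomial weight $\prod_{wb\in\Ms}K_{w,b}=\prod_{wb\in\Ms}(a_{f(w,b)}-a_{f(b,w)})$ attached to each $\Ms$.

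The heart of the matter is therefore a \emph{sign} comparison: I must show that for each dimer configuration $\Ms$, the permutation sign $\sgn(\sigma_\Ms)$ in the determinant expansion agrees, up to a single global sign $\epsilon(\varphi)$ independent of $\Ms$, with the Kasteleyn sign $\prod_{wb\in\Ms}\varphi_{(w,b)}$ appearing in \eqref{equ:partition_function_2}. The standard way to do this is to fix one reference dimer configuration $\Ms_0$ (the graph is connected and has a perfect matching whenever $Z\neq 0$; if no perfect matching exists both sides vanish and the statement is trivial with any $\epsilon(\varphi)$) and compare an arbitrary $\Ms$ to it. The superposition $\Ms\cup\Ms_0$ decomposes into a union of doubled edges and disjoint alternating cycles. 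Along each such cycle the ratio of the two permutation signs, and likewise the ratio of the two Kasteleyn sign products, is controlled by the parity of the cycle length together with the product of the $\varphi$ values around the cycle. The Kasteleyn condition $\prod_{wb\in\partial f}\varphi_{(w,b)}=(-1)^{k+1}$ on faces of degree $2k$ is precisely what is needed to evaluate the $\varphi$-product around any cycle in terms of the number of faces it encloses and their degrees, and this is exactly arranged so that the ratio of Kasteleyn signs matches the ratio of permutation signs for every alternating cycle. Hence the ratio $\sgn(\sigma_\Ms)\prod_{wb\in\Ms}\varphi_{(w,b)}$ is the same for all $\Ms$, which is the claimed global constant $\epsilon(\varphi)$.

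The main obstacle I expect is carrying out the cycle-by-cycle sign bookkeeping cleanly, since there are two subtleties specific to this setting. First, $G$ has \emph{open faces}, so the Kasteleyn condition is imposed only on internal faces; I must check that the superposition cycles of $\Ms\cup\Ms_0$ enclose only internal faces (which holds because both $\Ms$ and $\Ms_0$ are perfect matchings of the same finite bipartite graph, so any bounded region enclosed by an alternating cycle consists of internal faces), and that open faces never enter the Kasteleyn product. Second, one must confirm that $\epsilon(\varphi)$ genuinely depends on $\varphi$ alone and not on $a$ or on the reference matching $\Ms_0$: independence of $a$ is clear since the signs are purely combinatorial, and independence of $\Ms_0$ follows because changing the reference matching multiplies all terms by the same fixed ratio. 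A clean way to organize the cycle computation is to invoke the well-known lemma that flipping the orientation of a single alternating cycle of length $2\ell$ multiplies the permutation sign by $(-1)^{\ell+1}$ and, under the Kasteleyn condition, multiplies the $\varphi$-product by the same factor; iterating this transformation from $\Ms_0$ to any $\Ms$ then gives the result. I would present this as the key lemma and reduce the proposition to it.
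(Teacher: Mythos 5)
Your proposal is correct, and its skeleton is the same as the paper's: both start from the grouped expression \eqref{equ:partition_function_2}, which exhibits $Z(G,a,\varphi)$ as an ordinary dimer partition function with signed edge weights $\nu_{wb}=\varphi_{(w,b)}(a_{f(w,b)}-a_{f(b,w)})$, and both then identify this with $\pm\det K$. The difference lies in how that identification is handled. The paper treats Kasteleyn theory as a black box and adds one gauge observation: the Kasteleyn matrix attached to the edge weights $\nu_{wb}$ has entries $\varphi_{(w,b)}\,\nu_{wb}=\varphi_{(w,b)}^2(a_{f(w,b)}-a_{f(b,w)})=K_{w,b}$, so the cited identity ``dimer partition function $=\epsilon(\varphi)\det(\text{Kasteleyn matrix})$'' is literally the claim. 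You instead re-prove the content of that citation: permutation expansion of $\det K$, superposition $\Ms\cup\Ms_0$ into alternating cycles, and matching of the two sign changes $(-1)^{\ell+1}$ per cycle of length $2\ell$. Your route is longer but self-contained, and it makes explicit two things the citation leaves implicit: (i) the sign identity is purely combinatorial, hence valid for complex or formal face weights --- relevant here since the entries of $K$ are not positive reals, a point the paper itself raises; (ii) exactly where planarity and the open faces enter. On the latter, two small corrections to your bookkeeping. First, the reason superposition cycles enclose no open face is simply that open faces are sub-regions of the unbounded outer face, so they cannot lie in the bounded interior of a simple cycle; your stated justification (that $\Ms,\Ms_0$ are perfect matchings of the same graph) is not the operative one. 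Second, the Kasteleyn face condition gives the cycle product of $\varphi$ as $(-1)^{\ell+1+p}$, where $p$ is the number of \emph{enclosed vertices}, not just a function of enclosed faces and degrees; your key lemma therefore needs the fact that $p$ is even, which holds because the enclosed vertices are matched among themselves (no matching edge can cross the cycle, by planarity). This parity step is the only genuine gap to fill in a complete write-up; with it, your argument goes through.
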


\begin{proof}
  Using the alternative expression~\eqref{equ:partition_function_2} and the Kasteleyn theory~\cite{Kasteleyn}, see also~\cite{TemperleyFisher,Percus}, we know that, up to a sign depending on $\vphi$ only, the partition function $Z(G,a,\varphi)$ is equal to the determinant of the Kasteleyn matrix corresponding to $\vphi$, which is the weighted adjacency matrix whose non-zero coefficients are given by, for every edge $wb$,
\[
\vphi_{(w,b)}\cdot \vphi_{(w,b)} (a_{f(w,b)}-a_{f(b,w)})=a_{f(w,b)}-a_{f(b,w)},
\]
\emph{i.e.}, it is equal to the determinant of the matrix $K$.
\end{proof}

\begin{definition}
  Consider a finite,
  bipartite graph $G$ with open faces, together with face weights $a=\left( a_f \right)_{f \in F}$ in $\hat{\C}$, and a Kasteleyn orientation $\varphi$.
  When it is well-defined in $\hat{\C}$,
  we denote the \emph{ratio function of oriented
    dimers} as
  \begin{equation}
    \label{eq:defY}
    Y(G,a) = C(G,a) \
    \frac{Z(G,a^{-1},\varphi)}{Z(G,a,\varphi)},
  \end{equation}
  where $a^{-1} = \bigl( a^{-1}_f \bigr)_{f\in F}$ and
  \begin{equation*}
    C(G,a) = i^{|V|} \prod_{f \in F^i}a_{f}^{\frac{d(f)}{2}-1}
    \prod_{f \in F^o}a_{f}^{\lceil \frac{d(f)}{2} \rceil}.
  \end{equation*}
  By Proposition~\ref{prop:oriented_dimers_adjacency_matrix}, the
  ratio in Equation~\eqref{eq:defY} does not depend on $\varphi$, hence the
  same goes for $Y(G,a)$. The normalization $C(G,a)$, as we will
  see in the next paragraph, is such that $Y(G,a)$ is invariant
  under several local modifications of the weighted graph $(G,a)$.
\end{definition}

We can now precisely state the main result of this section, which is the
pendent of Speyer's Theorem~\ref{theo:speyer} in the case of the
dSKP recurrence. Note that it involves a ratio of partition functions rather than only a partition function as in~\cite{Speyer}.

\begin{theorem}
  \label{theo:dskp_dimers}
  Let $x: \calL \rightarrow \hat{\C}$ be a function that satisfies
  the dSKP recurrence. Let $h$ be a height function, and $\calI, \calU$
  be defined as in Equations~\eqref{eq:defI},~\eqref{eq:defU}. Let
  $(a_{i,j})=(x(i,j,h(i,j)))$ be the initial data indexed by points of $\calI$. Then for every point $p$ of $\calU$,
  \begin{equation*}
    x(p) = Y(G_p,a),
  \end{equation*}
  where $G_p$ is the crosses-and-wrenches graph corresponding to $p$.
\end{theorem}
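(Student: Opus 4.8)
The plan follows Speyer's strategy closely, the key structural fact being that the crosses-and-wrenches graph $G_p$ changes in a controlled way as $p$ moves up one step in the lattice, and that the recurrence relation on $x$ must be matched by an identity on the ratio function $Y$. I would proceed by a double induction on $p\in\calU$, the base case being $p\in\calI$ where the graph is empty (or a single face) and $Y(G_p,a)=a_{i,j}$ trivially, and the inductive step moving from the five faces $x_{-e_1},x_{-e_2},x_{e_1},x_{e_2},x_{-e_3}$ that determine $x_{e_3}$ via the dSKP recurrence \eqref{eq:dskp_x}. The heart of the argument is therefore local: I must show that when the graph grows by the elementary move corresponding to one application of the recurrence, the ratio function $Y$ transforms exactly according to \eqref{eq:dskp_x}.

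The main engine is the invariance of $Y(G,a)$ under the two local moves announced in the section preamble: contraction/expansion of a degree-$2$ vertex, and the spider (urban-renewal) move. I would first establish that the crosses-and-wrenches graph $G_{p+e_3}$ is obtained from the graphs associated to the lower neighbors by a specific finite sequence of these moves — this is the purely combinatorial ``cross becomes wrench'' bookkeeping that Speyer carries out, and I expect it to transfer essentially verbatim to our setting because the graph construction is identical to his. The analytic content is then the claim that $Y$ is genuinely invariant under each move. For the degree-$2$ contraction this amounts to checking that the normalization constant $C(G,a)$ absorbs the change in $Z(G,a^{-1},\varphi)/Z(G,a,\varphi)$, which is an elementary determinant computation using Proposition~\ref{prop:oriented_dimers_adjacency_matrix}. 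For the spider move the computation is the genuinely new one: it requires expanding $\det(K)$ along the relevant rows/columns before and after the move, and verifying that the ratio, after multiplication by the change in $C(G,a)$, reproduces precisely the Schwarzian cross-ratio combination appearing on the left-hand side of \eqref{eq:dskp_x}.

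A subtlety specific to the dSKP (as opposed to dKP) case is that $Y$ is a \emph{ratio} of two partition functions, one weighted by $a$ and one by $a^{-1}$, so each local move must be verified simultaneously for both weightings, and the factor $C(G,a)$ together with the $i^{|V|}$ prefactor must be tracked through parity changes in $|V|$ when a vertex is added or removed. I would handle the boundary/open-face complications by working in the infinite-completion framework alluded to in Section~\ref{sec:infinite}, which lets me ignore edge effects and treat the local move in isolation; the open-face exponents $\lceil d(f)/2\rceil$ in $C(G,a)$ are designed exactly so that the normalization is compatible across the boundary. I expect the main obstacle to be the spider-move identity: one must show by direct Kasteleyn-matrix manipulation that the transformation of $Y$ under urban renewal is not merely \emph{some} Möbius-type relation but precisely the involution determined by the dSKP equation, as described in Remark~\ref{rem:dskpprojsymmetry}(3). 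Once that single local identity is in hand, the global theorem follows by the induction, since every value $x(p)$ for $p\in\calU$ is reached from $\calI$ by a finite sequence of dSKP applications mirrored by a finite sequence of local moves that preserve $Y$.
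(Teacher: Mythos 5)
Your ingredients are the right ones --- the infinite completion to sidestep boundary effects, invariance of $Y$ under contraction/expansion of degree-$2$ vertices, and a spider-move identity whose weight change is exactly the dSKP relation --- and these coincide with what the paper uses. But the inductive scaffolding you hang them on is wrong, and the step where you invoke them would fail. You induct on the point $p$ with the initial surface fixed, and claim that ``$G_{p+e_3}$ is obtained from the graphs associated to the lower neighbors by a specific finite sequence of these moves.'' That claim is false: both local moves preserve the number of faces (a spider move adds $4$ vertices and $4$ edges, a contraction removes $2$ of each, so by Euler's formula the face count is unchanged), whereas $G_{p+e_3}$ has strictly more internal faces than $G_{p-e_3}$, $G_{p\pm e_1}$, $G_{p\pm e_2}$, since $\mathring{\calC}_{p+e_3}$ meets $\calI$ in a strictly larger set. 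More fundamentally, spider moves and contractions relate the two graphs $G_p$ built from two \emph{different height functions at the same point} $p$; this is Speyer's bookkeeping, and it never relates graphs attached to different points $p$ over the same height function. Relating the six octahedron-neighbour graphs $G_{p\pm e_i}$ directly, as your inductive step requires, would be a condensation-type identity (in the spirit of Dodgson/Kuo condensation) --- a genuinely different tool that you have not supplied and that the paper never proves.

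The paper's induction (following Speyer's Proof I) is instead on $\#(\calU_h \cap \calC_p)$ with $p$ \emph{fixed} and the height function varying. Base case: $\calU_h\cap\calC_p=\{p\}$, where $G_p$ is a single quadrilateral with four open faces; one spider move in the infinite completion leaves a graph with a unique acceptable matching, and an explicit computation gives $Y_\infty(\calG_p',a')=a'_{i_0,j_0}$, the dSKP-evolved value. Inductive step: find a local minimum $(i,j)$ of $h$ inside the cone, raise it by $2$ to get $h'$, and replace the initial datum at $(i,j)$ by the value $x(i,j,h(i,j)+2)$ obtained from one application of the recurrence; the graph $G_p$ for $h$ turns into the graph $G_p$ for $h'$ by exactly one spider move followed by contractions of degree-$2$ vertices, so $Y$ is unchanged by Corollary~\ref{cor:inv_inf}, while $\#(\calU_{h'}\cap\calC_p)$ drops by one. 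This is where ``one application of the recurrence $=$ one spider move'' actually lives: the recurrence enters by updating the \emph{initial data}, not by an identity among the $Y$-values at the six neighbours of an octahedron. (Note also that your proposed base case $p\in\calI$ is vacuous, since $\calI\not\subset\calU$ and $G_p$ is not defined there.) Your proposal becomes correct once the induction is reorganized this way; as written, the key combinatorial step has no proof and is not true.
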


Before proving Theorem~\ref{theo:dskp_dimers}, let us illustrate this theorem
with an example.
\begin{example}[Aztec diamond]
  \label{ex:Aztec}
  Consider the height function $h:\Z^2 \to \{0,1\}$, given by
  $h(i,j)=[i+j]_2$. Let $x:\calL \to \hat{\C}$ be a function that satisfies the
  dSKP recurrence. The initial data are again $(a_{i,j})=(x(i,j,h(i,j)))$. For
  $p=(i,j,k+1) \in \calL$, let us explicitly describe  $x(p)$ in terms of the
  initial data.

  In the cross and wrenches construction, this height function
  $h$ only produces crosses. The crosses-and-wrenches graph $G_p$ is commonly known as the
  \emph{Aztec diamond} of \emph{size} $k$,
  where the
  \emph{size} is the number of squares per ``side'' of the
  Aztec diamond, and the
  central face is at $(i,j)$; in this case, the graph $G_p$ is commonly denoted by $A_k$; see Figure~\ref{fig:ex_Aztec}. Using
  Theorem~\ref{theo:dskp_dimers} and computing the prefactor in Equation~\eqref{eq:defY},
  we get that for $k\geq 1$, the value of $x(p)$ is
  \begin{equation}
    \label{eq:dskp_Aztec_dim}
    x(p) = Y(A_k,a) = \biggl(\,\prod_{f\in F_p}a_f\biggr) \frac{Z(A_k,a^{-1},\varphi)}{Z(A_k,a,\varphi)},
  \end{equation}
  where the product is over all faces of $A_k$, internal or open, and the values
  of $a$ are displayed in Figure~\ref{fig:ex_Aztec}.
  \begin{figure}[h]
    \centering
    \def\svgwidth{15cm} \begin{footnotesize}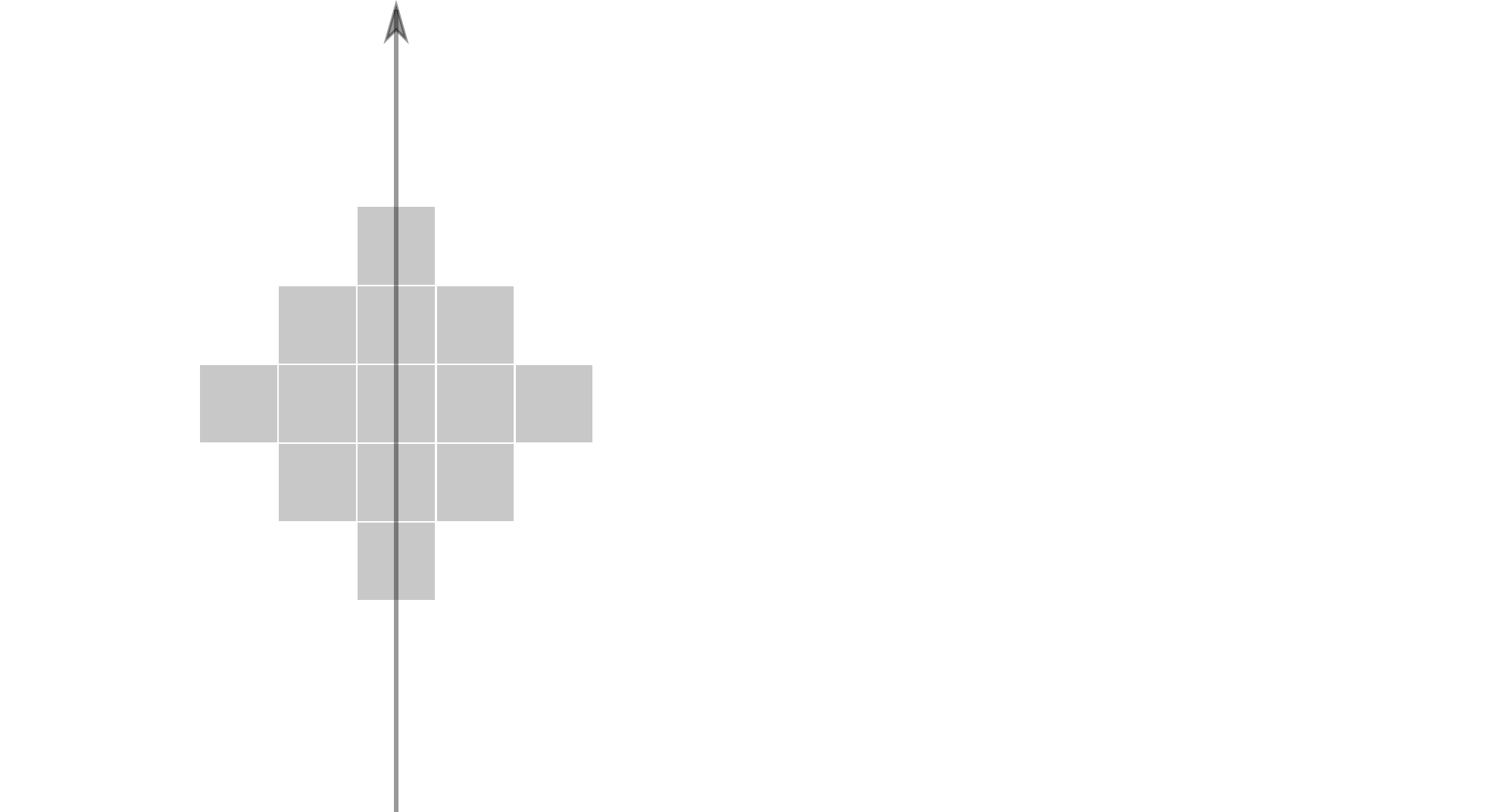\end{footnotesize}
    \caption{Left: the cross and wrenches graphs in Example~\ref{ex:Aztec},
      for $k=3, p=(0,0,4)$. The
      height function $h$ is shown in black, and the function defining
      $\calC_{p}$ is shown in red, giving the finite graph with open
      faces $G_p$ (gray); in this case, it is an Aztec diamond of
      size $k=3$ centered at face $(i,j)=(0,0)$, denoted by
      $A_k$. Right: the same graph with open faces
      with its face weights $a$; every gray dot
      represents a face (inner or open) and is equipped with a
      weight.
  }
    \label{fig:ex_Aztec}
  \end{figure}

\begin{remark}\label{rem:Aztec_diamond}
Note that oriented dimer configurations and monomials of $Z(A_k,a,\vphi)$ in the $a$ variables in the denominator of Equation~\eqref{eq:dskp_Aztec_dim} are not in one-to-one correspondence. The same of course also holds true for the numerator of~\eqref{eq:dskp_Aztec_dim}. For
instance, for $k=1$ there are $8$ oriented dimer configurations but
$6$ monomials; for
$k=2$ there are $512$ configurations but $220$ monomials; for
$k=3$ there are $262144$ configurations but $49224$
monomials. Unfortunately the sequence of number of monomials is not in
OEIS.

What happens is that several oriented dimer configurations cancel each
other. An example is when a square face is surrounded by two clockwise
dimers. Changing these dimers by the other two edges, oriented
clockwise, has the effect of negating the weight. As a result, the
variables $a$ can only appear with exponent $1$ in the monomials.

Finding a model that gives such a one-to-one correspondence is one of the goals of Section~\ref{sec:comb_sol_II}, and the final statement for the
Aztec diamond is given in Corollary~\ref{cor:Z_Aztec_diamond}.
  \end{remark}
\end{example}

The proof of Theorem~\ref{theo:dskp_dimers} is the subject of the next
three sections. The method follows that of Speyer~\cite{Speyer}. The
first part, Section~\ref{sec:infinite}, relates dimers on $G_p$ to
dimers on an infinite graph, with some asymptotic conditions; this
trick is useful to get rid of issues at the boundary. The second part,
Section~\ref{sec:invariance}, consists in proving that $Y(G,a)$ is
invariant under natural modifications of the underlying graph and
weight function. The third part, Section~\ref{sec:invariance}, relies
on the first two and is an induction argument on the height
functions. Note that the main contributions of
Theorem~\ref{theo:dskp_dimers} are the identification of the function
$Y(G_p,a)$ satisfying the invariance relations and handling ratios of
partition functions in the proof.

\subsection{Infinite completions}
\label{sec:infinite}

We follow \cite[Section~4.1]{Speyer} for the following definition.
Consider a height function $h$, a point $p=(i,j,k) \in \calU_h$, and
introduce the function
\begin{equation}
  \label{eq:defht}
  h_p(i',j') =
  \min\left(h(i',j'),k-|i-i'|-|j-j'|\right).
\end{equation}
Note that the minimum is between $h$ and the function defining
$\calC_p$. Then $h_p$ is not a height function, as it does not satisfy
Condition~2. but, since it still satisfies Condition 1.,
we may produce an infinite graph $\calG_p$ from $h_p$ using the method
of crosses and wrenches\footnote{In Speyer's paper, the
    notation $\tilde{\calG}$ is used for the graph that we call
    $\calG_p$ here.}. Then
$G_p$ is also a subgraph of $\calG_p$, consisting of
  hexagons at face distance 1 from $G_p$, see
Figure~\ref{fig:infinite}.

\begin{figure}[tbp]
  \centering
  \def\svgwidth{6cm} 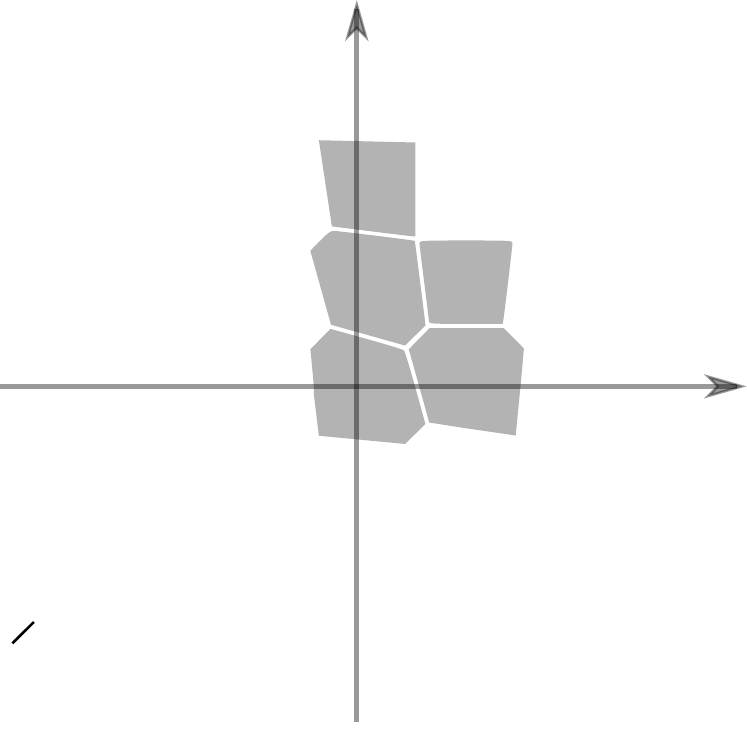
  \caption{The function $h_p$ and the corresponding infinite
    graph $\calG_p$, using the same example as
    Figure~\ref{fig:cross_wrench}. The subgraph $G_p$ is colored in
    gray. An acceptable dimer configuration is shown in red.}
  \label{fig:infinite}
\end{figure}

A perfect matching $\Ms$ on $\calG_p$ is said to be
\emph{acceptable} if there exists a finite subgraph outside of which
$\Ms$ contains only the middle edge of every wrench. We denote by
$\M_\infty\bigl( \calG_p \bigr)$ the set of acceptable perfect
matchings of $\calG_p$. By \cite[Proposition~6]{Speyer},
acceptable perfect matchings on $\calG_p$ are in bijection with
perfect matchings of $G_p$, and can in fact always be obtained by extending a
perfect matching of $G_p$ to $\calG_p$ using all wrenches of
$\calG_p \setminus G_p$; see also Figure~\ref{fig:infinite}.

The introduction of ratios of partition functions on the infinite graph
$\calG_p$ requires a bit more care than in \cite{Speyer}, as we
would like to develop and factor infinite products of face weights in
two partition functions, and then simplify them; these last steps were
not required in \cite{Speyer}. We proceed in the following way: fix an
acceptable perfect matching
$\Ms_0 \in \M_\infty\bigl( \calG_p \bigr)$, then for any
$\Ms \in \M_\infty\bigl( \calG_p \bigr)$, $\Ms_0$ and $\Ms$
differ only at a finite number of edges, hence the following
\emph{ratio of weights} is well-defined:
\begin{equation}
  \label{eq:defwinf}
  \frac{w_\infty(\Ms)}{w_\infty(\Ms_0)} = \frac{\prod_{wb \in \Ms
      \setminus \Ms_0} \varphi_{(w,b)}\left( a_{f(w,b)}-a_{f(b,w)}
    \right)}{\prod_{wb \in \Ms_0 \setminus \Ms} \varphi_{(w,b)}\left( a_{f(w,b)}-a_{f(b,w)} \right)}.
\end{equation}
We define the partition function relative to $\Ms_0$ as
\begin{equation}
  \label{eq:defzinf}
  Z_\infty\left(\calG_p,a,\varphi,\Ms_0 \right) = \sum_{\Ms \in
    \M_\infty\left( \calG_p \right)} \frac{w_\infty(\Ms)}{w_\infty(\Ms_0)}.
\end{equation}
Let $\calF_p$ be the set of faces of $\calG_p$. For any
$f\in\calF_p$, let $d_{\Ms_0}(f)$ be the number of dimers in
$\Ms_0$ adjacent to $f$. Note
that all but a finite number of faces $f \in \calF_p$ have degree
$6$ and are such that $d_{\Ms_0}(f)=2$. This implies that the
prefactor in the following expression is well-defined:
\begin{equation}
  \label{eq:defyinf}
  Y_\infty\left(\calG_p,a \right) = \left(\prod_{f\in
    \calF_p}a_f^{\frac{d(f)}{2} - 1 - d_{\Ms_0}(f)}\right) \frac{Z_\infty\left(\calG_p,a^{-1},\varphi,\Ms_0 \right)}{Z_\infty\left(\calG_p,a,\varphi,\Ms_0 \right)}.
\end{equation}
As suggested by the notation, $Y_\infty\left(\calG_p,a \right)$
does not depend on $\Ms_0$ nor on $\varphi$; this will be a consequence of the forthcoming
Proposition~\ref{prop:inf}.

The main point of this construction is that, since $\calG_p$ has no outer face, the quantity $Y_\infty$ treats
internal and outer faces in the same way. Together with the next proposition, which states that using $\calG_p$ we recover the usual ratio of partition functions on $G_p$, this allows us to avoid tedious case handling at the boundary.

\begin{proposition}
 \label{prop:inf}
  Let $h$ be a height function, let $p\in \calU_h$, let
  $h_p$ be defined by \eqref{eq:defht}, and let $\calG_p$
  be the crosses-and-wrenches graph corresponding to $h_p$. Then
  \begin{equation*}
    Y_\infty\left(\calG_p,a \right) = Y(G_p,a).
  \end{equation*}
\end{proposition}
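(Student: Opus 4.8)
The plan is to compute $Y_\infty(\calG_p,a)$ via Speyer's bijection and match it with $Y(G_p,a)$. Fix the reference acceptable matching $\Ms_0$ and write $M_0=\Ms_0\cap G_p$. By \cite[Proposition~6]{Speyer} every $\Ms\in\M_\infty(\calG_p)$ has the form $\Ms=M\cup H$, where $M=\Ms\cap G_p$ ranges over the perfect matchings of $G_p$ and $H$ is the \emph{fixed} set of all handle (middle) edges of the wrenches of $\calG_p\setminus G_p$; in particular $\Ms_0=M_0\cup H$. As $H$ is shared by every acceptable matching, $\Ms\setminus\Ms_0=M\setminus M_0$ and $\Ms_0\setminus\Ms=M_0\setminus M$, so all handle edges cancel in \eqref{eq:defwinf}. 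Moreover every edge $wb$ of $G_p$ borders the same two faces, with the same weights, whether viewed in $G_p$ (where one of them may be an open face carrying weight $a_f$) or in $\calG_p$ (where it is an honest face). Hence, writing $w(M)=\prod_{wb\in M}\varphi_{(w,b)}(a_{f(w,b)}-a_{f(b,w)})$ for the edge weight of \eqref{equ:partition_function_2},
\[\frac{w_\infty(\Ms)}{w_\infty(\Ms_0)}=\frac{w(M)}{w(M_0)}.\]
Summing over $\M(G_p)$ and using \eqref{equ:partition_function_2} gives $Z_\infty(\calG_p,a,\varphi,\Ms_0)=Z(G_p,a,\varphi)/w(M_0)$, and identically with $a$ replaced by $a^{-1}$.

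Taking the ratio of the versions for $a^{-1}$ and $a$, the normalizations $w(M_0)$ do not cancel and leave a residual factor $w(M_0)/w^{a^{-1}}(M_0)$, where $w^{a^{-1}}$ uses the weights $a^{-1}$. The per-edge identity $\frac{a_{f(w,b)}-a_{f(b,w)}}{a_{f(w,b)}^{-1}-a_{f(b,w)}^{-1}}=-a_{f(w,b)}a_{f(b,w)}$ yields
\[\frac{w(M_0)}{w^{a^{-1}}(M_0)}=(-1)^{|M_0|}\prod_{e\in M_0}a_{f(e)}a_{f'(e)}=(-1)^{|M_0|}\prod_{f}a_f^{\,d_{M_0}(f)},\]
where $d_{M_0}(f)$ is the number of edges of $M_0$ bordering $f$ in $G_p$. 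Since $|M_0|=|V|/2$ with $|V|$ even, $(-1)^{|M_0|}=i^{|V|}$, which is precisely the constant in $C(G_p,a)$. Inserting this and the prefactor of \eqref{eq:defyinf}, and using $d_{\Ms_0}(f)=d_{M_0}(f)+W(f)$ with $W(f)$ the number of handle edges of $H$ bordering $f$, all dependence on $M_0$ disappears (only the fixed set $H$ survives), giving
\[Y_\infty(\calG_p,a)=i^{|V|}\Bigl(\prod_{f\in\calF_p}a_f^{\,\frac{d(f)}{2}-1-W(f)}\Bigr)\frac{Z(G_p,a^{-1},\varphi)}{Z(G_p,a,\varphi)},\]
where $d(f)$ is the degree of $f$ in $\calG_p$. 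Comparing with \eqref{eq:defY} and $C(G_p,a)$, it remains to establish the identity of (finite) products
\[\prod_{f\in\calF_p}a_f^{\,\frac{d(f)}{2}-1-W(f)}=\prod_{f\in F^i}a_f^{\frac{d_p(f)}{2}-1}\prod_{f\in F^o}a_f^{\lceil\frac{d_p(f)}{2}\rceil},\]
where $d_p(f)$ denotes the degree of $f$ in $G_p$.

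I would prove this identity face by face, noting first that only finitely many faces carry a nonzero exponent. For $f\in F^i$ the face is bounded solely by edges of $G_p$, so its $\calG_p$-degree equals its $G_p$-degree ($d(f)=d_p(f)$) and $W(f)=0$, matching the exponent $\frac{d_p(f)}{2}-1$. For $f\notin F_p$ the face lies in the pure wrench region and is a hexagon bordered by exactly two handles, so $\frac{d(f)}{2}-1-W(f)=3-1-2=0$, consistently with its absence on the right. The only delicate case is an open face $f\in F^o$: completing it into an honest face of $\calG_p$ changes its degree and endows it with some handles, and one must check that the result is exactly $\frac{d(f)}{2}-1-W(f)=\lceil\frac{d_p(f)}{2}\rceil$.

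The main obstacle is this last local count at the boundary. It requires enumerating the finitely many ways in which $G_p$ can meet the surrounding wrench region---straight boundary stretches and the various corners created by crosses and by wrenches---and, for each, comparing the degree the open face acquires in $\calG_p$ against the handles it borders. This is the infinite-completion analogue of the open-face exponent $\lceil d(f)/2\rceil$ in Speyer's prefactor $C_{\mathrm{dim}}$ of Theorem~\ref{theo:speyer}, and I would organize it in the same way, reducing the bookkeeping to a short list of local pictures. Once the identity holds for every face, $Y_\infty(\calG_p,a)=Y(G_p,a)$ follows; since the right-hand side depends neither on $\Ms_0$ nor on $\varphi$, this simultaneously proves the independence asserted after \eqref{eq:defyinf}.
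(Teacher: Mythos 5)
Your proof is correct and follows the paper's own argument essentially step for step: Speyer's Proposition~6 to reduce $Z_\infty$ to $Z(G_p,\cdot,\varphi)$ divided by the weight of $\Ms_0\cap E_p$, the per-edge identity $(a_{f}-a_{f'})/(a_{f}^{-1}-a_{f'}^{-1})=-a_{f}a_{f'}$, the observation $(-1)^{|\Ms_0\cap E_p|}=i^{|V_p|}$, and the face-by-face matching of exponents, with interior faces and far-away hexagons handled exactly as in the paper. The one step you leave as a sketched local case analysis---that an open face satisfies $\tfrac{d(f)}{2}-1-W(f)=\lceil d_p(f)/2\rceil$---is precisely the step the paper discharges without any new boundary bookkeeping, by noting that $W(f)=d_{\Ms_0\setminus E_p}(f)$ is the normalization Speyer already computed for acceptable matchings in Section~4.1 of~\cite{Speyer} (the origin of the exponent $\lceil d(f)/2\rceil$ in $C_{\mathrm{dim}}$), so you can close your argument by that citation instead of re-enumerating the boundary pictures.
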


\begin{proof}
  Recall that $E_p$ are the edges of the finite subgraph $G_p$. Again
  by \cite[Proposition~6]{Speyer},
  \begin{equation*}
    Z_\infty\left(\calG_p,a,\varphi,\Ms_0 \right) =
    \frac{Z(G_p,a,\varphi)}{\prod_{wb \in \Ms_0 \cap E_p}\varphi_{(w,b)} \left( a_{f(w,b)}-a_{f(b,w)} \right)}.
  \end{equation*}
  Doing the same for face weights $a^{-1}$ and taking the ratio, we get
  \begin{equation}
    \label{eq:ratiozinf}
    \begin{split}
      \frac{Z_\infty\left(\calG_p,a^{-1},\varphi,\Ms_0
        \right)}{Z_\infty\left(\calG_p,a,\varphi,\Ms_0 \right)}
      = & \biggl(\,\prod_{wb \in \Ms_0 \cap E_p}
        \frac{a_{f(w,b)}-a_{f(b,w)}}{a^{-1}_{f(w,b)}-a^{-1}_{f(b,w)}}\biggr)
      \frac{Z(G_p,a,\varphi)}{Z(G_p,a^{-1},\varphi)}, \\
      = & (-1)^{|\Ms_0 \cap E_p|} \biggl(\,\prod_{wb \in \Ms_0 \cap E_p}
        a_{f(w,b)}a_{f(b,w)}\biggr)
      \frac{Z(G_p,a,\varphi)}{Z(G_p,a^{-1},\varphi)}.
    \end{split}
  \end{equation}
  Therefore,
  \begin{equation*}
    Y_\infty\left(\calG_p,a \right) = (-1)^{|\Ms_0 \cap E_p|} \biggl(\,\prod_{f\in
        \calF_p}a_f^{\frac{d(f)}{2} - 1 - d_{\Ms_0}(f)}\biggr) \biggl(\,\prod_{wb \in \Ms_0 \cap E_p}
      a_{f(w,b)}a_{f(b,w)}\biggr) \frac{Z(G_p,a,\varphi)}{Z(G_p,a^{-1},\varphi)}.
  \end{equation*}
  We claim that
  \begin{equation}
    \label{eq:eqprefact}
    (-1)^{|\Ms_0 \cap E_p|} \biggl(\,\prod_{f\in
        \calF_p}a_f^{\frac{d(f)}{2} - 1 - d_{\Ms_0}(f)}\biggr) \biggl(\,\prod_{wb \in \Ms_0 \cap E_p}
      a_{f(w,b)}a_{f(b,w)}\biggr) = i^{|V_p|} \prod_{f \in F_p^i}a_{f}^{\frac{d(f)}{2}-1}
    \prod_{f \in F_p^o}a_{f}^{\lceil \frac{d(f)}{2} \rceil},
  \end{equation}
  which implies that
  $Y_\infty\left(\calG_p,a \right) = Y(G_p,a)$. First, $\Ms_0$
  reduces to a perfect matching of $G_p$, so
  $|M_0\cap E_p|=\frac{|V_p|}{2}$, proving that the complex prefactors
  are the same.
    Then, let $f\in\calF_p$. If $f\notin F_p$, as
  argued previously, $a_f$ has exponent $0$ in the left-hand side of
  \eqref{eq:eqprefact}, in accordance with the right-hand side. If
  $f \in F_p^i$, then the second product on the left-hand side of
  \eqref{eq:eqprefact} produces a factor $a_f^{d_{\Ms_0}(f)}$,
  simplifying with the first product to give $a_f^{\frac{d(f)}{2}-1}$
  as on the right-hand side. Finally, if $f\in F_p^o$, then by the
  same argument, the left-hand side gives an exponent
  $\frac{d(f)}{2}-1-d_{\Ms_0 \setminus E_p}(f)$, which exactly corresponds
  to the normalization obtained for acceptable dimers in
  \cite[Section~4.1]{Speyer} (we recall that in Speyer's case, the
  weight of an edge $wb$ is $a^{-1}_{f(w,b)}\cdot a^{-1}_{f(b,w)}$, so the
  dimers in $\Ms$ outside of $G_p$ contribute with a factor
  $a_f^{-d_{\Ms \setminus E_p}(f)}$). By Speyer's computation, this is
  equal to the normalization in $C_{\mathrm{dim}}(G_p,a)$ for this
  face, which is also $\lceil \frac{d(f)}{2} \rceil$.
\end{proof}

\subsection{Invariance of ratio function of oriented dimers}
\label{sec:invariance}

In this section, $G$ is a finite bipartite graph with open faces $F=F^i\cup F^o$,
equipped with face weights $a=\left( a_f \right)_{f\in F}$.
We will often identify the names of faces with the weights
attached to it.

\subsubsection{Contraction/Expansion of a vertex of degree 2}

In the graph $G$, consider a vertex $v$ adjacent to at least
two distinct \emph{inner} faces with respective weights $a_1,a_2$. A
new graph $G'$ can be obtained by replacing $v$ with two vertices $v_1,v_2$
joined by a vertex $u$ of degree~$2$, such that the two new edges separate
$a_1$ from $a_2$; see Figure~\ref{fig:contraction}.

This produces $G'$ which is also bipartite, and
naturally equipped with face weights still denoted by $a$. The
graphs $G$ and $G'$ are said to be related by the
\emph{contraction/expansion} of a vertex of degree~$2$.

\begin{figure}[tb]
  \centering
  \def\svgwidth{5cm} 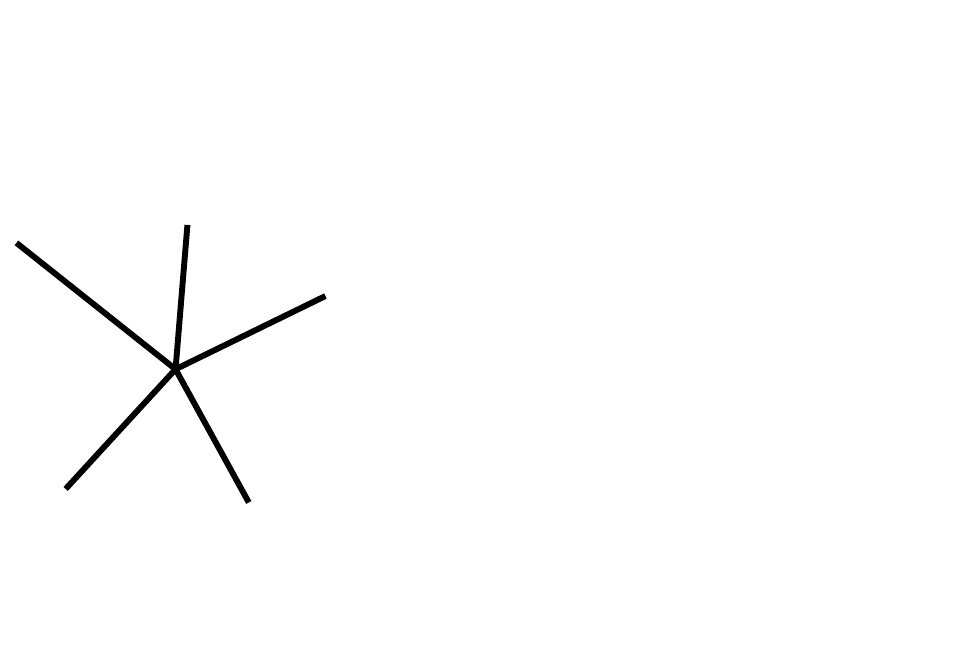
  \caption{Local operation of contracting/expanding a vertex of degree
  $2$, with an example Kasteleyn orientation on the expanded side.}
  \label{fig:contraction}
\end{figure}

\begin{proposition}
  \label{prop:contraction}
  Let $G$, $G'$ be two graphs as above related by the contraction/expansion of a vertex of
  degree~$2$. Then
  \begin{equation*}
    Y(G,a) = Y(G',a).
  \end{equation*}
\end{proposition}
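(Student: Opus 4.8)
The plan is to reduce the statement to a single identity between the Kasteleyn determinants of $G$ and $G'$, using Proposition~\ref{prop:oriented_dimers_adjacency_matrix}. Since $Z(G,a,\varphi)=\epsilon(\varphi)\det K$ with $\epsilon(\varphi)$ independent of the weights, the sign cancels in the ratio and $Y(G,a)=C(G,a)\,\det K(a^{-1})/\det K(a)$, where $K(a),K(a^{-1})$ denote the weighted adjacency matrices~\eqref{eq:defKast} for weights $a$ and $a^{-1}$; likewise for $G'$. Thus I never need a Kasteleyn orientation on $G'$, only its existence (guaranteed by planarity). I assume without loss of generality that $v$ is white, the black case following by transposing $K$. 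Writing $W_0=W\setminus\{v\}$, I order the rows of $K'$ as $v_1,v_2,W_0$ and its columns as $u,B$. Since $u$ has degree $2$ with neighbours $v_1,v_2$ and no vertex of $W_0$ is adjacent to $u$, the matrix is block-structured:
\[
K'=\begin{pmatrix} \alpha & r_1\\ \beta & r_2\\ 0 & M\end{pmatrix},
\]
where $\alpha=K'_{v_1,u}$, $\beta=K'_{v_2,u}$, the rows $r_1,r_2$ are the restrictions of rows $v_1,v_2$ to the columns $B$, and $M$ is indexed by $W_0\times B$.

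The first geometric input is that the edges of $v$ are split between $v_1$ and $v_2$ without changing their incident faces (only $a_1,a_2$ change shape, not weight), so $r_1,r_2$ have disjoint support and $r_1+r_2$ is exactly the $v$-row of $K$; hence $K=\begin{pmatrix} r_1+r_2\\ M\end{pmatrix}$. The second input is that the two path-edges $v_1u$ and $u v_2$ both separate $a_1$ from $a_2$, but with opposite white-to-black orientation, so that $\alpha=-\beta=s\,(a_1-a_2)$ for a fixed geometric sign $s\in\{-1,1\}$. Cofactor expansion of $\det K'$ along column $u$ together with multilinearity of the determinant in its first row then gives
\[
\det K' = \alpha\,\det\begin{pmatrix} r_2\\ M\end{pmatrix}-\beta\,\det\begin{pmatrix} r_1\\ M\end{pmatrix}=\alpha\,\det\begin{pmatrix} r_1+r_2\\ M\end{pmatrix}=s\,(a_1-a_2)\,\det K.
\]
Running the same computation with weights $a^{-1}$ only replaces $\alpha$ by $s\,(a_1^{-1}-a_2^{-1})$ with the \emph{same} $s$; taking the ratio, the sign $s$ and the Kasteleyn signs cancel, and the $\det K$-factors reassemble into partition functions on $G$, giving
\[
\frac{Z(G',a^{-1},\varphi')}{Z(G',a,\varphi')}=\frac{a_1^{-1}-a_2^{-1}}{a_1-a_2}\,\frac{Z(G,a^{-1},\varphi)}{Z(G,a,\varphi)}=\frac{-1}{a_1a_2}\,\frac{Z(G,a^{-1},\varphi)}{Z(G,a,\varphi)}.
\]

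It remains to match the prefactors. Passing from $G$ to $G'$ adds the two white vertices $v_1,v_2$ in place of $v$ together with the black vertex $u$, so $|V'|=|V|+2$ and the complex factor $i^{|V|}$ picks up $i^{2}=-1$. No face is created or destroyed and every face keeps its weight; the only degrees that change are those of the two inner faces $a_1,a_2$, each of which gains exactly the two path-edges, so $d'(a_i)=d(a_i)+2$. As $a_1,a_2\in F^i$ contribute with exponent $\tfrac{d(f)}2-1$, this multiplies the face-weight part of $C$ by $a_1^{(d'(a_1)-d(a_1))/2}a_2^{(d'(a_2)-d(a_2))/2}=a_1a_2$. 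Hence $C(G',a)=-a_1a_2\,C(G,a)$, and combining with the previous display,
\[
Y(G',a)=C(G',a)\,\frac{-1}{a_1a_2}\,\frac{Z(G,a^{-1},\varphi)}{Z(G,a,\varphi)}=C(G,a)\,\frac{Z(G,a^{-1},\varphi)}{Z(G,a,\varphi)}=Y(G,a).
\]

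The routine linear algebra (cofactor expansion, multilinearity) is the easy part; the main obstacle is the bookkeeping of the local picture, namely checking that the side-edges of $v$ retain their face weights so that $r_1+r_2$ is the $v$-row of $K$, that the two new edges contribute opposite signs $\alpha=-\beta$, and that $a_1,a_2$ each gain exactly two units of degree. Once these three local facts are in place the determinant identity is forced, and the normalization $C(G,a)$ is designed precisely so that the spurious factor $-1/(a_1a_2)$ coming from the determinant ratio is absorbed.
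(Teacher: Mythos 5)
Your proof is correct, and it reaches the same two key facts as the paper's proof --- the factorization of the partition function of $G'$ by $\pm(a_1-a_2)$, followed by the identical $-1/(a_1a_2)$ ratio computation and the prefactor bookkeeping $C(G',a)=-a_1a_2\,C(G,a)$ --- but it establishes the factorization by a different mechanism. The paper constructs an explicit Kasteleyn orientation on $G'$ extending $\varphi$ (setting $\vphi_{(u,v_1)}=1$, $\vphi_{(u,v_2)}=-1$) and factors $Z(G',a,\varphi)=(a_2-a_1)Z(G,a,\varphi)$ combinatorially via Expression~\eqref{equ:partition_function_2}, by splitting perfect matchings of $G'$ according to whether $u$ is matched to $v_1$ or $v_2$. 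You instead invoke Proposition~\ref{prop:oriented_dimers_adjacency_matrix} to replace both ratios of partition functions by ratios of determinants of the orientation-free matrix $K$, and prove $\det K'=\pm(a_1-a_2)\det K$ by cofactor expansion along the $u$-column plus multilinearity in the $v$-row; your expansion along the $u$-column is precisely the linear-algebra shadow of the paper's matching decomposition, and your multilinearity step is its recombination. What your route buys: all sign and orientation issues are outsourced to Proposition~\ref{prop:oriented_dimers_adjacency_matrix}, so you never need to check that the extended orientation on $G'$ satisfies the Kasteleyn condition around the enlarged faces $a_1,a_2$ (a verification the paper leaves implicit), and the sign $s$ manifestly cancels in the ratio. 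What the paper's route buys: the factor $(a_2-a_1)$ is combinatorially transparent at the level of oriented dimer configurations, without passing through determinants. Both arguments rest on the same three geometric observations you isolate (side-edges keep their face weights, the two new edges carry opposite entries $\alpha=-\beta$, and $a_1,a_2$ each gain degree exactly $2$), and both use the hypothesis that $a_1,a_2$ are inner faces at exactly the same point, namely in matching the exponents in $C(G',a)$.
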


\begin{proof}
  Suppose that the vertex $v$ is black, the case where $v$ is white
  being similar. Let $\varphi$ be a Kasteleyn orientation on
  $G$. We can get a Kasteleyn orientation of $G'$, also denoted $\vphi$, by setting
  $\vphi_{(u,v_1)}=1$, $\vphi_{(u,v_2)}=-1$,
  see Figure~\ref{fig:contraction}. As $Y(G',a)$ does not depend on the Kasteleyn
  orientation, we can use $\varphi$ in the proof.

  We claim that
  $Z(G',a,\varphi) = (a_2-a_1) Z(G,a,\varphi)$. Indeed,
  using Expression~\eqref{equ:partition_function_2} for the partition
  function, in a perfect matching of $G'$, $u$ has to be matched
  either to $v_1$ or to $v_2$. In the first case, this gives a
  contribution $(a_2-a_1)$ that factors in the corresponding sub-sum
  of $Z(G',a,\varphi)$, and in the second case, it gives a
  contribution $-1(a_1-a_2)=a_2-a_1$ to the second sub-sum. As the sum
  of these two sub-sums is $Z(G,a,\varphi)$, we get the claim.

  Therefore,
  $Z(G',a^{-1},\varphi) = \left(a_2^{-1} - a_1^{-1} \right)
  Z(G,a^{-1},\varphi)$, from which we get
  \begin{equation*}
    -a_1 a_2 \frac{Z(G',a^{-1},\varphi)}{Z(G',a,\varphi)} = \frac{Z(G,a^{-1},\varphi)}{Z(G,a,\varphi)}.
  \end{equation*}
  Accounting for the discrepancy of degree of faces $a_1,a_2$ between $G$
  and $G'$, it is straightforward to check that
  $Y(G,a)=Y(G',a)$. Note that this is where we use the hypothesis that faces
  $a_1,a_2$ are inner faces.
\end{proof}

\subsubsection{Spider move}

We state the central invariance result, which is the application
of a \emph{spider move}, also known as a \emph{square move}, or
\emph{urban renewal}~\cite{ProppUR,CiucuUR}.
Suppose that $G$
contains a face of degree four, with
vertices $\{v_1,v_2,v_3,v_4\}$ in counterclockwise cyclic order, surrounded
by four \emph{distinct} (inner or open) faces; the vertices
$v_i$ may belong to $\Bound$ or not. Then we replace this square
with a smaller one surrounded by four edges, as in
Figure~\ref{fig:urban}, and obtain a graph denoted by $G'$.

Suppose that $G$ has face weights $a$; denote by $a_{0,0}$ the weight
of the center face, and by $a_{-1,0}, a_{0,-1},a_{1,0},a_{0,1} $ the
weights at the four boundary faces. Then, we set $G'$ to have a weight
function $a'$ equal to $a$ everywhere except at the center face where
it is equal to $a_{0,0}'$.

\begin{definition}\label{def:DSKP_relation}
  Under the above assumptions, the weight functions $a$ and $a'$ are
  said to satisfy the \emph{dSKP relation if}:
  \begin{equation}\label{equ:DSKP_relation}
    \frac{(a_{0,0}-a_{0,1})(a_{-1,0}-a_{0,0}')(a_{0,-1}-a_{1,0})
    }{(a_{0,1}-a_{-1,0})(a_{0,0}'-a_{0,-1})(a_{1,0}-a_{0,0})}=-1.
  \end{equation}
\end{definition}
\begin{remark}\label{rem:DSKP_relation_recurrence}
  Note that by setting $x_{i,j,0}=a_{i,j}$ whenever $i$ or $j$ is equal to $\pm 1$, and
  $x_{-e_3}=x_{0,0,-1}=a_{0,0}$, $x_{e_3}=x_{0,0,1}=a_{0,0}'$, we recover the dSKP recurrence of Definition~\ref{def:dSKP_recurrence} evaluated at the point $p=(0,0,0)$.
\end{remark}

\begin{figure}[h]
  \centering
  \def\svgwidth{13cm} 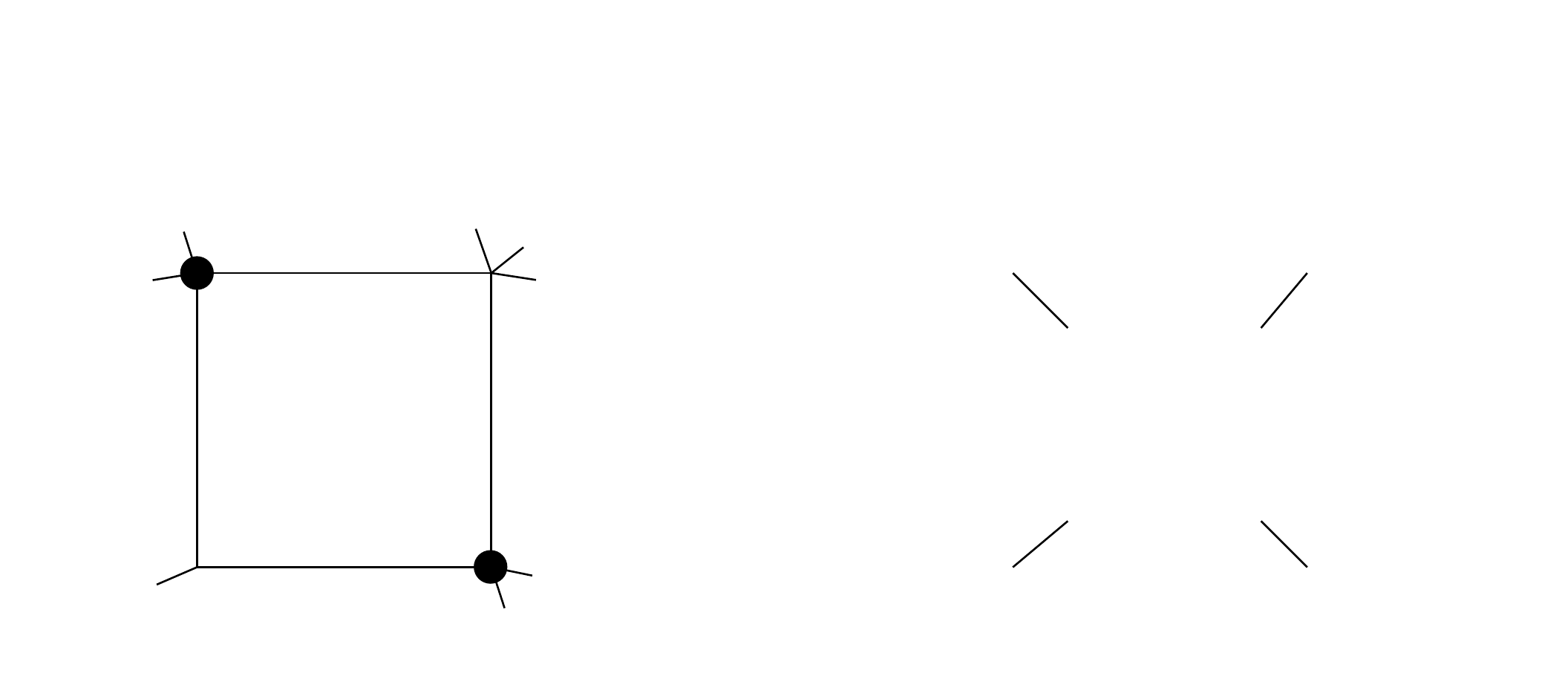
  \caption{The ``spider move'' transformation; the
    Greek letters represent the Kasteleyn orientation, and non-indexed
  edges on the right-hand side are given orientation $1$, \emph{i.e.}, from
  white to black.}
  \label{fig:urban}
\end{figure}

\begin{proposition}
  \label{prop:urban}
  Let $G,G'$ be two graphs as above related by a spider move, and
  suppose that the respective weight functions $a$ and $a'$ (as in
  Figure~\ref{fig:urban}) satisfy the dSKP
  relation~\eqref{equ:DSKP_relation}. Then
  \[
    Y(G,a)=Y(G',a').
  \]
\end{proposition}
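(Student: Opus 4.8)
The plan is to compare the two oriented-dimer partition functions directly and to show that, under the dSKP relation~\eqref{equ:DSKP_relation}, the move acts on $Z(G,a,\varphi)$ by an explicit rational factor that is cancelled by the change of prefactor in~\eqref{eq:defY}. Since $Y$ is independent of the Kasteleyn orientation, I fix $\varphi$ on $G$ to be the one in Figure~\ref{fig:urban} and extend it to $G'$ as prescribed there (central edges $-\alpha,-\beta,-\gamma,-\delta$, the connecting edges oriented from white to black); a quick check shows this is Kasteleyn on $G'$. The spider move adds the four vertices of the small central square, so $|V(G')|=|V(G)|+4$; as $i^4=1$, the factor $i^{|V|}$ in $C(G,a)$ is the same for $G$ and $G'$, and only the face-degree part of the prefactor will need attention.

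First I would use locality together with the dimer rewriting~\eqref{equ:partition_function_2}, which presents $Z(G,a,\varphi)$ and $Z(G',a',\varphi)$ as signed dimer partition functions whose edge weights are the differences $a_{f(w,b)}-a_{f(b,w)}$ of adjacent face weights (up to a sign). As $G$ and $G'$ agree outside a neighbourhood of the central face, I expand both partition functions over the matchings of the central gadget --- the corners $v_1,\dots,v_4$ in $G$, and these together with the four new vertices in $G'$ --- against one common family of external weighted matching sums, indexed by the subset of corners matched to the exterior. This is exactly the setting of weighted urban renewal~\cite{ProppUR,CiucuUR}: if the induced edge weights of the small square and of the connecting edges are the urban-renewal transforms of the four central edge weights of $G$, then $Z(G',a',\varphi)=\Lambda\,Z(G,a,\varphi)$ for the standard urban-renewal factor $\Lambda$, which is a rational expression in the face weights.

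The crucial step is to identify when this matching of edge weights occurs. The four central edge weights of $G$ are, up to the signs $\alpha,\beta,\gamma,\delta$, the differences $a_{0,0}-a_{0,1}$, $a_{-1,0}-a_{0,0}$, $a_{0,0}-a_{0,-1}$, $a_{1,0}-a_{0,0}$, while the small-square edges of $G'$ produce the analogous differences with $a_{0,0}$ replaced by $a'_{0,0}$, and the connecting edges produce differences of neighbouring surrounding weights (which, by telescoping, are automatically compatible). A direct computation shows that the remaining, genuinely constraining identity among these differences holds precisely when $a'_{0,0}$ is the value prescribed by the dSKP relation~\eqref{equ:DSKP_relation} --- this is the one place where the hypothesis is essential. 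Inverting all face weights, the pair $(a^{-1},a'^{-1})$ again satisfies~\eqref{equ:DSKP_relation}, because the Möbius map $z\mapsto 1/z$ preserves the dSKP relation (Remark~\ref{rem:dskpprojsymmetry}); hence $Z(G',a'^{-1},\varphi)=\Lambda(a^{-1})\,Z(G,a^{-1},\varphi)$, and the ratio $Z(\cdot,a^{-1},\varphi)/Z(\cdot,a,\varphi)$ is multiplied by $\Lambda(a^{-1})/\Lambda(a)$ under the move.

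It then remains to reconcile the prefactors. Forming the quotient of~\eqref{eq:defY} for $G'$ and $G$, the $i^{|V|}$ factors cancel, and the quotient of the face-degree products records exactly the degree changes produced by the move: the central face keeps degree $4$ with its weight replaced $a_{0,0}\mapsto a'_{0,0}$, the four surrounding faces change degree in a controlled way, and the new faces contribute their $\tfrac{d(f)}{2}-1$ terms. A bookkeeping computation shows this prefactor quotient equals $\Lambda(a)/\Lambda(a^{-1})$, so it cancels the factor from the previous paragraph and yields $Y(G,a)=Y(G',a')$. I expect the main obstacle to be precisely the identification in the third paragraph: one must carry the four Kasteleyn signs through every matching pattern and verify that the whole system of edge-weight matching conditions is equivalent to the \emph{single} dSKP relation --- that dSKP is not merely necessary but exactly sufficient --- and then match the resulting factor $\Lambda(a^{-1})/\Lambda(a)$ against the face-degree bookkeeping without sign errors.
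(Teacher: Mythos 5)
Your outline is correct and is, at its core, the same argument as the paper's: the paper proves exactly the weighted--urban-renewal statement you want to invoke, by partitioning both partition functions into six sub-sums according to which of $v_1,\dots,v_4$ are matched inside the central gadget and checking (using $\alpha\beta\gamma\delta=-1$ and the dSKP relation) that corresponding sub-sums differ by one common factor $\lambda$; it then repeats this for $a^{-1}$ (implicitly via the M\"obius invariance you make explicit) and cancels against the prefactors just as in your last paragraph, where your bookkeeping claim --- that the prefactor quotient equals $\Lambda(a)/\Lambda(a^{-1})$, with $i^{|V|}$ unchanged --- is indeed the right identity. One caution about outsourcing the key step to \cite{ProppUR,CiucuUR}: standard urban renewal takes unit weights on the four legs, whereas here the legs carry weights $a_f-a_{f'}$ of adjacent outer faces, so your reduction still requires a gauge transformation at the four new vertices (or the arbitrary-weight form of the move), and verifying that the gauged small-square weights are the urban-renewal transforms of the old ones precisely when $a'_{0,0}$ solves \eqref{equ:DSKP_relation} is the same six-case computation the paper performs by hand --- so the citation repackages, but does not remove, the verification you yourself flag as the main obstacle.
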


\begin{proof}
  Fix a Kasteleyn orientation $\varphi$ on $G$. We get a Kasteleyn
  orientation on $G'$ as in Figure~\ref{fig:urban}, by multiplying
  $\varphi$ by $-1$ on the square and setting it to $1$ on the four newly
  created edges; we denote it by $\varphi'$.

  Consider a perfect matching on $G$ or $G'$. In both cases, among
  $\{v_1,v_2,v_3,v_4\}$, those matched inside the center region are
  either all of them, none of them, or some $\{v_i,v_{i+1}\}$ (taken
  cyclically); see Figure~\ref{fig:urban2}. We partition $Z(G,a,\varphi)$
  and $Z(G',a',\varphi')$, each into six sub-sums, depending on these six
  cases. We show that, for each case, the sub-sum of $Z(G,a,\varphi)$ is
  proportional to that of $Z(G',a',\varphi')$, with a common factor
  \[
    \lambda = \alpha \gamma
    \frac{a_{0,1}a_{0,-1}  - a_{1,0}a_{-1,0} - a_{0,0}a_{0,1} + a_{0,0}a_{-1,0} -
      a_{0,0}a_{0,-1} + a_{0,0}a_{1,0}
    }{(a_{0,1}-a_{1,0})(a_{0,1}-a_{-1,0})(a_{0,-1}-a_{-1,0})(a_{0,-1}-a_{1,0})}.
  \]

  \begin{figure}[h]
    \centering
    \def\svgwidth{6cm} 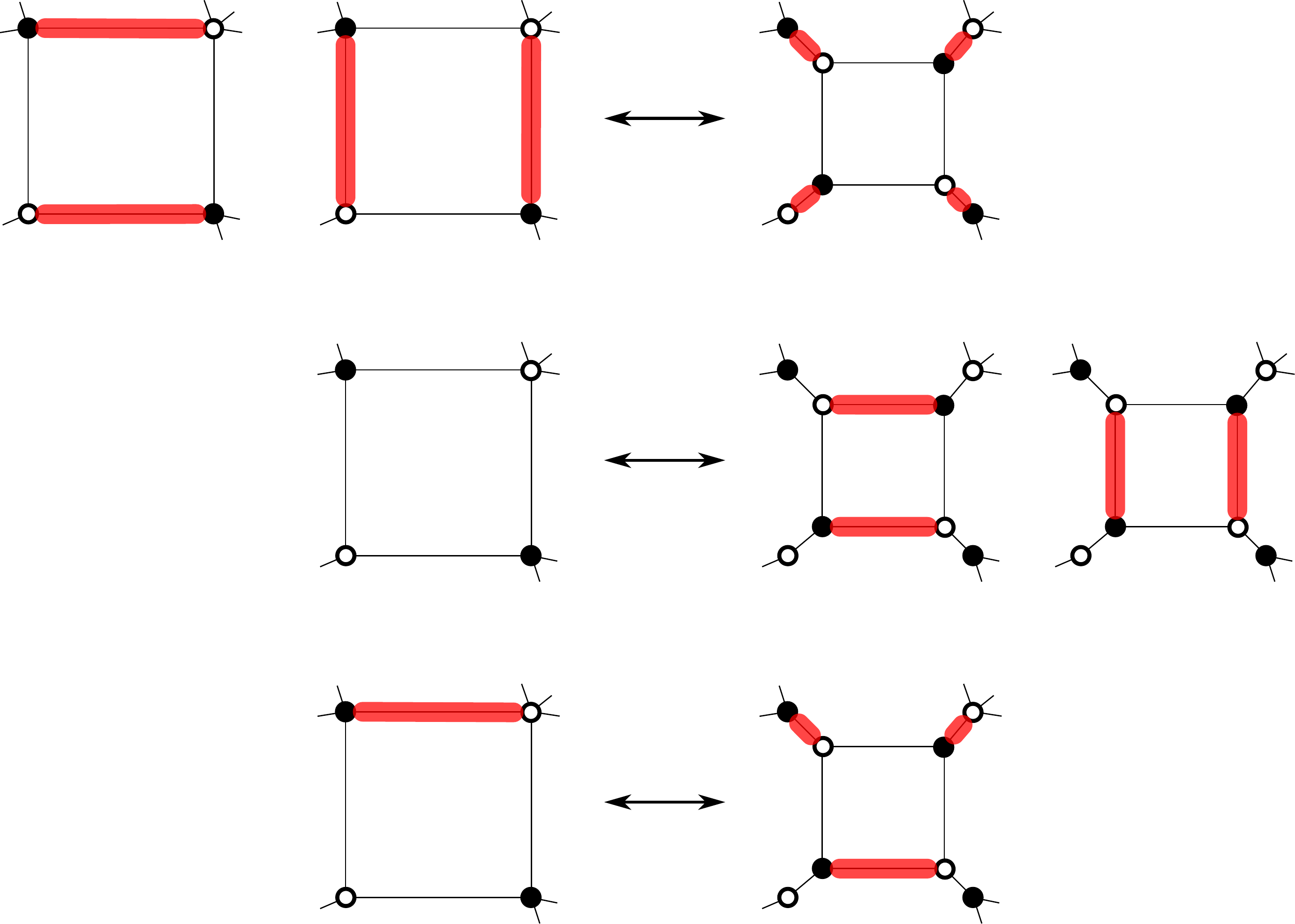
    \caption{Corresponding matchings between $G$ and $G'$. Three extra
      symmetries of the last row have to be considered.}
    \label{fig:urban2}
  \end{figure}

  Consider the first case (all of the $v_i$ are matched internally). In the
  sub-sum of $Z(G,a,\varphi)$, taking into account the possible
  orientations of each dimer, we can factor in
  \begin{equation}\label{proof:urban_renewal_1}
  \alpha(a_{0,1}-a_{0,0})\gamma(a_{0,-1}-a_{0,0}) +
  \beta(a_{0,0}-a_{-1,0})\delta(a_{0,0}-a_{1,0}).
  \end{equation}
  In that of
  $Z(G',a',\varphi')$, we can factor in
  \begin{equation}\label{proof:urban_renewal_2}
  (a_{0,1}-a_{1,0})(a_{0,1}-a_{-1,0})
  (a_{0,-1}-a_{-1,0})(a_{0,-1}-a_{1,0}).
  \end{equation}

  After
  this factorization, what is left of both sub-sums is equal. We claim
  that the term~\eqref{proof:urban_renewal_1} is equal to $\lambda$ times the term \eqref{proof:urban_renewal_2}, which implies that the two sub-sums indeed differ by a
  multiplicative factor $\lambda$.  This relation can be checked by a
  direct computation, using the fact that
  $\alpha \beta \gamma \delta = -1$.

  The same can be done in the other five cases, with the same constant $\lambda$
  appearing; we omit computations here: they only use
  Equation~\eqref{equ:DSKP_relation} and polynomial manipulations. By summing all
  cases, this implies
  \begin{equation*}
  \frac{Z(G,a,\varphi)}{Z(G',a',\varphi')} =
  \alpha \gamma
  \frac{a_{0,1}a_{0,-1}  - a_{1,0}a_{-1,0} - a_{0,0}a_{0,1} + a_{0,0}a_{-1,0} -
    a_{0,0}a_{0,-1} + a_{0,0}a_{1,0}
    }{(a_{0,1}-a_{1,0})(a_{0,1}-a_{-1,0})(a_{0,-1}-a_{-1,0})(a_{0,-1}-a_{1,0})}
  .
  \end{equation*}

  Therefore,
  \begin{equation*}
  \frac{Z(G,a^{-1},\varphi)}{Z(G',a'^{-1},\varphi')}=
  \alpha \gamma
  \frac{a_{0,1}^{-1}a_{0,-1}^{-1}  - a_{1,0}^{-1}a_{-1,0}^{-1} - a_{0,0}^{-1}a_{0,1}^{-1} + a_{0,0}^{-1}a_{-1,0}^{-1} -
    a_{0,0}^{-1}a_{0,-1}^{-1} + a_{0,0}^{-1}a_{1,0}^{-1}
    }{(a_{0,1}^{-1}-a_{1,0}^{-1})(a_{0,1}^{-1}-a_{-1,0}^{-1})(a_{0,-1}^{-1}-a_{-1,0}^{-1})(a_{0,-1}^{-1}-a_{1,0}^{-1})}
  .
  \end{equation*}
  By taking the ratio of the last two equations, and after some
  computations that again use Equation~\eqref{equ:DSKP_relation}, we get
  \begin{equation*}
    \frac{Z(G,a^{-1},\varphi)}{Z(G,a,\varphi)} =
    \frac{a_{0,1}a_{-1,0}a_{0,-1}a_{1,0}a_{0,0}}{a_{0,0}'}\
    \frac{Z(G',a'^{-1},\varphi')}{Z(G',a',\varphi')}.
  \end{equation*}
  Accounting for the degree of each face in the prefactors $C(G,a)$
  and $C(G',a')$, this directly gives $Y(G,a)=Y(G',a')$.
\end{proof}

 We now extend the two previous invariance results to
  the setting of infinite graphs. Recall the infinite graph $\calG_p$
  of Section~\ref{sec:infinite} and its weight function $a$. A series
  of contractions/expansions of vertices of degree $2$, and spider moves, can also be defined on $\calG_p$, giving a new graph
  $\calG_p'$, and the dSKP relation~\eqref{equ:DSKP_relation} allows
  one to define its weight function $a'$. One can still use
  \eqref{eq:defwinf},~\eqref{eq:defzinf},~\eqref{eq:defyinf} to define
  quantities $Z_\infty(\calG_p',a')$ and $Y_\infty(\calG_p',a')$ on
  this new weighted graph. We claim that the invariance also holds in
  this infinite setting:

\begin{corollary}
  \label{cor:inv_inf}
  Let $\calG_p'$ be obtained from $\calG_p$ by a finite number of
  contractions/expansions of vertices of degree $2$, and of spider moves with the weight functions $a', a$ satisfying the dSKP relation
  \eqref{equ:DSKP_relation}. Then
  \begin{equation}
    \label{eq:inv_inf}
    Y_\infty(\calG_p',a') = Y_\infty(\calG_p,a).
  \end{equation}

\end{corollary}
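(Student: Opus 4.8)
The plan is to rerun, move by move, the local computations that prove Proposition~\ref{prop:contraction} and Proposition~\ref{prop:urban}, but with the finite partition function $Z(G,a,\varphi)$ replaced throughout by the reference-normalized infinite sum $Z_\infty(\calG_p,a,\varphi,\Ms_0)$ of \eqref{eq:defzinf}. Since a finite sequence of moves can be performed one at a time, it suffices by composition to prove \eqref{eq:inv_inf} for a single contraction/expansion of a degree-$2$ vertex, and for a single spider move. Any such move is supported on a finite region $R\subset\calG_p$, and $\calG_p$ and $\calG_p'$ coincide — as weighted graphs — outside $R$. First I would fix a reference acceptable matching $\Ms_0$ of $\calG_p$ and choose a reference $\Ms_0'$ of $\calG_p'$ that agrees with $\Ms_0$ on every edge outside $R$, completing it inside $R$ exactly as the reference is completed in the corresponding finite proof. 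A useful point is that $\calG_p$ has no outer face, so all faces touched by the move are \emph{internal}; consequently the inner-face hypotheses of Propositions~\ref{prop:contraction} and~\ref{prop:urban} are automatically satisfied, and no open-face case distinction is needed.

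Next I would transfer the partition-function step. Each acceptable matching $\Ms'$ of $\calG_p'$ agrees with a unique acceptable matching $\Ms$ of $\calG_p$ outside $R$, and inside $R$ the local factorizations of the finite proofs apply verbatim to the normalized weights $w_\infty(\Ms')/w_\infty(\Ms_0')$ of \eqref{eq:defwinf}: the degree-$2$ vertex contributes the common factor $(a_2-a_1)$ (as in the proof of Proposition~\ref{prop:contraction}), while the square contributes, case by case, the common factor $\lambda$ (as in the proof of Proposition~\ref{prop:urban}). Carrying this out for the weights $a$ and then for $a^{-1}$, and taking the quotient, shows that the ratio
\[
\frac{Z_\infty(\calG_p',(a')^{-1},\varphi,\Ms_0')}{Z_\infty(\calG_p',a',\varphi,\Ms_0')}
\]
differs from its counterpart on $\calG_p$ by exactly the same multiplicative factor as in the finite setting, up to the ratios of reference weights $w_\infty(\Ms_0')/w_\infty(\Ms_0)$ (with weights $a$ and $a^{-1}$) introduced by the normalization.

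It then remains to absorb these reference contributions into the prefactor $\prod_{f\in\calF_p}a_f^{d(f)/2-1-d_{\Ms_0}(f)}$ of \eqref{eq:defyinf}. Only the finitely many faces met by the move change either their degree $d(f)$ or their reference-degree $d_{\Ms_0}(f)$; these changes are read off from the local picture, and the exponent $-d_{\Ms_0}(f)$ is precisely what cancels the reference-weight ratios and the weight-inversion factors $\prod (a_f a_{f'})^{-1}$ coming from passing between $a$ and $a^{-1}$ on the matched edges — the same mechanism already used in the proof of Proposition~\ref{prop:inf}. Here the bookkeeping plays the role that the complex normalization $i^{|V|}$ and the face-degree discrepancies played in Propositions~\ref{prop:contraction} and~\ref{prop:urban}. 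Combining the partition-function factor with the prefactor factor yields $Y_\infty(\calG_p',a')=Y_\infty(\calG_p,a)$, which is \eqref{eq:inv_inf}; since we chose the references compatibly and $Y_\infty(\calG_p,a)$ is already independent of $(\Ms_0,\varphi)$ by Proposition~\ref{prop:inf}, the same independence for $\calG_p'$ follows.

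The hard part is the consistent tracking of the reference matching $\Ms_0$ in two places at once — inside the normalized sums $Z_\infty$, where the local weight factors must be made to combine correctly, and inside the exponents $d_{\Ms_0}(f)$ of the prefactor, where the reference-degree changes must match the half-degree changes $d(f)/2$ face by face. One must check that the way $\Ms_0'$ completes $\Ms_0$ inside $R$ is the same choice driving both computations, and that the spider-move partition into six cases survives the normalization. Everything else is the finite, local identity already established; the whole purpose of working on the boundary-free graph $\calG_p$ is to make these identities available without any special treatment at the boundary.
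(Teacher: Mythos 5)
Your proposal is correct and follows essentially the same route as the paper: reduce to a single contraction/expansion or spider move, rerun the local factorizations of Propositions~\ref{prop:contraction} and~\ref{prop:urban} on the normalized sums $Z_\infty$ of \eqref{eq:defzinf} for both $a$ and $a^{-1}$, and then check that the reference-matching ratios $w_\infty(\Ms_0')/w_\infty(\Ms_0)$ and the finitely many changes in $d(f)$ and $d_{\Ms_0}(f)$ cancel against the prefactor of \eqref{eq:defyinf} --- exactly the bookkeeping the paper carries out (including the parity argument $|\Ms_0'|=|\Ms_0|+1$ that makes the $-1$ factors cancel, which you subsume under ``bookkeeping'' but which is the only sign subtlety). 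The one point worth making explicit in a final write-up is that sign check; otherwise the argument is the paper's own.
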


\begin{proof}
We first suppose that $\calG_p'$ is obtained from
    $\calG_p$ by expanding a vertex $v$ of degree $2$, with the same
    notation as Figure~\ref{fig:contraction}. We fix acceptable
    perfect matchings $\Ms_0$ (resp. $\Ms'_0$) on $\calG_p$
    (resp. $\calG_p'$); note that they may differ only at a finite
    number of edges. Then the same argument as in
    Proposition~\ref{prop:contraction} give
  \begin{equation*}
    Z_\infty(\calG_p',a,\varphi,\Ms'_0) = (a_2-a_1) \ \frac{w_\infty(\Ms_0)}{w_\infty(\Ms'_0)} Z_\infty(\calG_p,a,\varphi,\Ms_0).
  \end{equation*}
  Doing the same for $a^{-1}$ and taking the ratio, we get after
  computations analogous to \eqref{eq:ratiozinf}
  \begin{equation*}
    -a_1a_2 \biggl(\,\frac{\prod_{wb \in \Ms_0 \setminus
        \Ms'_0}(-a_{f(w,b)}a_{f(b,w)})}{\prod_{wb \in \Ms'_0 \setminus
        \Ms_0}(-a_{f(w,b)}a_{f(b,w)})}\biggr)
    \frac{Z_\infty(\calG_p',a^{-1},\varphi,\Ms'_0)}
    {Z_\infty(\calG_p',a,\varphi,\Ms'_0)} =
    \frac{Z_\infty(\calG_p,a^{-1},\varphi,\Ms_0)}
    {Z_\infty(\calG_p,a,\varphi,\Ms_0)}.
  \end{equation*}
  After putting in the prefactors, we get that
  $Y_\infty(\calG_p,a) = Y_\infty(\calG_p',a)$ is equivalent to
  \begin{equation*}
    -a_1a_2 \biggl(\,\frac{\prod_{wb \in \Ms_0 \setminus
          \Ms'_0}(-a_{f(w,b)}a_{f(b,w)})}{\prod_{wb \in \Ms'_0 \setminus
          \Ms_0}(-a_{f(w,b)}a_{f(b,w)})}\biggr)
    \biggl(\,\prod_{f\in
        \calF_p}a_f^{\frac12({d_{\calG_p}(f) - d_{\calG_p'}(f)}) - d_{\Ms_0}(f) +
        d_{\Ms'_0}(f)}\biggr) = 1.
  \end{equation*}
  To check this last equation, first note that $|\Ms'_0|=|\Ms_0|+1$,
  which implies that
  $\left[ |\Ms'_0 \setminus \Ms_0| \right]_2 \neq \left[|\Ms_0 \setminus \Ms'_0|\right]_2$, and this implies that the factors $-1$ cancel out. Then, the
  only faces that have a different degree in $\calG_p,\calG_p'$ are $a_1$ and
  $a_2$, and for these the factor $a_1a_2$ cancels
  $a_f^{\frac12({d_{\calG_p}(f) - d_{\calG_p'}(f)})}$. Finally, the ratio of
  products is equal to
  $\prod_{f\in \calF_p}a_f^{ d_{\Ms_0}(f) -d_{\Ms'_0}(f)}$, canceling out
  with the remaining part of the product over $\calF_p$.

  Now suppose that $\calG_p$ and $\calG_p'$ are related by
  a spider move, with the respective weight functions $a,a'$ satisfying the dSKP
  equation, with the notation of Figure~\ref{fig:urban}. Similarly,
  the same argument as Proposition~\ref{prop:urban} give
  \begin{align*} 
    \frac{Z_\infty(\calG_p,a^{-1},\varphi,\Ms_0)}
    {Z_\infty(\calG_p,a,\varphi,\Ms_0)} &=
    \frac{a_{0,1}a_{-1,0}a_{0,-1}a_{1,0}a_{0,0}}{a_{0,0}'}
    \\ &\phantom{=}  \cdot \biggl(\,\frac{\prod_{wb \in \Ms_0 \setminus
          \Ms'_0}(-a_{f(w,b)}a_{f(b,w)})}{\prod_{wb \in \Ms'_0 \setminus
          \Ms_0}(-a'_{f(w,b)}a'_{f(b,w)})}\biggr)
     \frac{Z_\infty(\calG_p',a'^{-1},\varphi',\Ms'_0)}
    {Z_\infty(\calG_p',a',\varphi',\Ms'_0)}. %
  \end{align*}
  Then one concludes exactly as in the previous case: the contributions of
  $\Ms_0,\Ms'_0$ cancel out with the prefactor.

  Successive applications of the previous two operations
    show that Equation~\eqref{eq:inv_inf} holds.
\end{proof}

\subsection{Proof of Theorem~\ref{theo:dskp_dimers}}\label{sec:proof_main_thm}

With the propositions of
Sections~\ref{sec:infinite} and \ref{sec:invariance} proved, the
argument now follows Speyer's Proof~I of the Main Theorem~\cite{Speyer}.
Consider a point $p=(i_0,j_0,k_0)$ of $\calL$ and the closed square
cone $\calC_p$ defined in Equation~\eqref{eq:defC}. Given a height
function $h$ recall the definition of $\calU_{h}$, the upper set
corresponding to $h$, defined in Equation~\eqref{eq:defU}.
The proof is by induction on
$\# \left( \calU_h \cap \calC_p \right)$, where $h$ is a height
function such that $p\in$ $\calU_h$.

$\bullet$ If $\# \left( \calU_h \cap \calC_p \right) = 1$ (its minimal
  value given that $p \in \calU_h$), then
  $\calU_h \cap \calC_p=\{p\}$. This implies that $h(i_0,j_0)=k_0-2$,
  and $h(i_0-1,j_0)=h(i_0+1,j_0)=h(i_0,j_0-1)=h(i_0,j_0+1)=k_0-1$; the
  values of $h$ at other points of $\Z^2$ do not affect the
  intersection $\calU_h \cap \calC_p$. Denote by $a$ the associated
  initial condition. Let us now return to the construction of the
  crosses-and-wrenches graph $G_p$, see
  Section~\ref{sec:crosses_and_wrenches}: the internal faces of $G_p$
  are indexed by points of $\mathring{\calC}_p\cap \calI$, that is by
  the unique vertex $(i_0,j_0,k_0-2)$ and has weight $a_{i_0,j_0}$;
  the open faces are indexed by points of $\calC_p\cap\calI$, that is
  by the four vertices $(i_0-1,j_0,k_0-1),\dots, (i_0,j_0+1,k_0-1)$,
  and have face weights $a_{i_0-1,j_0},\dots,a_{i_0,j_0+1}$. Then,
  using Proposition~\ref{prop:inf}, we know that
    \[ Y(G_p,a) = Y_\infty(\calG_p,a).\]
  where $\calG_p$ is the infinite graph shown in
  Figure~\ref{fig:induction} (left).
  \begin{figure}[tb]
    \centering
    \def\svgwidth{15cm} \begin{footnotesize}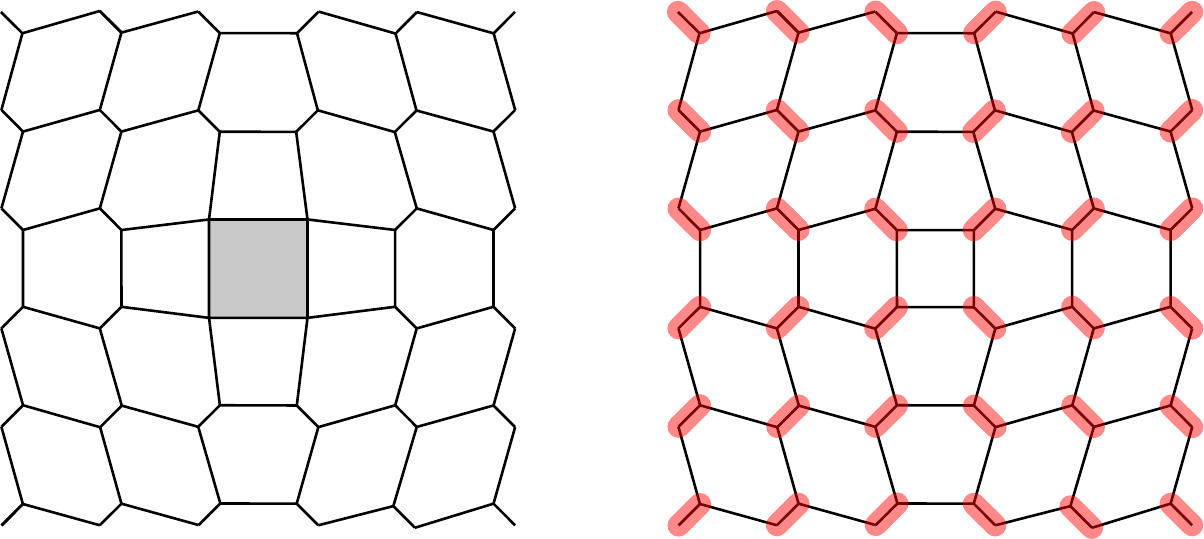\end{footnotesize}
    \caption{Left: the graph $\calG_p$ when $\# \left( \calU_h
        \cap \calC_p \right) = 1$; the finite graph with open faces
      $G_p$ is shown in gray. Right: the graph $\calG_p'$
      obtained after a spider move at $(i_0,j_0)$, and its unique
      acceptable perfect matching.}
    \label{fig:induction}
  \end{figure}
    We then apply a spider move at the face $a_{i_0,j_0}$ in
    $\calG_p$. This yields the graph $\calG_p'$ with weight function $a'$ equal to $a$ except at the center vertex where $a'_{i_0,j_0}$ is chosen so as to satisfy the dSKP
    relation~\eqref{equ:DSKP_relation} translated from $(0,0)$ to
  $(i_0,j_0)$.
    By Corollary~\ref{cor:inv_inf}, we get
    \[ Y(G_p,a) = Y_\infty(\calG_p',a'). \]
  On $\calG_p'$ there is only one
  acceptable perfect matching consisting
  of the four small edges; an explicit computation yields:
  \[
  Y_\infty(\calG_p',a')=a_{i_0,j_0}'.
  \]
  The proof is concluded by using
  Remark~\ref{rem:DSKP_relation_recurrence} to note that
  $a_{i_0,j_0}'$ is also equal to $x(i_0,j_0,k_0)$ the solution at $p$
  of the dSKP recurrence with initial condition $a$.

$\bullet$ If $\#(\calU_h \cap \calC_p) > 1$, then as argued in
    \cite[Section 5.3]{Speyer} there exists $(i,j)$ such that
    $(i,j,h(i,j)) \in \calC_p$ and
    $h(i-1,j)=h(i+1,j)=h(i,j-1)=h(i,j+1)=h(i,j)+1$. In other words,
    $(i,j)$ is a ``local minimum'', and we can define a height
    function $h'$ that is equal to $h$ everywhere except at $(i,j)$,
    where $h'(i,j)=h(i,j)+2$. Then
    $\#(\calU_{h'} \cap \calC_p) = \#(\calU_h \cap \calC_p) - 1$, and
    $p \in \calU_{h'}$ (otherwise we would have
    $\#(\calU_h \cap \calC_p) = 1$). Denote by $G_p'$ the crosses-and-wrenches graph corresponding to $h'$, and by $a'$ the initial condition equal to $a$ everywhere except at the point $(i,j)$ where it is equal to the solution $x(i,j,h(i,j)+2)$ at $(i,j,h(i,j)+2)$ of the dSKP recurrence with initial condition $a$. By induction we have
    \[
    Y(G_p',a')=x(p),
    \]
    where $x(p)$ is the solution at $p$ of the dSKP recurrence with
    initial condition $a'$ which, by our choice of initial condition
    $a'$, is equal to the solution at $p$ of the dSKP recurrence with
    initial condition $a$.  The proof is thus concluded if we can
    prove that $Y(G_p,a)=Y(G_p',a'),$ which by
      Proposition~\ref{prop:inf} is equivalent to
      $Y_\infty(\calG_p,a)=Y_\infty(\calG_p',a')$ where
      $\calG_p'$ is the infinite completion corresponding to
      $h'$.
    \begin{figure}[tb]
      \centering
      \def\svgwidth{6cm} 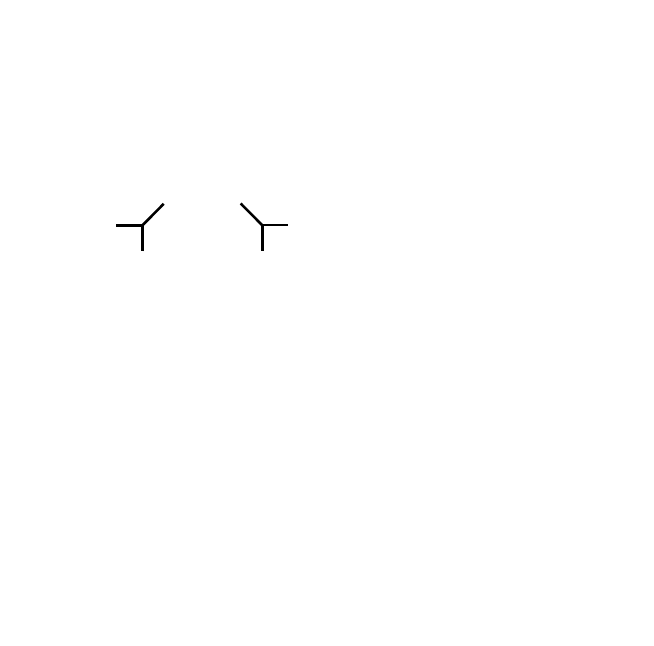
      \caption{Example local minimum of $h$. A spider move and
        three contractions are applied to get to the cross-wrenches
        graph of $h'$.}
      \label{fig:urban3}
    \end{figure}

    To see why this last equation holds, note that the effect of
    going from $h$ to $h'$ on the cross-wrenches graphs
    is exactly to perform a spider move at the face $(i,j)$,
    followed by contractions of vertices of degree $2$ around the
    newly created square; see Figure~\ref{fig:urban3} for an
    example. By Remark~\ref{rem:DSKP_relation_recurrence}, when
    performing a spider move, the corresponding weight functions
    $a$ and $a'$ satisfy the dSKP equation~\eqref{equ:DSKP_relation}
    translated from $(0,0)$ to $(i,j)$. Thus, by
    Corollary~\ref{cor:inv_inf}, we
    conclude that indeed
    $Y_\infty(\calG_p,a)=Y_\infty(\calG_p',a')$.
    \qed

\section{dSKP: combinatorial solution II - trees and forests}\label{sec:comb_sol_II}

Let us recall, see also Remark~\ref{rem:Aztec_diamond}, that in the case of the Aztec diamond of size $k$, oriented dimer configurations that occur in the expansion of $Z(A_k,a,\vphi)$ are not in one-to-one correspondence with the monomials in $a$. Several oriented dimer configurations cancel.

The goal of this section is to give a combinatorial interpretation of
the oriented dimer partition function $Z(A_k,a,\vphi)$, \emph{i.e.},
we introduce combinatorial objects, called \emph{complementary trees
  and forests}, that are in bijection with monomials in the expansion
of $Z(A_k,a,\vphi)$. Actually, the setting where this result can be
obtained is more general that that of the Aztec diamond. In
Section~\ref{sec:comb_sol_II_1} below, we define this setting and
state our main result; it is then proved in
Section~\ref{sec:comb_sol_II_2}; in Section~\ref{sec:singularity}, we
deduce the Aztec diamond applications, and use them to prove Devron
properties.

\subsection{Complementary trees and forests}\label{sec:comb_sol_II_1}

Consider a simple quadrangulation $\tilde{G}$ of the sphere. Since $\tilde{G}$ has faces of degree four (even), it is bipartite and its vertices can be colored in white and black. The set of faces is written as $F$, and the notation $f$ is used for a face of $F$ as well as for the corresponding dual vertex. Assume that faces are equipped with weights $(a_f)_{f\in F}$. For the sequel, let us emphasize the following easy fact: every directed edge $(w,b)$ has a unique face on the left and on the right.

Further suppose that the quadrangulation $\tilde{G}$ has two marked
adjacent vertices $w_r,b_r$, and denote its vertex set by
$(W\cup\{w_r\})\sqcup (\tilde{B}\cup \{b_r\})$.
Without loss of generality, assume that $|W|\leq |\tilde{B}|$, otherwise exchange the black and white colors, see Figure~\ref{fig:G_Gdual} (left).

Let $G^\bullet$, resp. $G^\circ$, be the graph consisting of the diagonals of the quadrangles of $\tilde{G}$ joining black, resp. white, vertices. Note that $G^\bullet$ and $G^\circ$ are dual graphs. We consider $G^\bullet$ and $G^\circ$ as embedded in the plane in such a way that $w_r$ corresponds to the outer face of $G^\bullet$ and $b_r$ is a vertex on the boundary of the outer face of $G^\circ$; the vertex $w_r$ is represented in a spreadout way, see Figure~\ref{fig:G_Gdual} (center).

Consider a subset $B\subset \tilde{B}$ of black vertices, and let $G$ be the graph obtained from $\tilde{G}$ by removing the black vertices $(\tilde{B}\cup\{b_r\})\setminus B$, the white vertex $w_r$ and their incident edges; the vertex set of $G$ is $W\sqcup B$, see Figure~\ref{fig:G_Gdual} (right).

\begin{figure}[ht]
\begin{minipage}[b]{0.33\linewidth}
\begin{center}
\begin{overpic}[width=5.2cm]{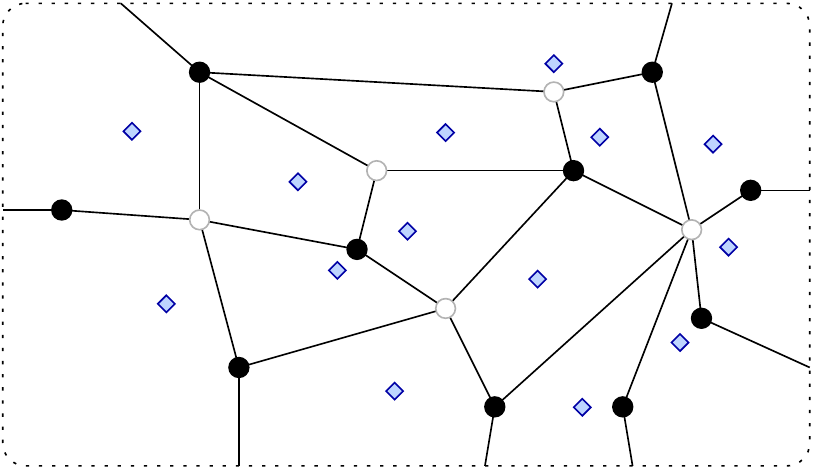}
\put(7,25){\scriptsize $b_r$}
\put(3,58){\scriptsize $w_r$}
\end{overpic}
\end{center}
\end{minipage}
\begin{minipage}[b]{0.33\linewidth}
\begin{center}
\begin{overpic}[width=5.2cm]{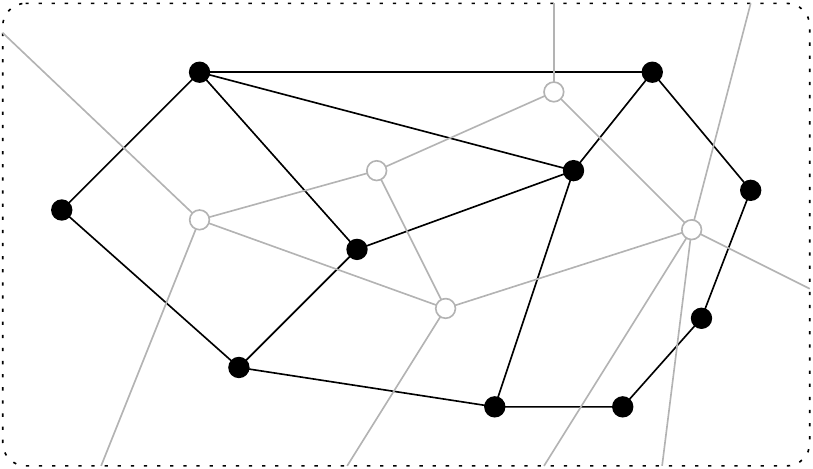}
\put(7,25){\scriptsize $b_r$}
\put(3,58){\scriptsize $w_r$}
\end{overpic}
\end{center}
\end{minipage}
\begin{minipage}[b]{0.33\linewidth}
\begin{center}
\begin{overpic}[width=5.2cm]{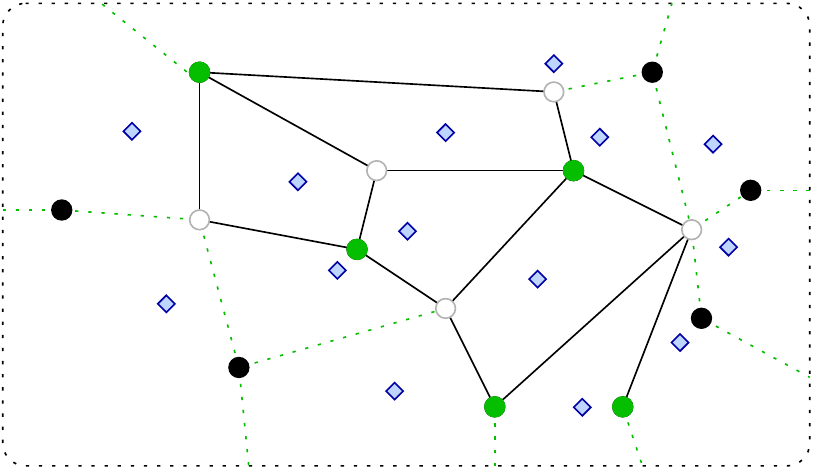}
\put(7,25){\scriptsize $b_r$}
\put(3,58){\scriptsize $w_r$}
\end{overpic}
\end{center}
\end{minipage}

\caption{Left: the quadrangulation $\tilde{G}$ with marked vertices $w_r,b_r$ ($w_r$ is represented in a spreadout way by a dotted line) and the corresponding faces (pictured with diamonds). Center: the graph $G^\bullet$ and its dual graph $G^\circ$. Right: a subset of black vertices $B\subset\tilde{B}$ (pictured as green bullets) and the associated graph $G$ (full edges), dotted edges are those of $\tilde{G}$ that have been removed.}
\label{fig:G_Gdual}
\end{figure}

The graph $G$ plays the role of the graph with open faces of Section~\ref{sec:crosses_and_wrenches}.
Let us recall the definition of the matrix $K=(K_{w,b})$: it is the weighted adjacency matrix of $G$, whose rows are indexed by vertices of $W$, columns by those of $B$, and whose non zero coefficients correspond to edges of $G$. Observing that edges of $G$ are also edges of $\tilde{G}$, non-zero entries are given by, for every edge $wb$ of $G$,
\begin{equation}
  \label{eq:defKtrees}
  K_{w,b}=a_{f(w,b)}-a_{f(b,w)},
\end{equation}
where $f(w,b)$, resp.~$f(b,w)$ denotes the face on the right of $(w,b)$, resp.~$(b,w)$ in $\tilde{G}$.

For the remainder of this section, suppose that $|B|=|W|$ to ensure that the matrix $K$ is square. Our main result is a combinatorial interpretation of $\det K$ establishing a bijection between combinatorial objects and monomials of $\det K$ in the $a$ variables. In order to state it, we need the following definitions.

Given a subset of vertices $\{b_1,\dots,b_\ell\}$ of
$G^\bullet$, referred to as \emph{root vertices} or \emph{root
  set}, a \emph{directed spanning forest rooted at $\{b_1,\dots,b_\ell\}$} is a collection of $\ell$ connected
components, all of which are subsets of directed edges, such that the $i$-th component: contains the vertex $b_i$, is a tree, \emph{i.e.}, has no cycle, and has edges oriented towards the root vertex $b_{i}$; moreover, the union of the $\ell$ components covers all vertices of $G^\bullet$. Note that the $i$-th component is allowed to be reduced to the point $b_{i}$.
If the root set is reduced to a single vertex $\{b_1\}$, we speak of a \emph{directed spanning tree rooted at the vertex $b_1$}. From now on, we will omit the term ``directed'' in the definitions. We will also use the fact that a spanning forest of $G^\bullet$ rooted on $\ell$ vertices has $|\tilde{B}|+1-\ell$ edges.

Let $\F$ be the set of pairs $(\Ts,\Fs)$ of edge configurations of $G^\bullet$ such that:
\begin{itemize}
 \item[$\bullet$] $\Ts$ is a spanning tree of $G^\bullet$ rooted at $b_r$, $\Fs$ is a spanning forest of $G^\bullet$
 rooted at $(\tilde{B}\cup\{b_r\})\setminus B$,
 \item[$\bullet$] the edge intersection of $\Ts$ and $\Fs$ is empty,
\end{itemize}
We refer to $\F$ as the set of \emph{complementary tree/forest configurations of $G^\bullet$ (rooted at $b_r$,\\
$(\tilde{B}\cup\{b_r\})\setminus B$)}, omitting the bracketed part whenever no confusion occurs, see Figure~\ref{fig:Compl_trees} for an example.

\begin{figure}[tb]
\begin{center}
\begin{overpic}[width=6cm]{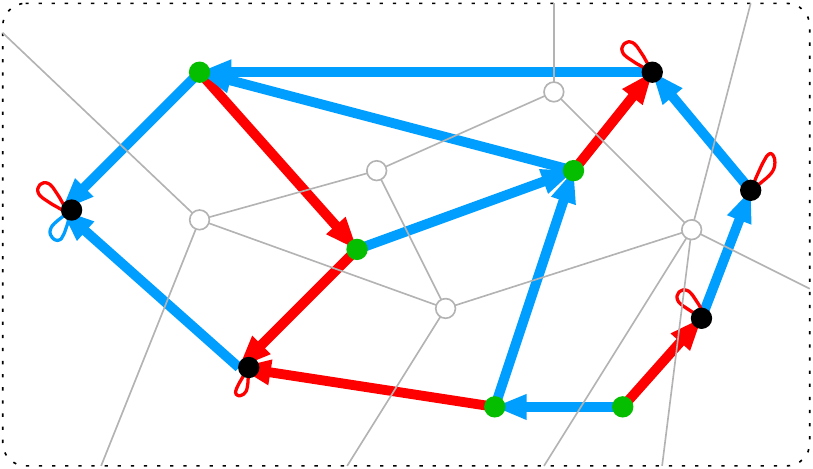}
\put(7,25){\scriptsize $b_r$}
\put(3,58){\scriptsize $w_r$}
\end{overpic}
\end{center}
\caption{A pair $(\Ts,\Fs)$ of complementary tree (blue) /forest (red) of $G$ rooted at $b_r$,
$(\tilde{B}\cup\{b_r\})\setminus B$; the set $B$ consists of the full green bullets. Roots are pictured with a loop.}
\label{fig:Compl_trees}
\end{figure}

\begin{remark}
All edges of the graph $G^\bullet$ are covered by the superimposition $\Ts\cup\Fs$ of both configurations. Indeed, since $\Ts,\Fs$ are disjoint, we have
\[
|\Ts\cup \Fs|=|\Ts|+|\Fs|=(|\tilde{B}|+1)-1+(|\tilde{B}|+1)-(|\tilde{B}|+1-|B|)=|\tilde{B}|+|W|,
\]
where in the last equality we used that $|B|=|W|$. Now by Euler's formula we know that this is equal to the number of edges of $G^\bullet$ (or $G^\circ$).
\end{remark}

Note that every (directed) edge $\vec{e}$ of $G^\bullet$ crosses a unique face $f$ of $F$, which we denote by $f_{\vec{e}}$. We are now ready to state the main result of this section, see Equation~\eqref{equ:def_sign} for a precise definition of $\sign(\Ts,\Fs)$, a function taking values in $\{-1,1\}$.

\begin{theorem}\label{thm:comb_int_II}
For any Kasteleyn orientation $\vphi$, the oriented dimer partition function is the following sum:
\begin{equation*}
Z(G,a,\vphi)=\pm
\sum_{(\Ts,\Fs)\in\F} \sign(\Ts,\Fs) \prod_{\vec{e}\in \Fs} a_{f_{\vec{e}}}.
\end{equation*}
Moreover, there is a bijection between terms in the sum on the right-hand-side and monomials of $Z(G,a,\vphi)$ in the variables $a$.
\end{theorem}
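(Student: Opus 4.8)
The plan is to deduce the identity from Proposition~\ref{prop:oriented_dimers_adjacency_matrix}, which already gives $Z(G,a,\vphi)=\pm\det K$ with $K$ the matrix of~\eqref{eq:defKtrees}, and then to read off the combinatorics of $\det K$ through a signed bijection with $\F$. First I would expand
\[
\det K=\sum_{\sigma}\sgn(\sigma)\prod_{w\in W}\bigl(a_{f(w,\sigma(w))}-a_{f(\sigma(w),w)}\bigr),
\]
the sum being over bijections $\sigma\colon W\to B$; a term is nonzero only when every $w\sigma(w)$ is an edge of $G$, \emph{i.e.} when $\sigma$ is a perfect matching $\Ms$ of $G$ as in~\eqref{equ:partition_function_2}. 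Expanding each binomial, a monomial of $\det K$ is indexed by a \emph{decorated matching}: a matching $\Ms$ together with a choice, for each edge $w\sigma(w)$, of one of its two adjacent faces $f$, contributing $a_{f}$ with a sign $\pm 1$ recording whether $f$ lies to the right or to the left of $(w,\sigma(w))$.

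Next I would turn each decorated matching into an oriented subgraph of $G^\bullet$. The black diagonal of a chosen face $f$ adjacent to $w\sigma(w)$ is an edge of $G^\bullet$ incident to $b=\sigma(w)$; orienting it out of $b$ produces exactly one outgoing edge at each vertex of $B$ and none at the vertices of $(\tilde{B}\cup\{b_r\})\setminus B$, and this edge crosses precisely $f$, so it carries the weight $a_{f}$. Collecting these edges yields a candidate forest $\Fs$, and I would set $\Ts=E(G^\bullet)\setminus\Fs$; the edge counts recalled just before the theorem then match those of a spanning tree rooted at $b_r$ and of a spanning forest rooted at $(\tilde{B}\cup\{b_r\})\setminus B$. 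The crucial point is that such a functional configuration is a genuine rooted forest exactly when it is acyclic. When $\Fs$ contains a directed cycle, reversing a canonically chosen cycle is a fixed-point-free, sign-reversing involution on decorated matchings that leaves the set of crossed faces, hence the monomial, unchanged; these paired terms cancel, exactly as in the clockwise-square cancellation of Remark~\ref{rem:Aztec_diamond}, and the same mechanism kills the non-squarefree monomials (which come from repeated faces). Only the acyclic, \emph{i.e.} forest, configurations survive.

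The two steps requiring real work, and where I expect the main difficulty, are the reconstruction of $\Ms$ from the undirected edge-partition $(\Ts,\Fs)$ and the sign. For the former I would invoke a Temperley-type correspondence between perfect matchings of $G$ and spanning trees of $G^\bullet$ rooted at $b_r$: it is the complementary tree $\Ts$ that pins down $\Ms$, while the orientations of both $\Ts$ and $\Fs$ are then forced by their rootings. This is what upgrades the construction to an honest bijection between the surviving decorated matchings and $\F$. For the sign I would track how $\sgn(\sigma)$, the right/left face signs, and the Kasteleyn signs of $\vphi$ combine, and verify that their product equals $\sign(\Ts,\Fs)$; since the ratio is independent of $\vphi$ up to a global sign, it suffices to fix one convenient orientation and to propagate the identity across the bijection, for instance by checking that both sides are invariant under an elementary local move on $(\Ts,\Fs)$ that swaps one tree edge for one forest edge along a fundamental cycle.

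Finally, the monomial bijection is essentially free once this is in place: each edge of $G^\bullet$ is the diagonal of a unique face, so $\vec e\mapsto f_{\vec e}$ descends to a bijection between edges of $G^\bullet$ and faces of $F$; hence $\prod_{\vec e\in\Fs}a_{f_{\vec e}}$ is a squarefree monomial whose support is precisely the undirected edge set of $\Fs$, and since the rooting forces the orientation, the monomial determines $(\Ts,\Fs)$ uniquely. Therefore distinct configurations of $\F$ contribute distinct monomials, the surviving sum is cancellation-free, and these monomials are exactly those of $Z(G,a,\vphi)=\pm\det K$, which is the statement of Theorem~\ref{thm:comb_int_II} to be proved in Section~\ref{sec:comb_sol_II_2}. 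The hardest part remains the simultaneous control of the bijection and of the sign function $\sign(\Ts,\Fs)$, for which the determinant/Temperley framework and the cycle-reversal involution are the key tools.
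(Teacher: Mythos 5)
Your starting point is sound and partially overlaps with the paper: you expand $Z(G,a,\vphi)=\pm\det K$ (Proposition~\ref{prop:oriented_dimers_adjacency_matrix}) over ``decorated matchings'', which are exactly the oriented dimer configurations, and your closing argument for the monomial bijection is the same as the paper's. Even your cycle-reversal can be implemented: since both white vertices of a quadrangle are adjacent to both of its black vertices, one can rotate the white partners along a directed cycle $b_1\to\cdots\to b_n\to b_1$ of the induced configuration on $G^\bullet$ (match $b_{i+1}$ to the old partner $w_i$ of $b_i$), which preserves the monomial and flips the sign by $(-1)^{n-1}\cdot(-1)^n=-1$. The genuine gap is in what the fixed points of this involution are. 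They are decorated matchings whose induced configuration on $G^\bullet$ is acyclic, \emph{i.e.}, a spanning forest $\Fs$ rooted at $(\tilde{B}\cup\{b_r\})\setminus B$ --- with \emph{no} constraint that the complement $E(G^\bullet)\setminus\Fs$ is a spanning tree rooted at $b_r$, and with possibly several decorated matchings inducing the same $\Fs$ (the fiber over $\Fs$ is the set of perfect matchings of $G$ compatible with the face assignment $b\mapsto f_b$, which need not be a singleton). So after your cancellation you must still prove that the signed fiber count is $0$ whenever the complement of $\Fs$ fails to be a spanning tree, and $\pm 1$ when it is; this is where the tree half of the definition of $\F$ has to enter, and your sketch never produces it.

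The tool you invoke to close this hole does not exist: there is no ``Temperley-type correspondence between perfect matchings of $G$ and spanning trees of $G^\bullet$ rooted at $b_r$''. The Kenyon--Propp--Wilson bijection used in the paper relates pairs of dual spanning trees to perfect matchings of the \emph{double graph} $\GD_r$, whose vertex set includes the face vertices $F$, not to matchings of $G$ itself. Already in the running example ($A_1$), $G$ has $2$ perfect matchings, while $G^\bullet$ has $8$ spanning trees and $6$ complementary pairs, so no such bijection can hold, and $\Ts$ cannot ``pin down'' $\Ms$ this way. The paper's proof is organized precisely to avoid this white-vertex bookkeeping: Proposition~\ref{prop:matrix_relation_gen} (proved via KPW on $\GD_r$ and a reference matching, with two determinant-$\pm1$ factors) gives $\det K=\pm\det\begin{pmatrix}C(1)^{\tilde{B}}& C(a)^{B}\end{pmatrix}$ (Corollary~\ref{cor:matrix_relation_gen}); in that larger determinant the white vertices of $G$ are gone, the expansion runs over pairs $(\Ms_1,\Ms_2)\in\Md$ that encode the tree part and the forest part \emph{simultaneously}, each face weight appears at most once automatically, and cycle reversal applied to each of $\bar{\Ms}_1,\bar{\Ms}_2$ cancels everything except the complementary pairs of $\F$, with the sign \eqref{equ:def_sign}. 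To repair your route you would in effect have to reprove this matrix identity in bijective language, and you would still need to identify your accumulated signs with $\sign(\Ts,\Fs)$, a verification your proposal defers.
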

The proof of this theorem is a consequence of intermediate results that are interesting in their own respect. This is the subject of the next section.

\subsection{Proof of Theorem~\ref{thm:comb_int_II}}\label{sec:comb_sol_II_2}

Recall that by Proposition~\ref{prop:oriented_dimers_adjacency_matrix}, up to a sign, the oriented dimer partition function $Z(G,a,\vphi)$ is equal to $\det(K)$, where $\vphi$ is any choice of Kasteleyn orientation.

Theorem~\ref{thm:comb_int_II} is a consequence of Proposition~\ref{prop:matrix_relation_gen} proving a matrix relation, its immediate Corollary~\ref{cor:matrix_relation_gen} and a combinatorial argument. As prerequisites, we need the generalized form of Temperley's bijection~\cite{Temperley} due to Kenyon, Propp and Wilson;~\cite{KPW}, and a few notation used in the statement of Proposition~\ref{prop:matrix_relation_gen}.

\paragraph{Extended Temperley's bijection~\cite{KPW}.} Given a spanning tree of $G^\bullet$ rooted at $b_r$, the dual edge configuration, consisting of the dual edges of the edges absent in the tree is a spanning tree of $G^\circ$; let us orient it towards the root vertex $w_r$. This pair is referred to as a \emph{pair of dual spanning trees of $G^\bullet,G^\circ$ (rooted at $b_r,w_r$)}. Note the difference between complementary trees/forests that live on the same graph $G^\bullet$, and pairs of dual spanning trees that live on $G^\bullet$, $G^\circ$. Note also that, given a spanning tree of $G^\bullet$, its complementary configuration might not be a spanning forest, whereas its dual configuration will always be a tree.

The \emph{double graph}, denoted by $\GD$ is the graph consisting of the diagonals of the quadrangles of $\tilde{G}$ with additional vertices at the crossings of the diagonals, see Figure~\ref{fig:Temperley} (left: full and dotted edges). Vertices of $\GD$ are of three types: black vertices $B\cup\{b_r\}$
 of $G^\bullet$, white vertices $W\cup\{w_r\}$ of $G^\circ$ and additional vertices corresponding to faces of $\tilde{G}$ labeled as $F$. The graph $\GD$ is bipartite with vertices split as
 $(W\cup \tilde{B}\cup \{w_r,b_r\})\sqcup F$. Let $\GD_r$ be the graph obtained from $\GD$ by removing the vertices $w_r,b_r$ and their incident edges, see Figure~\ref{fig:Temperley} (left: full edges).

Recall that by Euler's formula, we have $|W|+|\tilde{B}|=|F|$. By~\cite{KPW} perfect matchings of $\GD_r$ are in bijection with pairs of dual spanning trees of $G^\bullet$, $G^\circ$ rooted at $b_r,w_r$.
Given a perfect matching of $\GD_r$, the pair of dual spanning trees is obtained by adding the half-edge in the prolongation of each dimer edge, thus giving an edge of $G^\bullet$ or $G^\circ$, and orienting it in the direction of the prolongation. This procedure is naturally reversible, see Figure~\ref{fig:Temperley}. We refer to these constructions as the \emph{Temperley} and \emph{reverse Temperley tricks}.

\begin{figure}[ht]
\begin{minipage}[b]{0.49\linewidth}
\begin{center}
\begin{overpic}[width=6cm]{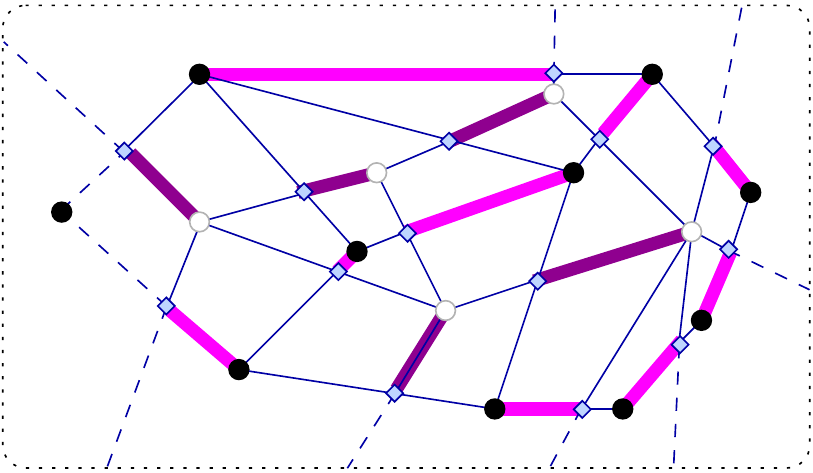}
\put(7,25){\scriptsize $b_r$}
\put(3,58){\scriptsize $w_r$}
\end{overpic}
\end{center}
\end{minipage}
\begin{minipage}[b]{0.49\linewidth}
\begin{center}
\begin{overpic}[width=6cm]{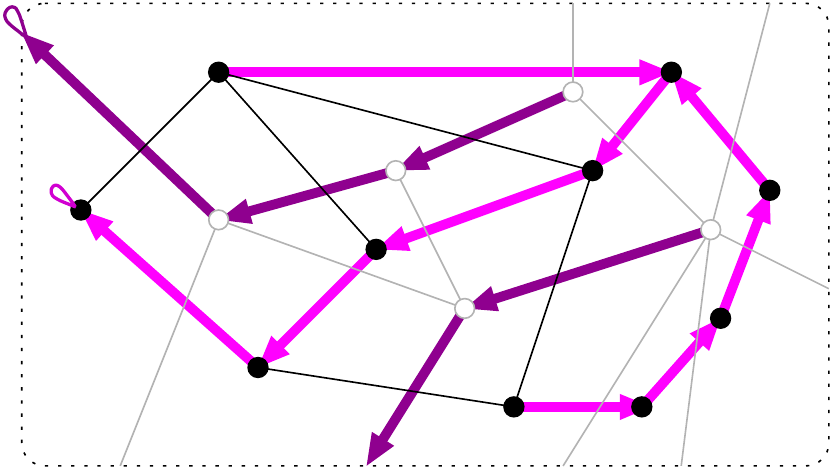}
\put(7,25){\scriptsize $b_r$}
\put(3,58){\scriptsize $w_r$}
\end{overpic}
\end{center}
\end{minipage}
\caption{Temperley's bijection~\cite{KPW}. Left: a perfect matching of
  $\GD_r$; its restriction to the light magenta edges corresponds to
  the non-zero coefficients of the matrix $M$. Right: the
  corresponding pair of dual spanning trees of $G^\bullet,G^\circ$
  rooted at $b_r,w_r$.
}
\label{fig:Temperley}
\end{figure}

\paragraph{Definition of the matrix $M$.}
Fix a pair of \emph{reference} dual spanning trees rooted at $b_r,w_r$, and the corresponding perfect matching of $\GD_r$. Let $M$ be the matrix whose rows are indexed by $\tilde{B}$, columns by $F$, whose non-zero coefficients are equal to 1 and correspond to the restriction of the dimer configuration to vertices of $\tilde{B}$, see Figure~\ref{fig:Temperley} (left).

\paragraph{Definition of the matrix $C(a)$.} Recall that $B$ is a subset of vertices of $\tilde{B}$. For the moment, we do not assume that $|B|=|W|$.
Let $C(a)$ be the weighted adjacency matrix of $\GD$, with rows indexed by vertices of $F$, columns by vertices of $(W\cup \tilde{B}\cup\{w_r,b_r\})$, and whose non-zero coefficients are given by, for every edge $fw$, resp. $fb$, of $\GD$,
\begin{equation}\label{equ:choice_sign}
C(a)_{f,w}=\pm a_f, \, C(a)_{f,b}=\pm a_f;
\end{equation}
the signs of coefficients are chosen so that, when going counterclockwise around each vertex $f$, two consecutive edges $fb$, $fw$ have the same sign and two consecutive edges $fw,fb$ have opposite signs, see Figure~\ref{fig:C_matrix}.

\begin{figure}[tb]
\begin{center}
\begin{overpic}[width=9cm]{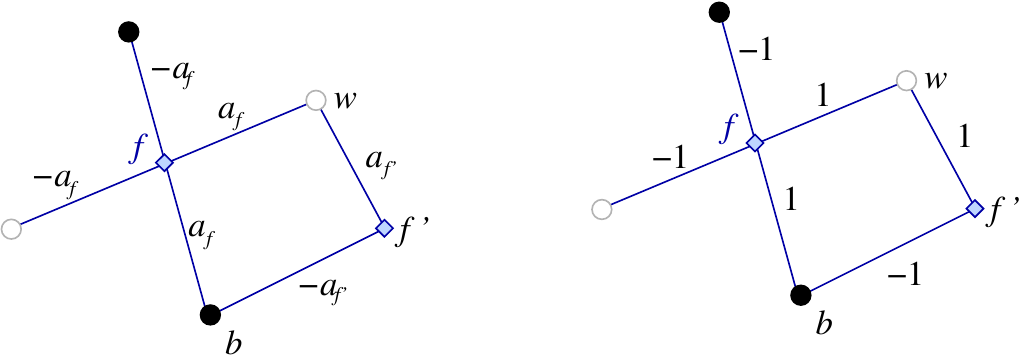}
\end{overpic}
\end{center}
\caption{Weights of the matrix $C(a)$ (left), $C(1)$ (right).}
\label{fig:C_matrix}
\end{figure}

Let $C(1)$ be the matrix $C(a)$ in the case where all faces weights $(a_f)_{f\in F}$ are equal to 1.
Let $C(1)^t_{W}$ be the transpose of $C(1)$ with rows restricted to $W$. In a similar way,
$C(1)^{\tilde{B}}$ is the matrix $C(1)$ with columns restricted to $\tilde{B}$, and $C(a)^{B}$ is the matrix $C(a)$ with columns restricted to $B$.

\begin{proposition}\label{prop:matrix_relation_gen}
The following matrix relation holds,
\begin{equation*}
\begin{blockarray}{cc}
     &\scriptstyle{F} \\
\begin{block}{c(c)}
  \scriptstyle{W}\! & C(1)^t_{W}\\
  \scriptstyle{\tilde{B}}\! & M\\
\end{block}
\end{blockarray}
\begin{blockarray}{ccc}
      &\scriptstyle{\tilde{B}}&\scriptstyle{B}\\
 \begin{block}{c(cc)}
   \scriptstyle{F} & C(1)^{\tilde{B}}& C(a)^{B}\\
 \end{block}
\end{blockarray} =
\begin{blockarray}{ccc}
      &\scriptstyle{\tilde{B}}&\scriptstyle{B}\\
 \begin{block}{c(cc)}
   \scriptstyle{W} & 0 & K\\
   \scriptstyle{\tilde{B}} & \bigstar & \lozenge\\
 \end{block}
\end{blockarray}.
\end{equation*}
Moreover,
\[
\det\begin{pmatrix}
    C(1)^t_{W}\\
    M
    \end{pmatrix}
=\pm 1, \quad \det(\bigstar)=\pm 1.
\]
\end{proposition}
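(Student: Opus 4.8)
The plan is to verify the asserted matrix identity block by block, and then to read off the two determinants from the combinatorics of the reference matching through the extended Temperley bijection. Write $\Ms_0$ for the reference perfect matching of $\GD_r$ defining $M$, so that $M_{b,f}=1$ exactly when $\{b,f\}\in\Ms_0$, and note $C(a)_{f,b}=a_f\,C(1)_{f,b}$. The $(w,b')$ entry of $C(1)^t_W\,C(1)^{\tilde B}$ (resp.\ of $C(1)^t_W\,C(a)^B$) equals $\sum_f C(1)_{f,w}C(1)_{f,b'}$ (resp.\ $\sum_f C(1)_{f,w}C(1)_{f,b'}a_f$), the sum running over the face-vertices $f$ of $\GD$ adjacent to both $w$ and $b'$. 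Such an $f$ is a quadrangle of $\tilde G$ having $w,b'$ as corners, so the sum is empty unless $wb'$ is an edge of $\tilde G$, in which case there are exactly the two bordering quadrangles $f(w,b'),f(b',w)$. The key point is the local sign rule of \eqref{equ:choice_sign}: since the two quadrangles lie on opposite sides of $wb'$, they traverse the pair $w,b'$ in opposite counterclockwise orders around their centers, so $C(1)_{f(w,b'),w}C(1)_{f(w,b'),b'}$ and $C(1)_{f(b',w),w}C(1)_{f(b',w),b'}$ have opposite signs. Hence $\sum_f C(1)_{f,w}C(1)_{f,b'}=0$, giving the top-left block $0$, while $\sum_f C(1)_{f,w}C(1)_{f,b'}a_f=\pm(a_{f(w,b')}-a_{f(b',w)})$, which after fixing the overall per-face sign is exactly $K_{w,b'}$ from \eqref{eq:defKtrees}; this gives the top-right block $K$. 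The two bottom blocks $\bigstar=M\,C(1)^{\tilde B}$ and $\lozenge=M\,C(a)^B$ are taken as definitions, so the matrix relation follows.

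For $\det\bigl(\begin{smallmatrix}C(1)^t_W\\ M\end{smallmatrix}\bigr)=\pm1$ I would expand the determinant over bijections $\pi\colon W\sqcup\tilde B\to F$. A term is nonzero only if $\pi(b)=\Ms_0(b)$ for every $b\in\tilde B$ (forced by $M$) and $w\pi(w)$ is an edge of $\GD$ for every $w\in W$; equivalently, the pairs $\{v,\pi(v)\}$ form a perfect matching of $\GD_r$ agreeing with $\Ms_0$ on $\tilde B$. Under the extended Temperley bijection~\cite{KPW}, the restriction of a matching to the black vertices encodes precisely the spanning tree $T^\bullet$ of $G^\bullet$, which in turn determines its dual spanning tree of $G^\circ$ and hence the white part of the matching; therefore $\Ms_0$ is the unique matching agreeing with $\Ms_0$ on $\tilde B$. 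The determinant thus reduces to a single term, a product of entries each equal to $\pm1$, so it equals $\pm1$.

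For $\det(\bigstar)=\pm1$, observe that $\bigstar_{b,b'}=C(1)_{\Ms_0(b),b'}$, which is nonzero only when $b'$ is one of the two black corners of the quadrangle $\Ms_0(b)$, namely $b$ itself or its parent $\pa(b)$ in $T^\bullet$ (the diagonal through $\Ms_0(b)$ being the tree edge from $b$ toward $b_r$). Expanding over permutations $\sigma$ of $\tilde B$, a nonzero term requires $\sigma(b)\in\{b,\pa(b)\}$ for all $b$; any nontrivial cycle would force $\sigma(b)=\pa(b)$ along the cycle, contradicting that parent pointers strictly decrease tree-depth and so are acyclic. Hence $\sigma=\mathrm{id}$ is the only contribution and $\det(\bigstar)=\prod_b C(1)_{\Ms_0(b),b}=\pm1$.

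The main obstacle I anticipate is the sign bookkeeping in the first paragraph: checking that the rule in \eqref{equ:choice_sign} is self-consistent per face, and that with the counterclockwise convention the surviving term in the top-right block is $+a_{f(w,b')}$ for the face on the right of $(w,b')$ and $-a_{f(b',w)}$ for the one on the left, so that the block is exactly $K$ rather than $-K$. The two determinant arguments, by contrast, only use that the relevant entries are $\pm1$ together with the tree structure supplied by Temperley's bijection, and are therefore insensitive to these sign choices.
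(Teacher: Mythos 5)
Your proposal is correct and follows essentially the same route as the paper's proof: the blocks are verified entrywise using the two quadrangles adjacent to each edge $wb'$ and the counterclockwise sign rule (cancellation for the zero block, $a_{f(w,b')}-a_{f(b',w)}$ for the $K$ block), the first determinant is reduced to a single term via the permutation-expansion-as-matchings argument and the uniqueness coming from Temperley's bijection, and $\det(\bigstar)=\pm1$ follows because the only cycles compatible with the tree's parent structure are fixed points. The sign worry you flag at the end is in fact harmless: the products $C(1)_{f,w}C(1)_{f,b'}$ are invariant under flipping all signs at a face, so the counterclockwise convention alone pins down the block as exactly $K$, just as in the paper.
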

Before turning to the proof of this proposition, let us state an immediate corollary.

\begin{corollary}\label{cor:matrix_relation_gen}
If the subset $B$ of black vertices of $\tilde{B}$ is such that $|B|=|W|$, then
\begin{equation*}
\det K=\pm \det
\begin{pmatrix}
C(1)^{\tilde{B}}& C(a)^{B}
\end{pmatrix}.
\end{equation*}
\end{corollary}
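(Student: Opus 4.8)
The plan is to obtain the corollary by simply taking determinants on both sides of the matrix identity of Proposition~\ref{prop:matrix_relation_gen}; the only thing that requires attention is checking that the hypothesis $|B|=|W|$ makes all the relevant matrices square. First I would record the sizes: by Euler's formula $|W|+|\tilde{B}|=|F|$, so the left factor (with rows indexed by $W\sqcup\tilde{B}$ and columns by $F$) is a square matrix of size $|F|$; under the assumption $|B|=|W|$, the second factor (rows $F$, columns $\tilde{B}\sqcup B$) is also square of size $|\tilde{B}|+|B|=|F|$, and in the product matrix both the block $K$ (of size $|W|\times|B|=|W|\times|W|$) and the block $\bigstar$ (of size $|\tilde{B}|\times|\tilde{B}|$) are square.

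Next I would apply multiplicativity of the determinant to the identity of Proposition~\ref{prop:matrix_relation_gen}. The left-hand side factors as
\[
\det\begin{pmatrix} C(1)^t_{W} \\ M \end{pmatrix}\cdot\det\begin{pmatrix} C(1)^{\tilde{B}} & C(a)^{B}\end{pmatrix},
\]
and the first determinant is $\pm1$ by the proposition, so the left-hand side equals $\pm\det\begin{pmatrix} C(1)^{\tilde{B}} & C(a)^{B}\end{pmatrix}$. For the right-hand side I would use that the product matrix is anti-block-triangular, with a zero block in the top-left corner: swapping the two block-rows (of sizes $|W|$ and $|\tilde{B}|$) turns it into block-triangular form and yields
\[
\det\begin{pmatrix} 0 & K \\ \bigstar & \lozenge \end{pmatrix}=(-1)^{|W|\,|\tilde{B}|}\,\det(\bigstar)\,\det(K).
\]
Since $\det(\bigstar)=\pm1$, the right-hand side equals $\pm\det(K)$. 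Equating the two sides then gives $\pm\det\begin{pmatrix} C(1)^{\tilde{B}} & C(a)^{B}\end{pmatrix}=\pm\det(K)$, which is exactly the claimed identity up to the overall sign.

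There is essentially no obstacle here: all the real content is contained in Proposition~\ref{prop:matrix_relation_gen}, namely the matrix factorization itself together with the two unimodularity statements asserting that the first factor and $\bigstar$ both have determinant $\pm1$. The only points worth emphasizing are that the hypothesis $|B|=|W|$ is used twice — once to make the second factor square, and once to make $K$ square — and that we make no attempt to track the precise sign, since the statement is only claimed up to $\pm1$; in particular the sign $(-1)^{|W|\,|\tilde{B}|}$ from the block-row swap and the unknown signs of $\det\begin{psmallmatrix} C(1)^t_{W} \\ M \end{psmallmatrix}$ and $\det(\bigstar)$ are all absorbed into the $\pm$.
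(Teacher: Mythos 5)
Your proof is correct and is precisely the argument the paper intends when it calls the corollary ``immediate'': take determinants in the identity of Proposition~\ref{prop:matrix_relation_gen}, use multiplicativity together with the two unimodularity facts $\det\begin{pmatrix} C(1)^t_{W} \\ M \end{pmatrix}=\pm 1$ and $\det(\bigstar)=\pm 1$, and evaluate the anti-block-triangular right-hand side. Your dimension checks (both uses of $|B|=|W|$, via Euler's formula $|W|+|\tilde{B}|=|F|$) and the sign bookkeeping are exactly the details the paper leaves implicit.
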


\begin{proof}[Proof of Prosition~\ref{prop:matrix_relation_gen}]
Let us show the identity for the first block row. We consider $w$ a white vertex of $W$, and $b$ a black vertex of $\tilde{B}$, resp. $B$. If $w$ and $b$ are not adjacent in $\tilde{G}$, then
\[
(C(1)^t_W\, C(1)^{\tilde{B}})_{w,b}=0, \text{ resp. } (C(1)^t_W\, C(a)^{B})_{w,b}=0.
\]
Else, if $w$ and $b$ are adjacent in $\tilde{G}$, there are exactly two vertices $f$, $f'$ of $\GD$ that are adjacent to both $w$ and $b$ in $\GD$; let us say that $f$ is on the right of the directed edge $(w,b)$ and $f'$ on the left, see Figure~\ref{fig:C_matrix}.
When $b\in \tilde{B}$ (first block column), the matrix product is
\begin{equation*}
C(1)_{w,f} C(1)_{f,b}+C(1)_{w,f'} C(1)_{f',b}=+1-1=0,
\end{equation*}
using our convention for the choice of sign. In a similar way, when $b\in B$ (second block column), we have
\begin{equation*}
C(1)_{w,f} C(a)_{f,b}+C(1)_{w,f'} C(a)_{f',b}=a_f-a_{f'}=K_{w,b}.
\end{equation*}
Since we are interested in determinants, we do not need to care about the matrix $\lozenge$. We now describe the matrix $\bigstar$ and prove that its determinant is equal to $\pm 1$. We will compute it ``graphically'' by considering the matrix
as a weighted adjacency matrix of a graph. Recall that $M$ is the restriction to $\tilde{B}$ of the perfect matching corresponding to a pair of fixed dual spanning trees of $G^\bullet$, $G^\circ$. As a consequence, when computing
$\bigstar=M\cdot C(1)^{\tilde{B}}$ we have, for every vertices $b,b'$ in $\tilde{B}$,
\[
\bigstar_{b,b'}=
\begin{cases}
\pm 1 & \text{ if $b=b'$}\\
\pm 1 & \text{ if $(b,b')$ is a directed edge of the spanning tree of $G^\bullet$}\\
0 & \text{otherwise}
\end{cases}.
\]
Writing $\det(\bigstar)$ as a sum over permutations, which decompose as cycles, and noting that apart from the diagonal terms, we have no cycle in the graph corresponding to this matrix, we deduce that the only non-zero contribution to the determinant comes from the identity permutation; it is equal to the product of the diagonal terms, that is $\pm 1$.

We are left with proving that
\[
\det \begin{pmatrix}
    C(1)^t_{W}\\
    M
    \end{pmatrix}=\pm 1.
\]
Again we expand this determinant
as a sum over permutations and compute it graphically. Since the graph corresponding to this matrix is bipartite, non-zero terms in the expansion correspond to perfect matchings. Now, this graph is a subgraph of $\GD_r$, and recall that perfect matchings of $\GD_r$ are in one-to-one correspondence with pairs of dual spanning trees of $G^\bullet,G^\circ$ by Temperley's bijection~\cite{KPW}. But the submatrix $M$ is the restriction to $\tilde{B}$ of the perfect matching corresponding to a fixed reference pair of dual spanning trees of $G^\bullet,G^\circ$. This implies that the primal tree is fixed, and hence the dual too. As a consequence, there is only one non-zero term, corresponding to the perfect matching arising from the fixed pair of dual spanning trees of $G^\bullet,G^\circ$. Given that coefficients are all equal to $\pm 1$, this contribution is $\pm 1$.
\end{proof}

We now restrict to the case where $|B|=|W|$. Our goal is to prove Theorem~\ref{thm:comb_int_II} establishing a combinatorial interpretation of $\det K$, but we first need to precisely define
$\sign(\Ts,\Fs)$.

\paragraph{Definition of $\sign(\Ts,\Fs)$.} Denote by $\Md$ the set of pairs of (non perfect) matchings $(\Ms_1,\Ms_2)$ of $\GD_r$ such that: $\Ms_1$ joins every black vertex of $\tilde{B}$ to a vertex of $F$, $\Ms_2$ joins every black vertex of $B$ to a vertex of $F$, and the superimposition $\Ms_1\cup\Ms_2$ is such that every vertex of $F$ is incident to exactly one edge of $\Ms_1$ or $\Ms_2$, see Figure~\ref{fig:Compl_trees_1} (right) for an example.

Suppose that $|F|=\ell$, $|\tilde{B}|=m$ for some $1<m<\ell$. Label the vertices of $F$ as $\{f_1,\dots,f_\ell\}$, those of $\tilde{B}$ as $\{b_1,\dots,b_m\}$ and those of $B$ as $\{b_{m+1},\dots,b_\ell\}$, keeping in mind that $B\subset \tilde{B}$, so that vertices of $B$ receive two labels. Then, every pair $(\Ms_1,\Ms_2)$ of matchings of $\Md$ naturally yields a permutation $\sigma\in \S_\ell$ where, for every $1\leq j\leq \ell$, $b_j$ and $f_{\sigma(j)}$ is a matched edge of $\Ms_1$, resp. $\Ms_2$, if $1\leq j\leq m$, resp. $m+1\leq j\leq \ell$.

Consider a pair $(\Ts,\Fs)$ of complementary tree/forest of $G^\bullet$ rooted at $b_r, (\tilde{B}\cup\{b_r\})\setminus B$. Using the reverse Temperley trick on $(\Ts,\Fs)$ yields a pair of matchings $(\Ms_1,\Ms_2)$ of $\Md$, see Figure~\ref{fig:Compl_trees_1}.

\begin{figure}[tb]
\begin{minipage}[b]{0.49\linewidth}
\begin{center}
\begin{overpic}[width=6cm]{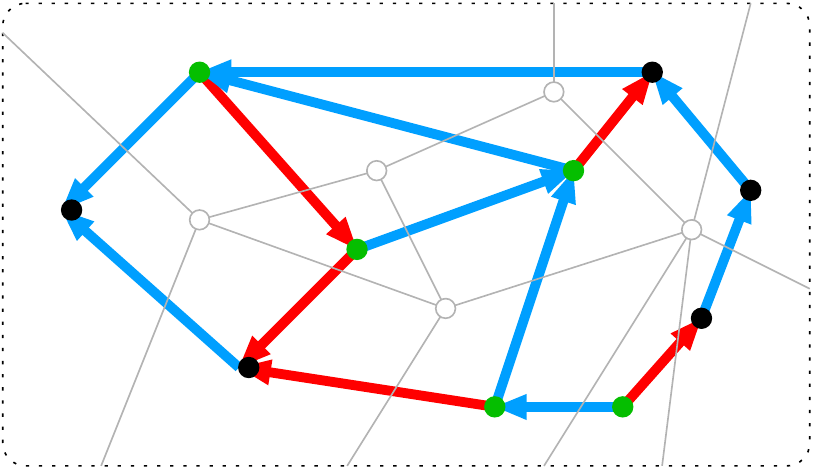}
\put(7,25){\scriptsize $b_r$}
\put(3,58){\scriptsize $w_r$}
\end{overpic}
\end{center}
\end{minipage}
\begin{minipage}[b]{0.49\linewidth}
\begin{center}
\begin{overpic}[width=6cm]{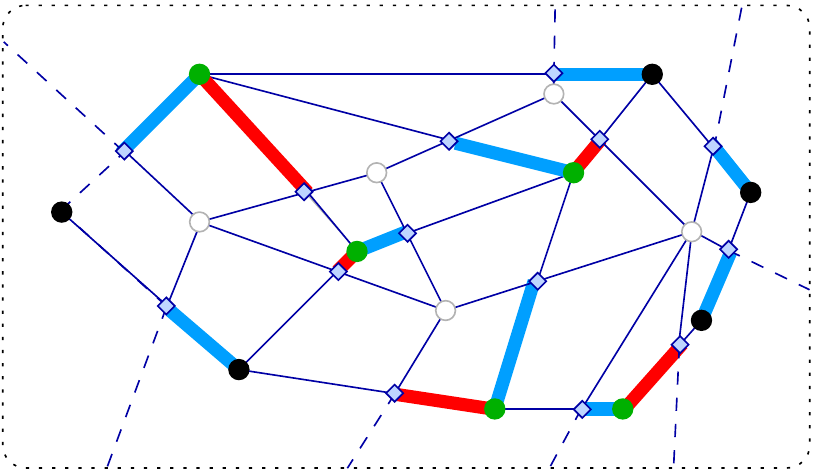}
\put(7,25){\scriptsize $b_r$}
\put(3,58){\scriptsize $w_r$}
\end{overpic}
\end{center}
\end{minipage}
\caption{Left: a pair $(\Ts,\Fs)$ of complementary tree (blue) / forest (red) of $G^\bullet$ rooted at $b_r$, $(\tilde{B}\cup\{b_r\})\setminus B$; the set $B$ consists of the green vertices; roots are not pictured in any specific way as was the case in Figure~\ref{fig:Compl_trees}. Right: corresponding pair of matchings
$(\Ms_1,\Ms_2)$ of $\Md$ obtained by Temperley's reverse trick.}
\label{fig:Compl_trees_1}
\end{figure}

Let $\sigma$ be the associated permutation of $\S_\ell$ as above.
Define the \emph{sign of $(\Ts,\Fs)$}, denoted $\sign(\Ts,\Fs)$, to be
\begin{equation}\label{equ:def_sign}
\sign(\Ts,\Fs)=\sgn(\sigma)C(1)_{f_{\sigma(1)},b_1}\dots C(1)_{f_{\sigma(\ell)},b_\ell}.
\end{equation}

We are now ready to prove Theorem~\ref{thm:comb_int_II}.

\paragraph{Proof of Theorem~\ref{thm:comb_int_II}.}
We use Corollary~\ref{cor:matrix_relation_gen} and graphically compute the determinant
\[
\det
\begin{pmatrix}
C(1)^{\tilde{B}}& C(a)^{B}
\end{pmatrix}.
\]
Non-zero terms in the permutation expansion of the determinant correspond to pairs of matchings of $\Md$. Consider such a pair $(\Ms_1,\Ms_2)$ and do the Temperley trick for $\Ms_1$. This gives a directed edge configurations $\bar{\Ms}_1$ of $G^\bullet$ such that every black vertex of $\tilde{B}$ has an outgoing edge, known as a \emph{directed cycle rooted spanning forest (CRSF) rooted at $b_r$}.
It consists of connected components covering all vertices of $G^\bullet$ each of which is: either a directed tree rooted at $b_r$ or a directed tree rooted on a simple cycle not containing $b_r$, where the cycle is oriented in one of the two possible directions. Note that there is exactly one directed tree component rooted at $b_r$ (which may consist of the vertex $b_r$ only). In a similar way, to the configuration $\Ms_2$ corresponds a directed cycle rooted spanning forest rooted at
$(\tilde{B}\cup\{b_r\})\setminus B$, consisting of connected components covering all vertices of $G^\bullet$ each of which is: either a directed tree rooted at a vertex of $(\tilde{B}\cup\{b_r\})\setminus B$, or a directed tree rooted on a simple cycle not containing any of the vertices of $(\tilde{B}\cup\{b_r\})\setminus B$. There is one directed tree component for each vertex of $(\tilde{B}\cup\{b_r\})\setminus B$ but it can be reduced to a single vertex. Note that since $G^\bullet$ is assumed to be simple, all cycles are of length greater or equal to 3.

Fix a pair of matchings $(\Ms_1,\Ms_2)$ of $\Md$, and suppose that $\bar{\Ms}_1$ contains a cycle of length $n\geq 3$. Then, consider the CRSF obtained from $\bar{\Ms}_1$ by reversing the orientation of the cycle. By using the reverse Temperley trick on $\bar{\Ms}_1$,
this yields a pair of matchings $(\Ms_1',\Ms_2)$ which also contributes to the determinant. Let us look at the quotient of the contributions of $(\Ms_1,\Ms_2)$ and $(\Ms_1',\Ms_2)$ to the determinant. The associated permutations differ by a cycle of length $n$, giving a factor $(-1)^{n+1}$. The only edge-weights contributing to the quotient arise from the matched edges associated to the cycle. Now, by our choice of signs for the matrix $C(1)$, the pair of half-edges of $\GD$ corresponding to each edge of $G^\bullet$ have opposite signs implying that the contribution of the edge-weights to the quotient is $(-1)^n$. As a consequence, the quotient of the contributions is equal to $(-1)^{2n+1}=(-1)$, and we deduce that the terms corresponding to $(\Ms_1,\Ms_2)$ and $(\Ms_1',\Ms_2)$ cancel out. This argument holds as soon as $\bar{\Ms}_1$ has a cycle, so that there only remains configurations where $\bar{\Ms}_1$ is a CRSF rooted at $b_r$ with no cycle, \emph{i.e.}, a spanning tree rooted at $b_r$.

Since the sign convention for $C(a)$ is the same as that of $C(1)$, a similar argument can be done for the matching $\Ms_2$. We deduce that the only configurations remaining are such that $\bar{\Ms}_2$ is a CRSF rooted at $(\tilde{B}\cup\{b_r\})\setminus B$ containing no cycle, \emph{i.e.}, a spanning forest rooted at $(\tilde{B}\cup\{b_r\})\setminus B$.
Since every vertex of $F$ is incident to exactly one edge of $\Ms_1$ or $\Ms_2$, we know that the edge intersection of the corresponding directed spanning trees/forests is empty.

Summarizing, applying Temperley's trick to pairs of matchings $(\Ms_1,\Ms_2)$ of $\Md$ that contribute to the determinant, we obtain pairs of complementary trees/forests of $G$ rooted at $b_r$, $(\tilde{B}\cup\{b_r\})\setminus B$. To compute the contribution of such a configuration, we also use that $C(a)_{f_{\sigma(j)},b_j}=C(1)_{f_{\sigma(j)},b_j}a_{f_{\sigma(j)}}$. Using the reverse Temperley trick yields the converse thus ending the proof of the combinatorial formula.

To establish the bijection between terms in the sum on the right-hand-side and monomials in the variables $a$ it suffices to notice that if we have two distinct pairs $(\Ts,\Fs)$, $(\Ts',\Fs')$ of complementary trees/forests of $G^\bullet$, then $\Fs\neq \Fs'$ implying that there is at least on edge $e$ such that $\vec{e}$ or $\cev{e}$ is present in $\Fs$ and not in $\Fs'$, giving a contribution $a_{f_{\vec{e}}}$ or $a_{f_{\cev{e}}}$ (both are equal) to one and not to the other.
\qed

\begin{remark}
  \label{rk:quadrangulate}
  In this Section, we started with a quadrangulation $\tilde{G}$, and
  considered a subgraph $G$ on which we defined the
  matrix~\eqref{eq:defKtrees}. However, we can switch perspective and think
  that we start with a graph $G$ whose internal faces have degree $4$,
  equipped with a usual dimer model. Then, finding a family of complex
  numbers $a$ such that the Kasteleyn matrix is (gauge equivalent to)
  \eqref{eq:defKtrees} is the point of the construction of Coulomb
  gauges, or of t-embeddings \cite{klrr,clrtembeddings}. Therefore,
  Theorem~\ref{thm:vert_translation} may be seen as a way to
  combinatorially expand the partition function of usual dimers in
  terms of the $a$ variables, taken as formal variables.

  This seems to be limited to dimer graphs $G$ with internal faces of
  degree $4$, but in fact, if $G$ is only bipartite, one can always
  quadrangulate its internal faces, and give the same value $a_f$ to
  all quadrangles coming from an initial face $f$ of $G$. In this way,
  the matrix $K$ defined by \eqref{eq:defKtrees} get entries $0$ on
  newly added edges, so it is not affected. Therefore, we can also
  write the partition function of usual dimers on $G$ as a sum over
  complementary trees and forests on diagonals of the quadrangulation
  of $G$. However, since we set several faces to the same weight
  $a_f$, it is no longer the case that configurations are in
  one-to-one correspondence with monomials.
\end{remark}

\section{Aztec diamond case and Devron property}\label{sec:singularity}

In all of this section we consider an Aztec diamond of size $k\geq 1$,
denoted $A_k$. We will picture $A_k$ turned by $45^\circ$ with respect to its introduction in
Figure~\ref{fig:ex_Aztec} for instance, and we change the labelling of
variables $a$ accordingly, see Figure~\ref{fig:Aztec_0}. The previous
representation naturally came from the method of crosses and wrenches. Here we need simple indexing of diagonals, which is much easier to do when considering them as columns of the $45^\circ$-rotated Aztec diamond.

\begin{figure}[tb]
\begin{minipage}[b]{0.495\linewidth}
\begin{center}
\begin{overpic}[width=4.8cm]{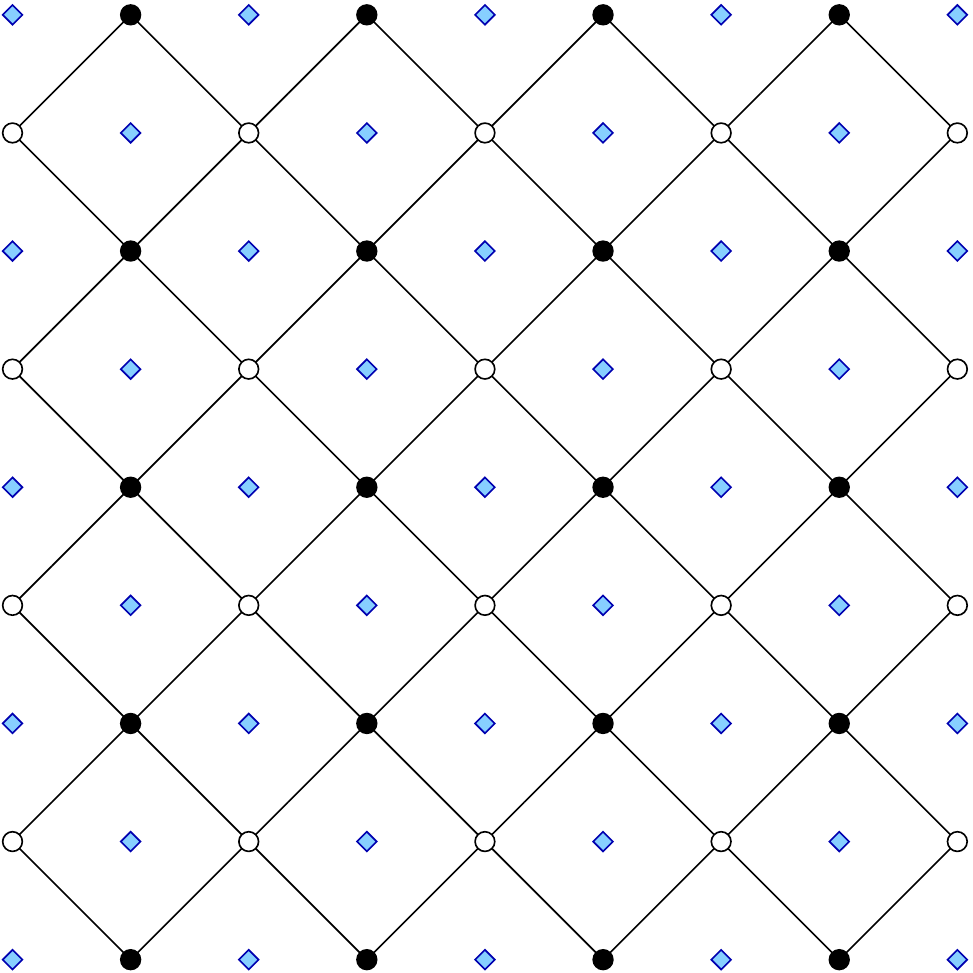}
\put(-5,-4){\scriptsize $a_{0,0}$}
\put(20,-4){\scriptsize $a_{2,0}$}
\put(45,-4){\scriptsize $a_{4,0}$}
\put(70,-4){\scriptsize $a_{6,0}$}
\put(95,-4){\scriptsize $a_{8,0}$}
\put(-5,21){\scriptsize $a_{0,2}$}
\put(-5,46){\scriptsize $a_{0,4}$}
\put(-5,69){\scriptsize $a_{0,6}$}
\put(-5,93){\scriptsize $a_{0,8}$}
\put(9,8){\scriptsize $a_{1,1}$}
\put(33,8){\scriptsize $a_{3,1}$}
\put(55,8){\scriptsize $a_{5,1}$}
\put(80,8){\scriptsize $a_{7,1}$}
\put(10,33){\scriptsize $a_{1,3}$}
\put(10,58){\scriptsize $a_{1,5}$}
\put(10,81){\scriptsize $a_{1,7}$}
\end{overpic}
\end{center}
\end{minipage}
\begin{minipage}[b]{0.495\linewidth}
\begin{center}
\begin{overpic}[width=4.8cm]{fig_Aztec_0.pdf}
\put(-5,-4){\scriptsize $c_{0,0}$}
\put(20,-4){\scriptsize $c_{1,0}$}
\put(45,-4){\scriptsize $c_{2,0}$}
\put(70,-4){\scriptsize $c_{3,0}$}
\put(95,-4){\scriptsize $c_{4,0}$}
\put(-5,21){\scriptsize $c_{0,1}$}
\put(-5,46){\scriptsize $c_{0,2}$}
\put(-5,69){\scriptsize $c_{0,3}$}
\put(-5,93){\scriptsize $c_{0,4}$}
\put(9,8){\scriptsize $d_{0,0}$}
\put(33,7){\scriptsize $d_{1,0}$}
\put(55,8){\scriptsize $d_{2,0}$}
\put(80,8){\scriptsize $d_{3,1}$}
\put(9,32){\scriptsize $d_{0,1}$}
\put(9,57){\scriptsize $d_{0,2}$}
\put(9,80){\scriptsize $d_{0,3}$}
\end{overpic}
\end{center}
\end{minipage}
\caption{Left: Aztec diamond $A_k$, $k=4$, with coordinates for face weights rotated by $45^\circ$. Right: notation $(c_{i,j})_{0\leq i,j\leq 4},\, (d_{i,j})_{0\leq i \leq 3}$ for faces weights}
\label{fig:Aztec_0}
\end{figure}

Face weights are now $\left(a_{i,j}\right)$ where $0\leq i,j \leq
2k$ and $[i+j]_2=0$. Another way to see this is to consider
two sets of weights, $\left( c_{i,j} \right)_{0 \leq i,j \leq k}$, and
$\left( d_{i,j} \right)_{0 \leq i,j \leq k-1}$, on even and odd faces,
that is
\begin{equation*}
  \begin{split}
    \forall\, i,j\in\{0,\dots,k\},& \quad a_{2i,2j}=:c_{i,j}, \\
    \forall\, i,j\in\{0,\dots,k-1\},&\quad a_{2i+1,2j+1}=:d_{i,j},
  \end{split}
\end{equation*}
see also Figure~\ref{fig:Aztec_0}.
We are interested in special cases of weights
motivated by their occurrence in
geometric systems, which are studied in the companion paper~\cite{paper2},
where they lead to
new incidence theorems and Devron properties.

The first goal is to specialize
Theorem~\ref{thm:comb_int_II} and
Corollary~\ref{cor:matrix_relation_gen} to the case of the Aztec diamond with no additional assumption on the weights;
we do this in Section~\ref{sec:Appl_Aztec_diamond}, and also
prove matrix identities for the ratio function of oriented dimers $Y(A_k,a)$ of
Definition~\ref{eq:defY}.  Next in Section~\ref{sec:cst_col} we
consider the case where columns of $d$ are constant, \emph{
  i.e.}, $d_{i,j}$ is independent of $j$, half of the column
weights are constant, and prove Theorem~\ref{thm:col_degen_I} which is a
combinatorial identity for the partition function of oriented dimers
involving simpler objects referred to as \emph{permutation spanning
  forests}.
In Section~\ref{sec:Dodgson_condens} we
  specialize further to all variables $d_{i,j}$ being equal, and prove
  Corollary~\ref{cor:dodgson} which is similar to classical Dodgson
  condensation \cite{dodgson}; this
  shows that $Z(A_k,a,\varphi)$ and $Y(A_k,a)$ have way more symmetries in the $\left(c_{i,j}\right)$ variables
  than one would expect. Finally in Section~\ref{sec:periodic_columns}
  we suppose that for some $p\geq 1$, every $p$-th column of $\left( d_{i,j} \right)$ is
  set to a constant, and prove another invariance result for
  $Y(A_k,a)$ in Theorem~\ref{thm:single_col}.

\subsection{Aztec diamond case}\label{sec:Appl_Aztec_diamond}

We use the notation and constructions of
Section~\ref{sec:comb_sol_II_1}. Since $k$ is fixed, we remove the
dependence in $k$ in the following notation except from $A_k$.

Let $W\sqcup B$ be the set of black and white vertices of $A_k$. Consider two additional black vertices $b_r,\tilde{b}$ such that $b_r$, resp. $\tilde{b}$, is on the left, resp. right, and all white vertices of $A_k$ on the left, resp. right, are connected to $b_r$, resp. $\tilde{b}$; denote by $\tilde{B}:=B\cup\{\tilde{b}\}$. Let $w_r$ be an additional white vertex connected to $b_r,\tilde{b}$, and to all black vertices of $A_k$ on the top and bottom rows. This defines a quadrangulation of the sphere $\tilde{G}$ with vertex set $(W\cup\{w_r\})\sqcup (\tilde{B}\cup\{b_r\})$, with two adjacent marked vertices $w_r,b_r$ as in Section~\ref{sec:comb_sol_II_1}, see Figure~\ref{fig:Aztec_1} (left). As before the set of faces is denoted by $F$, the notation $f$ is used for a face of $F$ and for the corresponding dual vertex, and faces are equipped with weights $(a_f)_{f\in F}$,
see Figure~\ref{fig:Aztec_1} (left); weights are also alternatively labeled by $\left(c_{i,j}\right)_{0\leq i,j \leq k}, \left(d_{i,j}\right)_{0\leq i,j \leq k-1} $ as in Figure~\ref{fig:Aztec_0}.

\begin{figure}[tb]
\begin{minipage}[b]{0.495\linewidth}
\begin{center}
\begin{overpic}[width=7cm]{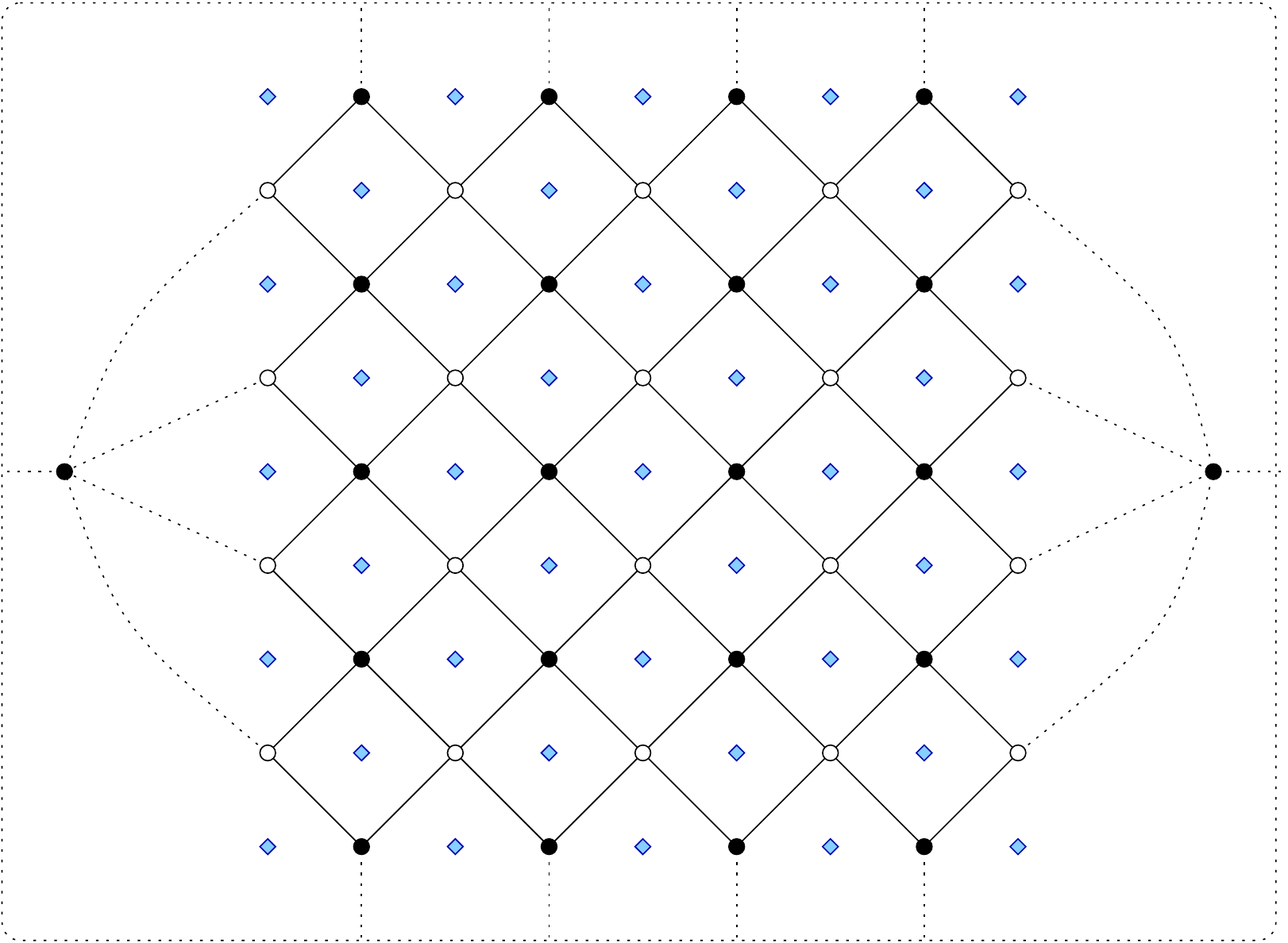}
\put(2,31){\scriptsize $b_r$}
\put(8,74){\scriptsize $w_r$}
\put(96,31){\scriptsize $\tilde{b}$}
\end{overpic}
\end{center}
\end{minipage}
\begin{minipage}[b]{0.495\linewidth}
\begin{center}
\begin{overpic}[width=7cm]{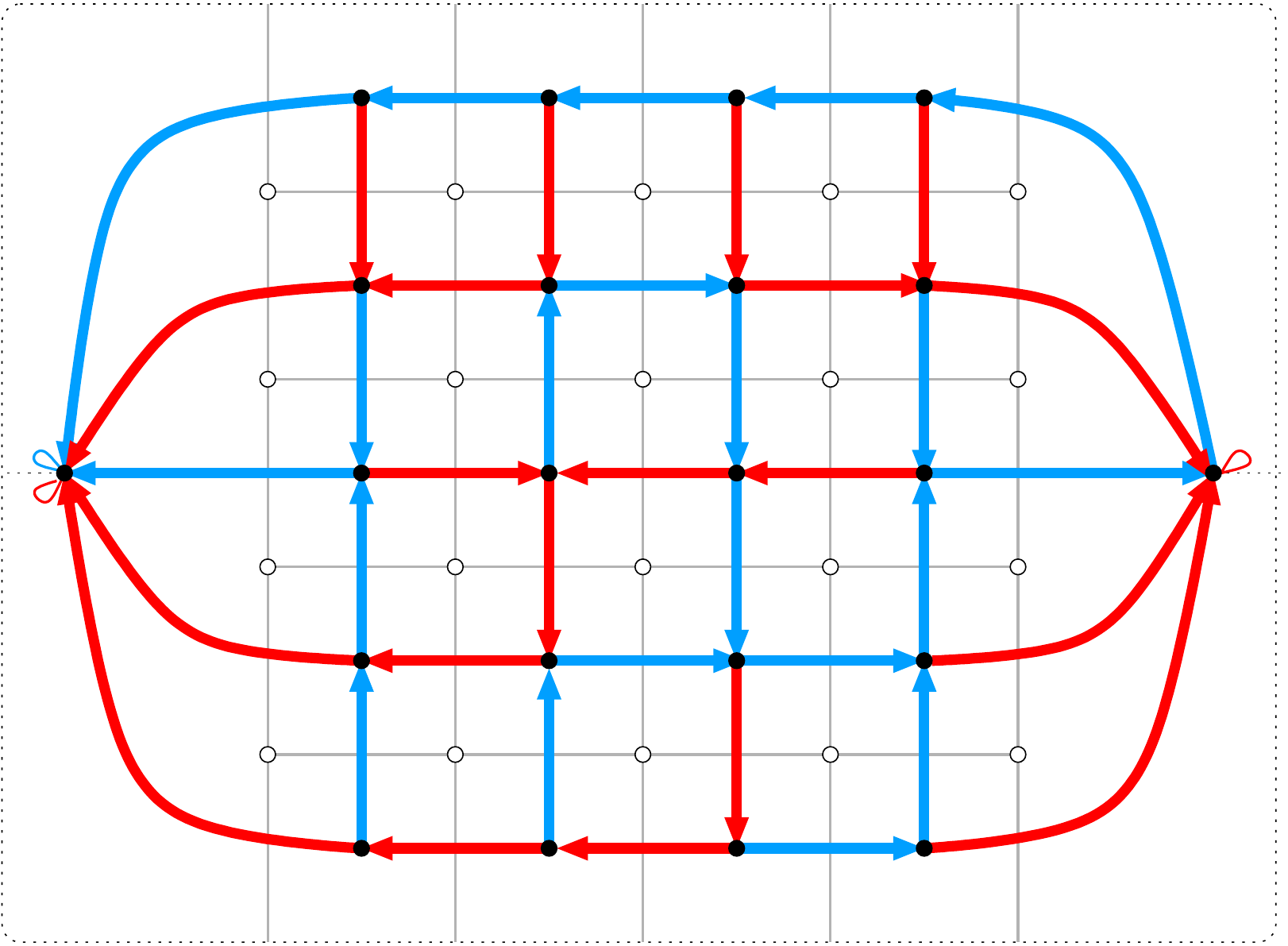}
\put(1,31){\scriptsize $b_r$}
\put(8,74){\scriptsize $w_r$}
\put(96,31){\scriptsize $\tilde{b}$}
\end{overpic}
\end{center}
\end{minipage}
\caption{Left: quadrangulation $\tilde{G}$ (plain and dotted lines)
  with marked vertices $w_r,b_r$, and the corresponding faces
  (pictured with diamonds); the Aztec diamond graph $G=A_k$ (plain
  lines), $k=4$.  Right: the graph $G^\bullet$ with a pair of
  complementary tree (blue) and forest (red) rooted at $b_r$,
  $\{\tilde{b},b_r\}$, and the dual graph $G^\circ$.
    }
\label{fig:Aztec_1}
\end{figure}

Trivially, we have that $B$ is a subset of $\tilde{B}$, and the graph $G$ obtained from $\tilde{G}$ by removing the vertices $\{\tilde{b},b_r,w_r\}$ and all of its incident edges is exactly the Aztec diamond $A_k$. Recall that $K$ denotes the weighted adjacency matrix of $A_k$ with non-zero coefficients given by, for every $b\in B$, $w\in W$ such that $w\sim b$, $K_{w,b}=a_{f(w,b)}-a_{f(b,w)}$.

The corresponding graphs $G^\bullet,G^\circ$ of Section~\ref{sec:comb_sol_II_1} are pictured in Figure~\ref{fig:Aztec_1} (right). The set $\F$ of complementary tree/forest configurations of $G^\bullet$ (rooted at $b_r$ and $\{\tilde{b},b_r\}$) is the set of pairs $(\Ts,\Fs)$ such that: $\Ts$ is a spanning tree of $G^\bullet$ rooted at $b_r$, and $\Fs$ is a spanning forest of $G^\bullet$ (with two components) rooted at $\{\tilde{b},b_r\}$, see Figure~\ref{fig:Aztec_1} for an example. As an immediate corollary to Proposition~\ref{prop:oriented_dimers_adjacency_matrix} and Theorem~\ref{thm:comb_int_II} we have

\begin{corollary}\label{cor:Z_Aztec_diamond}
For every Kasteleyn orientation $\vphi$,
\[
Z(A_k,a,\vphi)=\pm \sum_{(\Ts,\Fs)\in \F}\sign(\Ts,\Fs)\prod_{\vec{e}\in \Fs}a_{f_{\vec{e}}},
\]
where $\sign(\Ts,\Fs)$ is defined in Equation~\eqref{equ:def_sign}, and the sum is over all pairs of complementary trees/forests of $G^\bullet$ rooted at $b_r$, $\{\tilde{b},b_r\}$.
Moreover, there is a bijection between terms in the sum on the right-hand-side and monomials of $Z(A_k,a,\vphi)$ in the variables $a$.
\end{corollary}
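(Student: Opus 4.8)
The plan is to verify that the Aztec diamond together with the boundary data $b_r,\tilde b,w_r$ introduced above is nothing but a particular instance of the abstract setting of Section~\ref{sec:comb_sol_II_1}, so that Proposition~\ref{prop:oriented_dimers_adjacency_matrix} and Theorem~\ref{thm:comb_int_II} apply verbatim. Concretely, I would take $\tilde G$ to be the graph constructed in Figure~\ref{fig:Aztec_1} (left), with marked adjacent vertices $w_r,b_r$, white vertices $W\cup\{w_r\}$, black vertices $\tilde B\cup\{b_r\}$ where $\tilde B=B\cup\{\tilde b\}$, and with $B$ the original black vertex set of $A_k$. The graph obtained from $\tilde G$ by deleting $\{b_r,\tilde b,w_r\}$ and their incident edges is then exactly $A_k=G$, as required, and the graphs $G^\bullet,G^\circ$ coincide with those pictured in Figure~\ref{fig:Aztec_1} (right).

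Then I would check, one by one, the hypotheses of Theorem~\ref{thm:comb_int_II}. First, $\tilde G$ is a simple quadrangulation of the sphere: the internal faces of $A_k$ are already quadrilaterals, and the point of introducing $b_r,\tilde b$ (joined to the left, resp.\ right, white vertices) and $w_r$ (joined to $b_r,\tilde b$ and to the top and bottom black vertices) is precisely to subdivide the outer region into quadrilateral faces; this is a direct inspection of the figure. Second, $w_r,b_r$ are adjacent and $B\subset\tilde B$ by construction. Third, the matrix $K$ with entries $K_{w,b}=a_{f(w,b)}-a_{f(b,w)}$ is literally the matrix~\eqref{eq:defKtrees} of the general construction. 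Finally, and this is the only quantitative point, one needs $|B|=|W|$ so that $K$ is square: this holds because $A_k$ admits perfect matchings, forcing equal numbers of black and white vertices. Note also $|\tilde B|=|W|+1>|W|$, consistent with the standing assumption $|W|\le|\tilde B|$.

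With all hypotheses verified, Proposition~\ref{prop:oriented_dimers_adjacency_matrix} gives $Z(A_k,a,\vphi)=\pm\det K$ for every Kasteleyn orientation $\vphi$, and Theorem~\ref{thm:comb_int_II}, specialised to the present $G^\bullet$ and root sets $b_r$ and $\{\tilde b,b_r\}=(\tilde B\cup\{b_r\})\setminus B$, yields the stated expansion of $Z(A_k,a,\vphi)$ over complementary trees and forests $(\Ts,\Fs)\in\F$, with $\sign(\Ts,\Fs)$ given by~\eqref{equ:def_sign}, together with the bijection between these configurations and the monomials of $Z(A_k,a,\vphi)$. I expect the only genuine verification to be topological — that the closure described above really is a quadrangulation of the sphere and that $G^\bullet$ is simple, so that the cancellation of cyclic configurations used in the proof of Theorem~\ref{thm:comb_int_II} applies — which is immediate from the explicit picture; everything else is a direct substitution into the general statements, justifying the label of ``immediate corollary''.
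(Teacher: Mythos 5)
Your proposal is correct and takes essentially the same route as the paper: the paper also obtains Corollary~\ref{cor:Z_Aztec_diamond} by observing that the Aztec diamond, completed by the vertices $b_r,\tilde{b},w_r$ into the quadrangulation $\tilde{G}$ of the sphere, is exactly an instance of the setting of Section~\ref{sec:comb_sol_II_1} with $B\subset\tilde{B}$ and root sets $b_r$ and $(\tilde{B}\cup\{b_r\})\setminus B=\{\tilde{b},b_r\}$, so that Proposition~\ref{prop:oriented_dimers_adjacency_matrix} and Theorem~\ref{thm:comb_int_II} apply verbatim. Your explicit checks (simplicity of the quadrangulation, $|B|=|W|$ from the existence of perfect matchings, $|W|\le|\tilde{B}|$) merely fill in details the paper leaves implicit in calling the statement ``an immediate corollary''.
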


Using Corollary~\ref{cor:matrix_relation_gen}, we prove two
interesting identities for the ratio function $Y(A_k,a)$ of oriented
dimers defined in Equation~\eqref{eq:defY}, see also
Equation~\eqref{eq:dskp_Aztec_dim}. This is the content of
Propositions~\ref{prop:Zp_over_Z} and Theorem~\ref{theo:kery} below. The second is used in Section~\ref{sec:periodic_columns} to prove invariance of $Y(A_k,a)$ when columns are shifted periodically.

To simplify notation, choose the signs of Equation~\eqref{equ:def_sign} defining the matrices $C(a)$ as in Figure~\ref{fig:Aztec_3},
that is, around every face corresponding to a weight of type $c_{i,j}$, resp. $d_{i,j}$, we have, starting from the right horizontal edge, $1,1,-1,-1$, resp. $-1,1,1,-1$. Recall that face weights of the right most column are labeled $c_{k,0},\dots,c_{k,k}$.

\begin{figure}[tb]
\begin{center}
\begin{overpic}[width=7cm]{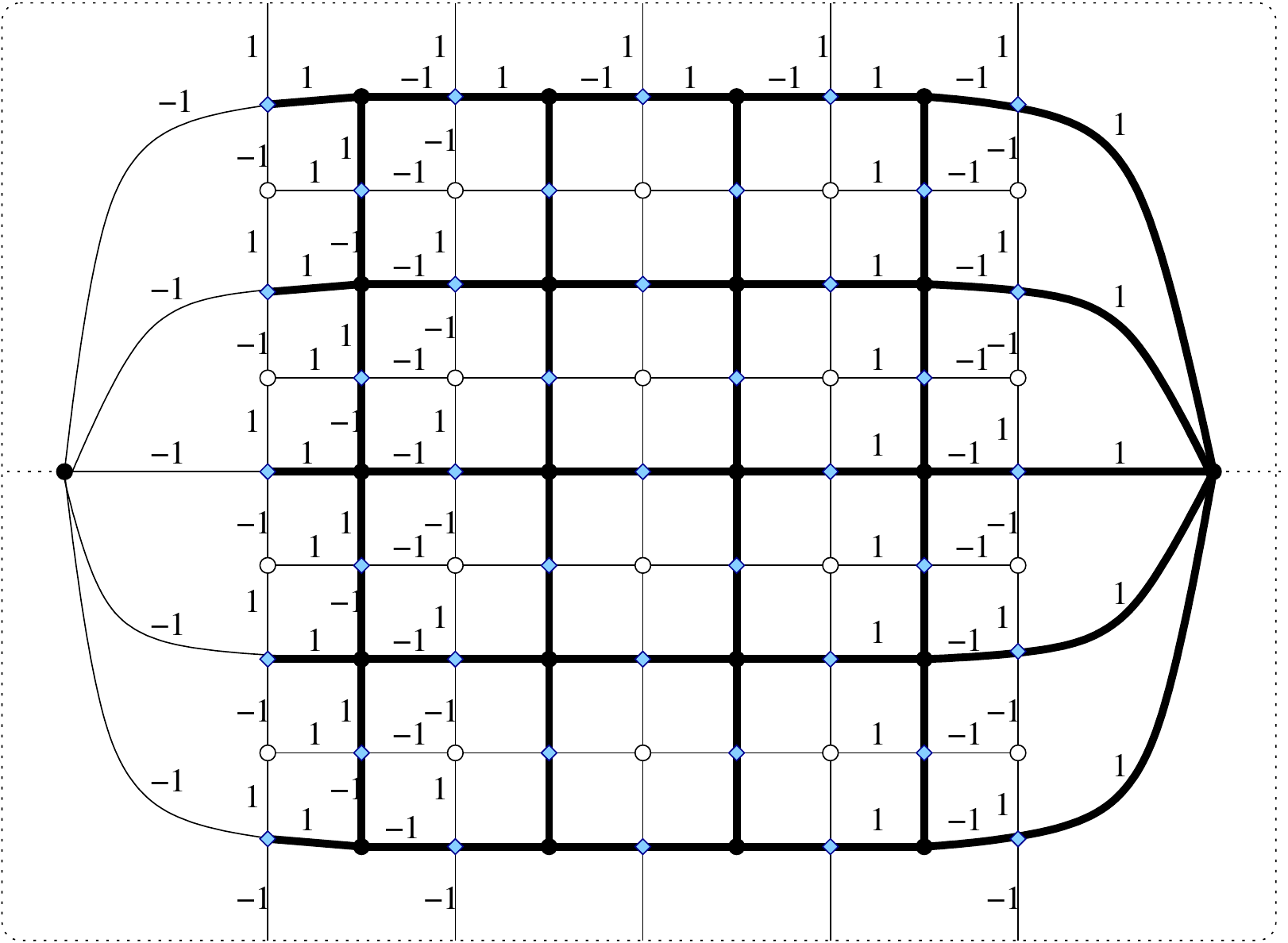}
\put(1,31){\scriptsize $b_r$}
\put(8,74){\scriptsize $w_r$}
\put(96,31){\scriptsize $\tilde{b}$}
\put(81,5){\scriptsize $c_{k,0}$}
\put(81,19){\scriptsize $c_{k,1}$}
\put(81,66){\scriptsize $c_{k,k}$}
\end{overpic}
\end{center}
\caption{Left: The double graph $\GD$ whose edges correspond to non-zero coefficients of the matrix $C(a)$; thicker lines indicate edges corresponding to non-zero coefficients of $C(a)^{\tilde{B}}$. On the edges, is the choice of signs of Equation~\eqref{equ:def_sign} defining $C(a)$ in the case of the Aztec diamond.}
\label{fig:Aztec_3}
\end{figure}

\begin{proposition}\label{prop:Zp_over_Z}
The ratio function of oriented dimers satisfies the following identity:
\begin{equation*}
Y(A_k,a)=\sum_{j=0}^{k} c_{k,j} \cdot C_{1,j}^{-1},
\end{equation*}
where $C=\begin{pmatrix} C(1)^{\tilde{B}} & C(a)^{B} \end{pmatrix}$, and its $k+1$ first rows correspond to elements of $F$ with face weights $c_{k,0},\dots,c_{k,k}$.
\end{proposition}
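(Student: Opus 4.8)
The plan is to collapse the definition of $Y(A_k,a)$ into a single ratio of determinants of $C$-type matrices, and then to recognize the claimed sum as the Cramer's rule expansion of that ratio along the column indexed by $\tilde b$.

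First I would assemble the ingredients. By Equation~\eqref{eq:dskp_Aztec_dim},
\[
Y(A_k,a)=\Bigl(\prod_{f\in F}a_f\Bigr)\frac{Z(A_k,a^{-1},\varphi)}{Z(A_k,a,\varphi)}.
\]
By Proposition~\ref{prop:oriented_dimers_adjacency_matrix} and Corollary~\ref{cor:matrix_relation_gen}, $Z(A_k,a,\varphi)=\epsilon\,\det C$ with $C=\begin{pmatrix}C(1)^{\tilde B}&C(a)^{B}\end{pmatrix}$, and $Z(A_k,a^{-1},\varphi)=\epsilon\,\det\begin{pmatrix}C(1)^{\tilde B}&C(a^{-1})^{B}\end{pmatrix}$ with the \emph{same} global sign $\epsilon$, since $\epsilon$ (coming from $\epsilon(\varphi)$, $\det\bigstar$ and $\det\begin{pmatrix}C(1)^t_W\\ M\end{pmatrix}$) depends only on the combinatorial data and not on the weights; thus it cancels in the ratio. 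Next, rescaling each row $f$ of the numerator matrix by $a_f$ sends the $\tilde B$-entries $C(1)_{f,\cdot}=\pm1$ to $\pm a_f$ and the $B$-entries $C(a^{-1})_{f,\cdot}=\pm a_f^{-1}$ to $\pm1$, so that
\[
\Bigl(\prod_{f\in F}a_f\Bigr)\det\begin{pmatrix}C(1)^{\tilde B}&C(a^{-1})^{B}\end{pmatrix}=\det\begin{pmatrix}C(a)^{\tilde B}&C(1)^{B}\end{pmatrix}=:\det C^{\mathrm{opp}}.
\]
Together these give the exact identity $Y(A_k,a)=\det C^{\mathrm{opp}}/\det C$.

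Second, I would expand the right-hand side. The column of $C$ indexed by $\tilde b$ — which I take to be the first column, so that $\tilde b$ labels row $1$ of $C^{-1}$ — has nonzero entries precisely in the $k+1$ rows carrying weights $c_{k,0},\dots,c_{k,k}$, because in $\GD$ the vertex $\tilde b$ is adjacent only to the rightmost faces of $A_k$. With the sign convention of Figure~\ref{fig:Aztec_3} (the rightmost horizontal edges carrying sign $+1$), the entry of $C(a)$ in row $c_{k,j}$, column $\tilde b$ equals $c_{k,j}$. Hence, by cofactor expansion along the $\tilde b$-column,
\[
\sum_{j=0}^{k}c_{k,j}\,C^{-1}_{1,j}=\frac{\det\hat C}{\det C},\qquad \hat C=\begin{pmatrix}C(a)^{\tilde b}&C(1)^{B}&C(a)^{B}\end{pmatrix},
\]
where $\hat C$ is $C$ with its $\tilde b$-column reweighted from $1$ to $a$.

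Finally, I would match $\det\hat C$ with $\det C^{\mathrm{opp}}$. Both matrices are built from the very same three column blocks — the single $\tilde b$-column at weight $a$, a $B$-block at weight $1$, and a $B$-block at weight $a$ — and differ only by interchanging the two $B$-blocks across the fixed $\tilde b$-column. This column permutation is a product of $|B|$ disjoint transpositions; since for the Aztec diamond $|B|=|W|=k(k+1)$ is even, its sign is $+1$ and $\det\hat C=\det C^{\mathrm{opp}}$. Combining the two displays yields $Y(A_k,a)=\sum_{j=0}^{k}c_{k,j}\,C^{-1}_{1,j}$. I expect the main obstacle to be exactly this sign bookkeeping: one must verify that the several $\pm1$'s (the two uses of Corollary~\ref{cor:matrix_relation_gen}, the Cramer expansion, and the block interchange) cancel so that the identity holds on the nose, and the orientation conventions of Figure~\ref{fig:Aztec_3} are designed precisely so that this happens.
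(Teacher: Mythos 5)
Your proposal is correct and follows essentially the same route as the paper's proof: both identify $Z(A_k,a,\varphi)=\pm\det C$ via Corollary~\ref{cor:matrix_relation_gen}, turn the numerator $\bigl(\prod_{f\in F}a_f\bigr)Z(A_k,a^{-1},\varphi)$ into $\pm\det\begin{pmatrix}C(a)^{\tilde b}&C(1)^{B}&C(a)^{B}\end{pmatrix}$ by row rescaling plus an even number ($|B|=k(k+1)$) of column transpositions, and then conclude by cofactor/Cramer expansion along the $\tilde b$-column. Your explicit remark that the global sign $\epsilon$ is weight-independent (so it cancels between the $a$ and $a^{-1}$ partition functions) is a slightly more careful bookkeeping of what the paper handles with matching $\pm$ signs, but it is the same argument.
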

\begin{proof}
Observe that the matrix $C$ can be written as
\[
C=\begin{pmatrix}
C(1)^{\tilde{b}}&C(1)^{B} & C(a)^{B}
\end{pmatrix}.
\]
Fix a Kasteleyn orientation $\vphi$.
By Corollary~\ref{cor:matrix_relation_gen},
  \begin{equation}
    \label{eq:ZdetC}
    Z(A_k,a,\varphi) = \pm \det(C).
  \end{equation}
Now, applying Equation~\eqref{eq:ZdetC} to the variables $a^{-1}$, we compute the
numerator in the Aztec diamond Equation~\eqref{eq:dskp_Aztec_dim}:
\begin{equation}
  \label{eq:ZpdetC}
  \begin{split}
  \bigl(\prod_{f\in F}a_f\bigr)Z\Bigl(A_k,a^{-1},\vphi\Bigr) &= \pm
\bigl(\prod_{f\in F}a_f\bigr)
\det
 \begin{pmatrix}
 C(1)^{\tilde{b}}&C(1)^{B}&C(a^{-1})^{B}
 \end{pmatrix}\\
&=\pm \det
 \begin{pmatrix}
 C(a)^{\tilde{b}}&C(a)^{B}&C(1)^{B}
\end{pmatrix}\\
&=\pm
\det
 \begin{pmatrix}
 C(a)^{\tilde{b}}&C(1)^{B}&C(a)^{B}
\end{pmatrix},
\end{split}
\end{equation}
where in the penultimate equality we have multiplied, for every vertex
$f$ of $F$, the row of $C$ corresponding to $f$ by $a_f$. The
  signs on the right are all equal, because the number of column
  transpositions between the last two matrices is $|B|=k(k+1)$ which
  is even. Expanding the determinant over the first column, and using Equation~\eqref{eq:ZdetC} gives
\begin{equation*}
 Y(A_k,a) = \sum_{j=0}^{k} c_{k,j}(-1)^{j}\frac{\det( C_{j}^1)}{\det(C)}=
\sum_{j=0}^{k} c_{k,j}\cdot C_{1,j}^{-1},
\end{equation*}
where $C_{j}^1$ is the matrix obtained from $C$ by deleting the $j$-th row and first column.
\end{proof}

The next statement is central in proving Devron properties and
exact values for the singularities of the dSKP recurrence. We state it
as a theorem although its proof is short.

Let us denote by
$D=\begin{pmatrix} C(1)^{B} & C(a)^{B} \end{pmatrix}$, the matrix
obtained from $C$ by removing the first column. Seen as a linear
operator, $D$ takes as input a vector in $\C^{B \sqcup B}$ and its
output is a vector in $\C^F$. The following proposition relates the
kernel of $D^T$ with $Y(A_k,a)$. Note that $D^T$ has a nontrivial
kernel, because it goes from a space of dimension $|F|=2k(k+1)+1$ to a
space of dimension $2|B|=2k(k+1)$.

\begin{theorem} \label{theo:kery}
  Let $v \in \C^F$ be a nonzero vector such that
  \begin{equation} \label{eq:vkerdt}
    D^T \ v = 0.
  \end{equation}
  Let $v_{k,0},\dots, v_{k,k}$ be the entries of $v$ corresponding to the
  $k+1$ elements of $F$ with face weights $c_{k,0},\dots, c_{k,k}$ as in Figure~\ref{fig:Aztec_3}. Then,
  the ratio function of oriented dimers can be expressed as:
  \begin{equation*}
    Y(A_k,a) = \frac{\sum_{j=0}^k c_{k,j}\, v_{k,j}}{\sum_{j=0}^k v_{k,j}}.
  \end{equation*}
\end{theorem}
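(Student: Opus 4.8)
The plan is to identify the kernel of $D^T$ explicitly as a row of $C^{-1}$, and then to recognize the two sums in the statement as proportional to the numerator and denominator appearing in Proposition~\ref{prop:Zp_over_Z}.

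First I would recall that, by Corollary~\ref{cor:matrix_relation_gen} and Equation~\eqref{eq:ZdetC}, $\det C = \pm Z(A_k,a,\vphi)$; since $Y(A_k,a)$ is assumed well-defined this quantity is nonzero, so $C$ is invertible. As $D$ is obtained from $C$ by deleting its single first column $C(1)^{\tilde{b}}$, the $2|B|$ columns of $D$ are linearly independent, hence $\operatorname{rank} D = 2|B|$ and $\ker D^T$ is one-dimensional (recall $|F|=2|B|+1$). I claim it is spanned by $r_1$, the first row of $C^{-1}$ viewed as a vector in $\C^F$: indeed the identity $C^{-1}C=I$ reads $r_1^T c_\ell = \delta_{1\ell}$ over the columns $c_\ell$ of $C$, so $r_1$ is orthogonal to every column of $D$, i.e.\ $D^T r_1 = 0$. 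Consequently the hypothesis $D^T v = 0$ forces $v = \lambda r_1$ for some $\lambda \neq 0$, and in particular $v_{k,j} = \lambda\,(C^{-1})_{1,j}$, where on the right $j$ indexes the $c_{k,j}$-face, i.e.\ the $j$-th row of $C$.

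Substituting this into the statement, the numerator becomes $\sum_{j=0}^k c_{k,j} v_{k,j} = \lambda \sum_{j=0}^k c_{k,j}(C^{-1})_{1,j}$, which is exactly $\lambda\, Y(A_k,a)$ by Proposition~\ref{prop:Zp_over_Z}. It therefore remains to show that the denominator equals $\lambda$, i.e.\ that $\sum_{j=0}^k (C^{-1})_{1,j} = 1$. The key point is that the first column $C(1)^{\tilde{b}}$ of $C$ is supported exactly on the $k+1$ faces $c_{k,0},\dots,c_{k,k}$ with all entries equal to $+1$: the faces of $\tilde{G}$ incident to $\tilde{b}$ are precisely the rightmost-column faces, and by the sign convention of Figure~\ref{fig:Aztec_3} (Equation~\eqref{equ:def_sign}) the edge joining each such $c$-face to $\tilde{b}$ is its ``right horizontal edge'', carrying sign $+1$. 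Writing $u \in \C^F$ for this all-ones indicator, we then have $u = C(1)^{\tilde{b}} = C e_1$, so
\[
\sum_{j=0}^k (C^{-1})_{1,j} = \sum_{f\in F}(C^{-1})_{1,f}\,u_f = (C^{-1}u)_1 = (C^{-1}C e_1)_1 = 1,
\]
which yields the denominator $\lambda$ and hence the ratio $\lambda\,Y(A_k,a)/\lambda = Y(A_k,a)$, as claimed.

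The main obstacle I anticipate is precisely this last sign/geometry bookkeeping: verifying that $C(1)^{\tilde{b}}$ is genuinely the all-$+1$ indicator of $c_{k,0},\dots,c_{k,k}$, which relies both on the combinatorial fact that exactly these $k+1$ faces touch $\tilde{b}$ and on the specific orientation convention fixed in Figure~\ref{fig:Aztec_3}. Everything else is linear algebra; in particular the one-dimensionality of $\ker D^T$, and thus the existence of the scalar $\lambda$, is where invertibility of $C$ (equivalently $Z(A_k,a,\vphi)\neq 0$) is used.
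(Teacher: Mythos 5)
Your proof is correct and essentially the same as the paper's: identifying $v$ as a scalar multiple of the first row of $C^{-1}$ is just a rephrasing of the paper's step $v^T C = \bigl(\sum_{j} v_{k,j}\bigr)e_1^T$, and both arguments then reduce to Proposition~\ref{prop:Zp_over_Z} together with the observation that, by the sign convention of Figure~\ref{fig:Aztec_3}, the column $C(1)^{\tilde{b}}$ is the all-$+1$ indicator of the faces carrying $c_{k,0},\dots,c_{k,k}$. The only discrepancy is at degenerate weights: the paper proves the identity for generic $a$ and extends by analyticity, whereas your inference ``$Y(A_k,a)$ well-defined $\Rightarrow \det C \neq 0$'' is slightly off (one can have $Y(A_k,a)=\infty$ in $\hC$ precisely when $Z(A_k,a,\varphi)=0$), so your argument covers the generic case and should be completed by the same analytic-continuation remark the paper uses.
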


  \begin{proof}
    By transposing Equation~\eqref{eq:vkerdt}, because of the choice of signs, we get
    \begin{equation}
      \label{eq:vTC}
      v^T \ C =
      \begin{pmatrix}
     \sum_{j=0}^k v_{k,j} & 0 & \dots & 0
      \end{pmatrix}.
    \end{equation}
    For generic $a$ (or $c,d$) variables, we have
      $\det C \neq 0$ (as we can use \eqref{eq:ZdetC} and the fact
      that there exists at least one complementary tree/forest
      configuration), in particular by~\eqref{eq:vTC},
      $\sum_{j=0}^k v_{k,j} \neq 0$. Similarly, using
      \eqref{eq:ZpdetC}, for generic weights,
      $\sum_{j=0}^k c_{k,j} v_{k,j} \neq 0$.

    We right multiply both sides of equation~\eqref{eq:vTC} by
    $C^{-1}w$, where $w\in\C^F$ is the vector whose only non-zero
    entries are equal to $c_{k,0},\dots, c_{k,k}$. This gives
    \begin{equation*}
      v^T \
      \begin{pmatrix}
        c_{k,0} \\ \vdots \\ c_{k,k} \\ 0 \\ \vdots \\ 0
      \end{pmatrix}
      =
      \begin{pmatrix}
     \sum_{j=0}^k v_{k,j} &  0 & \dots & 0
      \end{pmatrix}
      C^{-1}
      \begin{pmatrix}
        c_{k,0} \\ \vdots \\ c_{k,k} \\ 0 \\ \vdots \\ 0
      \end{pmatrix}.
    \end{equation*}
    By Proposition~\ref{prop:Zp_over_Z}, the right-hand side is just
    $ Y(A_k,a)\cdot \sum_{j=0}^k v_{k,j}$, while the
    left-hand side is $\sum_{j=0}^k c_{k,j}\, v_{k,j}$. This shows
    that, at least as formal expression in the $a$ variables, the two
    sides of the statement of the theorem are equal. Since they are
    both analytic, this also holds when the ratio on the right is
    well-defined in $\hC$, moreover it is undefined in $\hC$ iff
    $Y(A_k,a)$ is undefined.
\end{proof}

\subsection{Constant columns} \label{sec:cst_col}

We consider the special case where columns of $d$ are constant,
\emph{i.e.}~for some $(d_i)_{0\leq i\leq k-1}$, see also Figure~\ref{fig:Aztec_4},
\begin{equation}
  \label{eq:cst_weights}
  \forall\, 0\leq i,j\leq k-1,\quad  d_{i,j}=d_i.
\end{equation}

Denote by
$\tilde{a}=(\tilde{a}_{i,j})$ face weights obtained by a vertical cyclic shift:
\begin{equation}
  \label{eq:defatilde1}
  \forall 0\leq i,j\leq 2k \, \text{ s.t. } [i+j]_2=0, \quad\quad
  \tilde{a}_{i,j}= a_{i ,\,[j+2]_{2(k+1)}},
\end{equation}
then we have the following.

\begin{theorem}\label{thm:vert_translation}
Suppose that all odd columns of the Aztec diamond $A_k$
have constant weights
 $(d_i)_{0\leq i\leq k-1}$ as in
Equation~\eqref{eq:cst_weights}. Then, for every Kasteleyn orientation
$\vphi$, the partition function of oriented dimers associated to face
weights $a$, resp.~to vertically shifted face weights $\tilde{a}$
of Equation~\eqref{eq:defatilde1}, are equal up to an explicit sign:
\begin{equation*}
Z(A_k,a,\vphi)=(-1)^k Z(A_k,\tilde{a},\vphi).
\end{equation*}
Furthermore, the corresponding ratio functions of oriented dimers are equal:
\begin{equation*}
Y(A_k,a)=Y(A_k,\tilde{a}).
\end{equation*}
\end{theorem}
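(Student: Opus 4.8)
The plan is to prove the partition-function identity first, and then read off the identity for the ratio function almost for free. \emph{Reduction of the second identity to the first.} By Equation~\eqref{eq:dskp_Aztec_dim} the prefactor on the Aztec diamond is $C(A_k,a)=\prod_{f\in F}a_f$, every face weight appearing with exponent one. The vertical shift~\eqref{eq:defatilde1} permutes the even weights $(c_{i,j})$ bijectively (cyclically in the $j$-index) and, by the constant-column hypothesis~\eqref{eq:cst_weights}, acts as the identity on every odd weight; hence $\prod_{f\in F}a_f$ is unchanged and $C(A_k,\tilde a)=C(A_k,a)$. Since the shift commutes with inversion, $\widetilde{a^{-1}}=(\tilde a)^{-1}$, the partition-function identity $Z(A_k,a,\vphi)=(-1)^kZ(A_k,\tilde a,\vphi)$ yields, through Definition~\eqref{eq:defY},
\[
Y(A_k,a)=C(A_k,a)\frac{Z(A_k,a^{-1},\vphi)}{Z(A_k,a,\vphi)}=C(A_k,\tilde a)\frac{(-1)^kZ(A_k,(\tilde a)^{-1},\vphi)}{(-1)^kZ(A_k,\tilde a,\vphi)}=Y(A_k,\tilde a),
\]
the two factors $(-1)^k$ cancelling. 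So everything reduces to the partition-function identity.

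\emph{The partition-function identity.} For this I would argue combinatorially through Corollary~\ref{cor:Z_Aztec_diamond}, which expresses $Z(A_k,a,\vphi)=\pm\sum_{(\Ts,\Fs)\in\F}\sign(\Ts,\Fs)\prod_{\vec e\in\Fs}a_{f_{\vec e}}$. The decisive use of the constant-column hypothesis is that it lets the odd faces ``wrap around'': assigning the common value $d_i$ to a virtual odd face in each column $i$ completes $A_k$ and its diagonal graph $G^\bullet$ into a cylindrical object on which the shift~\eqref{eq:defatilde1} is a genuine rotation by one step, that is, a weight-automorphism carrying the face crossed by a directed edge $\vec e$ to the face crossed by its image. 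This rotation induces a bijection $\Phi$ on the set $\F$ of complementary tree/forest configurations with $\prod_{\vec e\in\Phi(\Ts,\Fs)}a_{f_{\vec e}}=\prod_{\vec e\in\Fs}\tilde a_{f_{\vec e}}$. Reindexing the sum defining $Z(A_k,\tilde a,\vphi)$ through $\Phi$ then reproduces, term by term, the sum defining $Z(A_k,a,\vphi)$, up to the change of $\sign$ under $\Phi$.

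\emph{The main obstacle} is to show that this change of sign is the \emph{same} constant $(-1)^k$ for every configuration, so that it factors out of the sum. One cannot get it from the permutation of weights alone: cyclically shifting the $j$-index in each of the $k+1$ rows of the even grid is a product of $k+1$ cycles of length $k+1$, of total sign $((-1)^k)^{k+1}=+1$. The genuine $(-1)^k$ comes instead from the topology of the rotation on the completed cylinder, namely from the parity of the tree/forest edges crossing the cut along which the completion is glued. This is exactly what the reformulation through \emph{permutation spanning forests} is meant to control: on such configurations the reverse Temperley trick turns $\sign(\Ts,\Fs)$ of Equation~\eqref{equ:def_sign} into $\sgn(\sigma)$ of an explicit permutation $\sigma$, and $\Phi$ multiplies $\sigma$ by a single $(k+1)$-cycle, contributing $\sgn=(-1)^k$. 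Verifying that the edge-weight signs of $C(1)$ fixed in Figure~\ref{fig:Aztec_3} conspire with this permutation sign to produce precisely $(-1)^k$, uniformly in the configuration and independently of the wrap-around, is the delicate bookkeeping on which the argument rests.
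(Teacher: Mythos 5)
Your reduction of the ratio-function identity to the partition-function identity is correct and coincides with the paper's own argument: by \eqref{eq:cst_weights} the shift \eqref{eq:defatilde1} fixes every odd weight and permutes the even ones, so $\prod_{f\in F}a_f=\prod_{f\in F}\tilde{a}_f$; it commutes with inversion (and $a^{-1}$ again has constant odd columns, so the $Z$-identity may legitimately be applied to it); and the two factors $(-1)^k$ cancel in \eqref{eq:dskp_Aztec_dim}. Your diagnosis of where the $(-1)^k$ must come from --- composition of a permutation with a single $(k+1)$-cycle --- is also exactly the mechanism used in the paper.

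The gap is in the proof of the partition-function identity itself. Your primary device, a bijection $\Phi$ on the set $\F$ of Corollary~\ref{cor:Z_Aztec_diamond} induced by rotating a ``cylindrical completion'', is not well-defined: the cyclic row shift is an automorphism of the quotient graph on the cylinder, not of the planar graph $G^\bullet$ with its roots $b_r,\tilde{b}$, and a complementary tree/forest pair $(\Ts,\Fs)$ --- whose forest may contain vertical edges and whose tree must remain a planar spanning tree rooted at $b_r$ --- has no natural image under the shift; moreover the monomials in Corollary~\ref{cor:Z_Aztec_diamond} carry the plain weights $a_{f_{\vec{e}}}$, which are not shift-covariant. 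What makes the argument work in the paper is precisely the intermediate result you invoke but do not prove (Theorem~\ref{thm:col_degen_I}): under hypothesis \eqref{eq:cst_weights}, column operations on the matrix $C$ of Corollary~\ref{cor:matrix_relation_gen} (subtracting $d_i$ times each column of $C(1)^{B}$ from the corresponding column of $C(a)^{B}$) annihilate all vertical edges, so that the surviving configurations are exactly the \emph{permutation spanning forests} $\Fs(\tau)$, $\tau\in\S_{k+1}$, carrying the modified weights $(a_{f(\vec{e})}-d_{\col b(\vec{e})})$ --- which \emph{are} shift-covariant, the $d$-part being column-constant --- and a superimposition-of-matchings computation identifies $\sign(\Ts,\Fs)$ with $\pm\sgn(\tau)$ uniformly over configurations. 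Only after this reduction does the shift act simply, as $\tau\mapsto\tau\circ c$ with $c=(0\,\dots\,k)$, producing the global factor $\sgn(c)=(-1)^k$. This reduction and the accompanying sign identification constitute the bulk of the paper's proof; you explicitly flag them (``delicate bookkeeping'') and leave them open, so the proposal, while correctly outlining the strategy, omits its mathematical core.
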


Theorem~\ref{thm:vert_translation} is a consequence of Theorem~\ref{thm:col_degen_I} below, which we state and prove first. We need the following definition. A \emph{permutation spanning forest $\Fs$ of $G^\bullet$ (rooted at $b_r$, $\tilde{b}$)} is a spanning forest $\Fs$ of $G^\bullet$ (with two connected components) rooted at $b_r,b$, such that:
\begin{itemize}
 \item it contains no vertical edge,
 \item it has one absent edge per row, and absent edges form a permutation of $\{0,\dots,k\}$. More precisely, the graph $G^\bullet$ has $k+1$ edge-rows, each
having $k+1$ edges. These edges are written as $(e_{i,j})_{0\leq i,j\leq k}$, where $i$ represents the column from left to right, and $j$ the row from bottom to top. Note that horizontal edges $(e_{i,j})_{0\leq i,j\leq k}$ of $G^\bullet$ are in correspondence with face weights $(c_{i,j})_{0\leq i,j\leq k}$ as defined in Equation~\eqref{eq:cst_weights}.
For the permutation spanning forest $\Fs$, denote by $e_{\tau(0),0},\dots,e_{\tau(k),k}$ the absent edges, then $\tau$ is a permutation of $\S_{k+1}$, see Figure~\ref{fig:Aztec_4} for an example.
\end{itemize}

\begin{remark}\label{rem:permutation_tree_forest}$\,$
\begin{itemize}
\item Given a permutation $\tau\in\S_{k+1}$, the edge configuration of $G^\bullet$ with only horizontal edges and absent edges $e_{\tau(0),0},\dots,e_{\tau(k),k}$ is a spanning forest rooted at $b_r,\tilde{b}$, \emph{i.e.}, a permutation spanning forest, denoted by $\Fs(\tau)$.
\item Let $\Fs$ be a permutation spanning forest of $G^\bullet$, and let $\Ts$ be the complementary edge configuration in $G^\bullet$. Then, $\Ts$ is spanning tree of $G^\bullet$ containing all vertical edges of $G^\bullet$, and we consider it as rooted towards $b_r$. That is, $(\Ts,\Fs)$ is a pair of complementary spanning tree/forest of $G^\bullet$ rooted at $b_r$, $\{b_r,\tilde{b}\}$.
\end{itemize}
\end{remark}

\begin{figure}[tb]
\begin{center}
\begin{overpic}[width=7cm]{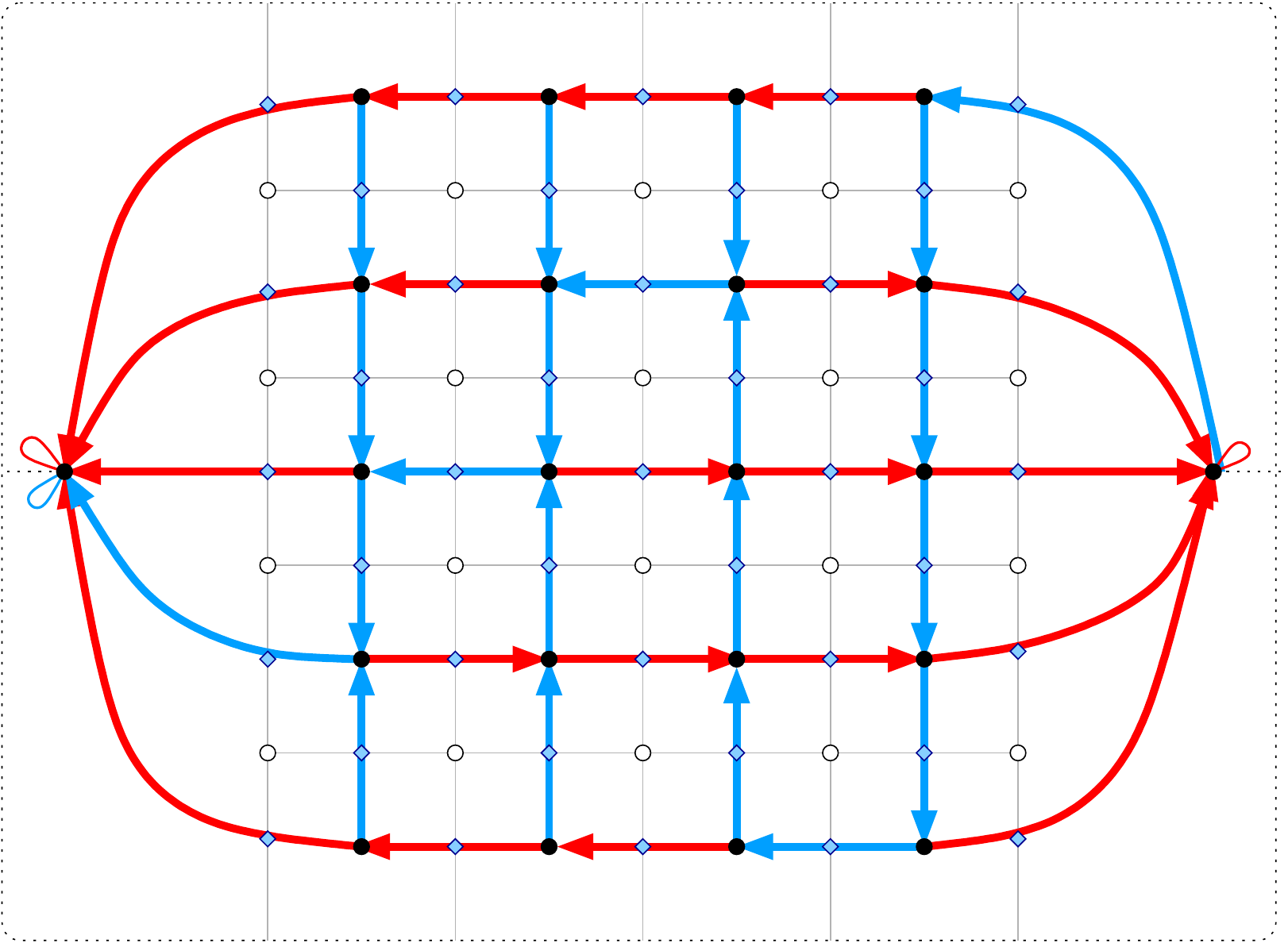}
\put(1,31){\scriptsize $b_r$}
\put(8,74){\scriptsize $w_r$}
\put(96,31){\scriptsize $\tilde{b}$}
\put(17,4){\scriptsize $c_{0,0}$}
\put(31,4){\scriptsize $c_{1,0}$}
\put(46,4){\scriptsize $c_{2,0}$}
\put(61,4){\scriptsize $c_{3,0}$}
\put(75,4){\scriptsize $c_{4,0}$}
\put(17,18){\scriptsize $c_{0,1}$}
\put(17,33){\scriptsize $c_{0,2}$}
\put(17,47){\scriptsize $c_{0,3}$}
\put(17,61){\scriptsize $c_{0,4}$}
\put(23,11){\scriptsize $d_0$}
\put(38,11){\scriptsize $d_1$}
\put(52,11){\scriptsize $d_2$}
\put(67,11){\scriptsize $d_3$}
\put(23,25){\scriptsize $d_0$}
\put(23,40){\scriptsize $d_0$}
\put(23,55){\scriptsize $d_0$}
\put(38,25){\scriptsize $d_1$}
\put(38,40){\scriptsize $d_1$}
\put(38,55){\scriptsize $d_1$}
\put(52,25){\scriptsize $d_2$}
\put(52,40){\scriptsize $d_2$}
\put(52,55){\scriptsize $d_2$}
\put(67,25){\scriptsize $d_3$}
\put(67,40){\scriptsize $d_3$}
\put(67,55){\scriptsize $d_3$}
\put(23,69){\scriptsize $\col =0$}
\put(39,69){\scriptsize $\col =1$}
\put(53,69){\scriptsize $\col =2$}
\put(68,69){\scriptsize $\col =3$}
\put(63,38){\scriptsize $\vec{e}$}
\put(53,32){\scriptsize $b(\vec{e})$}
\put(61,32){\scriptsize $f(\vec{e})$}
\end{overpic}
\end{center}
\caption{
        Case $n=4$; notation $(c_{i,j})_{0\leq i,j\leq 4},\, (d_i)_{0\leq i \leq 3}$ for faces weights; permutation spanning forest $\Fs$ (red) corresponding to the permutation
         $\tau=\scriptsize{\begin{pmatrix}
0&1&2&3&4\\
3&0&1&2&4
\end{pmatrix}}$,
 and its complementary tree (blue); a directed edge $\vec{e}$ such that $\col b(\vec{e})=2$.
}
\label{fig:Aztec_4}
\end{figure}

We need one more notation. Using the reverse Temperley trick, every directed edge $\vec{e}$ of $G^\bullet$ is in correspondence with an edge $b(\vec{e})f(\vec{e})$ of $\GD$ where $b(\vec{e})$ is a vertex of $G^\bullet$, $f(\vec{e})$ is a vertex of type $F$ of $\GD$, and the directed edge $(b(\vec{e}),f(\vec{e}))$ has the same orientation as $\vec{e}$, see Figure~\ref{fig:Aztec_4}. Now, the graph $G^\bullet$ has $k$ columns of $k$ vertical edges, labeled $\{0,\dots,k-1\}$ from left to right. For every directed edge $\vec{e}$ not oriented away from $b_r$ or $\tilde{b}$, we have $b(\vec{e})\notin \{b_r,\tilde{b}\}$, and we let $\col b(\vec{e})$ be the label of the column to which the vertex $b(\vec{e})$ belongs. Then, we prove the following.

\begin{theorem}\label{thm:col_degen_I}
Suppose that all odd columns of the Aztec diamond $A_k$ have constant weights $(d_i)_{0\leq i\leq k-1}$ as in Equation~\eqref{eq:cst_weights}. Then, for every Kasteleyn orientation $\vphi$, the following combinatorial identity holds for the partition function of oriented dimers:
\begin{equation}\label{equ:thm_col_degen_I}
Z(A_k,a,\vphi)=\pm \sum_{\tau\in \S_{k+1}} \sgn(\tau) \prod_{\vec{e}\in \Fs(\tau)}(a_{f(\vec{e})}-d_{\col b(\vec{e})}).
\end{equation}
\end{theorem}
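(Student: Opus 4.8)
The plan is to specialize the general combinatorial formula of Corollary~\ref{cor:Z_Aztec_diamond} to the constant-column hypothesis~\eqref{eq:cst_weights}, and show that most terms in the resulting sum over complementary tree/forest configurations $(\Ts,\Fs)$ either vanish or collapse onto the much smaller class of permutation spanning forests $\Fs(\tau)$. First I would start from
\[
Z(A_k,a,\vphi)=\pm \sum_{(\Ts,\Fs)\in \F}\sign(\Ts,\Fs)\prod_{\vec{e}\in \Fs}a_{f_{\vec{e}}},
\]
and analyze the effect of the forest $\Fs$ containing a vertical edge $\vec{e}$ of $G^\bullet$. The key observation is that a vertical edge $\vec{e}$ in column $c$ has both adjacent faces $f(\vec{e}),f(\cev{e})$ carrying the same weight $d_c$ (this is exactly the content of the constant-column hypothesis, read off Figure~\ref{fig:Aztec_4}). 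The idea is then to build a sign-reversing involution on those configurations whose forest uses at least one vertical edge, pairing each such $(\Ts,\Fs)$ with a configuration obtained by a local swap that toggles a vertical forest edge against the complementary tree, leaving the monomial $\prod_{\vec{e}\in\Fs}a_{f_{\vec e}}$ unchanged (since the two faces carry the same weight $d_c$) but flipping $\sign(\Ts,\Fs)$. After cancellation, only configurations whose forest contains \emph{no} vertical edge survive; by Remark~\ref{rem:permutation_tree_forest} these are precisely the permutation spanning forests $\Fs(\tau)$ indexed by $\tau\in\S_{k+1}$, whose complementary tree $\Ts$ contains all vertical edges of $G^\bullet$ and is forced.

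Next I would compute the surviving contribution of a single $\Fs(\tau)$. The absent edges of $\Fs(\tau)$ form the permutation $\tau$, so the horizontal forest edges are $e_{i,j}$ with $i\neq\tau(j)$; each contributes a face weight, and I expect the product to organize into $\prod_{\vec{e}\in\Fs(\tau)}(a_{f(\vec{e})}-d_{\col b(\vec{e})})$ after grouping every horizontal edge with the vertical edges of the tree that border the same column of $d$-faces. Concretely, I would reinterpret the weighted sum via the matrix expansion behind $\sign(\Ts,\Fs)$ in~\eqref{equ:def_sign}: because the $d$-faces in a given column $c$ all carry the value $d_c$, the contribution of matched edges at those faces factors so that each surviving horizontal edge weight $a_{f(\vec{e})}$ gets shifted by $-d_{\col b(\vec{e})}$. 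I would verify that the permutation sign $\sgn(\sigma)$ attached to $(\Ts,\Fs(\tau))$ in~\eqref{equ:def_sign} reduces (up to a global, $\tau$-independent sign) to $\sgn(\tau)$, since the vertical tree edges are fixed across all $\tau$ and contribute a constant factor, while the horizontal absent/present pattern is governed by $\tau$. Collecting these gives exactly~\eqref{equ:thm_col_degen_I}.

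The main obstacle will be making the sign-reversing involution precise and checking it is well-defined: when I toggle a vertical forest edge I must confirm that the result is again a legitimate complementary tree/forest pair (the complement of the new forest is still a spanning tree rooted at $b_r$, no cycle is created, the rooting at $\{b_r,\tilde b\}$ is preserved) and that the involution is genuinely fixed-point-free on the set of forests using vertical edges, so that the cancellation is complete. A secondary subtlety is bookkeeping the signs: I need the toggle to flip $\sign(\Ts,\Fs)$ while preserving the monomial, which relies on the equality of the two $d_c$-weighted faces \emph{and} on the sign convention of $C(1)$ from Figure~\ref{fig:Aztec_3} (recall that the two half-edges of $\GD$ associated to one edge of $G^\bullet$ carry opposite signs, exactly the mechanism already used in the proof of Theorem~\ref{thm:comb_int_II}). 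I would organize the involution by choosing, in each offending forest, a canonical vertical edge (say the lowest-then-leftmost), which guarantees the pairing is an involution; the remaining verification that untouched configurations are permutation spanning forests, together with the sign identification $\sgn(\sigma)\mapsto\sgn(\tau)$, is then a routine but careful matching argument.
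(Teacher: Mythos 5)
There is a genuine gap, and it is fatal to the first (and central) step of your plan. The sign-reversing involution you propose -- cancelling \emph{all} configurations $(\Ts,\Fs)$ whose forest contains a vertical edge -- cannot exist, because those configurations do not cancel: they are exactly the configurations carrying the monomials that contain $d$-factors, and such monomials genuinely appear in $Z(A_k,a,\vphi)$. Indeed, expand the right-hand side of \eqref{equ:thm_col_degen_I}: each binomial $(a_{f(\vec{e})}-d_{\col b(\vec{e})})$ contributes terms with $d$'s, so the identity to be proved manifestly has $d$-containing monomials on both sides. Concretely, for $k=1$ (Figure~\ref{fig:ex1_forests}), four of the six complementary tree/forest configurations have the vertical edge in their forest, they contribute the four monomials $\pm d_0\,c$ in $Z(A_1,a,\vphi)$, and these survive: the theorem for $k=1$ reads $\pm\bigl[(c_{1,0}-d_0)(c_{0,1}-d_0)-(c_{0,0}-d_0)(c_{1,1}-d_0)\bigr]$, whose expansion reproduces precisely those $d_0\,c$ terms. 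Your proposal is also internally inconsistent at the second step: after your claimed cancellation, each surviving permutation forest $\Fs(\tau)$ would contribute the \emph{single} monomial $\pm\prod_{\vec{e}\in\Fs(\tau)}a_{f_{\vec{e}}}$, and no amount of ``grouping with vertical tree edges'' can turn one monomial into the product of binomials $\prod_{\vec{e}\in\Fs(\tau)}(a_{f(\vec{e})}-d_{\col b(\vec{e})})$; tree edges carry no weight in the expansion of Corollary~\ref{cor:Z_Aztec_diamond}.

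The mechanism that actually makes the theorem work is not a cancellation among configurations of the original expansion, but an algebraic regrouping performed \emph{before} expanding. The paper starts from $Z(A_k,a,\vphi)=\pm\det\begin{pmatrix}C(1)^{\tilde{b}}&C(1)^{B}&C(a)^{B}\end{pmatrix}$ (Corollary~\ref{cor:matrix_relation_gen}) and, for each vertex $b$ in the $i$-th column of vertical edges, subtracts $d_i$ times its column of $C(1)^{B}$ from its column of $C(a)^{B}$. This column operation leaves the determinant unchanged, replaces every entry $\pm a_f$ by $\pm(a_f-d_{\col b})$, and -- this is where the constant-column hypothesis enters -- makes every entry corresponding to a vertical edge equal to $\pm(d_i-d_i)=0$. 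Only then does one expand graphically: since vertical edges now have weight zero, the surviving configurations are forced to be permutation spanning forests (a forest missing two horizontal edges in the same row would force a cycle in the complementary tree), already carrying the shifted weights $(a_{f(\vec{e})}-d_{\col b(\vec{e})})$. Finally, the identification $\sign(\Ts,\Fs(\tau))=\pm\sgn(\tau)$, which you dismiss as routine, requires a genuine computation: one compares two permutation forests differing by a transposition, analyzes the cycle formed by superimposing the corresponding pairs of matchings of $\GD_r$, and checks that the signature contribution and the $C(1)$-coefficient contribution multiply to $-1$ in both the $i'\neq k$ and $i'=k$ cases. If you want to salvage a purely combinatorial argument on the original expansion, you would need a weight-preserving, sign-respecting \emph{many-to-many} regrouping (matching the expansion of each product of binomials), not an involution; that is considerably more delicate than what you describe.
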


\begin{proof}
We start from Corollary~\ref{cor:matrix_relation_gen}:
\begin{equation*}
Z(A_k,a,\vphi)=\pm \det
(C)=\pm \det
\begin{pmatrix}
C(1)^{\tilde{b}}&C(1)^{B}&C(a)^{B}
\end{pmatrix}.
\end{equation*}
For every $0\leq i\leq k-1$, consider the black vertices
$\{b_{i,0},\dots,b_{i,k}\}$ belonging to the column $i$ of vertical
edges of $G^\bullet$, and do the following operations: for every
$0\leq j\leq k$, multiply the column corresponding to $b_{i,j}$ in
$C(1)^{B}$ by $d_i$ and subtract it from the corresponding column in
$C(a)^{B}$;~the column in $C(1)^B$ is left unchanged. This operation yields a matrix $C'(a)^{B}$ and does not change the determinant. We have
\begin{equation*}
Z(A_k,a,\vphi)=\pm \det
\begin{pmatrix}
C(1)^{\tilde{b}}&C(1)^{B}&C'(a)^{B}
\end{pmatrix},
\end{equation*}
where for every $f\in F$, every $b\in B$,
\begin{equation*}
C'(a)^{B}_{f,b}=
\begin{cases}
0& \text{ if $fb$ corresponds to a vertical edge by the Temperley trick}\\
\pm(a_f-d_{\col b}) & \text{ if $fb$ corresponds to a horizontal edge by the Temperley trick},
\end{cases}
\end{equation*}
where the sign is defined as for the matrix $C(a)$, see Equation~\eqref{equ:choice_sign}.

We now compute the determinant similarly to what we have done in the proof of Theorem~\ref{thm:comb_int_II}, using the notation introduced for that purpose. Non-zero terms in the permutation expansion of the determinant correspond to pairs of matchings of $\Md$. Then, applying Temperley's trick, we show that the only remaining configurations are pairs of complementary spanning trees/forests rooted at $b_r$ and $\{b_r,\tilde{b}\}$. But, in the present setting, because of the definition of $C'(a)^{B}$, the graph $(G')^\bullet$, on which spanning forests rooted at $\{b_r,\tilde{b}\}$ live, is the graph $G^\bullet$ with no vertical edge (since they have weight 0 in the matrix). Returning to the definition of a spanning forest rooted at $\{b_r,\tilde{b}\}$, we deduce that this component must contain exactly $k$ edges per row. Consider such a spanning forest $\Fs$. Then we know that the complementary configuration $\Ts$ must be a spanning tree rooted at $b_r$. Since all vertical edges are absent from $(G')^\bullet$, they must all be present in $\Ts$. Suppose now that the absent horizontal edges of $\Fs$ do not form a permutation, then $\Ts$ must contain two horizontal edges $e_{i,j}$ $e_{i,j'}$ for some column $i$ and some distinct $j,j'$. This implies that $\Ts$ has a cycle which contradicts it being a spanning tree. Thus $\Fs$ must be a permutation spanning forest of $G^\bullet$ rooted at $\{b_r,\tilde{b}\}$. By Remark~\ref{rem:permutation_tree_forest}, we then have that $\Ts$ is indeed a spanning tree rooted at $b_r$. Using the specific form of the matrix $C'(a)$, we have so far proved that,
\begin{equation*}
Z(A_k,a,\vphi)=\pm \sum_{(\Ts,\Fs)\in\F'} \sign(\Ts,\Fs)\prod_{\vec{e}\in \Fs}(a_{f(\vec{e})}-d_{\col b(\vec{e})}),
\end{equation*}
where $\F'$ is the set of pairs of complementary spanning trees/forests rooted at $b_r$, $\{b_r,\tilde{b}\}$, such that $\Fs$ is a permutation spanning forest rooted at $\{b_r,\tilde{b}\}$.

We are thus left with showing that $\sign(\Ts,\Fs)$ is equal to the signature of the permutation $\tau$ corresponding to $\Fs$ (up to a global $\pm$ sign). To this purpose, it suffices to show that if $\tau,\tau'$ are two permutations differing by a transposition, corresponding to two permutation spanning forests $\Fs,\Fs'$, then the product of $\sign(\Ts,\Fs)$ and $\sign(\Ts',\Fs')$ is equal to $-1$. Let $j< j'\in\{0,\dots,k\}$ be the indices of the rows such that $\tau(j)=\tau'(j')=i$, $\tau(j')=\tau'(j)=i'$, for $i,i'\in\{0,\dots,k\}$, and without loss of generality suppose that $i<i'$.

Denote by $(\Ms_1,\Ms_2)$, resp. $(\Ms_1',\Ms_2')$, the pair of matchings of $\Md$ corresponding to $(\Fs,\Ts)$, resp. $(\Fs',\Ts')$, and by $\sigma$, resp. $\sigma'$, the permutation associated to $(\Ms_1,\Ms_2)$, resp. $(\Ms_1',\Ms_2')$, see Equation~\eqref{equ:def_sign} for definition. Our goal is to prove that
\begin{equation}\label{equ:proof_equ_1}
\sgn(\sigma)\sgn(\sigma')C(1)_{f_{\sigma(1),b_1}}\dots C(1)_{f_{\sigma(\ell),b_\ell}}
C(1)_{f_{\sigma'(1),b_1}}\dots C(1)_{f_{\sigma'(\ell),b_\ell}}=-1.
\end{equation}
To this purpose, we need to study the superimposition of $(\Ms_1,\Ms_2)$ and $(\Ms_1',\Ms_2')$, see Figure~\ref{fig:Aztec_5}. By definition of $\Md$, we know that it consists of cycles such that: each vertex of $B$ has degree 4 (1 from each of $\Ms_1,\Ms_2,\Ms_1',\Ms_2'$), each vertex of $F$ has degree 2 (1 from each of $(\Ms_1,\Ms_2)$, $(\Ms_1',\Ms_2')$), $\tilde{b}$ has degree 2 (1 from each of $\Ms_1$, $\Ms_1'$), $b_r$ has degree $0$. Recall that vertices of $B$ receive two labels; using colors, this translates in the fact that blue edges incident to a vertex of $B$ and red ones come from the two copies of that vertex.

\begin{figure}[tb]
\begin{center}
\begin{overpic}[width=\linewidth]{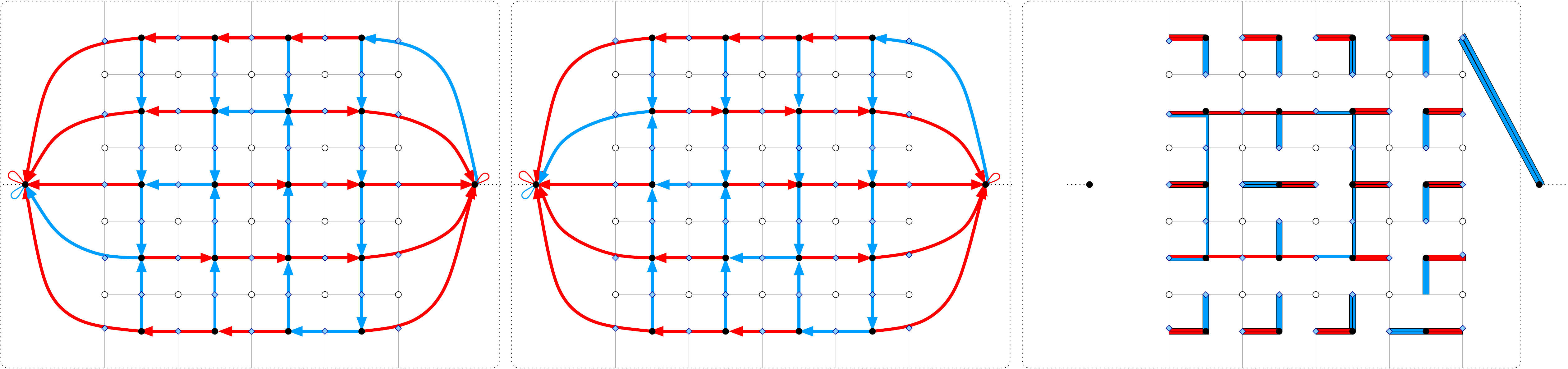}
\put(5,6){\scriptsize $e_{0,1}$}
\put(15,17){\scriptsize $e_{2,3}$}
\put(37,14){\scriptsize $e_{0,3}$}
\put(47,6){\scriptsize $e_{2,1}$}
\put(73,22){\scriptsize $\col=0$}
\put(83,22){\scriptsize $\col=2$}
\end{overpic}
\end{center}
\caption{Left and middle: permutation spanning forests $\Fs,\Fs'$
  (red) differing by a transposition: the associated permutations
  $\tau,\tau'$ are such that $\tau(1)=\tau'(3)=0$,
  $\tau(3)=\tau'(1)=2$, \emph{i.e.}, $i=0, i'=2, j=1, j'=3$, and their
  complementary spanning trees $\Ts,\Ts'$ (blue). Right: corresponding
  superimposition of $(\Ms_1,\Ms_2), (\Ms_1',\Ms_2')$.}
\label{fig:Aztec_5}
\end{figure}

Looking at the orientation of the edges of $(\Ts,\Fs)$, $(\Ts',\Fs')$, and using that the permutation spanning forests $\Fs$, $\Fs'$ differ by a transposition, we have that the superimposition of $(\Ms_1,\Ms_2)$ and $(\Ms_1',\Ms_2')$ consists of doubled edges of the same color, and a cycle of $\GD$ between the $i$-th and $i'$-th columns of $G^\bullet$, and the $j$-th and $j'$-th rows of $G^\bullet$, with two length-two detours on the left, at the level of the $j$-th and $j'$-th rows,
see Figure~\ref{fig:Aztec_5} (right). When $i'\neq k$, resp. $i'=k$, the cycle has length $4[(i'-i)+(j'-j)+1]$, resp. $4(i'-i)+2(j'-j)+4$ (the last column of the cycle is reduced to a point). This implies  that
\begin{align*}
\sgn(\sigma)\sgn(\sigma')=
\begin{cases}
(-1)^{2[(i'-i)+(j'-j)]+1}& \text{ if $i'\neq k$},\\
(-1)^{2(i'-i)+(j'-j)+1}& \text{ if $i'= k$}.
\end{cases}
\end{align*}
Observing that by our choice of signs for the matrix $C(a)$, the two half edges of $\GD$ corresponding to an edge of $G^\bullet$ have opposite signs, we deduce that
\begin{align*}
C(1)_{f_{\sigma(1),b_1}}\dots C(1)_{f_{\sigma(\ell),b_\ell}}
C(1)_{f_{\sigma'(1),b_1}}\dots C(1)_{f_{\sigma'(\ell),b_\ell}}=
\begin{cases}
(-1)^{2[(i'-i)+(j'-j)]} & \text{ if $i'\neq k$},\\
(-1)^{2(i'-i)+(j'-j)} & \text{ if $i'= k$}.
\end{cases}
\end{align*}
Taking the product of the signature and coefficients contributions, we deduce that Equation~\eqref{equ:proof_equ_1} is indeed true.
\end{proof}

We are now ready to prove Theorem~\ref{thm:vert_translation}.

\begin{proof}[Proof of Theorem~\ref{thm:vert_translation}]
Let $c=(0\, \dots\, k)$ be the permutation cycle corresponding to the vertical cyclic shift of the weights. Then, there is a bijection between $\{\tau:\tau\in \S_{k+1}\}$ and $\{\tau\circ c: \tau\in S_{k+1}\}$. Moreover, given $\tau\in \S_{k+1}$, the product of the directed edge weights of $\Fs(\tau)$ in the expansion~\eqref{equ:thm_col_degen_I} with weight function $a$, is equal to that of $\Fs(\tau\circ c)$ with weight function $\tilde{a}$. As a consequence, by Equation~\eqref{equ:thm_col_degen_I}, we have that the oriented dimer partition functions are related by:
\begin{equation}\label{equ:proof_Z_vert_translation}
Z(A_k,\tilde{a},\varphi)=\sgn(c) Z(A_k,a,\varphi)=(-1)^{k}Z(A_k,a,\varphi).
\end{equation}
The equality between the ratio functions $Y(A_k,a)$ and $Y(A_k,\tilde{a})$ is obtained by returning to Equation~\eqref{eq:dskp_Aztec_dim}, giving the explicit computation of $Y(A_k,a)$ in the Aztec diamond case, applying Equation~\eqref{equ:proof_Z_vert_translation} to the face weights $(a^{-1})$, and using that $(\prod_{f\in F}a_f)=(\prod_{f\in F}\tilde{a}_f)$.
\end{proof}

\subsection{Schwarzian Dodgson condensation}\label{sec:Dodgson_condens}

We now suppose that all odd columns are set to the \emph{same} value,
that is, for some $d$,
\begin{equation}
  \label{eq:cst_cst_weights}
  \forall\, 0\leq i,j\leq  k-1, \quad d_{i,j}=d.
\end{equation}

Let $N$ be the matrix of size $(k+1)\times(k+1)$ whose coefficients are defined by
\[
\forall\, 0\leq i,j \leq k,\quad N_{i,j}=\frac{1}{c_{i,j}-d}.
\]
Then, as a consequence of Theorem~\ref{thm:col_degen_I}, we obtain
\begin{corollary}
  \label{cor:dodgson}
Suppose that \emph{all} odd columns of the Aztec diamond $A_k$ have constant weight $d$ as in Equation~\eqref{eq:cst_cst_weights}. Then, for every Kasteleyn orientation $\vphi$, the following combinatorial identity holds for the partition function of oriented dimers:
\begin{equation*}
Z(A_k,a,\vphi)=\pm \prod_{0\leq i,j\leq k}(c_{i,j}-d)\cdot \det N.
\end{equation*}
Moreover, for the ratio function of oriented dimers, we have
\begin{equation*}
Y(A_k,a)=d + \sum_{0\leq i,j \leq k} \left( N^{-1} \right)_{i,j}.
\end{equation*}

\end{corollary}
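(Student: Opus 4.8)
The plan is to deduce Corollary~\ref{cor:dodgson} directly from Theorem~\ref{thm:col_degen_I} by specializing all constant column weights $d_i$ to the single value $d$. Once we set $d_i=d$ for all $i$, the summand in Equation~\eqref{equ:thm_col_degen_I} becomes $\prod_{\vec{e}\in\Fs(\tau)}(a_{f(\vec e)}-d)$. Since $\Fs(\tau)$ is a permutation spanning forest consisting of horizontal edges with exactly one absent edge per row, the present horizontal edges in row $j$ are all edges $e_{i,j}$ except $e_{\tau(j),j}$. Each such edge $\vec e$ contributes the factor $a_{f(\vec e)}-d$, and by the correspondence between horizontal edges $(e_{i,j})$ and face weights $(c_{i,j})$ noted in Section~\ref{sec:cst_col}, this factor is precisely $c_{i,j}-d$.

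First I would make the bookkeeping precise. For a fixed permutation $\tau$, the product over $\vec e\in\Fs(\tau)$ of $(c_{i,j}-d)$ runs over all pairs $(i,j)$ with $i\neq\tau(j)$; that is, it equals $\prod_{0\leq i,j\leq k}(c_{i,j}-d)$ divided by $\prod_{j=0}^k(c_{\tau(j),j}-d)$. Pulling the full product $P:=\prod_{0\leq i,j\leq k}(c_{i,j}-d)$ out of the sum, Equation~\eqref{equ:thm_col_degen_I} becomes
\[
Z(A_k,a,\vphi)=\pm\, P\sum_{\tau\in\S_{k+1}}\sgn(\tau)\prod_{j=0}^k\frac{1}{c_{\tau(j),j}-d}
=\pm\, P\sum_{\tau\in\S_{k+1}}\sgn(\tau)\prod_{j=0}^k N_{\tau(j),j}.
\]
The remaining sum is exactly the Leibniz expansion of $\det N$ (up to the convention on rows versus columns, which only affects the overall $\pm$), giving the first claim $Z(A_k,a,\vphi)=\pm\,P\det N$.

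For the ratio function I would return to the Aztec-diamond formula Equation~\eqref{eq:dskp_Aztec_dim}, namely $Y(A_k,a)=\bigl(\prod_{f\in F}a_f\bigr)Z(A_k,a^{-1},\vphi)/Z(A_k,a,\vphi)$. Applying Theorem~\ref{thm:col_degen_I} and the above with weights $a^{-1}$, the odd columns all take value $d^{-1}$, so the numerator is governed by the matrix $N'$ with $N'_{i,j}=(c_{i,j}^{-1}-d^{-1})^{-1}=\frac{c_{i,j}d}{d-c_{i,j}}=-d\,c_{i,j}N_{i,j}$. The plan is then to track how the two determinants and the prefactors combine: factoring the scalar $-d\,c_{i,j}$ out of each entry and comparing with the denominator's $P\det N$, together with the weight product $\prod_{f}a_f$, should collapse everything into the cofactor/inverse expression. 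The cleanest route is probably to differentiate or directly manipulate $\det N$ via the cofactor identity $\sum_{i,j}(N^{-1})_{i,j}\det N=\sum_{i,j}\mathrm{cof}_{i,j}(N)$, recognizing that the numerator determinant, after extracting scalars, equals $\det N - d^{-1}\sum_{i,j}\mathrm{cof}_{i,j}(N)$ or a similar combination. I expect the main obstacle to be this last algebraic simplification: carefully bookkeeping the signs, the row-versus-column conventions in the Leibniz expansion, and verifying that the scalar factors $-d\,c_{i,j}$ and $\prod_f a_f$ recombine exactly to yield $Y(A_k,a)=d+\sum_{i,j}(N^{-1})_{i,j}$, rather than some off-by-a-sign or off-by-a-factor-of-$d$ variant. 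The identity $\sum_{i,j}(N^{-1})_{i,j}=\frac{\det(N+J)-\det N}{\det N}$ type manipulation (where $J$ is all-ones) may give the most transparent path, since adding $d$ and summing inverse entries is exactly what appears when one perturbs $N$ by a rank-one shift corresponding to replacing $d$ by a nearby value.
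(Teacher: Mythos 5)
Your first identity and its derivation are exactly the paper's argument: specialize Equation~\eqref{equ:thm_col_degen_I} to $d_i=d$ (so that the factor attached to a forest edge no longer depends on its orientation), pull out $P=\prod_{0\leq i,j\leq k}(c_{i,j}-d)$, and recognize the Leibniz expansion of $\det N$. That half is complete and correct.

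The second identity is where you have a genuine gap, and the route you propose first would actually fail. Writing $N'_{i,j}=(c_{i,j}^{-1}-d^{-1})^{-1}=-d\,c_{i,j}N_{i,j}$ is right, but ``factoring the scalar $-d\,c_{i,j}$ out of each entry'' is not a legitimate determinant operation: $c_{i,j}$ depends on both indices, and an entrywise (Hadamard) factor cannot be extracted from a determinant --- only common row or column factors can. The step that rescues the computation, and which your proposal never states, is the identity
\begin{equation*}
c_{i,j}N_{i,j}=\frac{c_{i,j}}{c_{i,j}-d}=1+\frac{d}{c_{i,j}-d}=1+d\,N_{i,j},
\end{equation*}
which says the entrywise-modified matrix is exactly the rank-one shift $J+dN$, with $J$ the all-ones matrix, so that $N'=-d\,(J+dN)$. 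At that point your second, deferred route does close, and it is precisely the paper's argument in disguise: the paper expands $\det\bigl(1+\tfrac{d}{c_{i,j}-d}\bigr)_{i,j}$ by multilinearity of columns, discards all terms with two or more columns of ones, and recognizes cofactors; equivalently, the matrix determinant lemma gives $\det(dN+J)=d^{k+1}\det N\,\bigl(1+d^{-1}\sum_{i,j}(N^{-1})_{i,j}\bigr)$. Note that your guessed combination $\det N-d^{-1}\sum_{i,j}\mathrm{cof}_{i,j}(N)$ has the wrong sign. Finally, the ``bookkeeping'' you postpone is not negligible and is where the clean constant $d$ emerges: in \eqref{eq:dskp_Aztec_dim} the prefactor is $\prod_{f}a_f=d^{k^2}\prod_{i,j}c_{i,j}$ (there are $k^2$ inner faces of weight $d$), and one must convert $\prod_{i,j}(c_{i,j}^{-1}-d^{-1})$ into $\pm\, d^{-(k+1)^2}\prod_{i,j}c_{i,j}^{-1}\prod_{i,j}(c_{i,j}-d)$; only after checking that the net power of $d$ is $d^{\,k^2-(k+1)^2+2k+1}=d^{\,0}$ does the quotient of the two partition functions collapse to $d+\sum_{i,j}(N^{-1})_{i,j}$ with no stray factor. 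Since your proposal neither states the rank-one identity nor performs this cancellation --- and its primary route is invalid --- the second statement is not yet proved, although the ingredients you list at the end are the right ones.
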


\begin{proof}
We start from Equation~\eqref{equ:thm_col_degen_I} of Theorem~\ref{thm:col_degen_I}.
Let $\tau$ be a permutation of $\S_{k+1}$, and recall that the permutation spanning forest $\Fs(\tau)$ contains all horizontal edges except
$e_{\tau(0),0},\dots,e_{\tau(k),k}$. Since the weights of the faces of the Aztec diamond labeled $d_{i,j}$ are all equal to $d$, the product on the right-hand-side of Equation~\eqref{equ:thm_col_degen_I} is independent of the orientation of the edges of $\Fs(\tau)$. As a consequence, we can write
\begin{equation*}
\prod_{e\in \Fs(\tau)}(a_{f(e)}-d)=
\prod_{0\leq i,j\leq k} (c_{i,j}-d)
\prod_{e\notin \Fs(\tau):\, e \text{ horizontal}} \frac{1}{a_{f(e)}-d}.
\end{equation*}
Observing that
\begin{equation*}
\prod_{e\notin \Fs(\tau):\, e \text{ horizontal}} \frac{1}{a_{f(e)}-d}=
\prod_{j=0}^k \frac{1}{c_{\tau(j),j}-d},
\end{equation*}
we deduce from Equation~\eqref{equ:thm_col_degen_I} that,
\begin{equation}
  \label{eq:zan_detN}
Z(A_k,a)=\pm \prod_{0\leq i,j\leq k} (c_{i,j}-d)
\sum_{\tau\in \S_{k+1}}\sgn(\tau)\prod_{j=0}^k \frac{1}{c_{\tau(j),j}-d}=
\pm \prod_{0\leq i,j\leq k} (c_{i,j}-d)\cdot \det(N^t).
\end{equation}

To compute $Y(A_k,a)$, recalling \eqref{eq:dskp_Aztec_dim}, we first compute
the numerator. Since there are $k^2$ faces with weight $d$, and using
Equation~\eqref{eq:zan_detN}, it is equal to
\begin{equation*}
  \begin{split}
     \prod_{0\leq i,j \leq k} c_{i,j} \ \cdot d^{k^2}
    \cdot Z(A_k,a^{-1},\varphi)
    = & \pm \prod_{0\leq i,j \leq k} c_{i,j} \left(
        c_{i,j}^{-1} - d^{-1} \right) \ \cdot d^{k^2} \cdot
    \det\left( (c_{i,j}^{-1}-d^{-1})^{-1} \right)_{0\leq i,j \leq k}
    \\
    = &  \pm \prod_{0\leq i,j \leq k} (d-c_{i,j}) \ \cdot d^{-1-2k} \cdot
    \det\left( \frac{d c_{i,j}}{d-c_{i,j}} \right)_{0\leq i,j \leq k} \\
    = &  \pm \prod_{0\leq i,j \leq k} (c_{i,j}-d) \  \cdot d^{-k} \cdot
    \det\left( \frac{c_{i,j}}{c_{i,j}-d} \right)_{0\leq i,j \leq k}.
  \end{split}
\end{equation*}
In the last line, we moved the factor $d$ out of the determinant, and
we changed the sign in both the product and the matrix, which gives
signs that cancel out. Then, we write $\frac{c_{i,j}}{c_{i,j}-d}
= 1 + \frac{d}{c_{i,j}-d}$, and we use multi-linearity of the columns in the
determinant. In the resulting expression, terms with at least two
columns of ones disappear. When there is exactly one column of ones,
we may expand on this column, and get a sum on minors of size
$k$. This gives

\begin{equation*}
  \det\left(\frac{c_{i,j}}{c_{i,j}-d} \right)_{0\leq i,j \leq k} =
  \det \left( \frac{d}{c_{i,j}-d} \right)_{0\leq i,j \leq k} +
  \sum_{0\leq i_0,j_0 \leq k} (-1)^{i_0 + j_0} \det \left( \frac{d}{c_{i,j}-d}
  \right)_{i \neq i_0, j \neq j_0}.
\end{equation*}
Putting this back into the previous equation, and extracting again the
factors $d$ from the matrices, we recognize $N$ and the entries of
$N^{-1}$:
\begin{equation*}
  \prod_{0\leq i,j \leq k} c_{i,j} \ \cdot d^{k^2}
  \cdot Z(A_k,a^{-1},\varphi)
  = \pm \prod_{0\leq i,j \leq k} (c_{i,j}-d) \ \cdot \det N \cdot \left( d  +
     \sum_{0\leq i_0, j_0 \leq k} N^{-1}_{i_0,j_0}\right).
\end{equation*}
Dividing by Equation~\eqref{eq:zan_detN} gives the formula for $Y(A_k,a)$.
\end{proof}

\subsection{Periodically constant columns}\label{sec:periodic_columns}

We turn to a case where constant columns appear periodically, which is
a generalization of Section~\ref{sec:cst_col}.

Let $m\geq 2, p\geq 1$, and let $k=mp-2p+1$. Suppose that the weights
$\left(c_{i,j}\right),\left(d_{i,j}\right)$ are $(0,m)$-periodic (or
equivalently that $\left(a_{i,j}\right)$ are $(0,2m)$-periodic), and
that every $p$-th odd column is constant, that is
\begin{equation}
  \label{eq:per_weights}
  \begin{split}
    \forall i,j, \quad \ & c_{i,j}=c_{i,j+m}, \\
    & d_{i,j}=d_{i,j+m}, \\
    & \text{if } [i]_p = 0, \text{ then } d_{i,j}=d_{i/p},
  \end{split}
\end{equation}
whenever these are well-defined,
see
Figure~\ref{fig:single_col}, left; note that we switched the role of black
and white vertices compared to Figure~\ref{fig:Aztec_1}, which will be
useful in the forthcoming proof.

We again consider translated weights, taking
periodicity into account:

\begin{equation}
  \label{eq:defatilde2}
\forall i,j \in \Z \, \text{ s.t. } [i+j]_2=0, \, \quad\quad  \ \tilde{a}_{i,j}=a_{i, [j+2]_{2m}},
\end{equation}
see Figure~\ref{fig:single_col}, right.

\begin{figure}[tb]
  \centering
  \includegraphics[width=14cm]{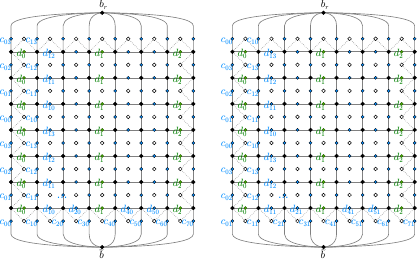}
  \caption{Left: the weights $a$ corresponding to the application of
    Theorem~\ref{thm:single_col}. Here $m=4$ and $p=3$, so the Aztec
    diamond $A_k$ has size $k=mp-2p+1=7$; its boundary is shown in
    dotted lines. The graph $G^\bullet$ is shown in solid lines and
    black dots, recall that its vertices are
    $\{B,\tilde{b},b_r\}$. The weights $(c_{i,j})$ and $(d_{i,j})$ are
    attached to elements of $F$, or equivalently edges of $G^\bullet$,
    shown as green and blue diamonds. The weights are periodic with
    period $(0,4)$. In $(d_{i,j})$, constant columns appear every $3$
    columns, and correspond to green diamonds. No other periodicity is
    assumed on the weights. Right: the shifted weights $\tilde{a}$ on
    the same graph.
  }
  \label{fig:single_col}
\end{figure}

\begin{theorem}\label{thm:single_col}
  Let $m\geq 2, p\geq 1$, and let $k=mp-2p+1$. Suppose that the
  weights of the Aztec diamond $A_k$ satisfy \eqref{eq:per_weights}.
  Then
  the ratio function of oriented
  dimers associated to face weights $a$ and translated weights
  $\tilde{a}$ of \eqref{eq:defatilde2} are equal:
  \begin{equation*}
    Y(A_k,a)=Y(A_k,\tilde{a}).
  \end{equation*}
\end{theorem}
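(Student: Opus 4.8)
The plan is to deduce the statement entirely from Theorem~\ref{theo:kery}. Recall that it expresses $Y(A_k,a)$ as $\frac{\sum_{j=0}^k c_{k,j}\,v_{k,j}}{\sum_{j=0}^k v_{k,j}}$ for \emph{any} nonzero $v$ in the kernel of $D^T$, where $D=\begin{pmatrix} C(1)^{B} & C(a)^{B} \end{pmatrix}$; let $\tilde D$ denote the analogous matrix built from $\tilde a$. Since the shift acts on the rightmost column by $\tilde c_{k,j}=c_{k,[j+1]_m}$, it suffices to produce nonzero $v\in\ker D^T$ and $\tilde v\in\ker \tilde D^T$ together with a single nonzero scalar $\lambda$ such that $\sum_j \tilde c_{k,j}\,\tilde v_{k,j}=\lambda\sum_j c_{k,j}\,v_{k,j}$ and $\sum_j \tilde v_{k,j}=\lambda\sum_j v_{k,j}$; then the two expressions of Theorem~\ref{theo:kery} coincide. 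Thus the heart of the matter is the construction and comparison of kernel vectors, not a new determinantal identity, and the role of the rightmost-column data selected in Proposition~\ref{prop:Zp_over_Z} is to turn the whole question into one about $v_{k,\cdot}$.

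First I would unpack $D^Tv=0$ into its two blocks: for every black vertex $b$ of $G^\bullet$ it reads $\sum_{f\ni b}\varepsilon_{f,b}\,v_f=0$ and $\sum_{f\ni b}\varepsilon_{f,b}\,a_f v_f=0$, where the $\varepsilon_{f,b}\in\{\pm1\}$ are the fixed signs of $C(1)$ (Figure~\ref{fig:Aztec_3}) and the sums run over the faces incident to $b$. These are \emph{local} relations, so one can organize the solution of the system as a recurrence in the column ($i$) direction of the Aztec diamond: reading the two equations along the black vertices of one column determines the $v$-values on the next column from the previous ones, defining a transfer operator $T_i$. The boundary data at $b_r$ seeds the recurrence, the monodromy $M(a)=T_k\cdots T_0$ propagates it, and the rightmost column produces $v_{k,\cdot}$ up to a global scalar.

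The key algebraic input is the behaviour of the transfer under the vertical shift $S:j\mapsto j+2$, for which $\tilde a = S\cdot a$. Across a column whose $d$-weights are \emph{not} constant, the weights depend on $j$ and the transfer conjugates, $T_i(\tilde a)=S\,T_i(a)\,S^{-1}$; across a constant column, $d_{i,j}=d_{i/p}$ is $S$-invariant, so $T_i(\tilde a)=T_i(a)$ and moreover $T_i(a)$ commutes with $S$. Using the $(0,m)$-periodicity of the weights and the placement of the $m-1$ constant columns at $i\equiv 0 \bmod p$ inside the $k=(m-2)p+1$ columns, I would push $S$ through $M(\tilde a)$ run by run: within each maximal run of non-constant columns the factors $S,S^{-1}$ telescope, and the constant columns, commuting with $S$, splice the runs together. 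The outcome is that $\tilde v_{k,\cdot}$ equals, up to the scalar $\lambda$, the shift of $v_{k,\cdot}$, and the boundary discrepancy generated by the finite height of the diamond is reabsorbed exactly when one reindexes the two sums in the target identities; this is where the precise value of $k$ enters. Theorem~\ref{theo:kery} then finishes the proof.

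The main obstacle is precisely this last bookkeeping. The shift $S$ is only a symmetry of the \emph{bulk} relations: the naive conjugation $M(\tilde a)=S\,M(a)\,S^{-1}$ fails at the top and bottom rows and at the distinguished vertices $b_r,\tilde b$ and the rightmost column, so the reindexing $\sum_{j=0}^k c_{k,[j+1]_m}v_{k,[j+1]_m}=\sum_{j=0}^k c_{k,j}v_{k,j}$ is \emph{not} a priori valid (indeed $m\nmid k+1$ in general). What rescues the argument is the simultaneous use of three ingredients — the $S$-invariance and $S$-commutation of the constant-column transfers, the $m$-periodicity of the weights, and the exact value $k=(m-2)p+1$ fixing the number and spacing of those columns — which force the finite-size corrections at the two ends to cancel in the ratio. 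A secondary, purely technical, point is to track the fixed sign conventions of $C(1)$ and $C(a)$ through the transfer operators so that $\lambda$ is genuinely independent of $j$. The case $p=1$, where every column is constant and the conjugation becomes trivial, recovers the cleaner combinatorial statement of Theorem~\ref{thm:vert_translation} and serves as a consistency check for the general argument.
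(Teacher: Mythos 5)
Your starting point is exactly the paper's: Theorem~\ref{theo:kery} reduces everything to producing kernel vectors of $D^T$ and $\tilde{D}^T$ whose restrictions to the distinguished column have proportional weighted and unweighted sums. But the mechanism you propose for producing and comparing these vectors — a column-direction transfer recurrence conjugated by the vertical shift $S$ — has a genuine gap, and it sits precisely at the point you flag as "bookkeeping." First, $S$ is not even a well-defined operator on the finite column spaces: a $c$-column has $k+1=(m-2)p+2$ entries, the shift is only meaningful modulo the vertical period $m$, and $m\nmid k+1$ in general, so the identity $T_i(\tilde a)=S\,T_i(a)\,S^{-1}$ has no meaning on $\C^{k+1}$; there is nothing to telescope. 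Second, the transfer steps are not honest operators: each vertex-column of $B$ carries $2(k+1)$ relations but introduces only $2k+1$ new unknowns (the next $c$-column plus the intervening $d$-column), so propagation is over-determined by one constraint per step — consistent with $\dim\ker D^T=1$ globally, but incompatible with a monodromy-matrix formalism. Third, the assertion that the three ingredients "force the finite-size corrections at the two ends to cancel in the ratio" is not an argument; it is a restatement of the theorem. Nothing in the proposal shows why the boundary-seeded solutions for $a$ and $\tilde a$ end up with proportional column sums.

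The paper resolves exactly this difficulty by refusing to work on the finite graph at all when constructing the kernel vector. It quotients $G^\bullet\setminus\{b_r,\tilde b\}$ by the vertical period $(0,m)$ to get a graph $\bar{G^\bullet}$ on a cylinder, where the shift \emph{is} an exact symmetry, and shows $\dim\ker\bar D^T\geq 1$ by pure dimension counting: rank-nullity reduces this to exhibiting $m+1$ independent vectors in $\ker\bar D$, which are built from the $m$ connected components obtained by deleting the constant columns — and it is here, in counting those components, that the exact value $k=mp-2p+1$ is used. The resulting cylinder vector $\bar v$ lifts to a $(0,m)$-periodic vector $v\in\ker D^T$ because the relations defining $D^T$ are local at the vertices of $B$ (which exclude $b_r,\tilde b$), and each such vertex sees the same incident faces in the plane as in the quotient; no boundary correction ever arises. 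The comparison between $a$ and $\tilde a$ is then free: the two weighted cylinders are literally the same object, so the \emph{same} $\bar v$ serves both, and Theorem~\ref{theo:kery} gives $Y(A_k,a)$ and $Y(A_k,\tilde a)$ as ratios read off from $\bar v$ at two consecutive rows of the cylinder. The final identification of the two ratios follows by summing the kernel relations of $\bigl(\bar C(1)^{\bar B}\bigr)^T$ and $\bigl(\bar C(a)^{\bar B}\bigr)^T$ along one row, which yields $\sum_j\bar v_{j,0}=\sum_j\bar v_{j,1}$ and $\sum_j c_{j,0}\bar v_{j,0}=\sum_j c_{j,1}\bar v_{j,1}$ — the two sum identities you were trying to extract from the monodromy, obtained here with no transfer structure at all. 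If you want to salvage your approach, the periodic lift is the missing idea: replace "conjugate the finite monodromy by $S$" with "construct a shift-periodic kernel vector," and your outline essentially becomes the paper's proof.
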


The proof is more abstract than those of the previous
sections. It uses Theorem~\ref{theo:kery},
the matrix $C$ defined in Sections~\ref{sec:comb_sol_II_2}~and~the associated matrix $D$ of Section~\ref{sec:Appl_Aztec_diamond}; but this time, the proof
is not based on a combinatorial identification of the ratios of
partition functions. A purely combinatorial proof still eludes us.

\begin{proof}

  Recall that Theorem~\ref{theo:kery} expresses the ratio function $Y(A_k,a)$ of oriented dimers using a non-zero vector $v\in\C^F$ in the kernel of $D^T$,
  where $D=\begin{pmatrix}
    C(1)^B& C(a)^B
  \end{pmatrix},$ is the matrix obtained from the matrix $C$ by
  removing the column corresponding to $\tilde{b}$.  The proof
  consists in creating such a vector $v$ that is in addition
  $(0,m)$-periodic, and using it to prove the invariance result.

  For that purpose, we consider a graph $\bar{G^\bullet}$ on a
  cylinder, obtained as a quotient of
  $G^\bullet\setminus \{b_r,\tilde{b}\}$ by $(0,m)$, see
  Figure~\ref{fig:gcyl}. This graph has vertices $\bar{B}$ and edges
  $\bar{F}$ equipped with weights inherited from that of
  $G^\bullet$. On $\bar{G^\bullet}$ we define the analogous operators
  $\bar{C}(1)^{\bar{B}}, \bar{C}(a)^{\bar{B}},$ and
  $\bar{D} =\begin{pmatrix} \bar{C}(1)^{\bar{B}} &
    \bar{C}(a)^{\bar{B}}
  \end{pmatrix}$. Our goal is to find a vector in $\ker \bar{D}^T$,
  and lift it to a vector in $\ker D^T$.

  \begin{figure}[tb]
    \centering
    \includegraphics[width=8cm]{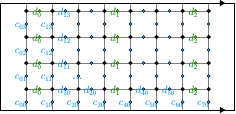}
    \caption{The quotient graph $\bar{G^\bullet}$, defined on a
      cylinder, corresponding to the setting of
      Figure~\ref{fig:single_col}. Constant columns are
      shown with green diamonds.}
    \label{fig:gcyl}
  \end{figure}

  We claim that
  \begin{equation}
    \label{eq:dimkerdt}
    \dim \ker \bar{D}^T \geq 1.
  \end{equation}
  To show this, let us first compute the dimensions of the initial and
  target space of $\bar{D}$. Simple counting shows that
  $2|\bar{B}| = 2mk+2m$ and $|\bar{F}|=2mk+m$. Then, applying the
  rank-nullity theorem to $\bar{D}$ and $\bar{D}^T$, we get that the
  statement \eqref{eq:dimkerdt} is equivalent to
  $\dim \ker \bar{D} \geq 1+2|\bar{B}|-|\bar{F}| = m+1$. Let us find $m+1$
  free vectors in $\ker \bar{D}= \ker \begin{pmatrix} \bar{C}(1)^{\bar{B}} &
    \bar{C}(a)^{\bar{B}}
  \end{pmatrix}$. Again we are considering these vectors as defined on two
  copies of $\bar{B}$.

  Consider the connected components of black vertices obtained by
  removing the horizontal edges on the ``constant columns'' (shown in
  green in Figure~\ref{fig:gcyl}). Using the exact value of
  $k=mp-2p+1$, we get that there are $m$ such connected
  components. Consider a vector that is equal to $\alpha_i$
  (resp. $\beta_i$) on the whole $i$-th connected component, in the
  first (resp. second) copy of $\bar{B}$. Then, for any edge in
  $\bar{F}$ that is not in one of the constant columns, this vector
  will produce a zero (as $\bar{C}(1)^{\bar{B}}$ outputs the
  difference of the two values adjacent to $f$, and  $\bar{C}(a)^{\bar{B}}$
  outputs this difference multiplied by $a_f$). So this vector is in
  $\ker \bar{D}$ \emph{iff} edges in the constant columns also output
  zeros, which amounts to
  \begin{equation*}
    \forall i \in \{0,\dots, m-2\}, \ \alpha_{i+1} - \alpha_i + d_i
    \left( \beta_{i+1}-\beta_i \right) = 0.
  \end{equation*}
  This is a system of $m-1$ equation on $2m$ variables $\left(
    \alpha_i,\beta_i \right)$, so it has rank at most $m-1$, and its
  kernel has dimension at least $2m - (m-1) = m+1$. It is clear that
  these $m+1$ free solutions produce $m+1$ free vectors in $\ker
  \bar{D}$. This proves Equation~\eqref{eq:dimkerdt}.

  Therefore, we can fix a nonzero vector $\bar{v} \in \ker
  \bar{D}^T$. Consider the weight-preserving quotient by $(0,m)$,
  which maps $F$ onto $\bar{F}$. Using this application, we can lift
  the vector $\bar{v}\in\C^{\bar{F}}$ to a vector $v\in \C^F$, by
  setting the value $v_f$ to be $\bar{v}_{\bar{f}}$, for any $f\in
  F$. The crucial observation is that $v\in \ker D^T$. Indeed, for any
  vertex $b\in B$, the neighbouring elements of $F$ are the same in
  the initial graph $G^\bullet$ as in the quotient graph
  $\bar{G^\bullet}$, so the computation of rows labeled $b$ in the
  expression $D^Tv$ is the same as that labeled $\bar{b}$ in
  $\bar{D}^T\bar{v}$. Therefore, by Theorem~\ref{theo:kery},
  \begin{equation}
    \label{eq:ya_quot_v}
    Y(A_k,a) = - \frac{\sum_{j=0}^k c_{j,0} \bar{v}_{j,0}}{\sum_{j=0}^k \bar{v}_{j,0}},
  \end{equation}
  where $\bar{v}_{j,0}$ is the value of $\bar{v}$ at the element of
  $\bar{F}$ with weight $c_{j,0}$; note that the indices and
    the sign have been adapted due to the choice of the position of
    $\tilde{b}$ in Figure~\ref{fig:single_col}.

  Now consider the Aztec diamond with shifted weights $\tilde{a}$ (see
  Figure~\ref{fig:single_col}, right). Its quotient graph on the cylinder
  is exactly the same graph $\bar{G}^\bullet$, with the same
  weights. Therefore we can use the same vector $\bar{v}$ to apply the
  previous procedure, which gives
  \begin{equation}
    \label{eq:yat_quot_v}
    Y(A_k,\tilde{a}) = - \frac{\sum_{j=0}^k c_{j,1} \bar{v}_{j,1}}{\sum_{j=0}^k \bar{v}_{j,1}}.
  \end{equation}

  Since $\bar{v}\in \ker{\bar{D}^T}$, in particular $\bar{v} \in \ker
  \left( \bar{C}(1)^{\bar{B}} \right)^T$. Writing what this means for
  the $k+1$ elements of $\bar{B}$ shown as the bottom row in
  Figure~\ref{fig:gcyl}, and summing all these equations, we get
  \begin{equation}
    \label{eq:sumv01}
    \sum_{j=0}^k \bar{v}_{j,0} = \sum_{j=0}^k \bar{v}_{j,1}.
  \end{equation}
  Doing the same for $\left( \bar{C}(a)^{\bar{B}} \right)^T$ gives
  \begin{equation}
    \label{eq:sumcv01}
    \sum_{j=0}^k c_{j,0} \bar{v}_{j,0} = \sum_{j=0}^k c_{j,1} \bar{v}_{j,1}.
  \end{equation}
  Using Equations~\eqref{eq:sumv01}, \eqref{eq:sumcv01}, we get that
  \eqref{eq:ya_quot_v}, \eqref{eq:yat_quot_v} are equal.
\end{proof}

\begin{remark}
Note that this generalizes the second result of Theorem~\ref{thm:vert_translation}. Nevertheless, we chose a combinatorial approach there, providing an invariance result for the partition function $Z(A_k,a)$ itself, which is stronger, and cannot be reached by this technique.
\end{remark}

\subsection{dSKP Devron properties}
\label{sec:dskp-devr-prop}

We finish this section with the proofs of Devron properties as they
are stated in the introduction, namely
Theorem~\ref{theo:sing_dodgson_intro}, Theorem~\ref{theo:sing_devron_intro}
and Corollary~\ref{cor:harm_mean}. For this purpose, we go back to
the initial convention for indices on the lattice $\calL$.
We first rephrase
Corollary~\ref{cor:dodgson} in terms of the dSKP solution. Consider again some
function $x:\calL \to \hat{\C}$ satisfying the dSKP recurrence, the
height function $h(i,j)=[i+j]_2$, and initial data
$(a_{i,j})=(x(i,j,h(i,j)))$, with no periodicity assumption.
\begin{corollary}
  \label{cor:dodgson_L}
  Suppose that the initial data are such that for some $d \in
  \hat{\C}$, for all $(i,j)\in\Z^2$ such that $[i+j]_2=0$, $a_{i,j}=d$.
  Let $(i,j,k) \in \calL$ with $k\geq 1$. Consider the
  matrix $N=\left( N_{i',j'} \right)_{0\leq i',j' \leq k-1}$ with entries
  \begin{equation*}
    N_{i',j'} = \left( a_{i-i'+j',j+k-1-i'-j'} - d\right)^{-1}.
  \end{equation*}
  Then
  \begin{equation*}
    x(i,j,k) = d+\sum_{0\leq i',j' \leq k-1} \left( N^{-1} \right)_{i',j'},
  \end{equation*}
  where the sum is over entries of the inverse matrix $N^{-1}$.
\end{corollary}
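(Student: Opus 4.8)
The plan is to translate the Aztec-diamond statement of Corollary~\ref{cor:dodgson} into the coordinates of the lattice $\calL$ via Theorem~\ref{theo:dskp_dimers}, and then simply read off the formula. First I would invoke Theorem~\ref{theo:dskp_dimers}: for the height function $h(i,j)=[i+j]_2$ and the point $p=(i,j,k)\in\calU_h$, the crosses-and-wrenches graph $G_p$ is the Aztec diamond $A_k$ (as explained in Example~\ref{ex:Aztec}), so that $x(i,j,k)=Y(A_k,a)$, where the face weights of $A_k$ are exactly the initial data located inside the closed square cone $\calC_p$. The essential point is therefore a \emph{bookkeeping} step: I must identify precisely which of the initial data $(a_{i',j'})$ become the even-face weights $(c_{i',j'})$ and odd-face weights $(d_{i',j'})$ of the Aztec diamond centered at $(i,j)$ of size $k$, using the $45^\circ$-rotated labeling convention of Figure~\ref{fig:Aztec_0} and the coordinates set up in Section~\ref{sec:singularity}.

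The main content is this coordinate matching. Under the assumption that $a_{i',j'}=d$ whenever $[i'+j']_2=0$, the hypothesis of Corollary~\ref{cor:dodgson} (all odd columns of $A_k$ constant equal to $d$) is met, provided I verify that the odd faces of the Aztec diamond correspond precisely to the lattice points with $[i'+j']_2=0$. This holds because the bipartite coloring of $\Z^2$ obtained by projecting $\calL$ sorts initial data by the parity of $i'+j'$, and one color class furnishes the even faces $c_{i',j'}$ while the other furnishes the odd faces $d_{i',j'}$. I would then translate the even-face weights $c_{i',j'}$ (with $0\le i',j'\le k$ in the Aztec labeling, but here of size $k-1$ after reindexing to match the corollary) into their lattice counterparts: tracking the center of the diamond at $(i,j,k)$ down through the cone, a face at rotated-coordinate $(i',j')$ sits at the lattice point whose horizontal coordinates are obtained by the affine shift $(i',j')\mapsto (i-i'+j',\,j+k-1-i'-j')$. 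This is exactly the index appearing in the statement, so $N_{i',j'}=(a_{i-i'+j',\,j+k-1-i'-j'}-d)^{-1}$ matches the matrix $N_{i',j'}=(c_{i',j'}-d)^{-1}$ of Corollary~\ref{cor:dodgson} after relabeling.

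With the dictionary in place, the conclusion is immediate. The second formula of Corollary~\ref{cor:dodgson} reads $Y(A_k,a)=d+\sum_{i',j'}(N^{-1})_{i',j'}$, and since $x(i,j,k)=Y(A_k,a)$, substituting the identified entries gives
\[
  x(i,j,k)=d+\sum_{0\le i',j'\le k-1}\left(N^{-1}\right)_{i',j'},
\]
which is the claim. I would note that the index ranges shift from $0\le i',j'\le k$ in Corollary~\ref{cor:dodgson} to $0\le i',j'\le k-1$ here because the Aztec diamond of \emph{size} $k$ appearing for the point at height $k$ is, in the convention of this corollary, the one produced after $k-1$ forward iterations; a careful check that the number of even faces is $k^2$ (hence an $(k-1+1)\times(k-1+1)$ matrix in one convention matches the $(k)\times(k)$ matrix here) resolves the discrepancy.

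The hard part will be the indexing and sign bookkeeping in the coordinate-matching step: getting the exact affine map $(i',j')\mapsto(i-i'+j',\,j+k-1-i'-j')$ right, and confirming that the $45^\circ$ rotation together with the placement of the cone tip at $(i,j,k)$ produces precisely this reparametrization of the even faces, rather than a reflected or transposed version. Everything else is a direct application of results already proved, so once the dictionary between lattice coordinates and Aztec-diamond face labels is pinned down, the proof is essentially a one-line substitution.
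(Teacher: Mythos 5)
Your proposal is correct and is essentially the paper's own proof: apply Theorem~\ref{theo:dskp_dimers} as in Example~\ref{ex:Aztec} to write $x(i,j,k)$ as the ratio function of an Aztec diamond centered at $a_{i,j}$, note that the height-$0$ data (all equal to $d$) occupy the odd faces so that Corollary~\ref{cor:dodgson} applies, and rewrite its matrix $N$ in the original lattice coordinates. Two small remarks: the diamond attached to a point at height $k$ is $A_{k-1}$, not $A_k$ (Example~\ref{ex:Aztec} pairs size $k$ with height $k+1$) --- an off-by-one you assert at the outset but effectively fix in your last paragraph, where you correctly land on a $k\times k$ matrix --- and your worry about getting a ``reflected or transposed version'' of the affine relabeling is immaterial, since transposing $N$ or permuting its rows and columns leaves $\sum_{i',j'}\left(N^{-1}\right)_{i',j'}$ unchanged.
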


\begin{proof}
  With the stated hypothesis, via
  Theorem~\ref{theo:dskp_dimers} as in Example~\ref{ex:Aztec}, the
  value of $x(i,j,k)$ is $Y(A_{k-1},a)$ for Aztec diamond of size
  $k-1$, with the central face labeled $a_{i,j}$. This
  Aztec diamond has face weights as in Corollary~\ref{cor:dodgson}.
  Rewriting the matrix $N$ in the original coordinate system gives the result.
\end{proof}

\begin{proof}[Proof of Theorem~\ref{theo:sing_dodgson_intro}]
  We use Corollary~\ref{cor:dodgson_L}. In the lattice
  $\calL$, going from $(i,j,m)$ to $(i+1,j+1,m)$ changes the matrix
  $N$ simply by a cyclic permutation of
  the rows, which does not change the value of the sum of coefficients
  of $N^{-1}$. Therefore $x(i,j,m)=x(i+1,j+1,m)$, and similarly
  $x(i,j,m)=x(i+1,j-1,m)$, which proves that this value is independent
  of $i,j$ as long as $(i,j,m)\in \calL$.
\end{proof}

\begin{proof}[Proof of Theorem~\ref{theo:sing_devron_intro}]
  The argument is almost the same, except now the weights on the
  corresponding Aztec diamond of size $k-1=(m-2)p+1$ satisfy the
  hypothesis of Theorem~\ref{thm:single_col}, in particular the fact
  that $[i-j-mp]_{2p}=0$ translates into the fact that constant columns
  appear at the leftmost and rightmost columns of inner faces, as
  in~\eqref{eq:per_weights} and Figure~\ref{fig:single_col}. Going in
  $\calL$ from $(i,j,k)$ to $(i+1,j+1,k)$ has the effect of
  changing the Aztec diamond weights $a$ into $\tilde{a}$, so the
  result is a rephrasing of Theorem~\ref{thm:single_col}.
\end{proof}

For the proof of Corollary~\ref{cor:harm_mean},
we need the following basic lemma:
\begin{lemma}
  \label{lem:markovdodgson}
 Let $N$ be an invertible $m\times m$ matrix. Suppose that there is a
 $\lambda \in \C^*$ such that for all $i,$
  $\sum_{j} N_{ij} = \lambda$. Then
  $\sum_{i,j} (N^{-1})_{ij} = m\lambda^{-1}$.
\end{lemma}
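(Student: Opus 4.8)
The plan is to recognize the row-sum hypothesis as an eigenvector relation and then simply invert it. Let $\mathbf{1}\in\C^m$ denote the all-ones column vector. The assumption that every row of $N$ sums to $\lambda$ is exactly the statement that
\[
N\mathbf{1} = \lambda\,\mathbf{1},
\]
since the $i$-th entry of $N\mathbf{1}$ is $\sum_j N_{ij}$. Thus $\mathbf{1}$ is an eigenvector of $N$ with eigenvalue $\lambda$.

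Because $N$ is invertible and $\lambda\in\C^*$, I can apply $\lambda^{-1}N^{-1}$ to both sides of this relation to obtain
\[
N^{-1}\mathbf{1} = \lambda^{-1}\,\mathbf{1},
\]
so $\mathbf{1}$ is also an eigenvector of $N^{-1}$, now with eigenvalue $\lambda^{-1}$. (This is the only place invertibility and $\lambda\neq 0$ are used, and it is what makes the conclusion well-defined.)

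Finally I would express the quantity of interest as a bilinear form in $\mathbf{1}$: the sum of all entries of $N^{-1}$ is
\[
\sum_{i,j}(N^{-1})_{ij} = \mathbf{1}^{T} N^{-1}\mathbf{1}
= \mathbf{1}^{T}\bigl(\lambda^{-1}\mathbf{1}\bigr)
= \lambda^{-1}\,\mathbf{1}^{T}\mathbf{1}
= m\lambda^{-1},
\]
using $\mathbf{1}^{T}\mathbf{1}=m$. There is no genuine obstacle here: the entire content is the observation that a constant row-sum is an eigenvector condition for $\mathbf{1}$, which then transfers to $N^{-1}$ with the reciprocal eigenvalue. One could alternatively phrase the same argument in terms of the adjugate and Dodgson/condensation-style minors to match the surrounding section, but the eigenvector route is the cleanest and requires no further computation.
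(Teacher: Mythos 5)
Your proof is correct and is essentially identical to the paper's: both recognize the all-ones vector as an eigenvector of $N$ with eigenvalue $\lambda$, deduce it is an eigenvector of $N^{-1}$ with eigenvalue $\lambda^{-1}$, and evaluate $\mathbf{1}^{T}N^{-1}\mathbf{1}=m\lambda^{-1}$. No differences worth noting.
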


\begin{proof}
  The vector $o\in \C^m$ with $o_i = 1$ for all
  $i$ is clearly an eigenvector of $N$ for the eigenvalue
  $\lambda$. Therefore $\lambda^{-1}$ is an eigenvalue to
  eigenvector $o$ for the inverse matrix $N^{-1}$. Thus
  \begin{equation*}
    \sum_{i,j} (N^{-1})_{ij} = o^{T}M^{-1}o = o^{T}\lambda^{-1}o = m \lambda^{-1}.
  \end{equation*}
\end{proof}

\begin{proof}[Proof of Corollary~\ref{cor:harm_mean}]
  For $m$-Dodgson initial conditions, whenever $(i,j,m)\in \calL$, the
  value of $x(i,j,m)$ is given by
  Corollary~\ref{cor:dodgson_L}. Under the condition
  $a_{i,j}=a_{i+p+1,j-p+1}$, note that the successive rows of $N$
  contain the same variables, shifted by $p$ from one row to the
  next. The fact that $p\notin m\Z$ guarantees that $N$ is invertible. Therefore $N$ satisfies the conditions of
  Lemma~\ref{lem:markovdodgson} with $\lambda$ being the sum of the $\frac{1}{a-d}$
  on one row of $N$.
\end{proof}

Finally, we describe how these singularities occur for initial data
with two different periodicities.
For a vector $(s,t)$ in $\Z^2$
with $[s+t]_2=0$, we say that the initial data is $(s,t)$-\emph{periodic} if
$a_{i,j}=a_{i+s,j+t}$ for all $(i,j)\in \Z^2$.
The following describes how many steps one has to go until
singularities reoccur when initial conditions have two such periodicities.
\begin{corollary}
  \label{cor:devron_periodic_vectors}
  Let $(s,t)$ and $(u,v)$ be two non-collinear vectors in $\Z^2$, with
  $[s+t]_2=[u+v]_2=0$. Suppose that the initial condition is both
  $(s,t)$ and $(u,v)$ periodic. Let $g=\gcd(s-t,u-v)$, and
  $\calA=|sv-tu|$.

  \begin{itemize}
  \item If the initial data is such that for all $(i,j)\in \Z^2$ with
    $[i+j]_2=0$,
    \begin{equation*}
      a(i,j)=a(i+1,j+1),
    \end{equation*}
    then for $k=\frac{\calA}{g}$ the same is true at height $k$. More precisely,
    \begin{equation*}
      \forall i,j \text{ s.t. } (i,j,k) \in \calL, \ x(i,j,k)=x(i+1,j+1,k).
    \end{equation*}
  \item If the
  initial data is such that
  \begin{equation*}
    \forall i \in \Z, a_{i,i}=a_{i+1,i+1},
  \end{equation*}
  then let $k=\frac{\calA}{2}-g+2$.
  After $k-1$ iterations of the dSKP recurrence, the values of
  $x$ have $\frac{g}{2}$-periodic constant columns.
  More precisely, for all $i,j$ such that
  $\left[i-j-\frac{\calA}{2}\right]_{\frac{g}{2}}=0$,
  \begin{equation*}
    x(i,j,k)=x(i+1,j+1,k).
  \end{equation*}
  \end{itemize}

\end{corollary}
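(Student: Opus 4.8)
The plan is to reduce both bullets to already-proven Devron statements (Theorems~\ref{theo:sing_dodgson_intro} and~\ref{theo:sing_devron_intro}, or equivalently the Aztec invariance results of Sections~\ref{sec:cst_col}--\ref{sec:periodic_columns}) by identifying the lattice of periodicities generated by $(s,t)$ and $(u,v)$. The key observation is that what matters for the recurrence is not the two individual vectors but the full period lattice $\Lambda = \Z(s,t) + \Z(u,v) \subset \{(i,j) : [i+j]_2 = 0\}$. I would first change coordinates to the diagonal system $u=i-j$, $v'=i+j$ (matching the $45^\circ$ rotation used in Section~\ref{sec:singularity}), so that ``constant SW-NE diagonals'' become ``constant columns''. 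In these coordinates the image of $\Lambda$ is spanned by $(s-t, s+t)$ and $(u-v, u+v)$; the column-index periodicity is controlled by the projection of $\Lambda$ onto the first coordinate, whose generator is exactly $g = \gcd(s-t, u-v)$, and the total index $\calA = |sv - tu|$ is the covolume of $\Lambda$ (up to the factor coming from the rotation).

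For the first bullet, I would argue that the hypothesis $a(i,j)=a(i+1,j+1)$ means all SW-NE diagonals at height $0$ are constant, which is the $p=1$ case. Combined with the two periodicities, the data is then determined by its values on a single fundamental domain, and the relevant constant-column structure has period $g/\,(\text{something})$ in the rotated frame; the claim that the same diagonal-constancy reoccurs at height $k = \calA/g$ should follow by applying Theorem~\ref{theo:sing_devron_intro} (or its strong partition-function version, valid for $p=1$, via Theorem~\ref{thm:vert_translation}) with the simply-periodic parameter $m$ read off as $m = \calA/g$ and $p$ determined so that $k = (m-2)p + 2$ collapses to $\calA/g$ when $p=1$. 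Concretely I would exhibit the two periodicities as generating an effective $m$-simple periodicity with $m = \calA/g$, check that $p=1$ is forced by the everywhere-constant-diagonal hypothesis, and then quote the theorem.

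For the second bullet, the hypothesis is weaker: only the main diagonal $a_{i,i}=a_{i+1,i+1}$ is assumed constant, which singles out one constant column among $p$-periodically spaced ones. Here I would show that the two periodicities force the constant columns to occur with period $g/2$ in the rotated coordinates (the $/2$ coming from the parity constraint $[i+j]_2=0$), so that the data fits the hypothesis of Theorem~\ref{theo:sing_devron_intro} with parameters $p = g/2$ and $m$ satisfying $mp = \calA/2$, i.e. $m = \calA/g$. Substituting into $k = (m-2)p + 2$ gives $k = (\calA/g - 2)(g/2) + 2 = \calA/2 - g + 2$, matching the stated height. The reoccurring singularity $x(i,j,k)=x(i+1,j+1,k)$ for $[i-j-\calA/2]_{g/2}=0$ is then exactly the conclusion of Theorem~\ref{theo:sing_devron_intro} transported back through the coordinate change.

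The main obstacle I anticipate is the bookkeeping in the coordinate change and the parity/divisibility arithmetic: verifying that the period lattice $\Lambda$ genuinely induces the claimed $m$-simple periodicity and $p$-periodic constant columns in the rotated frame, and that the factors of $2$ (from the sublattice $[i+j]_2=0$ and from the $45^\circ$ rotation doubling lengths) land correctly so that $g$ rather than $g/2$ or $2g$ appears in each formula. In particular I would need to confirm that $\gcd(s-t,u-v)$ and the covolume $|sv-tu|$ are the right invariants—that no spurious common factor between the two period vectors inflates the apparent period—and to check that $[s+t]_2=[u+v]_2=0$ guarantees the resulting effective periods are even where the theorems require them to be. Once these identifications are pinned down, the two statements are immediate corollaries; the content is entirely in matching the geometric data to the hypotheses of the two cited theorems.
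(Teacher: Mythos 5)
Your proposal follows essentially the same route as the paper: identify the smallest $m$ with $(m,m)\in\Z(s,t)+\Z(u,v)$, namely $m=\calA/g$ (your covolume argument is the paper's ``classical arithmetic computation''), take $p=1$ in the first bullet and $p=g/2$ in the second, and apply Theorem~\ref{theo:sing_devron_intro} with $k=(m-2)p+2$. The bookkeeping you flag as a potential obstacle works out exactly as you predict, so the proposal is correct and matches the paper's proof.
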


\begin{proof}
  We are relating the initial data $a$ to the hypothesis of
  Theorem~\ref{theo:sing_devron_intro}. First we are looking for $m$
  such that $a$ is $m$-simply periodic. This is equivalent to
  \begin{equation*}
    (m,m) \in \Z (s,t) + \Z (u,v).
  \end{equation*}
  Classical arithmetic computations show that the smallest positive
  such $m$ is $\frac{\calA}{g}$. In the first point, this shows that the
  initial data is $(m,1)$-Devron, and we get the result by applying
  Theorem~\ref{theo:sing_devron_intro}.

  In the second point, we want to find $p$ such that the constant
  ``column'' $(a_{i,i})_{i\in \Z}$ repeats every $p$ even
  column. Since the initial data is $(s,t)$-periodic, this is true for
  $p_1=\frac{s-t}{2}$, and similarly for $p_2=\frac{u-v}{2}$, so the
  smallest positive $p$ is $\gcd\left( p_1,p_2 \right)=\frac{g}{2}$.
  For these values of $m$ and $p$, the initial data is $(m,p)$-Devron,
  and we apply Theorem~\ref{theo:sing_devron_intro}.
\end{proof}

\section{Limit shapes}\label{sec:limitshapes}

Suppose that $x:\calL \to \hat\C$ satisfies the dSKP recurrence.
Let
$h(i,j)=[i+j]_2$
then for any $p\in \calU_h$, we may consider $x(p)$ as a
function of the $(a_{i,j})=(x(i,j,h(i,j)))$, given by Theorem~\ref{theo:dskp_dimers}
We are interested in the effect of the initial
condition at $(0,0,0)$ on the value of $x_p$. Let
\begin{equation}
  \label{eq:defrho}
  \rho(i,j,k) = \dfrac{\partial
    x(i,j,k)}{\partial a_{0,0}}.
\end{equation}
We are looking for the order of magnitude of $\rho(i,j,k)$ when
$k\to \infty$ and $i/k, j/k$ both converge to some constants. This
behaviour is made explicit in the forthcoming
Proposition~\ref{prop:limitshape}, when the derivative
\eqref{eq:defrho} is evaluated at specific dSKP solutions of
Example~\ref{ex:sol}: $x(i,j,k)=ia+jb+kc+d$ for some fixed $a,b,c,d$ taken in $\R$ hereafter. By a slight abuse of notation, for $x,y\in \R$,
we denote
$\rho(xk,yk,k) := \rho\left( \lfloor xk \rfloor, \lfloor yk \rfloor, k
\right)$.

\begin{figure}[h]
  \centering
  \includegraphics[width=14cm]{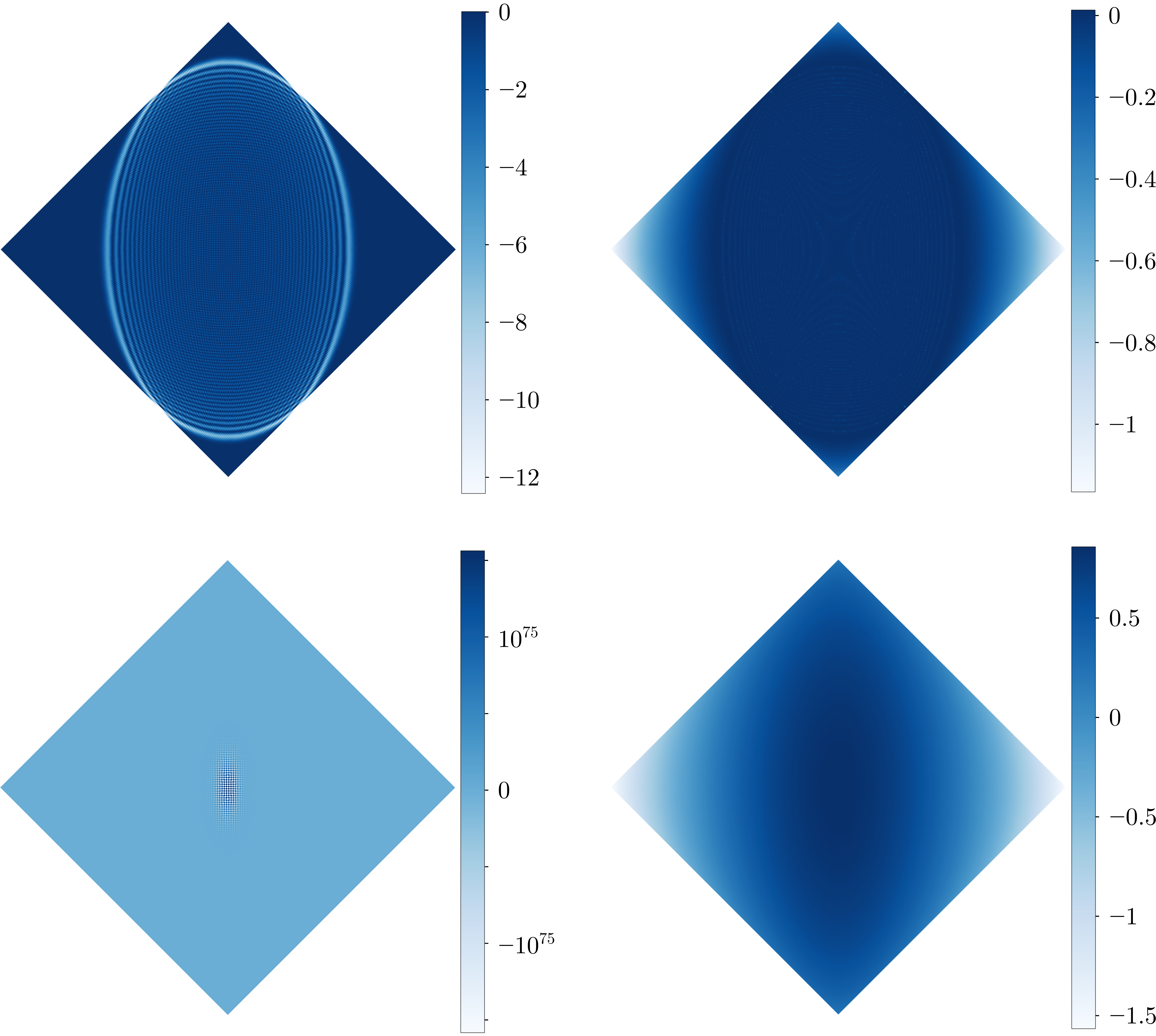}
  \caption{Values of $k\rho(i,j,k)$ (left) and
    $\frac{\log |k \rho(i,j,k)|}{k}$ (right) for $k=200$, evaluated at
    the solution $x(i,j,k)=ia+jb+kc+d$, with
    $q:=\frac{c^2-b^2}{a^2-b^2}=0.7$ (top) and $q=1.2$ (bottom).}
  \label{fig:heatmaprho}
\end{figure}

The identification of the asymptotic behaviour of $\rho(xk,yk,k)$ when
$k\to \infty$ follows the method of \cite{PSarctic} and is closely related to
\cite[Section~2.3]{dFSG}, where the analogous problem for the dKP
relation is treated. In fact dKP and dSKP produce almost the same generating
functions, however we use different references to deduce the
asymptotics. We mainly identify the generating functions with that of
\cite{BBCCR}, see also references therein, from which point the asymptotic behaviour can be
recovered almost directly from \cite{CEP}.

\begin{proposition}
  \label{prop:limitshape}
  Let $a,b,c,d \in \R$. Let $q=\frac{c^2-b^2}{a^2-b^2}$. Consider the
  partial derivatives $\rho(i,j,k)$ evaluated at
  $x(i,j,k)=ia+jb+kc+d$. Let $x,y\in\R$ be such that $|x+y| < 1$ and
  $|x-y| < 1$.

  \begin{enumerate}
  \item \label{it:1} If $q\in(0,1)$,
    \begin{enumerate}
    \item \label{it:a} for all $x,y$ such that
      $\frac{x^2}{1-q}+\frac{y^2}{q} < 1$ and $(kx,ky,k) \in \calL$,
      \begin{equation*}
        \rho(xk,yk,k) = \frac{2\cos^2(\Phi_q(xk,yk,k))}{k \pi \sqrt{q(1-q)
            - qx^2 - (1-q)y^2}} \left( 1+O\left( k^{-1} \right) \right)),
      \end{equation*}
      where the function $\Phi_q:\calL \to \R$ can be determined
      explicitly.
    \item \label{it:b} for all $x,y$ such that $\frac{x^2}{1-q}+\frac{y^2}{q} >
      1$,
      \begin{equation*}
        \rho(xk,yk,k) = \pm \frac{c_q(x,y)}{k} \exp(\xi_q(x,y)k) \ \left( 1+ o(1) \right),
      \end{equation*}
      where $\xi_q(x,y) < 0$ and $c_q(x,y)\in \R$ can be determined
      explicitly.
    \end{enumerate}
  \item \label{it:2} If $q\in (-\infty,0) \cup (1,\infty)$, for all
    $x,y$,
    \begin{equation*}
      \rho(xk,yk,k) = \pm \frac{c_q(x,y)}{k} \exp(\xi_q(x,y)k) \ \left( 1+ o(1) \right),
    \end{equation*}
    where $\xi_q(x,y) \in \R$ and $c_q(x,y)\in \R$ can be determined
    explicitly. Moreover, $\xi_q(x,y)>0$ in a neighbourhood of
    $(0,0)$.
  \end{enumerate}

\end{proposition}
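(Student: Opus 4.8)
The plan is to bypass the combinatorial solution and to work directly with the linearization of the dSKP recurrence around the explicit base solution $x(i,j,k)=ia+jb+kc+d$. Writing $x+\eps u$ for an infinitesimal perturbation and taking the logarithmic derivative of~\eqref{eq:dskp_x} (which is identically $-1$), every factor $x_{\pm e_i}-x_{\mp e_j}$ evaluates on the base solution to one of $\pm(a+b),\pm(b+c),\pm(a+c)$, so after collecting the six terms one obtains a \emph{constant-coefficient} linear recurrence for $u=\rho$. A direct computation gives
\begin{equation*}
  u(i,j,k{+}1)+u(i,j,k{-}1) = q\,[u(i{+}1,j,k)+u(i{-}1,j,k)] + (1-q)\,[u(i,j{+}1,k)+u(i,j{-}1,k)],
\end{equation*}
with $q=\frac{c^2-b^2}{a^2-b^2}$ exactly as in the statement, subject to the initial data $u(0,0,0)=1$ and $u\equiv 0$ at the remaining points of $\calI$ (heights $0$ and $1$ furnishing the two layers of a well-posed forward Cauchy problem). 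The whole asymptotic question is thereby reduced to the Green's function of this discrete operator.

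First I would solve the recurrence by Fourier transform in the horizontal variables. Setting $\hat u_k(\theta_1,\theta_2)=\sum_{i,j}u(i,j,k)e^{-\mathrm{i}(i\theta_1+j\theta_2)}$ turns it into the scalar second-order recurrence $\hat u_{k+1}+\hat u_{k-1}=2\mu\,\hat u_k$, where $\mu(\theta_1,\theta_2)=q\cos\theta_1+(1-q)\cos\theta_2$, solved by $\hat u_k=A\lambda_+^{\,k}+B\lambda_-^{\,k}$ with $\lambda_\pm=\mu\pm\sqrt{\mu^2-1}$ the two reciprocal roots of $\lambda^2-2\mu\lambda+1=0$. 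The parity structure imposed by $h(i,j)=[i+j]_2$ (the two initial layers living on complementary sublattices of $\Z^2$) fixes $A,B$, and Fourier inversion yields an exact double-integral representation of $\rho(i,j,k)$ as an oscillatory integral with large parameter $k$. At this point I would identify this generating function with the one of~\cite{BBCCR}, for which the relevant contour integrals are already in normal form, so that the asymptotic machinery of~\cite{CEP} applies; this is precisely where dSKP and dKP produce ``almost the same'' generating function, as noted before the statement.

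The asymptotics of $\rho(xk,yk,k)$ then follow from a two-dimensional saddle-point analysis of the phase $k\bigl(\pm\phi(\theta)+x\theta_1+y\theta_2\bigr)$, where $e^{\mathrm{i}\phi}=\lambda_+$ is the dominant root. The nature of the critical point is governed by the sign of $\mu^2-1$ there: when a \emph{real} saddle exists, $\lambda_\pm$ have modulus one, the integral decays like $k^{-1}$ with an oscillating factor, and the Hessian produces exactly the prefactor $1/\sqrt{q(1-q)-qx^2-(1-q)y^2}$ and the phase $\Phi_q$ of case~\ref{it:a}; when the saddle is complex the integral decays or grows exponentially, giving the rate $\xi_q$ and constant $c_q$ of cases~\ref{it:b} and~\ref{it:2}. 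For $q\in(0,1)$ both hopping coefficients are positive and the group-velocity map $\theta\mapsto(\partial_{\theta_1}\phi,\partial_{\theta_2}\phi)$ sweeps out precisely the ellipse $\frac{x^2}{1-q}+\frac{y^2}{q}<1$ --- whose boundary is where $q(1-q)-qx^2-(1-q)y^2$ vanishes --- separating the oscillatory and exponential regions. For $q\notin(0,1)$ one coefficient is negative, no real saddle ever occurs, the analysis always lands in the exponential regime, and a short expansion of $\mu$ near $(\theta_1,\theta_2)=(0,0)$ shows that $\mu$ can exceed $1$, forcing $\xi_q>0$ near $(0,0)$.

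The main obstacle will be the saddle-point analysis rather than the algebra: one must locate the possibly complex critical points uniformly in $(x,y)$, control their coalescence as $(x,y)$ approaches the arctic ellipse, and track the oscillating phase $\Phi_q$ and the subexponential constants $c_q$ with enough precision to certify the error terms $O(k^{-1})$ and $o(1)$. Checking that the identification with~\cite{BBCCR} puts the integral into exactly the hypotheses needed to quote~\cite{CEP}, and handling the regime $q\notin(0,1)$ --- where the symbol $\mu$ leaves $[-1,1]$ and the relevant saddle moves off the real torus into the complex domain --- are the places where genuine care is required.
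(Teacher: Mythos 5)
Your opening move is identical to the paper's: differentiating the dSKP relation at the linear solution gives exactly the recurrence $\rho(i,j,k{+}1)+\rho(i,j,k{-}1)=q\,[\rho(i{+}1,j,k)+\rho(i{-}1,j,k)]+(1-q)\,[\rho(i,j{+}1,k)+\rho(i,j{-}1,k)]$ with the same $q$ and the same Cauchy data, and your Fourier representation is nothing but the paper's generating function $F(u,v,t)$ evaluated at $u=e^{\mathrm{i}\theta_1}$, $v=e^{\mathrm{i}\theta_2}$. The proofs separate at the asymptotic stage. The paper never analyzes a two-dimensional integral: it matches $F$ with the creation-rate generating function of \cite{BBCCR} and invokes their exact identity $\rho(i,j,k)=-(1-q)^{k-2}C_q(A,B,k-2)\,C_q(B,A,k-2)$, which factorizes the problem into two \emph{one-dimensional} coefficient extractions; the asymptotics are then one-variable saddle-point computations (quoted from \cite{CEP} in the elliptic case, done by hand in case 2). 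Your plan instead requires genuine two-dimensional stationary phase/steepest descent, and that is where the entire difficulty of the proposition lives: in the exponential regimes (cases 1(b) and 2) one must deform the torus inside $\C^2$ through complex critical points, handle the amplitude singularities $A,B\sim(\mu^2-1)^{-1/2}$ along the curves $\mu=\pm1$, and control coalescence of saddles at the arctic boundary. The machinery of \cite{CEP} is one-dimensional and does not apply to such an integral; using it forces you back through the \cite{BBCCR} factorization, i.e., back to the paper's proof. As written, all the quantities that constitute the actual content of the statement ($\Phi_q$, $c_q$, $\xi_q$, and the positivity of $\xi_q$ near the origin) are deferred.

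There is also a concrete error in the one checkable geometric claim you make. With your dispersion relation $\cos\phi=\mu=q\cos\theta_1+(1-q)\cos\theta_2$, where $\theta_1$ is dual to $i$, one has $\partial_{\theta_1}\phi=q\sin\theta_1/\sin\phi$ and $\partial_{\theta_2}\phi=(1-q)\sin\theta_2/\sin\phi$; since $(q\cos\theta_1+(1-q)\cos\theta_2)^2\le q\cos^2\theta_1+(1-q)\cos^2\theta_2$ by convexity, it follows that $\frac{(\partial_{\theta_1}\phi)^2}{q}+\frac{(\partial_{\theta_2}\phi)^2}{1-q}\le 1$. Hence the group-velocity map sweeps out the ellipse $\frac{x^2}{q}+\frac{y^2}{1-q}\le 1$, \emph{not} $\frac{x^2}{1-q}+\frac{y^2}{q}\le 1$ as you assert. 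A sanity check: $q=1$ means $c^2=a^2$, the recurrence decouples $j$ entirely, and the perturbation spreads along the $i$-axis at speed one, so the oscillatory region must have half-width $\sqrt{q}$ in the $x$-direction. Note that the same $q\leftrightarrow 1-q$ swap separates the paper's statement from its own proof: a saddle-point analysis of the quoted $C_q$ product also places the oscillatory/exponential transition on $\frac{x^2}{q}+\frac{y^2}{1-q}=1$, so the ellipse in the proposition appears to be stated with the axes interchanged. The lesson for your write-up is that you cannot simply assert that your saddle geometry reproduces the stated ellipse; carrying out your own computation would have forced you to confront and resolve this discrepancy, which is precisely the kind of convention-tracking (through the change of variables to the Aztec-diamond coordinates $A,B$) that the paper's proof leaves implicit.
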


\begin{proof}
  Let us differentiate \eqref{eq:dskp_x} with respect
  to $a_{0,0}$. This gives the following, evaluated at any $p=(i,j,k) \in
  \Z^3\setminus \calL$ with $k\geq 1$:
  \begin{equation*}
    \begin{split}
      0 = & \rho_{-e_0} \frac{x_{e_2}-x_{-e_1}}{(x_{-e_0}-x_{-e_1})(x_{e_2}-x_{-e_0})} +
      \rho_{-e_1}
      \frac{x_{-e_2}-x_{-e_0}}{(x_{-e_1}-x_{-e_0})(x_{-e_2}-x_{-e_1})}
      \\
      & + \rho_{-e_2}
      \frac{x_{-e_1}-x_{e_0}}{(x_{-e_2}-x_{e_0})(x_{-e_1}-x_{-e_2})} +
      \rho_{e_0}
      \frac{x_{e_1}-x_{-e_2}}{(x_{e_0}-x_{-e_2})(x_{e_1}-x_{e_0})}
      \\
      & + \rho_{e_1} \frac{x_{e_0}-x_{e_2}}{(x_{e_1}-x_{e_2})(x_{e_0}-x_{e_1})} +
      \rho_{e_2} \frac{x_{-e_0}-x_{e_1}}{(x_{e_2}-x_{e_1})(x_{-e_0}-x_{e_2})}.
    \end{split}
  \end{equation*}
  After evaluating at $x(i,j,k)=ia+jb+kc+d$, the relation
  becomes
  \begin{equation}
    \label{eq:recrho}
    \rho_{-e_2} + \rho_{e_2} = q\left(
      \rho_{-e_0}+\rho_{e_0} \right) + (1-q) \left( \rho_{-e_1}+\rho_{e_1} \right),
  \end{equation}
  where $q=\frac{c^2-b^2}{a^2-b^2}$.
  From this linear relation, and handling separately the case of
  $\rho(i,j,k)$ with $k=0$ or $1$, we deduce the generating function:
  \begin{equation*}
    \begin{split}
      F(u,v,t) := & \sum_{\substack{(i,j,k) \in \calL, \\ k\geq 0}} \rho(i,j,k) u^i
      v^j t^k \\
      = & 1 - \frac{t^2}{1+t^2-qt(u+u^{-1})-(1-q)t(v+v^{-1})}.
    \end{split}
  \end{equation*}
  This is directly related to the generating function of the
  \emph{creation rate} of the Aztec diamond that can be found in
  \cite[Remark~23]{BBCCR} for $\lambda=\frac{1-q}{q}$. By this
  identification and \cite[Proposition~21]{BBCCR}, we get
  \begin{equation}
    \label{eq:rhocq}
    \forall (i,j,k) \in \calL, k\geq1, \
    \rho(i,j,k) = -(1-q)^{k-2}C_q(A,B,k-2)C_q(B,A,k-2),
  \end{equation}
  where $C_q(A,B,n)$ is the coefficient of $z^A$ in
  $(1-z)^B\left(1+ \frac{q}{1-q}z\right)^{n-B}$, and $A=\frac{k-2-i-j}{2}$,
  $B=\frac{k-2+i-j}{2}$.

  The asymptotics of $C_q(A,B,n)$ when $n\to \infty$ and $A,B$ are
  proportional to $n$ can be analyzed by the saddle point method (see
  e.g. \cite[Section VIII]{FS}). In case~\ref{it:a}, this is done in
  \cite[Proposition~24]{CEP}, which gives the result after changing
  variables back to our setting. Case~\ref{it:b} follows closely the
  proof of \cite[Proposition~8]{CEP}, the main difference being that
  we apply the full saddle-point method instead of just looking for an
  upper bound; therefore, we only give details for case~\ref{it:2} below.

  Let us suppose that $q>1$, the case $q<0$ being almost identical. We
  are looking for the asymptotics of $C_q(A,B,k-2)$, when $k\to\infty$
  and $A = \frac{k-2-\lfloor xk \rfloor-\lfloor yk \rfloor}{2}, B = \frac{k-2+\lfloor xk \rfloor-\lfloor yk \rfloor}{2}$.
  Performing a contour integral around $0$ on a circle of arbitrary radius $r>0$, this is
  \begin{equation*}
    C_q(A,B,k-2) = \frac{1}{2i\pi} \oint G(z) z^{-A-1} dz = \frac{r^{-A}}{2\pi} \int_0^{2\pi} G(re^{i\theta}) e^{-iA\theta} d\theta,
  \end{equation*}
  where $G(z)=(1-z)^B \left(1-\frac{q}{q-1}z\right)^{k-2-B}$. The function $\theta \mapsto |G(re^{i\theta})|$ is increasing on $[0,\pi]$ and decreasing on $[\pi,2\pi]$, as both $|1-re^{i\theta}|$ and $\left|1-\frac{q}{q-1}re^{i\theta}\right|$ are increasing and decreasing as well (note that $\frac{q}{q-1}>0$). The saddle-point method consists first in setting $r=|\zeta|$ where $\zeta$ satisfies the saddle-point equation, here $\zeta \frac{G'(\zeta)}{G(\zeta)}=A$. This gives a quadratic equation with only one real negative solution, which is $\zeta = \zeta_0 + O(k^{-1})$ where
  \begin{equation*}
    \zeta_0 = \frac{(q-1)x+qy - \sqrt{(1-q)x^2+qy^2-q(1-q)}}{q(x+y+1)}.
  \end{equation*}
  From now on $r=|\zeta|$, so that the integral goes through the saddle-point. By the previous remark on $|G(re^{i\theta})|$, the part of the integral around $\pi$ of a sufficiently large interval (that may still go to $0$ as $k\to \infty$) exponentially dominates the rest. We choose an interval of half-length $\eta_k=k^{-2/5}$ and we split the integral into a centre part and a tail part:
  \begin{equation*}
  \int_0^{2\pi} G(re^{i\theta}) e^{-iA\theta} d\theta = \int_{|\theta-\pi|<\eta_k} G(re^{i\theta}) e^{-iA\theta} d\theta \ + \ \int_{\eta_k \leq |\theta-\pi| < \pi} G(re^{i\theta}) e^{-iA\theta} d\theta.
  \end{equation*}
  In the centre part, we can write $G(re^{i\theta}) e^{-iA\theta} = \exp(g(\theta))$ where
  \begin{equation*}
  g(\theta) = B \log\left(1-re^{i\theta}\right) + (k-2-B) \log\left(1-\frac{q}{q-1}re^{i\theta}\right) -iA\theta
  \end{equation*}
  using the principal value of the complex logarithm. The saddle-point equation gives $g'(\pi)=0$. As all derivatives of $g$ are of order $O(k)$ with uniform constants on a neighbourhood of $\pi$, we have the Taylor expansion
  \begin{equation*}
  g(\theta) = g(\pi) + \frac{(\theta-\pi)^2}{2} g''(\pi) + O(k\eta_k^3),
  \end{equation*}
  with $\real(g''(\pi))<0$. The fact that $k \eta_k^3 \to 0$ ensures that this holds uniformly on the centre part. On the other hand, in the tails part, the modulus $|G(re^{i\theta})|$ is smaller than that at $\pi \pm \eta_k$, which is of order $\exp\left[\real(g(\pi)) - O(k\eta_k^2)\right]$, where the constant in the $O$ is positive and uniform in $k$; the fact that $k\eta_k^2\to \infty$ ensures that the centre part exponentially dominates the tail part; it also ensures that the centre part can be completed to a complete gaussian integral.

The setup of the saddle-point method is complete, see~\cite[Section VIII]{FS}, and it gives
  \begin{equation}
    \label{eq:asympt_cq}
    C_q(A,B,k-2) = \pm \frac{|\zeta|^{-A}}{\sqrt {2\pi |g''(\pi)|}}
    \exp\left[ \real(g(\pi)) \right] \ \left( 1+o(1) \right).
  \end{equation}

  Recall that $A$, $g''(\pi)$ and $g(\pi)$ are all of order $k$. A similar formula holds for $C_q(B,A,k-2)$, therefore combining \eqref{eq:asympt_cq} and \eqref{eq:rhocq} yields the asymptotic announced in the theorem for $\rho(xk,yk,k)$, with continuous functions $c_q,\xi_q$. To get the last claim, it
  suffices to show that $\xi_q(0,0)>0$. For $x=y=0$ that is $A=B=\frac{k-2}{2}$,  explicit computations give successively
  \begin{equation*}
  \begin{split}
  \zeta & = \zeta_0 + O(k^{-1}) = -\sqrt{\frac{q-1}{q}} + O(k^{-1}), \\
  \real(g(\pi)) & = \frac{k}{2} \log\left( 2+ \sqrt{\frac{q-1}{q}} + \sqrt{\frac{q}{q-1}} \right) + O(1),\\
  \xi_q(0,0) & = -\log|\zeta_0|+\frac{2}{k}\real(g(\pi))+\log(q-1) + O(k^{-1}) \\
  &  = 2\log\left(\sqrt{q}+\sqrt{q-1}\right) + O(k^{-1})  >0.
  \end{split}
  \end{equation*}
\end{proof}

\begin{remark}
  The case $q\in (0,1)$ exhibits a phenomenon commonly known as a
  \emph{limit shape}. The decay of $\rho(xk,yk,k)$ to zero is either
  of order $k^{-1}$ inside the \emph{arctic ellipse}
  $\frac{x^2}{1-q} + \frac{y^2}{q} = 1$, or exponential outside the
  ellipse. By contrast, for $q\in (-\infty, 0) \cup (1,\infty)$, the
  derivative $\rho(xk,yk,k)$ always has an exponential behaviour, and the rate $\xi_q(x,y)$, which can bee seen as a Lyapunov exponent for the dynamics, is
  positive on some neighbourhood of $(0,0)$ (and for some
  values of $q$, it seems, for all possible $x,y$). This is indicative
  of a chaotic behaviour of the dSKP recurrence.
  Proposition~\ref{prop:limitshape}, although restricted to very
  special initial conditions, gives quantitative estimates for such a
  chaotic behaviour.
\end{remark}

\begin{remark}
  In the case of the other special solution of Example~\ref{ex:sol},
  $x(i,j,k)=a^ib^jc^kd$, relation \eqref{eq:recrho} is satisfied for
  the following logarithmic derivative:
  \begin{equation*}
    \tilde{\rho}(i,j,k) := \frac{a_{0,0}}{x(i,j,k)} \dfrac{\partial
      x(i,j,k)}{\partial a_{0,0}},
  \end{equation*}
  with $q=\frac{a(c-b)(bc-1)}{c(a-b)(ab-1)}$. The rest of the proof is
  unchanged, so Proposition~\ref{prop:limitshape} applies to
  $\tilde{\rho}(i,j,k)$, governed by that value of $q$. Of course,
  using the asymptotics of $\tilde{\rho}(i,j,k)$ and the explicit
  value of $x(i,j,k)$, one also gets the asymptotics of
  $\rho(i,j,k)$ in that case. On the other hand, for other recurrences
  where there is a probabilistic interpretation such as dKP,
  $\tilde{\rho}$ is a commonly considered observable that carries a
  statistical meaning sufficient to describe the existence of a limit
  shape with high probability; see again \cite{dFSG}.
\end{remark}

\section{The other consistent equations of octahedron type}\label{sec:other_consistent_equ}

In \cite{abs}, Adler, Bobenko and Suris introduce the notion of
\emph{multidimensional consistency} for system of equations on the
root lattice $Q(A_3)$, see Remark~\ref{rk:A3}. They show that up to a
set of transformations called \emph{admissible}, any such system can
be transformed into a system belonging to a finite list
$\{\chi_1,\chi_2,\chi_3,\chi_4,\chi_5\}$. By changing variables into
the lattice $\calL$ using Remark~\ref{rk:A3}, $\chi_1$ corresponds to
the dKP recurrence (Definition~\ref{def:dkp_recurrence}), and $\chi_2$
to the dSKP recurrence (Definition~\ref{def:dSKP_recurrence}). We now
turn our attention to the last three systems. Translated into $\calL$,
they give the following definitions:

\begin{definition}
  A function $x: \calL \rightarrow \hat{\C}$ satisfies the
  $\chi_3$, resp. $\chi_4, \chi_5$ \emph{recurrence} if
  \begin{align*}
    \chi_3: \ \ & (x_{e_3}-x_{-e_2})x_{-e_1} +
                  (x_{-e_2} - x_{e_1})x_{-e_3}  +
                  (x_{e_1}-x_{e_3})x_{e_2}  = 0, \\
    \text{resp. } \chi_4: \ \ & \frac{x_{e_3}-x_{-e_2}}{x_{-e_1}} +
                  \frac{x_{-e_2} - x_{e_1}}{x_{-e_3}}  +
                  \frac{x_{e_1}-x_{e_3}}{x_{e_2}}  = 0, \\
    \text{resp. } \chi_5: \ \ & \frac{x_{e_3}-x_{e_1}}{x_{e_2}}
                  = x_{-e_2} \left( \frac{1}{x_{-e_3}} - \frac{1}{x_{-e_1}} \right),
  \end{align*}
  holds evaluated at any $p$ of $\Z^3\setminus \calL$.
\end{definition}

For $\varepsilon>0$ and a rational fraction $Q(\varepsilon)$, we
denote by $\lc_\varepsilon Q(\varepsilon)$ the leading coefficient in
$\varepsilon$ in the asymptotic development of $Q(\varepsilon)$ at
$\varepsilon \to 0$. Solutions of the $\chi_3,\chi_4$ and $\chi_5$
recurrences can be deduced from that of dSKP by taking successive
leading terms in $\varepsilon$ limits of the initial conditions:

\begin{theorem}
  \label{theo:chi_k}
  Let $x: \calL \rightarrow \hat{\C}$ be a function that
  satisfies the $\chi_3$, resp. $\chi_4,\chi_5$ recurrence. Let $h$ be
  a height function, and $\calI, \calU$ be defined as in
  Equations~\eqref{eq:defI},~\eqref{eq:defU}. Let
  $(a_{i,j})=(x(i,j,h(i,j)))$ be the initial data indexed by points of
  $\calI$. For $\varepsilon, \delta, \rho > 0$, let
  \begin{equation*}
    \begin{split}
      a_{i,j}^{\varepsilon,\rho} & = (1+\rho \varepsilon^{i-j+k}a_{i,j}),\\
      a_{i,j}^\varepsilon & = \varepsilon^{i-j+k}a_{i,j},\\
      a_{i,j}^{\varepsilon,\delta} & = \varepsilon^{i-j+k}\delta^{i+j+k}a_{i,j}.
    \end{split}
  \end{equation*}
  Then for every point $p$ of $\calU$,
  \begin{equation*}
    \begin{split}
      \chi_3: \ \ x(p) & = \lc_\rho \left(\lc_\varepsilon
        Y(G_p,a^{\varepsilon,\rho}) - 1\right), \\
      \text{resp. }  \chi_4: \ \ x(p) & = \lc_\varepsilon Y(G_p,a^\varepsilon), \\
      \text{resp. }  \chi_5: \ \ x(p) & = \lc_\delta \left(\lc_\varepsilon Y(G_p,a^{\varepsilon,\delta})\right),
    \end{split}
  \end{equation*}
  where $G_p$ is the crosses-and-wrenches graph corresponding to $p$.
\end{theorem}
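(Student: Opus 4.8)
The plan is to derive the solutions of $\chi_3,\chi_4,\chi_5$ from the already-established dSKP solution (Theorem~\ref{theo:dskp_dimers}, $x(p)=Y(G_p,a)$) by degenerating the initial conditions through the $\varepsilon\to 0$ (and then $\rho\to 0$ or $\delta\to 0$) limits. The key observation, which I would verify first by direct calculation, is that each of the three equations $\chi_3,\chi_4,\chi_5$ arises as a leading-order coefficient of the dSKP relation~\eqref{eq:dskp_x} under the corresponding substitution. Concretely, I would substitute $x_{i,j,k}\mapsto 1+\rho\varepsilon^{i-j+k}x_{i,j,k}$ (for $\chi_3$), $x_{i,j,k}\mapsto \varepsilon^{i-j+k}x_{i,j,k}$ (for $\chi_4$), and $x_{i,j,k}\mapsto\varepsilon^{i-j+k}\delta^{i+j+k}x_{i,j,k}$ (for $\chi_5$) into~\eqref{eq:dskp_x}, expand the products in the numerator and denominator, and check that the leading term in $\varepsilon$ (then in $\rho$ or $\delta$) of the dSKP relation is exactly the stated equation. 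The crucial point is that the exponents $i-j+k$ and $i+j+k$ are \emph{additive} along the octahedral cell: for the six neighbours $p\pm e_\ell$ of a point $p\in\Z^3\setminus\calL$, the six values of $i-j+k$ take a controlled pattern, so that differences $x_{q}-x_{q'}$ appearing in~\eqref{eq:dskp_x} factor as $\varepsilon^{(\cdots)}$ times a leading coefficient, and the $\varepsilon$-powers balance between numerator and denominator.

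Granting this algebraic fact at the level of the local relation, the global statement follows by the following reasoning. Fix $p\in\calU$ and the crosses-and-wrenches graph $G_p$. By Theorem~\ref{theo:dskp_dimers}, if $\hat x$ denotes the dSKP solution with initial data $a^{\varepsilon,\rho}$ (resp. $a^\varepsilon$, $a^{\varepsilon,\delta}$), then $\hat x(p)=Y(G_p,a^{\varepsilon,\rho})$, etc. I would then argue that taking the prescribed leading coefficients of $\hat x(p)$ reproduces precisely the $\chi_k$ solution at $p$. This is where the local computation is promoted to a global one: since the dSKP recurrence propagates the initial data upward through repeated applications of~\eqref{eq:dskp_x}, and since at each application the leading-order part of the substituted relation is the corresponding $\chi_k$ relation, an induction on the height (mirroring the induction in the proof of Theorem~\ref{theo:dskp_dimers}, on $\#(\calU_h\cap\calC_p)$) shows that the leading coefficient of the degenerated dSKP solution, evaluated layer by layer, agrees with the $\chi_k$ solution built from the same (undegenerated) initial data $a_{i,j}$. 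The base case is the single-octahedron relation~\eqref{eq:dskp_x}, which is exactly the local statement verified above.

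The step I expect to be the main obstacle is controlling the commutation of the limit with the recurrence: a priori, $\lc_\varepsilon$ of a solution obtained by iterating a rational map need not equal the solution obtained by iterating the leading-order map. One must check that no cancellation of leading terms occurs that would lower the expected order in $\varepsilon$, i.e. that the leading coefficient is generically nonzero and that the $\varepsilon$-valuations add up consistently through the iteration. I would handle this by tracking the $\varepsilon$-valuation of $\hat x(i,j,k)$ as an explicit affine function of $(i,j,k)$ — namely that $\hat x(i,j,k)$ has valuation $i-j+k$ (resp. the appropriate combination for $\chi_5$) times a unit — and proving this valuation is preserved by the recurrence, so that the leading coefficients themselves satisfy the $\chi_k$ relation exactly with no degeneration in order. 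For $\chi_3$ the extra subtlety is the affine substitution $1+\rho\varepsilon^{(\cdots)}x$: here one first extracts $\lc_\varepsilon$ to isolate the $O(\varepsilon)$ part, subtracts the constant $1$, and then extracts $\lc_\rho$; I would verify that the $\rho$-linearization of the dSKP relation around the constant solution $x\equiv 1$ yields the bilinear $\chi_3$ relation. The double limit for $\chi_5$ requires checking that the $\delta$- and $\varepsilon$-gradings are independent, so that iterated leading-coefficient extraction is well defined; this follows from the fact that $i-j+k$ and $i+j+k$ are linearly independent linear forms on $\calL$.

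Once these valuation bookkeeping lemmas are in place, the three identities of Theorem~\ref{theo:chi_k} follow by combining the local verification with the inductive propagation argument, exactly as $Y(G_p,a)$ was shown to solve dSKP. In particular I would not reprove the determinantal or combinatorial structure; the entire content is that the degenerations of the \emph{solution} track the degenerations of the \emph{equation}, which reduces to a finite check on a single octahedron plus a grading-preservation induction.
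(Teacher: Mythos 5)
Your $\chi_4$ and $\chi_5$ arguments coincide in substance with the paper's proof: there too, the local degeneration of the octahedron relation (taken from \cite[Remark 3]{abs}) is combined with precisely the valuation-preservation induction you outline, stated in the form: if $x$ solves $\chi_4$ with data $a$ and $x^\varepsilon$ solves dSKP with data $a^\varepsilon$, then $x^\varepsilon(p)=\varepsilon^{i-j+k}\bigl(x(p)+O(\varepsilon^2)\bigr)$ for all $p=(i,j,k)$; the $\chi_5$ case is then the same statement run a second time for the $\delta$-grading, i.e.\ $\lc_\varepsilon$ (at fixed $\delta$) produces the $\chi_4$ solution with data $\delta^{i+j+k}a_{i,j}$, and $\lc_\delta$ of that produces $\chi_5$. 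So your worry about interchanging leading coefficients with the iteration is resolved exactly as you propose, and these two cases are sound.

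The $\chi_3$ case, however, contains a genuine error. You claim that substituting $x_{i,j,k}\mapsto 1+\rho\,\varepsilon^{i-j+k}x_{i,j,k}$ into \eqref{eq:dskp_x} and linearizing in $\rho$ around the constant solution $x\equiv 1$ yields the bilinear $\chi_3$ relation. It cannot: the dSKP relation involves only \emph{differences} of the values, so it is exactly invariant under the affine map $z\mapsto 1+\rho z$ --- every difference acquires a factor $\rho$, and the three such factors in the numerator cancel against the three in the denominator. The substituted relation therefore has no $\rho$-expansion at all; it is identically the relation produced by the scaling $x\mapsto\varepsilon^{i-j+k}x$ alone, whose leading order in $\varepsilon$ is $\chi_4$, not $\chi_3$. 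The same invariance at the level of solutions gives $Y\bigl(G_p,1+\rho a^{\varepsilon}\bigr)=1+\rho\,Y\bigl(G_p,a^{\varepsilon}\bigr)$, so iterated extraction of leading coefficients from your expression returns the $\chi_4$ solution (or a degenerate constant), never the $\chi_3$ one.

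The paper's detour is therefore essential, not cosmetic: the two degenerations do not commute and must be composed in the right order, $\varepsilon$ first (dSKP $\to\chi_4$) and then $\rho$, where the $\rho$-step is a linearization of $\chi_4$ --- which, unlike dSKP, is \emph{not} affine invariant --- around the constant solution $1$. Concretely, the paper checks that if $x^\rho$ solves $\chi_4$ with initial data $1+\rho b$, then $x^\rho(p)=1+\rho\, x(p)+O(\rho^2)$, where $x$ solves $\chi_3$ with data $b$: in $\chi_4$ the $\rho^0$ terms cancel and the $\rho^1$ terms give exactly $\chi_3$. The substitution this induces on the dSKP initial data is $\varepsilon^{i-j+k}(1+\rho\, a_{i,j})$, with the constant $1$ \emph{inside} the $\varepsilon$-scaling; this is what the paper's proof actually uses (and what the displayed definition of $a^{\varepsilon,\rho}$ should be read as), whereas with the $1$ outside, as in your substitution, the affine-invariance argument above shows the whole construction collapses. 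Your proof can be repaired only by inserting $\chi_4$ as an intermediate object rather than linearizing dSKP directly.
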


\begin{proof}
  We use \cite[Remark 3]{abs}. Changing variables using
  Remark~\ref{rk:A3}, it states that if one starts with the initial data
  $a^\varepsilon_{i,j}$ and applies the dSKP recurrence, then the
  leading coefficient in $\varepsilon$ satisfies the
  $\chi_4$ recurrence. More precisely, it can be checked directly by induction
  that if $x$ is the solution of the $\chi_4$ recurrence with initial
  data $a$, and $x^\varepsilon$ is the solution of the dSKP recurrence
  with initial data $a^\varepsilon$, then for any $p=(i,j,k)$,
  $x^\varepsilon(p) = \varepsilon^{i-j+k} \left(x(p)  +
  O(\varepsilon^2)\right)$. Therefore, the formula for $x(p)$ in the $\chi_4$
  case is a consequence of Theorem~\ref{theo:dskp_dimers}.

  The $\chi_5$ and $\chi_3$ cases are then limits from the $\chi_4$
  solution itself. For $\chi_5$, it can also be deduced from
  \cite[Remark 3]{abs}, where the expression as a $\delta$ limit from
  $\chi_4$ is stated; the proof is identical to the previous one. For
  $\chi_3$, the $\rho$ limit seems to be new. The proof consists in
  checking that if $x$ is the solution of the $\chi_3$ recurrence with
  initial data $b$, and if $x^\rho$ is the solution of the $\chi_4$
  recurrence with initial data $1+\rho b$, then for any
  $p, x^\rho(p) = 1+\rho x(p) + O(\rho^2)$.
\end{proof}

\begin{table}[h]
  \centering
  \begin{tabular}{|l|l|l|l|l|l|}
\hline
                          & Aztec diamond size & $1$ & $2$   & $3$     & $4$     \\ \hline
\multirow{2}{*}{$\chi_2$} & numerator          & $6$ & $220$ & $49224$ &    ?    \\ \cline{2-6}
                          & denominator        & $6$ & $220$ & $49224$ &    ?    \\ \hline
\multirow{2}{*}{$\chi_3$} & numerator          & $4$ & $30$  & $680$   & $45188$ \\ \cline{2-6}
                          & denominator        & $2$ & $14$  & $300$   & $19044$ \\ \hline
\multirow{2}{*}{$\chi_4$} & numerator          & $4$ & $56$  & $2656$  &    ?    \\ \cline{2-6}
                          & denominator        & $2$ & $14$  & $328$   &    ?    \\ \hline
\multirow{2}{*}{$\chi_5$} & numerator          & $3$ & $23$  & $433$   & $19705$ \\ \cline{2-6}
                          & denominator        & $1$ & $3$   & $23$    &  $433$  \\ \hline
  \end{tabular}
  \caption{Number of contributing configurations of Aztec diamonds
    of small size.}
  \label{tab:chi_k}
\end{table}

In fact, at least in the case where the crosses-and-wrenches graph is
the Aztec diamond (as in Example~\ref{ex:Aztec}), it is relatively
easy to describe exactly the combinatorics of configurations that
appear in the leading terms of Theorem~\ref{theo:chi_k}, at least in
the $\chi_4$ and $\chi_5$ cases. We describe these configurations
here. We leave the $\chi_3$ case as an open problem, as well as the
case of more generic height functions.

Recall that for the height function $h(i,j)=[i+j]_2$, the solution of
the dSKP recurrence at $p=(i,j,k+1)$ is expressed as $Y(A_k,a)$ for an
Aztec diamond of size $k$. By Equation~\eqref{eq:dskp_Aztec_dim} and Corollary~\ref{cor:Z_Aztec_diamond},
this solution may be expressed as
\begin{equation*}
  Y(A_k,a) = \biggl(\,\prod_{f\in F_p}a_f\biggr)
  \frac{Z(A_k,a^{-1},\varphi)}{Z(A_k,a,\varphi)}
  = \frac{\sum_{(\Ts,\Fs)\in \F}\sign(\Ts,\Fs)\prod_{\vec{e}\in
      \Ts}a_{f_{\vec{e}}}}{\sum_{(\Ts,\Fs)\in \F}\sign(\Ts,\Fs)\prod_{\vec{e}\in \Fs}a_{f_{\vec{e}}}},
\end{equation*}
where the sums are over pairs of complementary trees/forests on
$G^\bullet$, with a one-to-one correspondence between configurations
and monomials. To identify the terms that contribute in the $\chi_4$
case, let us apply the previous equation at
$a_{i,j}^\varepsilon = \varepsilon^{i-j+k}a_{i,j}$, and look for the terms
that minimize the $\varepsilon$ exponent. We do this first in the
denominator, so we are looking for configurations $(\Ts,\Fs)$ such
that the $\varepsilon$ exponent of variables in $\Fs$ is minimal. Recall
that in the forest $\Fs$, every black vertex has one outgoing
edge. Looking at the exponents of $\varepsilon$ around a black vertex in
Figure~\ref{fig:chi_k}, top left, this exponent is minimal if the
forest never has an edge oriented towards the South-East. We claim
that there exist such configurations, an example is displayed in
Figure~\ref{fig:chi_k}, top left. Thus, the previous constraint
completely characterize those configurations that appear in the
denominator of the solution of the $\chi_4$ recurrence.

\begin{figure}[tb]
  \centering
  \includegraphics[width=13cm]{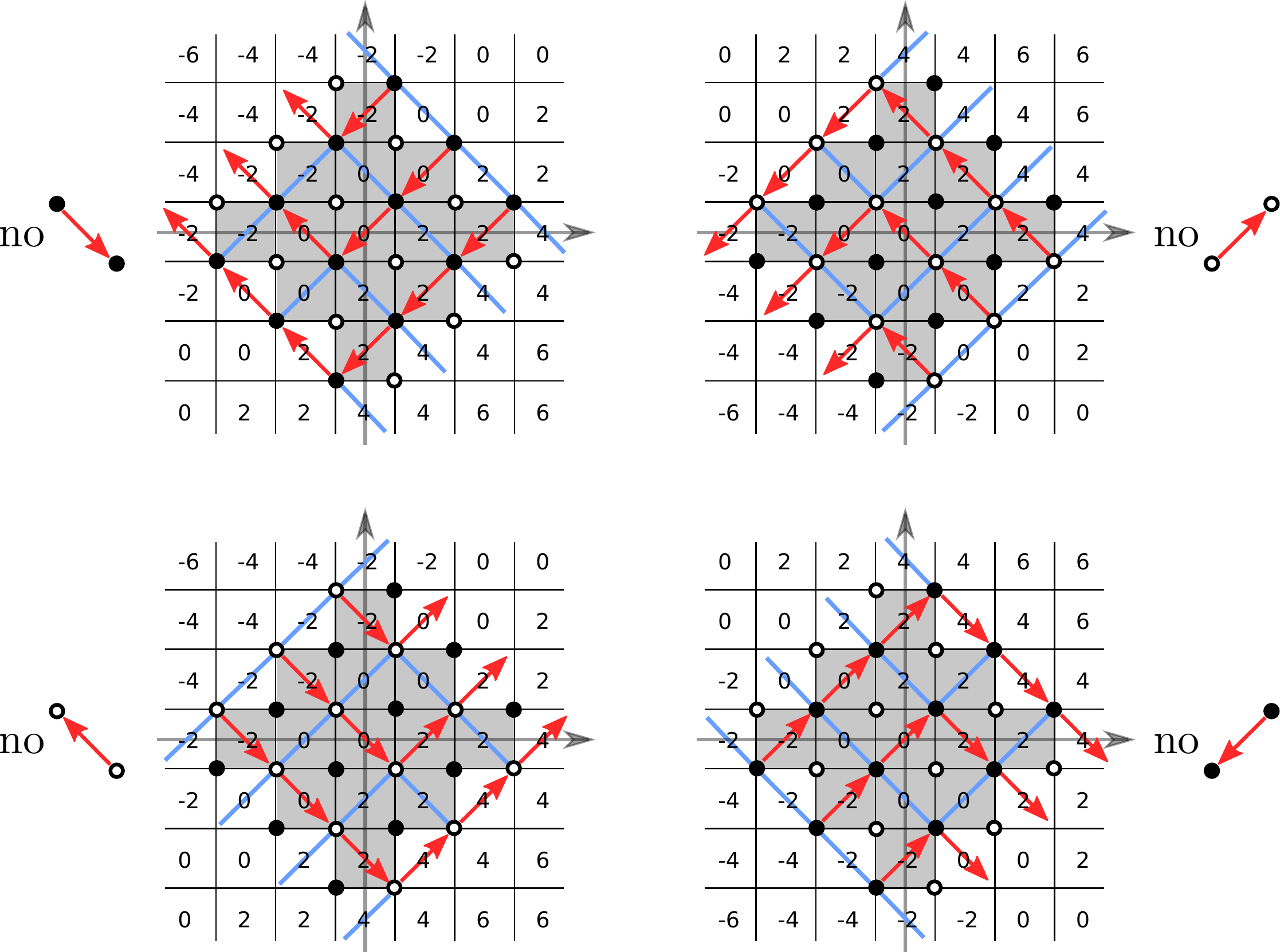}
  \caption{Example configurations contributing to the solutions of the
    $\chi_4$ and $\chi_5$ recurrence, for the initial data of
    Example~\ref{ex:Aztec}. Top left: the numbers indicate the
    exponent of $\varepsilon$ in $a^\varepsilon_{i,j}$; a tree/forest
    configuration on $G^\bullet$ is shown, with the forest shown in
    red and oriented, and vertices $b_r,\tilde{b}$ thought of as being
    ``at infinity''; this configuration minimizes the $\varepsilon$
    exponent, so it contributes to the denominator of the $\chi_4$
    solution. Top right: the exponent of $\delta$ in
    $a^{\varepsilon,\delta}_{i,j}$; the tree/forest configuration on
    $G^\circ$ minimizes the $\delta$ exponent, and corresponds to the
    top left configuration via the bijection; hence it also
    contributes to the denominator for $\chi_5$. Bottom: similarly, a
    configuration shown either on $G^\circ$ or on $G^\bullet$, such
    that the forest maximizes the $\varepsilon$ and $\delta$ exponents,
    hence contributing to the numerator of both $\chi_4$ and
    $\chi_5$.}
  \label{fig:chi_k}
\end{figure}

For the numerator, we have to minimize the $\varepsilon$ exponent in
$\Ts$ instead, so have to maximize it in $\Fs$. Consider the graph
$G^\circ$ analogous to $G^\bullet$ for white vertices. There is a bijection
between pairs of complementary trees/forests on $G^\bullet$ and those
on $G^\circ$, obtained by
replacing every black diagonal in $\Fs$ into the corresponding white
diagonal. Therefore, we may consider configurations on $G^\circ$. On
that graph, the same argument shows that the $\varepsilon$ exponent in
$\Fs$ is maximal when there is no edge oriented towards the
North-West. An example is shown in Figure~\ref{fig:chi_k}, bottom left.

For the $\chi_5$ case, we now have to minimize both the $\varepsilon$
exponent and the $\delta$ exponent. Similar considerations show that
contributing configurations for the denominator are those for which in
$G^\bullet$, $\Fs$ has no South-East going edge (to minimize the
$\varepsilon$ exponent as before), \textit{and} for the corresponding
forest on $G^\circ$, the forest has no North-East going edge (to
minimize the $\delta$ exponent). The previous example satisfies both
these constraints, see Figure~\ref{fig:chi_k}, top right. In the
numerator, the contributing configurations are such that in the forest
on $G^\circ$ there is no North-West going edge, \textit{and} in the
corresponding forest on $G^\bullet$ there is no South-West going edge,
see Figure~\ref{fig:chi_k}, bottom right.

As for the $\chi_3$ case, note that we would have to consider the
configurations coming from $\chi_4$ and write their weight in
variables $1+\rho a_{i,j,k}$, in order to expand each weight and find
the leading coefficient in $\rho$. Computing these by hand for small
values of $k$, it turns out that both in the numerator and
denominator, the coefficient in $\rho^0$ vanishes. We were not able to
find a simple combinatorial identification for the coefficients in
$\rho^1$ (which a priori may come from the expansion of several of the
$\chi_4$ configurations).

The number of contributing configurations for the first Aztec diamond
sizes (\emph{i.e.}~the number of monomials in the corresponding
solution of the $\chi_i$ recurrences) that we were able to compute are
given in Table~\ref{tab:chi_k}. For $\chi_5$, it seems that there are
as many monomials in the numerator for $A_k$ as in the denominator for
$A_{k+1}$, but we do not see a simple explanation for this pattern,
and we do not know if it continues.

\bibliographystyle{alphaurl}
\bibliography{references}

\newcommand{\etalchar}[1]{$^{#1}$}
\begin{thebibliography}{NCWQ84}

\bibitem[ABS12]{abs}
Vsevolod~E. Adler, Alexander~I. Bobenko, and Yuri~B. Suris.
\newblock {Classification of Integrable Discrete Equations of Octahedron Type}.
\newblock {\em International Mathematics Research Notices}, 2012(8):1822--1889,
  01 2012.
\newblock \href {https://doi.org/10.1093/imrn/rnr083}
  {\path{doi:10.1093/imrn/rnr083}}.

\bibitem[AdTM22]{paper2}
Niklas~C. Affolter, Béatrice de~Tilière, and Paul Melotti.
\newblock {The Schwarzian octahedron recurrence (dSKP equation) II: geometric
  systems}.
\newblock Preprint on arXiv, 2022.
\newblock \href {https://arxiv.org/abs/2208.00244} {\path{arXiv:2208.00244}}.

\bibitem[Aff21]{amiquel}
Niklas~C. Affolter.
\newblock {Miquel dynamics, Clifford lattices and the Dimer model}.
\newblock {\em Letters in Mathematical Physics}, 111:1--23, 2021.
\newblock \href {https://doi.org/10.1007/s11005-021-01406-0}
  {\path{doi:10.1007/s11005-021-01406-0}}.

\bibitem[BBC{\etalchar{+}}17]{BBCCR}
C{\'e}dric Boutillier, J{\'e}r{\'e}mie Bouttier, Guillaume Chapuy, Sylvie
  Corteel, and Sanjay Ramassamy.
\newblock Dimers on rail yard graphs.
\newblock {\em Annales de l’Institut Henri Poincar{\'e} D}, 4(4):479--539,
  2017.
\newblock \href {https://doi.org/10.4171/AIHPD/46}
  {\path{doi:10.4171/AIHPD/46}}.

\bibitem[BK98a]{BK1}
Leonid~V. Bogdanov and Boris~G. Konopelchenko.
\newblock {Analytic-bilinear approach to integrable hierarchies. I. Generalized
  KP hierarchy}.
\newblock {\em Journal of Mathematical Physics}, 39(9):4683--4700, 1998.
\newblock \href {https://doi.org/10.1063/1.532540}
  {\path{doi:10.1063/1.532540}}.

\bibitem[BK98b]{BK2}
Leonid~V. Bogdanov and Boris~G. Konopelchenko.
\newblock {Analytic-bilinear approach to integrable hierarchies. II.
  Multicomponent KP and 2D Toda lattice hierarchies}.
\newblock {\em Journal of Mathematical Physics}, 39(9):4701--4728, 1998.
\newblock \href {https://doi.org/10.1063/1.532531}
  {\path{doi:10.1063/1.532531}}.

\bibitem[BS02]{BSbook}
Michael Brin and Garrett Stuck.
\newblock {\em Introduction to Dynamical Systems}.
\newblock Cambridge University Press, 2002.
\newblock \href {https://doi.org/10.1017/CBO9780511755316}
  {\path{doi:10.1017/CBO9780511755316}}.

\bibitem[BV99]{BV}
Marc~P. Bellon and Claude-Michel Viallet.
\newblock Algebraic entropy.
\newblock {\em Communications in mathematical physics}, 204(2):425--437, 1999.
\newblock \href {https://doi.org/10.1007/s002200050652}
  {\path{doi:10.1007/s002200050652}}.

\bibitem[CEP{\etalchar{+}}96]{CEP}
Henry Cohn, Noam Elkies, James~G. Propp, et~al.
\newblock Local statistics for random domino tilings of the aztec diamond.
\newblock {\em Duke Mathematical Journal}, 85(1):117--166, 1996.
\newblock \href {https://doi.org/10.1215/S0012-7094-96-08506-3}
  {\path{doi:10.1215/S0012-7094-96-08506-3}}.

\bibitem[Ciu03]{CiucuUR}
Mihai Ciucu.
\newblock Perfect matchings and perfect powers.
\newblock {\em Journal of Algebraic Combinatorics}, 17(3):335--375, 2003.
\newblock \href {https://doi.org/10.1023/A:1025005023573}
  {\path{doi:10.1023/A:1025005023573}}.

\bibitem[CKP01]{CKP}
Henry Cohn, Richard Kenyon, and James Propp.
\newblock A variational principle for domino tilings.
\newblock {\em Journal of the American Mathematical Society}, 14(2):297--346,
  2001.
\newblock \href {https://doi.org/10.1090/S0894-0347-00-00355-6}
  {\path{doi:10.1090/S0894-0347-00-00355-6}}.

\bibitem[CLR20]{clrtembeddings}
Dmitry Chelkak, Benoît Laslier, and Marianna Russkikh.
\newblock Dimer model and holomorphic functions on t-embeddings of planar
  graphs.
\newblock Preprint, arXiv:2001.11871, 2020.
\newblock \href {https://arxiv.org/abs/2001.11871} {\path{arXiv:2001.11871}}.

\bibitem[CS04]{CScube}
Gabriel~D. Carroll and David~E Speyer.
\newblock The cube recurrence.
\newblock {\em the electronic journal of combinatorics}, 11(R73):1, 2004.
\newblock \href {https://doi.org/10.37236/1826} {\path{doi:10.37236/1826}}.

\bibitem[DFSG14]{dFSG}
Philippe Di~Francesco and Rodrigo Soto-Garrido.
\newblock Arctic curves of the octahedron equation.
\newblock {\em Journal of Physics A: Mathematical and Theoretical},
  47(28):285204, 2014.
\newblock \href {https://doi.org/10.1088/1751-8113/47/28/285204}
  {\path{doi:10.1088/1751-8113/47/28/285204}}.

\bibitem[DN91]{dndskp}
Irene~Ya. Dorfman and Frank~W. Nijhoff.
\newblock {On a (2+1)-dimensional version of the Krichever-Novikov equation}.
\newblock {\em Physics Letters A}, 157(2):107--112, 1991.
\newblock \href {https://doi.org/10.1016/0375-9601(91)90080-R}
  {\path{doi:10.1016/0375-9601(91)90080-R}}.

\bibitem[Dod67]{dodgson}
Charles~L. Dodgson.
\newblock {IV. Condensation of determinants, being a new and brief method for
  computing their arithmetical values}.
\newblock {\em Proceedings of the Royal Society of London}, 15:150--155, 1867.
\newblock \href {https://doi.org/10.1098/rspl.1866.0037}
  {\path{doi:10.1098/rspl.1866.0037}}.

\bibitem[FS09]{FS}
Philippe Flajolet and Robert Sedgewick.
\newblock {\em Analytic Combinatorics}.
\newblock Cambridge University Press, 2009.
\newblock URL:
  \url{http://www.cambridge.org/uk/catalogue/catalogue.asp?isbn=9780521898065}.

\bibitem[Geo21]{Ggroves}
Terrence George.
\newblock Grove arctic curves from periodic cluster modular transformations.
\newblock {\em International Mathematics Research Notices},
  2021(20):15301--15336, 2021.
\newblock \href {https://doi.org/10.1093/imrn/rnz367}
  {\path{doi:10.1093/imrn/rnz367}}.

\bibitem[Gli15]{gdevron}
Max Glick.
\newblock {The Devron property}.
\newblock {\em Journal of Geometry and Physics}, 87:161--189, 2015.
\newblock \href {https://doi.org/10.1016/j.geomphys.2014.07.029}
  {\path{doi:10.1016/j.geomphys.2014.07.029}}.

\bibitem[Gor21]{Gbook}
Vadim Gorin.
\newblock {\em Lectures on Random Lozenge Tilings}.
\newblock Cambridge Studies in Advanced Mathematics. Cambridge University
  Press, 2021.
\newblock \href {https://doi.org/10.1017/9781108921183}
  {\path{doi:10.1017/9781108921183}}.

\bibitem[Hir81]{hirota}
Ryogo Hirota.
\newblock {Discrete analogue of a generalized Toda equation}.
\newblock {\em Journal of the Physical Society of Japan}, 50(11):3785--3791,
  1981.
\newblock \href {https://doi.org/10.1143/JPSJ.50.3785}
  {\path{doi:10.1143/JPSJ.50.3785}}.

\bibitem[{Kas}61]{Kasteleyn}
Pieter~W. {Kasteleyn}.
\newblock {The statistics of dimers on a lattice : I. The number of dimer
  arrangements on a quadratic lattice}.
\newblock {\em Physica}, 27:1209--1225, December 1961.
\newblock \href {https://doi.org/10.1016/0031-8914(61)90063-5}
  {\path{doi:10.1016/0031-8914(61)90063-5}}.

\bibitem[Kas67]{Kasteleyn2}
Pieter~W. Kasteleyn.
\newblock Graph theory and crystal physics.
\newblock In {\em Graph {T}heory and {T}heoretical {P}hysics}, pages 43--110.
  Academic Press, London, 1967.

\bibitem[KLRR22]{klrr}
Richard Kenyon, Wai~Yeung Lam, Sanjay Ramassamy, and Marianna Russkikh.
\newblock {Dimers and circle patterns}.
\newblock {\em {Annales Scientifiques de l'{\'E}cole Normale Sup{\'e}rieure}},
  55(3):863--901, 2022.
\newblock \href {https://doi.org/10.24033/asens.2507}
  {\path{doi:10.24033/asens.2507}}.

\bibitem[KP16]{KPddim}
Richard~W. Kenyon and Robin Pemantle.
\newblock Double-dimers, the ising model and the hexahedron recurrence.
\newblock {\em Journal of Combinatorial Theory, Series A}, 137:27--63, 2016.
\newblock \href {https://doi.org/10.1016/j.jcta.2015.07.005}
  {\path{doi:10.1016/j.jcta.2015.07.005}}.

\bibitem[KPW00]{KPW}
Richard~W. Kenyon, James~G. Propp, and David~B. Wilson.
\newblock Trees and matchings.
\newblock {\em Electron. J. Combin}, 7(1):R25, 2000.
\newblock \href {https://doi.org/10.37236/1503} {\path{doi:10.37236/1503}}.

\bibitem[KS02]{ksclifford}
Boris~G. Konopelchenko and Wolfgang~K. Schief.
\newblock {Menelaus' theorem, Clifford configurations and inversive geometry of
  the Schwarzian {KP} hierarchy}.
\newblock {\em Journal of Physics A: Mathematical and General},
  35(29):6125--6144, 07 2002.
\newblock \href {https://doi.org/10.1088/0305-4470/35/29/313}
  {\path{doi:10.1088/0305-4470/35/29/313}}.

\bibitem[Kup98]{Kuperberg}
Greg Kuperberg.
\newblock An exploration of the permanent-determinant method.
\newblock {\em Electron. J. Combin.}, 5(1):Research Paper 46, 34, 1998.
\newblock \href {https://doi.org/10.37236/1384} {\path{doi:10.37236/1384}}.

\bibitem[Mel18]{Mkash}
Paul Melotti.
\newblock {The free-fermionic $C_2^{(1)}$ loop model, double dimers and
  Kashaev's recurrence}.
\newblock {\em Journal of Combinatorial Theory, Series A}, 158:407--448, 2018.
\newblock \href {https://doi.org/10.1016/j.jcta.2018.04.005}
  {\path{doi:10.1016/j.jcta.2018.04.005}}.

\bibitem[NCWQ84]{ncwqdskp}
Frank~W. Nijhoff, Hans~W. Capel, Gerlof~L. Wiersma, and G.~Reinout~W. Quispel.
\newblock Bäcklund transformations and three-dimensional lattice equations.
\newblock {\em Physics Letters A}, 105(6):267--272, 1984.
\newblock \href {https://doi.org/10.1016/0375-9601(84)90994-0}
  {\path{doi:10.1016/0375-9601(84)90994-0}}.

\bibitem[Per69]{Percus}
Jerome~K. Percus.
\newblock {One More Technique for the Dimer Problem}.
\newblock {\em J. Math. Phys.}, 10(10):1881--1884, 1969.
\newblock \href {https://doi.org/10.1063/1.1664774}
  {\path{doi:10.1063/1.1664774}}.

\bibitem[Pro03]{ProppUR}
James~G. Propp.
\newblock Generalized domino-shuffling.
\newblock {\em Theor. Comput. Sci.}, 303:267--301, 2003.
\newblock \href {https://doi.org/10.1016/S0304-3975(02)00815-0}
  {\path{doi:10.1016/S0304-3975(02)00815-0}}.

\bibitem[PS05]{PSarctic}
T.~Kyle Petersen and David~E Speyer.
\newblock An arctic circle theorem for groves.
\newblock {\em Journal of Combinatorial Theory, Series A}, 111(1):137--164,
  2005.
\newblock \href {https://doi.org/10.1016/j.jcta.2004.11.013}
  {\path{doi:10.1016/j.jcta.2004.11.013}}.

\bibitem[RG11]{rgbook}
Jürgen Richter-Gebert.
\newblock {\em {Perspectives on Projective Geometry: A Guided Tour Through Real
  and Complex Geometry}}.
\newblock Springer Publishing Company, Incorporated, 1st edition, 2011.

\bibitem[Spe07]{Speyer}
David~E Speyer.
\newblock Perfect matchings and the octahedron recurrence.
\newblock {\em Journal of Algebraic Combinatorics}, 25(3):309--348, 2007.
\newblock \href {https://doi.org/10.1007/s10801-006-0039-y}
  {\path{doi:10.1007/s10801-006-0039-y}}.

\bibitem[Tem74]{Temperley}
Harold N.~V. Temperley.
\newblock Enumeration of graphs on a large periodic lattice.
\newblock In {\em Combinatorics ({P}roc. {B}ritish {C}ombinatorial {C}onf.,
  {U}niv. {C}oll. {W}ales, {A}berystwyth, 1973)}, London Math. Soc. Lecture
  Note Ser., No. 13, pages 155--159. Cambridge Univ. Press, London, 1974.

\bibitem[TF61]{TemperleyFisher}
Harold N.~V. Temperley and Michael~E. Fisher.
\newblock Dimer problem in statistical mechanics -- an exact result.
\newblock {\em The Philosophical Magazine: A Journal of Theoretical
  Experimental and Applied Physics}, 6(68):1061--1063, 1961.
\newblock \href {https://doi.org/10.1080/14786436108243366}
  {\path{doi:10.1080/14786436108243366}}.

\bibitem[Yao14]{yao}
Zijian Yao.
\newblock Glick's conjecture on the point of collapse of axis-aligned polygons
  under the pentagram maps.
\newblock Preprint on arXiv, 2014.
\newblock \href {https://arxiv.org/abs/1410.7806} {\path{arXiv:1410.7806}}.

\end{thebibliography}

\end{document}